\newcommand{\R}{\mathbb R}
\newcommand{\N}{\mathbb N}
\newcommand{\W}{\mathcal W}
\newcommand{\K}{\mathcal K}
\newcommand{\E}{\mathcal E}
\newcommand{\B}{\mathcal B}
\newcommand{\oz}{\odot}
\newcommand{\Po}{\mathcal P}
\newcommand{\C}{\mathcal C}
\newcommand{\Vp}{V^\perp}
\newcommand{\Wp}{W^\perp}
\newcommand{\cC}{\mathcal{C}}
\newcommand{\cR}{\mathcal{R}}
\newcommand{\cP}{\mathcal{P}}
\newcommand{\cB}{\mathcal{B}}
\newcommand{\cW}{\mathcal{W}}
\newcommand{\cK}{\mathcal{K}}
\newcommand{\cA}{\mathcal{A}}
\newcommand{\cM}{\mathcal{M}}
\newcommand{\cX}{\mathcal{X}}
\newcommand{\cY}{\mathcal{Y}}
\newcommand{\cF}{\mathcal{F}}
\newcommand{\cH}{\mathcal{H}}
\newcommand{\cQ}{\mathcal{Q}}
\newcommand{\Z}{\mathbb{Z}}
\def \old {{\text{\tiny old}}}
\newcommand{\sgn}{\operatorname{sgn}}
\newcommand{\dist}{\operatorname{dist}}
\newcommand{\supp}{\operatorname{supp}}
\newcommand{\diam}{\operatorname{diam}}
\newcommand{\poly}{\operatorname{poly}}
\newcommand{\spn}{\operatorname{span}}
\newtheoremstyle{mytheor}
    {1ex}{1ex}{\normalfont}{0pt}{\scshape}{.}{1ex}
    {{\thmname{#1 }}{\thmnumber{#2}}{\thmnote{ (#3)}}}
\newtheorem{thm}{Theorem}[section]
\newtheorem{defi}[thm]{Definition}
\newtheorem{lemma}[thm]{Lemma}
\newtheorem{proposition}[thm]{Proposition}
\newtheorem{cor}[thm]{Corollary}
\newtheorem{rmk}[thm]{Remark}
\newtheorem{problem}{Problem}
\begin{document}
\title{The norm of linear extension operators for $C^{m-1,1}(\R^n)$}
\date{}
\author{J. Carruth, A. Frei-Pearson, A. Israel}
\maketitle

\begin{abstract}
    Fix integers $m\ge 2$, $n\ge 1$. We prove the existence of a bounded linear extension operator for $C^{m-1,1}(\R^n)$ with operator norm at most $\exp(\gamma D^k)$, where $D := \binom{m+n-1}{n}$ is the number of multiindices of length $n$ and order at most $m-1$, and $\gamma,k > 0$ are absolute constants (independent of $m,n,E$). Upper bounds on the norm of this operator are relevant to basic questions about fitting a smooth function to data. Our results improve on a previous construction of extension operators of norm at most $\exp(\gamma D^k 2^D)$. Along the way, we establish a finiteness theorem for $C^{m-1,1}(\R^n)$ with improved bounds on the involved constants.
\end{abstract}

\section{Introduction}
Fix $m\ge 1$, $n\ge 1$. We let $C^m(\R^n)$ denote the Banach space of all $m$-times continuously differentiable functions $F : \R^n \rightarrow \R$ whose partial derivatives up to order $m$ are bounded functions on $\R^n$. We equip $C^m(\R^n)$ with a standard norm:
\[
\|F \|_{C^m(\R^n)} := \sup_{x \in \R^n} \max_{|\alpha| \leq m} | \partial^\alpha F(x)|.
\]
Here, for a multiindex $\alpha = (\alpha_1,\dots,\alpha_n) \in \mathbb{N}^n$, we write $|\alpha| := \sum_{j} \alpha_j$ to denote the order of $\alpha$. We write $\partial^\alpha F(x)  = \partial^{\alpha_1}_1 \cdots \partial^{\alpha_n}_n F(x)$ for the $\alpha^{\text{th}}$ partial derivative of a function $F \in C^m_{loc}(\R^n)$. We also define $\alpha! := \prod_{i=1}^n \alpha_i !$.

The following problem goes back to Whitney \cite{whitney1934analytic,Whitney1934, whitney1934functions}. Let $E$ be an arbitrary subset of $\R^n$. Given a function $f : E \rightarrow \R$, determine whether there exists a function $F \in C^m(\R^n)$ with $F=f$ on $E$. 

Whitney's problem was solved by C. Fefferman in 2006 \cite{F4}.\footnote{The Whitney problem has a long history with contributions by many authors; below, we discuss some of the most relevant to our work. For a more complete history see \cite{FI-book} and the references therein.} In a remarkable series of papers, Fefferman posed and solved a variety of related problems. In three of these papers \cite{F5,FK0, FK1}, two of them joint with B. Klartag, the authors connected this work to the practical problem of computing a $C^m$ interpolant for a given set of data. 

Suppose now that $E$ is a finite subset of $\R^n$. We define the trace norm of a function $f:E\rightarrow \R$ by
\[
||f||_{C^m(E)}:= \inf \{ ||F||_{C^m(\R^n)} : F= f \text{ on } E\}. 
\]
A function $F : \mathbb{R}^n \rightarrow \R$ is an \emph{interpolant} of $f$ if $F = f$ on $E$. Given $C \geq 1$, a function $F \in C^m(\R^n)$ is a \emph{$C$-optimal interpolant} of $f$ provided that $F = f$ on $E$ and $||F||_{C^m(\R^n)}\le C ||f||_{C^m(E)}$. That is, $F$ is an interpolant of $f$ with $C^m$ norm that is within a factor of $C$ of the optimal value. In \cite{F5, FK0, FK1}, Fefferman and Klartag proved the following theorem.

\begin{thm}\label{thm: FK}
Fix $m\ge 1$, $n\ge 1$. Let $E\subseteq \R^n$ be a finite set with cardinality  $\#(E)=N$ and fix $f:E\rightarrow\R$. There exists an algorithm that computes a $C$-optimal interpolant $F \in C^m(\R^n)$ of $f$. Specifically, the algorithm takes as input $(E,f,m)$ and performs $C_1 N \log N$ units of one-time work, on an idealized (von Neumann) computer with $C_2 N$ units of memory. Given $x \in \R^n$, the computer responds to a query by returning the values of $\partial^\alpha F(x)$ for all $\alpha$ with $|\alpha| \leq m$, where $F$ is a $C$-optimal interpolant of $f$. The algorithm requires $C_3 \log N$ computer operations to answer a query. The constants $C, C_1, C_2, C_3$ depend only on $m$ and $n$.
\end{thm}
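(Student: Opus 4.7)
The plan is to localize the extension problem via a Calderón--Zygmund decomposition of $\R^n$ adapted to $E$, to solve bounded-cardinality local problems using a linear extension operator of norm depending only on $m$ and $n$, and to glue the local solutions with a partition of unity. Efficiency in $N$ is obtained by preprocessing $E$ into a balanced box-decomposition tree (or similar spatial search structure) that supports nearest-point and range queries in $O(\log N)$ time.

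First I would construct a Calderón--Zygmund decomposition: starting from a large dyadic cube $Q^\circ \supseteq E$, recursively split a cube $Q$ until its dilate $(1+\tau) Q$ meets $E$ in at most $k_\ast(m,n)$ points, where $k_\ast$ is the constant from the finiteness principle for $C^m$. With the preprocessed data structure, each splitting test costs $O(\log N)$ and the total number of output cubes $\{Q_\nu\}$ is $O(N)$, yielding $O(N \log N)$ preprocessing work. For each CZ cube $Q_\nu$, set $E_\nu := E \cap (1+\tau) Q_\nu$ and apply the bounded linear extension operator to $f|_{E_\nu}$ to obtain a local interpolant $F_\nu \in C^m(\R^n)$ with $F_\nu = f$ on $E_\nu$ and $\|F_\nu\|_{C^m(\R^n)} \le C(m,n) \|f\|_{C^m(E_\nu)}$, in $O(1)$ time per cube. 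I then pick a partition of unity $\{\theta_\nu\}$ with $\supp \theta_\nu$ contained in a small dilate of $Q_\nu$, bounded overlap, and $|\partial^\alpha \theta_\nu| \le C(\diam Q_\nu)^{-|\alpha|}$, and define
\[
F(x) := \sum_\nu \theta_\nu(x) F_\nu(x).
\]
A query at $x$ is answered by locating the containing CZ cube in $O(\log N)$ time and evaluating the $O(1)$ nontrivially contributing terms.

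The main obstacle is ensuring $\|F\|_{C^m(\R^n)} \le C \|f\|_{C^m(E)}$. Choosing each $F_\nu$ independently does \emph{not} suffice, since the partition of unity amplifies mismatches in derivatives across cube boundaries by factors of $(\diam Q_\nu)^{-m}$; neighboring $F_\nu$'s must have Taylor data agreeing to the appropriate order near their shared boundaries. The standard remedy is to identify a sparse family of \emph{keystone} cubes carrying a single near-optimal Whitney field and to require each $F_\nu$ to match the keystone polynomial in addition to interpolating $f|_{E_\nu}$. Proving that each $F_\nu$ can be chosen this way, with norm controlled by $\|f\|_{C^m(E)}$ rather than merely by $\|f\|_{C^m(E_\nu)}$, is the technical heart of the argument, and is precisely where the bounded linear extension operator of norm $C(m,n)$ enters as a black box. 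A secondary obstacle is organizing the keystone assignment and the compressed representation of each $F_\nu$ (e.g.\ by storing only its Taylor polynomial at a designated point of $Q_\nu$ together with bookkeeping of $E_\nu$) so that the total preprocessing remains $O(N \log N)$ and each query is answerable in $O(\log N)$.
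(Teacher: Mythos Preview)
This theorem is not proved in the present paper. It is quoted in the introduction as a result of Fefferman and Klartag, with the proof residing entirely in the cited references; the paper's own contributions are Theorems~\ref{thm: new c sharp} and~\ref{thm: lin op}, which improve the constant $C^\#$ in the finiteness theorem (Theorem~\ref{thm: bsf fp}) that underlies the algorithm. So there is no ``paper's own proof'' of Theorem~\ref{thm: FK} to compare your proposal against.

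That said, your sketch is a recognizable outline of the Fefferman--Klartag strategy: a Calder\'on--Zygmund decomposition stopped when dilates meet $E$ in at most $k^\#$ points, local interpolants on each cube, a partition-of-unity patching, and spatial data structures for $O(N\log N)$ preprocessing and $O(\log N)$ queries. You have also correctly identified the central difficulty, namely that independently chosen local interpolants $F_\nu$ need not patch with controlled $C^m$ norm, and that one must enforce consistency of jets across neighboring cubes. Your remark that this is ``precisely where the bounded linear extension operator of norm $C(m,n)$ enters as a black box'' is, however, somewhat misleading: in the actual argument the consistency mechanism relies on the convex sets $\Gamma_\ell(x,f,M)$ and the finiteness principle itself, not merely on an abstract bounded extension operator, and the algorithmic computation of near-optimal jets at representative points is a substantial piece of work in its own right. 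If you were writing this up for real you would need to engage with those details; as a high-level plan for a result the paper merely cites, the outline is reasonable.
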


For details on the model of computation, including an explanation of the terms ``one-time work'', ``query'', or what it means to ``compute'' a function on $\R^n$, see \cite{F5, FK0, FK1}.

We note that (1) the running time of the algorithm in Theorem \ref{thm: FK} likely has optimal dependence on $N = \#(E)$ and (2) this is the only known algorithm for solving the $C^m$ interpolation problem for \underline{arbitrary} finite sets efficiently in $N$. Therefore, at least in theory, this algorithm could have widespread practical application.

Unfortunately, the constant $C$ in Theorem \ref{thm: FK} grows rapidly with $m$ and $n$, rendering the algorithm impractical for real-world applications. While $C$ is not computed explicitly in \cite{F5, FK0, FK1}, an examination of the arguments in those papers shows that one must take $C$ to have order of magnitude at least $\exp(\gamma D^k 2^D)$ for some real number $\gamma > 0$ and integer $k>0$; here $D:= {m+n-1\choose n}$ denotes the dimension of the vector space of polynomials in $n$ variables of degree at most $m-1$. In other words, the optimality guarantees on the interpolant produced by this algorithm deteriorate rapidly as $n$ and $m$ grow. Any practical version of Theorem \ref{thm: FK} will have to address this issue. There is considerable interest in finding such an algorithm; see \cite{fefferman2009whitney}.

The proof of Theorem \ref{thm: FK} is based on a \emph{finiteness theorem} for  $C^{m-1,1}(\R^n)$. This theorem is the source of the double exponential dependence on $D$ of the constant $C$ in Theorem \ref{thm: FK}.  Next, we state this result.

We let $C^{m-1,1}(\R^n)$ denote the space of all $(m-1)$-times differentiable functions $F:\R^n\rightarrow \R$ whose $(m-1)^{\text{rst}}$ order partial derivatives are Lipschitz continuous on $\R^n$. We equip this space with a seminorm:
\[
||F||_{C^{m-1,1}(\R^n)}:=\sup_{x,y\in\R^n}\left(\sum_{|\alpha|=m-1}\frac{(\partial^\alpha F(x) - \partial^\alpha F(y))^2}{|x-y|^2}\right)^{1/2}.
\]
Given a ball $B \subseteq \R^n$, we write $C^{m-1,1}(B)$ for the corresponding space of $C^{m-1,1}$ functions $F :B \rightarrow \R$.

\begin{thm}\label{thm: bsf fp}[Finiteness theorem for $C^{m-1,1}(\R^n)$ -- see \cite{F1}]

Let $m\ge 2, n\ge 1$. There exist constants $k^\#, C^\#$ depending on $m$ and $n$ such that the following holds.

Let $f : E \rightarrow \R$,  $E \subseteq \R^n$ an arbitrary set. Suppose that for every finite subset $S \subseteq E$ with cardinality $\#(S) \leq k^\#$ there exists a function $F^S \in C^{m-1,1}(\R^n)$ satisfying $F^S = f$ on $S$ and $\| F^S \|_{C^{m-1,1}(\R^n)} \leq 1$.

Then there exists a function $F \in C^{m-1,1}(\R^n)$ with $F = f$ on $E$ and $\| F \|_{C^{m-1,1}(\R^n)} \leq C^\#$.
\end{thm}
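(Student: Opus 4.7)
The plan is to prove Theorem \ref{thm: bsf fp} by reducing to the case of finite $E$ and then applying a Glaeser-style refinement of jet bundles together with a Helly-type theorem on convex subsets of the $D$-dimensional space $\cP$ of polynomials of degree at most $m-1$.

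\textbf{Reduction to finite $E$.} I would first argue that it suffices to treat the case where $E$ is finite, with the norm bound depending only on $m,n$. Given a general $E$, exhaust it by an increasing chain $E_1 \subseteq E_2 \subseteq \cdots$ of finite subsets with $\bigcup_j E_j = E$; the finiteness hypothesis passes trivially to each $E_j$, so the finite case yields $F_j \in C^{m-1,1}(\R^n)$ with $F_j = f$ on $E_j$ and $\|F_j\|_{C^{m-1,1}(\R^n)} \le C^\#$. Fix $x_0 \in E_1$ and subtract from each $F_j$ its degree $\le m-1$ Taylor polynomial at $x_0$; the resulting sequence is uniformly bounded in $C^{m-1,1}$ seminorm with jets normalized at $x_0$, so Arzelà--Ascoli produces a subsequence converging in $C^{m-1}_{\loc}(\R^n)$. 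After adding back an appropriate polynomial (which is absorbed with no cost to the seminorm) the limit $F$ satisfies $F = f$ on $E$ and $\|F\|_{C^{m-1,1}(\R^n)} \le C^\#$.

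\textbf{Glaeser refinement.} For finite $E$, define initial bundles
\[
\Gamma^0(x) := \{P \in \cP : P(x) = f(x) \},
\]
and iteratively
\[
\Gamma^{l+1}(x) := \left\{ P \in \Gamma^l(x) : \forall\, y \in E,\ \exists\, Q \in \Gamma^l(y) \text{ with } |\partial^\alpha(P-Q)(x)| \le M |x-y|^{m-|\alpha|}\ \forall\, |\alpha| \le m-1 \right\},
\]
for a suitable $M = M(m,n)$. Each $\Gamma^l(x)$ is a closed convex affine subset of $\cP$, the sequence is decreasing in $l$, and a standard dimension-count argument shows the chain stabilizes after $O(D)$ rounds at some limit bundle $\Gamma^\#(x)$. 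By the Whitney extension theorem for $C^{m-1,1}$, any selection $x \mapsto P_x \in \Gamma^\#(x)$ yields $F \in C^{m-1,1}(\R^n)$ with $F = f$ on $E$ and norm bounded by a constant depending only on $m,n$; the problem reduces to showing $\Gamma^\#(x_0) \ne \emptyset$ for every $x_0 \in E$.

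\textbf{Nonemptiness via Helly.} For each subset $S \subseteq E$ containing $x_0$, let $\Gamma^\#_S(x_0)$ denote the bundle obtained by running the same refinement procedure but testing only against points in $S$. The hypothesis of the theorem, applied to $S$ with $|S| \le k^\#$, says that $S$ admits a $C^{m-1,1}$ extension of norm at most $1$, and the $(m-1)$-jet of this extension at $x_0$ is a point of $\Gamma^\#_S(x_0)$. Since each $\Gamma^\#_S(x_0)$ is a convex subset of $\cP \cong \R^D$, a Helly-type theorem in $\R^D$ would let me patch these local jets into a single common point in $\bigcap_S \Gamma^\#_S(x_0) \subseteq \Gamma^\#(x_0)$, provided $k^\#$ is chosen larger than the Helly number propagated through $O(D)$ rounds of refinement. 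Choosing $k^\#$ and $C^\#$ as functions of $m,n$ accordingly yields the theorem.

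\textbf{Main obstacle.} The principal difficulty is the combinatorial accounting in the last step: the refinement operator $\Gamma^l \mapsto \Gamma^{l+1}$ couples jets at different points of $E$, so local nonemptiness on $k^\#$-subsets must be propagated through all $O(D)$ rounds while both the Helly test-set size and the norm loss are controlled. It is exactly this propagation that, in the classical proof, forces $k^\#$ and $C^\#$ to grow like $\exp(\gamma D^k 2^D)$; compressing this accounting is the main technical contribution advertised in the abstract.
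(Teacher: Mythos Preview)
Your reduction to finite $E$ is fine and matches the paper's argument (Section~\ref{subsec:mainproofs2}), though the paper phrases it via compactness of the sets $\mathcal{D}(x) = \{F : \|F\|\le 1+\eta,\ F(x)=f(x)\}$ rather than a diagonal extraction.

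The real problem is the Glaeser-refinement step. Your assertion that ``any selection $x\mapsto P_x\in\Gamma^\#(x)$ yields $F\in C^{m-1,1}(\R^n)$ via the Whitney extension theorem'' is not justified, and in fact this is the entire difficulty. Membership of $P_x$ in $\Gamma^\#(x)$ only guarantees that for each $y$ there \emph{exists} some $Q\in\Gamma^\#(y)$ compatible with $P_x$; it does not guarantee that the particular $P_y$ you selected is compatible with $P_x$. Producing a \emph{coherent} Whitney field $(P_x)_{x\in E}$ from the stabilized bundle is precisely the problem you are trying to solve, not a consequence of Whitney's classical theorem. This is why the Glaeser approach, which works for $C^1$ and for $C^m$ on subanalytic sets (Bierstone--Milman--Paw\l ucki), does not straightforwardly settle the general $C^{m-1,1}$ case. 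Your Helly step has a related issue: the sets $\Gamma^\#_S(x_0)$ \emph{contain} $\Gamma^\#_E(x_0)$ (testing against fewer points enlarges the bundle), so nonemptiness of each $\Gamma^\#_S(x_0)$ and of their intersection does not by itself force $\Gamma^\#_E(x_0)\neq\emptyset$; one would need $\Gamma^\#_E(x_0)=\bigcap_{|S|\le k^\#}\Gamma^\#_S(x_0)$, and establishing that identity across $O(D)$ refinement rounds is exactly the hard part you defer to ``combinatorial accounting.''

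The paper's route (which in turn follows Fefferman's original proof but with a different labeling scheme) is structurally different. Rather than refining jet bundles, it works with the symmetric convex sets $\sigma(x)=\{J_x\varphi:\varphi|_E=0,\ \|\varphi\|\le1\}$ and the affine sets $\Gamma_\ell(x,f,M)$, assigns to each ball a DTI subspace $V$ transverse to $\sigma(x)$, and runs an induction on a \emph{complexity} invariant $\cC(E|B)$ bounded by $4mD^2$. At each induction step one builds a Whitney cover, constructs local interpolants $F_B$ with jets constrained to a common coset of $V$, and patches them with a partition of unity; the transversality condition is what makes the patched function have controlled seminorm. Helly's theorem does appear (Lemmas~\ref{lem:gamma_trans} and~\ref{fip_lem}), but only to guarantee nonemptiness of the $\Gamma_\ell$ sets and to pass local estimates to a covering ball---not to produce a global coherent selection in one shot.
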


The finiteness theorem was first proved in the case $m=2$, $n \ge 1$ by Shvartsman \cite{Shvartsman1988}; in this case, it was shown that one can take $k^\# = 3 \cdot 2^{n-1}$ and $C^\# = A \exp(\gamma n)$, where $A, \gamma > 0$ are absolute constants (independent of $n$). Further, Shvartsman \cite{shvartsman1986lipschitz} proves that the value $k^\# = 3\cdot 2^{n-1}$ is the smallest possible when $m=2$. In other words, if $k^\# < 3 \cdot 2^{n-1}$ then the finiteness theorem fails to hold for any $C^\# > 1$.

Theorem \ref{thm: bsf fp} was conjectured to hold for any $m \ge 2$, $n \geq 1$ by Brudnyi and Shvartsman in \cite{Brudnyi1994}.

In \cite{F1}, Fefferman proved the conjecture of Brudnyi and Shvartsman. He showed that Theorem \ref{thm: bsf fp} holds for any $m \ge 2$, $n \ge 1$ with $k^\# \le (D+1)^{3\cdot 2^D}$. He did not state an explicit bound on the value of $C^\#$, but one can check that his proof gives $C^\# \le \exp(\gamma D^k 2^D)$ for absolute constants $\gamma, k>0$ (independent of $m,n$).

Note that in the case $m=2$, Fefferman's result implies Shvartsman's with the caveat that Shvartman's result holds for smaller $k^\#$, $C^\#$. Indeed, if $m =2$, then $D = (n+1)$; therefore Shvartsman's result implies that the finiteness theorem holds with $k^\# = 3\cdot 2^{D-2}$ and $C^\# = A \exp(\gamma D)$.

The constant $C$ in Theorem \ref{thm: FK} inherits its double exponential dependence on $D$ from the constant $C^\#$ in Theorem \ref{thm: bsf fp}. This leads us to pose the following problem.

\begin{problem}\label{problem: c sharp}
Is it possible to improve the dependence of the constant $C^\#$ in Theorem \ref{thm: bsf fp} on $D = \binom{m+n-1}{n}$?
\end{problem}

Progress on Problem \ref{problem: c sharp} is not possible by optimizing the constants in each line of Fefferman's proof of Theorem \ref{thm: bsf fp}. Without going into detail, his proof is by induction, and it produces a $C^\#$ which is exponential in the number of induction steps. The number of induction steps is equal to $2^D$, leading to the double exponential dependence of $C^\#$ on $D$. Thus, lowering the constant $C^\#$ requires new ideas.

In a joint work \cite{coordinateFree} with B. Klartag, we gave a new proof of Theorem \ref{thm: bsf fp} which avoided Fefferman's induction scheme. Our proof relied on semialgebraic geometry and compactness arguments, however, and therefore it did not give an effective bound on $C^\#$. In this paper, we replace the qualitative arguments of \cite{coordinateFree} with quantitative ones and improve the dependence of $C^\#$ on $D$ in Theorem \ref{thm: bsf fp} to exponential in a power of $D$. Specifically, we prove the following theorem.

\begin{thm}\label{thm: new c sharp}
There exist absolute constants $\gamma > 0$ and $k \ge 1$, independent of $m$ and $n$, such that the finiteness theorem for $C^{m-1,1}(\R^n)$ (Theorem \ref{thm: bsf fp}) holds with $C^\# = \exp(\gamma D^{k})$ and $k^\# = \exp(\gamma D^{k})$.
\end{thm}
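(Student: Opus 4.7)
The plan is to deduce Theorem \ref{thm: new c sharp} from the bounded linear extension operator of norm $\exp(\gamma D^k)$ that is the subject of the paper's title and abstract. In broad strokes, once a quantitative linear extension theorem is in hand, Fefferman's $2^D$-step induction can be replaced by a single geometric argument at the level of Whitney convex bodies of jets, and the bound on $C^\#$ inherits the operator norm of the linear extension map with at worst polynomial-in-$D$ additional losses.

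Concretely, given $f:E\to \R$ satisfying the hypothesis of Theorem \ref{thm: bsf fp} with $k^\# = \exp(\gamma D^k)$, I would introduce for each $x \in \R^n$ and $M>0$ the local Whitney convex body $\Gamma^{\loc}(x,f,M) \subseteq \mathcal{P}$ consisting of $(m-1)$-jets at $x$ that arise from some $F \in C^{m-1,1}(\R^n)$ of seminorm at most $M$ agreeing with $f$ on every finite subset $S\subseteq E$ of cardinality at most $k^\#$. The hypothesis guarantees these bodies are nonempty for $M = 1$. The main task is then to select $P^x \in \Gamma^{\loc}(x,f,AM)$ for each $x$ in a Whitney-compatible manner with constant $A = \exp(\gamma D^k)$, after which the classical Whitney extension theorem for $C^{m-1,1}$ produces the required global function $F$.

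The compatibility between $P^x$ and $P^y$ at nearby points is to be produced by applying the linear extension operator to $f|_{S_{x,y}}$ for a suitably chosen finite test set $S_{x,y}\subseteq E\cap B(x,C|x-y|)$: the resulting single $C^{m-1,1}$ function then automatically yields Whitney-compatible jets at $x$ and $y$ with constant $\exp(\gamma D^k) \cdot \|f|_{S_{x,y}}\|_{C^{m-1,1}(S_{x,y})}$, and the finite trace norm is bounded by $1$ via the hypothesis. To obtain a consistent global selection, I would define $P^x$ as a canonical affine-equivariant center of $\Gamma^{\loc}(x,f,AM)$ (a quantitative John center, say), so that nearby bodies, determined by overlapping finite test data, produce nearby centers with controlled modulus of continuity.

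The main obstacle is to verify that the local Whitney convex bodies behave stably enough under Taylor-transport maps between nearby points to support such a selection, since in Fefferman's proof it is precisely the corresponding stability that forces the $2^D$-step induction over basis assignments. My strategy to avoid this loss is to use the linear extension operator as a single global object that simultaneously treats all $D$ monomial directions, so that the amortized cost of a basis change is absorbed into a single factor of $\exp(\gamma D^k)$ rather than compounding $D$-fold. The bound $k^\# = \exp(\gamma D^k)$ then arises as the cardinality of the finite test sets $S_{x,y}$ required to pin down $\Gamma^{\loc}(x,f,AM)$ with quantitative stability, and is controlled by the operator norm of the linear extension together with a quantitative John-ellipsoid estimate applied in $\mathcal{P}$.
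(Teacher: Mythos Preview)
Your proposal has a genuine gap, and it stems from a misreading of the logical structure of the paper. You assume that the linear extension operator with norm $\exp(\gamma D^k)$ is available as an independent input, and then try to deduce the finiteness theorem from it. In the paper the dependence runs the other way: Theorem~\ref{thm: new c sharp} and Theorem~\ref{thm: lin op} are both corollaries of the same underlying result (Theorem~\ref{thm:sharpfinitenessprinciple} for finite $E$), which is proven by an induction on the \emph{complexity} $\cC(E|B)$. The two new technical ingredients are Proposition~\ref{prop:tech1} (every Whitney convex set is $R_0$-transverse to some DTI subspace with $R_0=\exp(\poly(D))$) and Proposition~\ref{prop:tech2} (complexity is bounded by $4mD^2$). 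These bound the number of induction steps by $\poly(D)$ and the loss per step by $\exp(\poly(D))$, giving the result. There is no independent route to the operator norm bound that you could feed back into a finiteness argument.

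Even setting aside the circularity, your selection mechanism does not close. Applying a linear extension operator to $f|_{S_{x,y}}$ gives, for each pair $(x,y)$, a function whose jets at $x$ and $y$ are Whitney compatible; but the jet you obtain at $x$ depends on $y$, so you do not get a single globally defined $P^x$. Your fix is to take a ``John center'' of $\Gamma^{\loc}(x,f,AM)$, but these sets are affine (not symmetric) convex bodies, and no affine-equivariant center (Steiner point, centroid, etc.) is Lipschitz in the sense required for Whitney compatibility of the resulting field $x\mapsto P^x$. The instability of such selections under Taylor transport is precisely what forces an induction---Fefferman's over $2^D$ labels, or the paper's over $O(\poly(D))$ values of complexity. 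You correctly identify this as ``the main obstacle,'' but the sentence that follows (``use the linear extension operator as a single global object so that the amortized cost of a basis change is absorbed'') is not a mechanism; it restates the desired conclusion. The paper's actual resolution is to control the number of times the transverse DTI subspace can change as one rescales, via the signature-monotonicity argument of Section~\ref{sec:rescale_grass}.
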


In \cite{F2}, Fefferman showed that his proof of Theorem \ref{thm: bsf fp} can be modified to produce a $C^\#$-optimal interpolant $F$ that depends linearly on the data $f$. This property is crucial in getting from Theorem \ref{thm: bsf fp} to the algorithm in Theorem \ref{thm: FK}. Our proof also has this property. Specifically, the next theorem is a byproduct of the proof of Theorem \ref{thm: new c sharp}.

Given an arbitrary set $E \subseteq \R^n$ (not necessarily finite), we let $C^{m-1,1}(E)$ denote the space of all restrictions to $E$ of functions in $C^{m-1,1}(\R^n)$, equipped with the standard trace seminorm:
\[
\| f \|_{C^{m-1,1}(E)} := \inf \{ \| F \|_{C^{m-1,1}(\R^n)} : F = f \mbox{ on } E \} \quad (f \in C^{m-1,1}(E)).
\]

\begin{thm}\label{thm: lin op}
There exist absolute constants $\gamma>0$ and $k \ge 1$, independent of $m$ and $n$, such that the following holds. Given $E\subset\R^n$, there exists a linear map $T:C^{m-1,1}(E)\rightarrow C^{m-1,1}(\R^n)$ satisfying $Tf |_E = f$ and $||Tf||_{C^{m-1,1}(\R^n)} \le C^\# ||f||_{C^{m-1,1}(E)}$ for all $f \in C^{m-1,1}(E)$, where $C^\# = \exp(\gamma D^k)$.
\end{thm}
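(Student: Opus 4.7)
The plan is to extract a linear operator $T$ from the construction of extensions underlying the proof of Theorem \ref{thm: new c sharp}, following the general strategy that Fefferman used in \cite{F2} to linearize his original finiteness argument.

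I would first reduce to the case of finite $E$. Suppose that for each finite $E' \subseteq E$ we have a linear extension operator $T_{E'}: C^{m-1,1}(E') \to C^{m-1,1}(\R^n)$ of operator norm at most $C^\#$. Composing with the restriction map $f \mapsto f|_{E'}$ produces a family of linear operators $\widetilde{T}_{E'} : C^{m-1,1}(E) \to C^{m-1,1}(\R^n)$ of norm at most $C^\#$, each satisfying $\widetilde{T}_{E'} f = f$ on $E'$. A standard compactness argument (using Banach--Alaoglu on equivalence classes modulo polynomials of degree less than $m$, or an ultrafilter limit along the directed set of finite subsets) produces a subnet limit $T$ that is linear, satisfies $\|T\| \leq C^\#$, and agrees with $f$ at every point of $E$, since each such point eventually lies in $E'$ and pointwise evaluation is a continuous linear functional preserved under the limit.

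For finite $E$, I would follow the construction in the proof of Theorem \ref{thm: new c sharp} and verify that each step can be performed linearly in $f$. Schematically, this construction consists of (i) a Whitney-type decomposition of $\R^n$ into cubes $\{Q_\nu\}$ with sidelengths comparable to $\dist(Q_\nu, E)$; (ii) assigning to each cube $Q_\nu$ a local polynomial $P_\nu \in \cP$ of degree less than $m$ that represents a local jet of the extension; and (iii) gluing via a Whitney partition of unity $\{\theta_\nu\}$ to form $Tf := \sum_\nu \theta_\nu P_\nu$. The geometric data $\{Q_\nu, \theta_\nu\}$ depend only on $E$, so linearity of $T$ reduces to linearity of the maps $f \mapsto P_\nu$. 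A canonical choice is to take $P_\nu$ as the unique minimum $\ell^2$-norm polynomial interpolating $f$ on a designated finite set $S_\nu \subseteq E$ with $\#(S_\nu) \leq k^\#$ located near $Q_\nu$; this defines a fixed linear projection of $f|_{S_\nu}$ onto $\cP$.

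The main obstacle is showing that this canonical (linear) selection of $P_\nu$ preserves the norm bound $\exp(\gamma D^k)$. The finiteness theorem produces \emph{some} admissible $P_\nu$ with bounded local extension norm, but the minimum-norm projection could a priori be worse than this best-case polynomial. To control the loss, one chooses $S_\nu$ with good geometric conditioning so that the projection onto $\cP$ has operator norm at most polynomial in $D$, and verifies that the resulting polynomials $P_\nu$, $P_{\nu'}$ on neighboring cubes agree well enough that the partition-of-unity gluing preserves the $C^{m-1,1}$ norm up to multiplicative factors depending only on $m$ and $n$ (produced by derivatives of $\theta_\nu$). Combining these local bounds with the constant from Theorem \ref{thm: new c sharp} yields the final operator norm $\exp(\gamma D^k)$ for $T$.
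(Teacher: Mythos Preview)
Your reduction from arbitrary $E$ to finite $E$ via a Banach-limit or ultrafilter argument is correct and matches the paper, which simply cites Section 17 of \cite{F2} for this step.

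For finite $E$, however, your schematic (i)--(iii) does not match the actual construction and, as written, has a genuine gap. The extension in the paper is not built from a single Whitney decomposition of $\R^n$ with a polynomial $P_\nu$ on each cube. It is produced by the Local Main Lemma (Lemma~\ref{lml}), which is proved by induction on the complexity $\cC(E|5B_0)$: at each step one builds a Whitney cover $\cW$ of a ball, applies the inductive hypothesis on each $B\in\cW$, and glues. Linearity is built into this recursion from the start: the Local Main Lemma already outputs a linear map $T:C(E)\times\Po\to C^{m-1,1}(\R^n)$, and the Main Decomposition Lemma (Lemma~\ref{mdl}) produces the local jets $P_B$ via Lemmas~\ref{lem: linear select} and~\ref{lem: linear select2}, which select $P_B$ by minimizing an explicit quadratic form on Whitney fields and then correcting by a linear perturbation into a prescribed DTI subspace $V$.

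The step you gloss over --- ``verifies that the resulting polynomials $P_\nu$, $P_{\nu'}$ on neighboring cubes agree well enough'' --- is exactly the hard part, and your proposal of choosing $S_\nu$ with ``good geometric conditioning'' does not address it. In the paper, compatibility is enforced not by the geometry of $S_\nu$ but by the algebraic constraint $P_0-P_B\in V$ for a fixed DTI subspace $V$ transverse to $\sigma(x)$ (condition 7 of Lemma~\ref{mdl}); this is what makes Lemma~\ref{FMCJ_lem} go through and what allows the gluing bound to lose only a controlled factor per induction step. Without this mechanism, a naive minimum-norm interpolant on each cube need not be coherent across neighbors, and there is no reason the partition-of-unity patching would preserve the $\exp(\gamma D^k)$ bound rather than reverting to $\exp(\gamma D^k 2^D)$.
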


While the constant $C^\#$ in Theorems \ref{thm: new c sharp} and \ref{thm: lin op} is still too large to give rise to a practical algorithm for $C^m$ interpolation, this marks the first progress on Problem \ref{problem: c sharp} since Fefferman's proof of Theorem \ref{thm: bsf fp}.

Theorem \ref{thm: new c sharp} shows that the constant $C^\#$ in the finiteness theorem can be taken to be exponential in a power of $D$. We do not know whether this is the optimal dependence---little is known about lower bounds for $C^\#$. Trivially one has the lower bound $C^\# \ge 1$. One might hope that for any $C^\# > 1$ there exists some $k^\#$ sufficiently large depending on $C^\#$ such that the Finiteness Theorem holds. This is true when $m = 1$ (see \cite{prescribedjets1}), but not in general. In \cite{fefferman2009example}, Fefferman and Klartag show that there exists a constant $c_0>0$ such that Theorem \ref{thm: bsf fp} does not hold for $C^\# < 1+c_0$ for any $k^\#$ when $m=n=2$. It would be interesting to obtain a lower bound on $C^\#$ that grows with $n$ or $m$.

A loose inspection of our proof indicates that it is sufficient to take the power $k=8$ in Theorem \ref{thm: new c sharp}. In the case $m=2$ we know that this is not sharp---Shvartsman's work shows that Theorem \ref{thm: new c sharp} holds with $k=1$ when $m=2$ (see the discussion of Theorem \ref{thm: bsf fp} above).

While this paper is concerned with upper bounds on the constant $C^\#$, there is also interest in understanding the dependence of the constant $k^\#$ on $m$ and $n$. Bierstone and Milman, in \cite{Bierstone2007}, and Shvartsman, in \cite{shvartJetSelect}, independently showed that the Finiteness Theorem holds with $k^\# = 2^D$ and $C^\#$ as in Fefferman's proof of Theorem \ref{thm: bsf fp}, i.e. $C^\# = \exp(\gamma D^k 2^D)$ for absolute constants $\gamma, k > 0$. Our proof gives $k^\#, C^\# \leq \exp( \hat{\gamma} D^{\hat{k}})$ for absolute constants $\hat{\gamma}, \hat{k}$. We would be interested to know whether the Finiteness Theorem holds with $k^\# = 2^D$ and $C^\# \le \exp(\hat{\gamma} D^{\hat{k}})$ simultaneously.

We remark that, by standard arguments, Theorem \ref{thm: lin op} implies the analogous theorem for $C^m(\R^n)$ when $E$ is a \emph{finite} subset of $\R^n$. Fefferman proved the analogue of Theorem \ref{thm: lin op} for $C^m(\R^n)$ when $E$ is compact; the argument is significantly more complicated (see \cite{F7}). It would be interesting to understand the norm of linear extension operators $T : C^m(E) \rightarrow C^m(\R^n)$ for $E$ compact.

We will now sketch the proof of Theorem \ref{thm: new c sharp}, highlighting the new ideas in the argument. Small modifications to this argument enable us to obtain the existence of a linear extension operator  $T : C^{m-1,1}(E) \rightarrow C^{m-1,1}(\R^n)$ with improved bounds on the operator norm, as in Theorem \ref{thm: lin op}.

By a compactness argument, it suffices to prove the finiteness theorem for a finite set $E$ in $\R^n$. Note that the constants $C^\#$ and $k^\#$ in the finiteness theorem are to be chosen independent of $E$. In the following, constants written $C$, $C^\#$, etc., are assumed to depend only on $m$ and $n$.  We write $\| \varphi \| = \| \varphi  \|_{C^{m-1,1}(\R^n)}$ for the $C^{m-1,1}$ seminorm of a function $\varphi \in C^{m-1,1}(\R^n)$.

Fix a finite set $E \subseteq \R^n$ and function $f : E \rightarrow \R$. We assume the data $(E,f)$ satisfies the hypotheses of the finiteness theorem; namely, we assume the following \emph{finiteness hypothesis} is valid:
\[
\mathcal{(FH)} \quad \left\{
\begin{aligned}
&\mbox{for any subset } S \subseteq E \mbox{ with } \#(S) \leq k^\# \\
&\mbox{there exists a } C^{m-1,1} \mbox{ function } F^S : \R^n \rightarrow \R \\
&\mbox{satisfying } F^S =f \mbox{ on } S \mbox{ and } \| F^S \| \leq 1.
\end{aligned}
\right.
\]
We assume $k^\#$ in the finiteness hypothesis is a sufficiently large constant determined by $m$ and $n$. 

To prove the finiteness theorem, we will construct an $F \in C^{m-1,1}(\R^n)$ satisfying $F = f$ on $E$ and $\| F \| \leq C^\#$ for a constant $C^\#$ determined by $m$ and $n$. That is, we will construct an interpolant $F \in C^{m-1,1}(\R^n)$ of $f$ with $C^{m-1,1}$-seminorm at most $C^\#$.

Let $\Po$ be the vector space of real-valued polynomials on $\R^n$ of degree $\le m-1$. Write $J_x(\varphi)$ to denote the $(m-1)^{\text{rst}}$ order Taylor polynomial at $x$ of a function $\varphi \in C^{m-1,1}(\R^n)$, defined by
\[
J_x(\varphi)(z) := \sum_{|\alpha| \leq m-1} ( \partial^\alpha \varphi(x)/\alpha!) (z-x)^\alpha.
\]
We call $J_x(\varphi) \in \Po$ the $(m-1)$-jet of $\varphi$ at $x$. We define a ring product $\odot_x$ on $\Po$ by defining $P \odot_x Q = J_x( PQ)$ for $P,Q \in \Po$. We write $\cR_x$ for the ring $(\Po,\odot_x)$.

Fefferman's papers on the Whitney extension problem (e.g., \cite{F3,F7,F8,F2,F1,F4}) introduce and make extensive use of a family of convex subsets $\sigma(x) \subseteq \cP$, indexed by $x \in E$. Informally, the set $\sigma(x)$ measures the freedom in choosing the $(m-1)$-jet $J_x(F)$ for an interpolant $F \in C^{m-1,1}(\R^n)$ of $f$. Let
\[
\sigma(x) := \{ J_x (\varphi) : \varphi|_E = 0, \; \| \varphi \| \leq 1\} \subseteq \Po.
\]
Note that if $J_x(F_1) = P_1$ and $J_x(F_2) = P_2$ for two different interpolants $F_1,F_2$ of $f$, and if $\| F_1 \| \leq M$ and $\| F_2 \| \leq M$ for some $M > 0$, then $P_1 - P_2$ belongs to $2M \sigma(x)$. Indeed, $\varphi := F_1 - F_2$ satisfies $\varphi|_E = 0 $ and $\| \varphi \| \leq 2M$; hence, $P_1 - P_2 = J_x ( \varphi) \in 2M \sigma(x)$. Thus, the (dilates) of $\sigma(x)$ can be used to control the freedom in the choice of $J_x(F)$ for an interpolant $F$ of $f$ on $E$ of bounded seminorm.

A key idea in Fefferman's proof of the finiteness theorem is to index an interpolation problem by a \emph{label}\footnote{A label is a multi-index set $\cA = \{ \alpha_1,\cdots, \alpha_L \}$ with each $\alpha_i$ a multiindex of order at most $m-1$.} $\mathcal{A}$ which records information on the ``large coordinate directions'' in the set $\sigma(x)$. Fefferman introduces an order relation $<$ on labels, which can be used to sort interpolation problems according to their ``difficulty''. By a divide and conquer approach, he decomposes an interpolation problem with a given label $\cA$ into a family of easier interpolation problems with smaller labels $\cA' < \cA$. The proof is organized as an induction on the label assigned to a given interpolation problem. For details, see \cite{F1}. 

In a joint work \cite{coordinateFree} with B. Klartag, we gave a coordinate-free proof of the finiteness theorem. To accomplish this we explained how to replace the notion of a label in Fefferman's inductive scheme by the notion of a \emph{DTI subspace}. We record information on the large directions in $\sigma(x)$ by specifying that a DTI subspace is \emph{transverse} to $\sigma(x)$. We mimic Fefferman's divide and conquer strategy. However, one crucial difference is that our proof is organized as an induction with respect to an integer-valued quantity called the \emph{complexity} of $E$. Roughly speaking, the complexity of $E$ measures how often the geometry of the set $\sigma(x)$ changes dramatically as one applies a rescaling transformation about a fixed point $x \in E$.

Let $V$ be a subspace of $\Po$. We say that $V$ is \emph{dilation-and-translation-invariant}, or \emph{DTI}, provided that (1) $V$ is \emph{dilation-invariant}, i.e., $P(\cdot / \delta) \in V$ for all $P \in V$, $\delta >0$ and (2) $V$ is \emph{translation-invariant}, i.e., $P(\cdot - h) \in V$ for all $P \in V$, $h \in \R^n$. These conditions on $V$ can be reformulated as follows: A subspace $V$ is dilation-invariant provided that $ V = \bigoplus_{i=0}^{m-1} V_i$, where $V_i \subseteq \Po_i := \mathrm{span} \{ x^\alpha : |\alpha| = i \} $ is a \emph{homogeneous subspace} of $\Po$, for $i=0,1,2,\dots,m-1$. Further, a subspace $V$ is translation-invariant if and only if the orthogonal complement $V^\perp$ of $V$ with respect to a natural inner product\footnote{This claim is valid, e.g., for the inner product $\langle P, Q \rangle' := \sum_{|\alpha| \leq m-1} \frac{1}{\alpha!} \partial^\alpha P(0) \partial^\alpha Q(0)$ for $P,Q \in \Po$; see Lemma 3.11 of \cite{coordinateFree}. We will make use of another inner product $\langle \cdot, \cdot \rangle$ on $\Po$ later in the paper.} on $\Po$ satisfies that $V^\perp$ is an ideal in the ring of $(m-1)$-jets $\cR_0 = (\Po, \odot_0)$ based at $x=0$. It follows that the DTI subspaces $V$ are orthogonal to those ideals $I$ in $\cR_0$ which admit a direct sum decomposition into homogeneous subspaces.

We assign a \emph{DTI label} $V$ to the set $E$ at position $x \in E$ and scale $\delta > 0$ provided that $V$ is a DTI subspace of $\Po$, while $\sigma(x)$ and $V$ satisfy a quantitative \emph{transversality condition} at $(x,\delta)$. Roughly speaking, the transversality condition states that the ``big directions'' in $\sigma(x)$ do not make a small angle with $V$, and the intersection $V \cap \sigma(x)$ is suitably small. Here, to make sense of angles, we equip the vector space $\Po$ with a suitable inner product $\langle \cdot, \cdot \rangle_{x,\delta}$. See Definition \ref{def:trans2} for the precise statement of the transversality condition.

We associate to a point $x \in E$ a sequence of DTI subspaces
\[
V_1, V_2, \dots, V_L
\]
and lengthscales
\[
\delta_1 > \delta_2 > \dots > \delta_L
\]
such that $V_\ell$ is a DTI label assigned to $E$ at position $x$ and scale $\delta_\ell$ $(\ell \leq L)$, and $V_\ell$ is not a DTI label assigned to $E$ at position $x$ and scale $\delta_{\ell+1} < \delta_\ell$ ($\ell < L$). We denote by $\mathcal{C}(E)$ the supremal length of any such sequence associated to any $x \in E$. By convention, $\cC(E) = 0$ if $E = \emptyset$. Borrowing notation from our earlier work \cite{coordinateFree}, we refer to the quantity $\mathcal{C}(E)$ as the \emph{complexity} of $E$. It is evident from the definition that complexity is locally monotone with respect to inclusion, in the sense that $\mathcal{C}(E \cap B) \geq \mathcal{C} (E \cap B')$ whenever $B' \subseteq B \subseteq \R^n$. To construct an extension $F$ of $f$ of bounded $C^{m-1,1}$ norm, we proceed by induction on $\mathcal{C}(E)$. 

The base case of the induction corresponds to the case $\mathcal{C}(E) = 0$. If $\cC(E) = 0$ it easily follows that $E$ is the empty set, whence it is trivially true that there exists an extension of $f$ on $E$ of bounded $C^{m-1,1}$ seminorm. 

For the induction step, we assume the induction hypothesis that the finiteness theorem is true for any data $(\widetilde{E},\widetilde{f})$ satisfying that $\mathcal{C}(\widetilde{E})  < L_0$ for fixed $L_0 \geq 1$. We then fix data $(E,f)$ satisfying the hypotheses of the finiteness theorem, with $\mathcal{C}(E) = L_0$. To complete the induction step we must construct an interpolant $F$ of $f$ with $\| F \| \leq C$. 

Fix a closed ball $B_0 \subseteq \R^n$ with $E \subseteq B_0$ and $\diam(B_0) = \diam(E)$. We define a cover of $B_0$ by a family $\cW$ of closed balls in $\mathbb{R}^n$; thus, $B_0 \subseteq \bigcup_{B \in \cW} B$. We construct the cover $\cW$ to have the following properties: First, $ \mathcal{C}(E \cap B) < \mathcal{C}(E) = L_0$ for all $B \in \cW$. On the other hand, $\mathcal{C}(E \cap 100B ) = \mathcal{C}(E) = L_0$ for all $B \in \cW$. Finally, the cover $\cW$ has \emph{good geometry} in the sense that for every $B \in \cW$ we have $B \cap  B' \neq \emptyset$ for at most $C$ balls $B' \in \cW$; also, if $B \cap B' \neq \emptyset$  for $B, B' \in \cW$ then $\diam(B)$ and $\diam(B')$ differ by a factor of at most $K$. Here, $C=C(n)$ and $K=K(n)$ are appropriate dimensional constants. 

Evidently, it is sufficient to construct an interpolant $F$ of $f$ on $B_0$, satisfying $\| F \|_{C^{m-1,1}(B_0)} \leq C$. For then, it is trivial to extend $F$ to all of $\R^n$, while not increasing the $C^{m-1,1}$-seminorm by more than a constant factor.

By the induction hypothesis applied to the set $\widetilde{E} = E \cap B$, for each $B \in \cW$ there exists a \emph{local interpolant} $F_B$ of $f$ on $E \cap B$ satisfying two conditions: (local interpolation) $F_B = f$ on $E \cap B$ and (bounded seminorm) $\| F_B \| \leq M$ for all $B \in \cW$. Here, $M$ will be a constant determined by $m$,$n$ and the induction index $L_0$. So $\{ F_B \}_{B \in \cW}$ is a family of local interpolants associated to the balls in the cover $\cW$. We define
\[
F = \sum_{B \in \cW} F_B \theta_B \mbox{ on } B_0,
\]
where $\{\theta_B\}_{B \in \cW}$ is a partition of unity on $B_0$ (thus, $\sum_B \theta_B = 1$ on $B_0$), while each $\theta_B$ is supported on $B$, $\theta_B \equiv 1$ near the center of $B$, and each partition function $\theta_B$ satisfies the derivative bounds $\| \partial^\alpha \theta_B \|_{L^\infty} \leq C \diam(B)^{-|\alpha|}$ for $|\alpha| \leq m$. Such a partition of unity is guaranteed to exist by the covering and good geometry properties of $\cW$. Evidently, since $F_B = f$ on $E \cap B$ for all $B \in \cW$, we have $F = f$ on $E$. We hope to prove that $\| F \|_{C^{m-1,1}(B_0)} \leq \widetilde{C} M$ for a constant $\widetilde{C}$ determined by $m$ and $n$. Unfortunately, there is no reason to expect this to be true, given that the $F_B$ were chosen independently of one another. By following the ideas in \cite{coordinateFree} (inspired by analogous ideas in \cite{F1}), we construct local interpolants $F_B$ which are compatible with one another -- to enforce these compatibility conditions, we modify by a small additive correction function the $F_B$ specified above. We now state the extra compatibility conditions on the $F_B$. First we establish the existence of a DTI subspace $V$ that is transverse to $\sigma(x)$ for each $x \in E$ at some scale $\delta > 0$. Then fix an appropriate jet $P_0 \in \cP$ (determined by the data $(f,E)$) and specify that $J_{x_B} F_B \in P_0 + V$ for every $B \in \cW$; here $x_B$ is a specified point of $B$. Essentially, the compatibility conditions state that $J_{x_B} F_B$ belongs to the same coset of $V$ for every $B \in \cW$. These are the extra conditions required of the local interpolants $F_B$, beyond those stated before.  For a family of local interpolants $F_B$ satisfying the aforementioned conditions, we can prove that $\| F \|_{C^{m-1,1}(B_0)} \leq \widetilde{C}(m,n)  \max_{B} \| F_B \| \leq \widetilde{C}(m,n) M$ for the $F$ defined before. Since $F$ is an interpolant of $f$, this completes the induction step. As a final remark, we note that to carry out the above modification step and prove the existence of local solutions $F_B$ satisfying the extra compatibility conditions, it is required to bring in the finiteness hypothesis $\mathcal{(FH)}$ and  certain convex sets $\Gamma_\ell(x,f,M)$ (these being sometimes referred to as $\mathcal{K}_f(x;k,M)$ in Fefferman's work). We spare the details in this sketch.
 
Thus we have shown, by induction on $\mathcal{C}(E)$, that there exists an extension $F$ of $f$ with norm at most $\widetilde{C}^{\mathcal{C}(E)}$, where $\widetilde{C}$ is a fixed constant determined by $m$ and $n$. To see this, note that the bound on the norm of the extension $F$ increases by a factor of $\widetilde{C}=\widetilde{C}(m,n)$ at each step of the induction proof.

To conclude the proof of the finiteness theorem, we must demonstrate that the complexity $\mathcal{C}(E)$ is bounded uniformly for all finite subsets $E \subseteq \R^n$. We define the worst-case complexity $L_{\max}$  by
\[
L_{\max}:= \sup_{E\subseteq \R^n} \mathcal{C}(E),
\]
where the supremum is over finite sets $E \subseteq \R^n$. In \cite{coordinateFree}, we demonstrated that $L_{\max}$ is bounded by a constant $C(D)$ determined by $D = \binom{n+m-1}{n}$. Our proof used semialgebraic geometry, resulting in poor dependence $C(D) \gtrsim \exp(\exp(D))$. Also in \cite{coordinateFree}, we conjectured that
\begin{equation}\label{eq:L_max}
    L_{\max} \lesssim \poly(D).
\end{equation}
The first main technical result of this paper, Proposition \ref{prop:tech2}, establishes the conjecture \eqref{eq:L_max}. More specifically, in Section \ref{sec:rescale_grass}, we prove that $L_{\max} \leq 4 m D^2$.

 By our discussion above, we can construct an extension $F$ of $f: E \rightarrow \R$ with $||F||_{C^{m-1,1}(\R^n)}\le \widetilde{C}^{L_{\max}}$ for any finite set $E \subseteq \R^n$. Combining this with \eqref{eq:L_max} gives $||F||_{C^{m-1,1}(\R^n)}\le \widetilde{C}^{\poly(D)}$. Therefore to establish Theorem \ref{thm: new c sharp} it just remains to show that
\begin{equation}\label{eq:Ctilde}
    \widetilde{C} \lesssim \exp(\poly(D)).
\end{equation}
Indeed, \eqref{eq:Ctilde} follows from a careful bookkeeping of various constants appearing in the proof, and our second main technical result, Proposition \ref{prop:tech1}, which we prove in Section \ref{sec:WhConv}.

This completes our sketch of the proof of Theorem \ref{thm: new c sharp}.

To establish Theorem \ref{thm: lin op}, we show that our construction can be modified so that, for a fixed set $E$, the extension $F$ depends linearly on the data $f$. 

We finish the introduction by describing the content of Sections \ref{sec:statement_mt}-\ref{sec_ind} in more detail.

Section \ref{sec:statement_mt} contains the statement of our main extension theorem for finite sets $E \subseteq \R^n$.

Section \ref{sec:basicconv} contains the definitions of the convex sets $\sigma(x)$ and their variants, and gives results on the basic properties of these sets.

Section \ref{sec:MLS} contains additional technical results (many borrowed from \cite{F2}) needed for the proof of Theorem \ref{thm: lin op}.

Sections \ref{sec:localMain}--\ref{sec_ind} contain the main analytic ingredients of the paper, including the Main Decomposition Lemma (Lemma \ref{mdl}), which is the apparatus used to decompose the extension problem for $(E,f)$ into easier subproblems.

Finally, Section \ref{sec:mainproofs} contains the proof of the extension theorem for finite $E$, and the proofs of the theorems from the introduction (Theorems  \ref{thm: new c sharp} and \ref{thm: lin op}).

The notation and terminology in the previous discussion is not necessarily used in the rest of the paper. This discussion captures the spirit of the proof of our theorems, but some of the definitions given above are simplified for ease of explanation. In particular, the phrase ``DTI label'' does not appear in the remainder of the paper, nor in our earlier work \cite{coordinateFree}. Furthermore, the definition of complexity and the description of the properties of the cover $\cW$ are presented somewhat differently than in the main body of the paper -- for instance, certain technical constants have been obscured in the above discussion to simplify the exposition.

\subsubsection{Acknowledgements}

We are grateful to the participants of the 14th Whitney Problems Workshop for their interest in this work. We are particularly grateful to Charles Fefferman and Bo'az Klartag, for providing valuable comments on an early draft of this paper. We are also grateful to the National Science Foundation and the Air Force Office of Scientific Research for their generous financial support.\footnote{The first-named author acknowledges the support of AFOSR grant FA9550-19-1-0005. The third-named author acknowledges the support of NSF grant DMS-1700404 and AFOSR grant FA9550-19-1-0005.} Last, we would like to thank the anonymous referee, whose feedback led to improvements in the paper.

\section{Notation and preliminaries}\label{sec:prelim}

Fix $m\geq 2$, $n \geq 1$ throughout the paper.  Let $D := \binom{m+n-1}{n}$.

We write $B(x,r) = \{ z \in \R^n : |z-x| \leq r\}$ for the closed ball of radius $r$ and center $x$ in $\R^n$.

Given a ball $B \subseteq \R^n$ and $\lambda > 0$, let $\lambda B$ denote the ball with the same center as $B$ and radius equal to $\lambda$ times the radius of $B$.

For any finite set $S$, write $\#(S)$ to denote the number of elements of $S$. If $S$ is infinite, we put $\#(S) = \infty$.

Let $\cM := \{ \alpha = (\alpha_1,\alpha_2,\dots,\alpha_n) : |\alpha| = \alpha_1+\alpha_2+\dots+\alpha_n \leq m-1\}$ be the set of all multiindices of length $n$ and order at most $m-1$.  Then $\#(\cM) = D$.

\subsection{Convention on constants}

By an ``absolute constant'' we mean a numerical constant whose value is independent of $m$ and $n$.

Given quantities $A,B \geq 0$, we write $A = O(B)$ to indicate that $A \leq \gamma B$ for an absolute constant $\gamma > 0$. We write $\poly(x)$ to denote a polynomial $\poly(x) = \sum_{k=0}^d a_k x^k$ with coefficients $a_k$ and maximum degree $d$ given by absolute constants. Similarly, we write $\poly(x,y)$ to denote a polynomial in two variables with  coefficients and maximum degree given by absolute constants.

We say that $C > 0$ is a \emph{controlled constant} if $C$ depends only on $m$, $n$ and both $1/C$ and $C$ are $O(\exp(\poly(D)))$. Note that the product of $O(\poly(D))$ many controlled constants is again a controlled constant. 

Provided $m \geq 2$, the binomial coefficient $D = \binom{m+n-1}{n}$ satisfies $\max\{m,n\} \leq D$. So, if both $C$ and $1/C$ are $O(\exp(\poly(m,n)))$ then $C$ is a controlled constant. 

We say that two quantities $X,Y \geq 0$ are \emph{equivalent up to a controlled constant} if $C^{-1} Y \le X \le CY$ for a controlled constant $C$.

\subsection{Function spaces $C^{m-1,1}$ and $\dot{C}^m$}

Let $G\subseteq \R^n$ be a convex domain with nonempty interior. We write $C^{m-1,1}(G)$ to denote the space of all $(m-1)$-times differentiable  functions $F:G\rightarrow \R$ whose $(m-1)$-st order partial derivatives are Lipschitz continuous on $G$, equipped with the seminorm
\begin{equation}\label{eqn:norm_defn}
    \|F\|_{C^{m-1,1}(G)} := \sup_{x,y\in G}\left( \sum_{|\alpha|=m-1} \frac{(\partial^\alpha F(x) - \partial^\alpha F(y))^2}{|x-y|^2}\right)^{1/2}.
\end{equation}

We define the space $\dot{C}^m(G)$ to consist of all $m$-times continuously differentiable functions $F : G \rightarrow \R$ whose $m$-th order partial derivatives are uniformly bounded on $G$, equipped with the seminorm
\begin{equation}\label{eqn:Cm_norm_defn}
    \|F\|_{\dot{C}^{m}(G)} := \sup_{z\in G}\max_{|\beta|=m} |\partial^\beta F(z)|.
\end{equation}

Let $F \in \dot{C}^m(G)$. Given a multiindex $\alpha$ with $|\alpha| = m-1$, the Mean Value Theorem implies that the difference quotient $| \partial^\alpha F(x) - \partial^\alpha F(y)|/|x-y|$ is bounded by $\sup_{z \in [x,y]} |\nabla \partial^\alpha F(z)|$, where $[x,y]$ is the line segment connecting $x$ and $y$ (contained in $G$). The latter quantity is bounded by $\sqrt{n} \cdot \| F \|_{\dot{C}^m(G)}$. Therefore, if  $F \in \dot{C}^m(G)$ then $F \in C^{m-1,1}(G)$ and
\begin{equation}\label{eqn:Cm_norm_bd}
\| F \|_{C^{m-1,1}(G)} \leq C \| F \|_{\dot{C}^m(G)},
\end{equation}
for a controlled constant $C$. 

We write $C^{m-1}_{loc}(\R^n)$ to denote the space of all functions $F:\R^n \rightarrow \R$ such that $F\in \dot{C}^{m-1}(B(0,R))$ for any $ R> 0$. 

\subsection{Jet space}

Let $\Po$ denote the vector space of all polynomials on $\R^n$ of degree at most $m-1$. Then $\Po$ admits a basis of monomials, $\mathfrak{V}_x := \{ m_{\alpha,x}(z) := (z-x)^\alpha : \alpha \in \cM \}$ for any $x \in \R^n$. In particular, $\dim(\Po) = \#(\cM) = D$.

Given $x \in \R^n$ and $F \in C^{m-1}_{loc}(\R^n)$, let $J_x(F) \in \Po$  denote the $(m-1)$-jet of $F$ at $x$, given by
\[
J_x(F)(z) := \sum_{|\alpha| \leq m-1} ( \partial^\alpha F(x)/\alpha!) \cdot (z-x)^\alpha.
\]
We endow $\Po$ with a product $\odot_{x}$ (``jet multiplication at $x$'') defined by $P \odot_{x} Q = J_{x}(P\cdot Q)$ for $P,Q \in \Po$. We write $\cR_x$ to denote the ring $(\Po,\odot_x)$ of $(m-1)$-jets at $x$. We write $\odot = \odot_0$ for the jet product at $x=0$.

Note that if $F,G \in C^{m-1}_{loc}(\R^n)$ then $J_x(F \cdot G) = J_x(F) \odot_x J_x(G)$. That is, $J_x : C^{m-1}_{loc}(\R^n) \rightarrow \cR_x$ is a ring isomorphism.

We often use the notation $\Po$ and $\cR_x$  interchangeably. We shall use $\Po$ when the ring structure of the jet space is irrelevant to the intended application.


\subsubsection{Translations and dilations}

The jet space $\Po$ inherits the structure of translations and dilations from $\R^n$. Specifically, we let $\tau^h : \Po \rightarrow \Po$ ($h \in \R^n$) and $\tau_{x,\delta} : \Po \rightarrow \Po$ ($x \in \R^n$, $\delta > 0$) be translation and dilation operators defined by 
\begin{equation}\label{eqn:trans_dil}
\begin{aligned}
&\tau^h(P)(z) := P(z-h), \mbox{ and}\\ &\tau_{x,\delta}(P)(z) := \delta^{-m} P(x + \delta \cdot (z-x)) \qquad (P \in \Po).
\end{aligned}
\end{equation}

\subsubsection{Inner products and norms}\label{sec:inner_prod_norm}
Let $x \in \R^n$. We define the inner product $\langle P,Q\rangle_x$ of $P,Q \in \Po$ by
\[
\langle P,Q \rangle_x := \sum_{|\alpha| \leq m-1}  \partial^\alpha P(x) \partial^\alpha Q(x) / (\alpha !)^2.
\]
The corresponding norm $|P|_x$ of $P \in \Po$ is given by
\[
|P|_x := \sqrt{\langle P, P \rangle_x} = \sqrt{\sum_{|\alpha| \leq m-1 } (\partial^\alpha P(x))^2/(\alpha!)^2}.
\]
The purpose of the $1/(\alpha!)^2$ factor in the above expressions is to ensure the monomials $m_{\alpha,x}(z) := (z-x)^\alpha$ have unit length, i.e., $|m_{\alpha,x} |_x = 1$ for $|\alpha| \leq m-1$.

For $x \in \R^n$, $\delta > 0$, we define the \emph{scaled inner product} $\langle P, Q \rangle_{x,\delta}$ of $P,Q \in \Po$ by
\[
\begin{aligned}
\langle P, Q \rangle_{x,\delta} &:= \langle \tau_{x,\delta} (P), \tau_{x,\delta}
(Q) \rangle_x \\
&=  \sum_{|\alpha| \leq m-1} \frac{1}{(\alpha!)^2} \delta^{2 (|\alpha| - m)} \partial^\alpha P(x) \cdot \partial^\alpha Q(x).
\end{aligned}
\]
The associated \emph{scaled norm} $|P|_{x,\delta}$ of $P \in \Po$ is
\[
|P|_{x,\delta} := \sqrt{ \langle P, P \rangle_{x,\delta}} = \biggl( \sum_{|\alpha| \leq m-1} \frac{1}{(\alpha !)^2} \cdot ( \delta^{|\alpha| - m} \cdot \partial^{\alpha} P(x) )^2 \biggr)^{\frac{1}{2}}.
\]
The closed unit ball for the scaled norm $| \cdot |_{x,\delta}$ is denoted by
\[ 
\B_{x,\delta} := \biggl\{ P :  \; |P|_{x,\delta}  \leq 1 \biggr\} \subseteq  \Po.
\]

For fixed $x$ the monomial basis $\mathfrak{V}_x := \{ m_{\alpha,x}: |\alpha| \leq m-1\}$ is orthogonal in $\Po$, with respect to the scaled inner product $\langle \cdot, \cdot \rangle_{x,\delta}$ for any $\delta > 0$. The monomial basis $\mathfrak{V}_x$ is orthonormal in $\Po$ only for the inner product $\langle \cdot, \cdot \rangle_x = \langle \cdot, \cdot \rangle_{x,1}$.

For any $\delta \ge \rho >0$, and $P \in \cP$,
\begin{equation}\label{eqn:norm_scale}
    \left(\frac{\rho}{\delta}\right)^m\cdot|P|_{x,\rho}\le|P|_{x,\delta}\le \left(\frac{\rho}{\delta}\right)\cdot|P|_{x,\rho}.
\end{equation}
Therefore,
\begin{equation}\label{eqn:ball_scale}
   \left(\frac{\delta}{\rho}\right) \cB_{x,\rho} \subseteq \cB_{x,\delta} \subseteq \left(\frac{\delta}{\rho}\right)^m \cB_{x,\rho}.
\end{equation}
In particular, 
\begin{equation}\label{eqn:ball_inc}
| P |_{x,\delta} \leq |P|_{x,\rho}, \; \mbox{and} \; \cB_{x,\rho} \subseteq \cB_{x,\delta} \mbox{ for } \delta \geq \rho > 0.
\end{equation}
Observe that $|P|_{x,\delta} = |\tau_{x,\delta} P |_x$ for $P \in \Po$. It follows that
\begin{equation}\label{eqn:ball_scale_id}
\tau_{x,r} \B_{x,\delta} = \B_{x,\delta/r}.
\end{equation}

Note that $\langle \cdot, \cdot \rangle_{x,1} = \langle \cdot,\cdot \rangle_{x}$ and $| \cdot |_{x,1} = | \cdot |_x$ for $x \in \R^n$. When $x=0$, we write $\langle P,Q \rangle = \langle P,Q \rangle_{0,1}$ and $|P| = |P|_{0,1}$ for the \emph{standard inner product and norm} on $\Po$. Write $\B = \B_{0,1}$ to denote the closed unit ball for the standard norm on $\Po$. 

Unless stated otherwise, we equip $\Po$ by default with the standard norm and inner product.

We write $\Po_i = \mathrm{span} \{x^\alpha : |\alpha| = i \} \subseteq \Po$ to denote the subspace of homogeneous polynomials of degree $i$.

We require bounds on the norm of a product of polynomials. These bounds are sometimes referred to in the literature as Bombieri inequalities. Recall that $\odot$ is the jet product at $x=0$.
\begin{lemma}\label{lem:bombieri}
Let $C_b := (m+1)!$. Then
\begin{align}\label{eqn:bombieri1}
   &|P \odot Q | \le C_b |P| \cdot |Q| \quad (P,Q \in \Po) \\
   \label{eqn:bombieri2}
   &|P \odot Q | \geq C_b^{-1} |P|\cdot |Q| \quad (P \in \Po_i, Q \in \Po_j, \; i+j < m).
\end{align}
\end{lemma}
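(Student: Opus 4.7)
The plan is to prove \eqref{eqn:bombieri1} by a direct Cauchy--Schwarz estimate on monomial coefficients, and \eqref{eqn:bombieri2} by invoking the classical two-sided Bombieri inequality for products of homogeneous polynomials.

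For \eqref{eqn:bombieri1}, I would write $P = \sum_\alpha a_\alpha z^\alpha$ and $Q = \sum_\beta b_\beta z^\beta$, so that $P \odot Q = \sum_{|\gamma| \le m-1} c_\gamma z^\gamma$ with $c_\gamma = \sum_{\alpha+\beta=\gamma} a_\alpha b_\beta$. Cauchy--Schwarz gives $c_\gamma^2 \le N_\gamma \sum_{\alpha+\beta=\gamma} a_\alpha^2 b_\beta^2$ where $N_\gamma = \prod_i(\gamma_i+1)$ counts the decompositions $\gamma = \alpha + \beta$, and AM-GM yields $N_\gamma \le (1 + (m-1)/n)^n \le e^{m-1}$ uniformly in $n$. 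Summing over $\gamma$ then produces $|P \odot Q|^2 \le e^{m-1} |P|^2 |Q|^2$. Since $e^{(m-1)/2} \le 2^{m-1} \le (m+1)!$ for $m \ge 2$, this closes the upper bound.

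For \eqref{eqn:bombieri2}, the hypothesis $i + j < m$ ensures $PQ \in \Po_{i+j} \subseteq \Po$, so $P \odot Q = J_0(PQ) = PQ$ (no truncation). I would introduce the Bombieri norm on $\Po_d$, defined by $\|R\|_B^2 := \sum_{|\alpha| = d} (\alpha!/d!) r_\alpha^2$ for $R = \sum r_\alpha z^\alpha$. Since $1 \le \alpha! \le d!$ whenever $|\alpha| = d$, one has the elementary two-sided comparison $|R|/\sqrt{d!} \le \|R\|_B \le |R|$. The Beauzamy--Bombieri inequality then supplies $\|PQ\|_B \ge \binom{i+j}{i}^{-1/2} \|P\|_B \|Q\|_B$ for homogeneous $P \in \Po_i$, $Q \in \Po_j$; chaining this with the norm comparisons and using $\binom{i+j}{i} i! j! = (i+j)!$ gives
\[
|PQ| \ge \|PQ\|_B \ge \binom{i+j}{i}^{-1/2}\|P\|_B \|Q\|_B \ge \frac{|P||Q|}{\sqrt{(i+j)!}} \ge \frac{|P||Q|}{(m+1)!},
\]
using $(i+j)! \le (m-1)!$ and $\sqrt{(m-1)!} \le (m+1)!$.

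The hard part will be the Beauzamy--Bombieri reverse inequality itself, which is a classical but nontrivial statement whose standard proof uses the apolar inner product and a clever differential-operator identity. I would cite this result rather than reprove it; the remainder of the argument reduces to routine Cauchy--Schwarz, AM-GM, and factorial bookkeeping.
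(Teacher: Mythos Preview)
Your proposal is correct. For the lower bound \eqref{eqn:bombieri2} you take essentially the same route as the paper: both arguments invoke the Beauzamy--Bombieri reverse product inequality (the paper cites it directly in the $2$-norm form $|PQ| \ge ((i+j)!)^{-1/2}|P||Q|$; you cite the Bombieri-norm version and unpack the norm comparison to reach the same bound).

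For the upper bound \eqref{eqn:bombieri1} your approach is genuinely different and more self-contained. The paper decomposes $P$ and $Q$ into homogeneous pieces, invokes the cited upper bound $|P_i Q_j| \le 2^{(i+j)/2}|P_i||Q_j|$ for each homogeneous pair, and then reassembles via triangle inequality and Cauchy--Schwarz to reach $|P \odot Q| \le m\,2^{m/2}|P||Q|$. You instead work directly at the coefficient level: Cauchy--Schwarz on $c_\gamma = \sum_{\alpha+\beta=\gamma} a_\alpha b_\beta$ together with the AM--GM bound $N_\gamma = \prod_i(\gamma_i+1) \le (1+(m-1)/n)^n \le e^{m-1}$ gives $|P\odot Q| \le e^{(m-1)/2}|P||Q|$. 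This avoids the external citation for the upper bound, yields a slightly sharper constant ($e^{(m-1)/2}$ versus $m\,2^{m/2}$), and is arguably cleaner. The paper's decomposition-into-homogeneous-parts approach, on the other hand, makes the connection to the cited Bombieri-type inequalities more transparent and treats both directions uniformly via the same reference.
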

\begin{proof}

We use two inequalities from \cite{Beauzamy1990}, stated below in \eqref{eqn:bomb_norm}. Our standard norm on $\Po$ is given by $|P| = \sqrt{ \sum c_\alpha^2}$ if $P = \sum c_\alpha x^\alpha$. In  \cite{Beauzamy1990} this is called the \emph{$2$-norm} and denoted by $|P|_2$. From \cite{Beauzamy1990} (see Proposition 1.B.3 and Theorem 1.1),  the following holds: If  $P \in \Po_i$ and $Q \in \Po_j$ for $i + j < m$, then
\begin{equation}\label{eqn:bomb_norm}
((i+j)!)^{-1/2} |P| |Q| \leq |P \cdot Q| \leq 2^{(i+j)/2} |P| |Q|.
\end{equation}

Note that $P \cdot Q = P \odot Q$ if $P \in \Po_i$ and $Q \in \Po_j$ for $i + j < m$; else, if $P \in \Po_i$ and $Q \in \Po_j$ for $i + j \geq m$ then $P \odot Q = 0$.  Therefore, the left-hand inequality in \eqref{eqn:bomb_norm} implies \eqref{eqn:bombieri2}.

Now let  $P, Q \in \Po$. Write $P = \sum_{i < m} P_i$ and $Q = \sum_{i<m} Q_i$ for $P_i,Q_i \in \Po_i$. Then $|P| = \sqrt{\sum |P_i|^2}$ and $|Q| = \sqrt{\sum |Q_i|^2}$ by orthogonality of the homogeneous subspaces $\Po_i$. Also, $P \odot Q = \sum_{i + j < m} P_i \cdot Q_j$. By the triangle inequality, and the right-hand inequality in \eqref{eqn:bomb_norm},
\[
|P \odot Q| \leq \sum_{i + j < m} |P_i \cdot Q_j| \leq 2^{m/2} \cdot \sum_{i + j < m} |P_i | \cdot | Q_j| \leq 2^{m/2} \left( \sum_{i < m} |P_i| \right) \cdot \left( \sum_{j < m} |Q_j| \right).
\]
By Cauchy-Schwartz, $\sum_{i < m} |P_i|  \leq \sqrt{m} \sqrt{\sum_{i<m} |P_i|^2}$, and similarly for the $Q_i$. Hence, 
\begin{equation}\label{eqn:bomb_norm2}
|P \odot Q| \leq m 2^{m/2} \sqrt{\sum_{i<m} |P_i|^2} \sqrt{\sum_{i<m} |Q_i|^2} = m 2^{m/2} |P| |Q|.
\end{equation}
Observe that $m 2^{m/2} \leq (m+1) !$. Thus, \eqref{eqn:bomb_norm2} implies \eqref{eqn:bombieri1}.
\end{proof}

\begin{proposition}[Taylor's Theorem]\label{lem:TT}
Let $G$ be a convex domain with nonempty interior. There exists a controlled constant $C_T \geq 1$ such that, for all $F \in C^{m-1,1}(G)$, $x,y\in G$, and $\delta \ge |x-y|$,
\begin{equation}\label{eqn:Taylor}
    |J_x F - J_yF|_{x,\delta} \le C_T \|F\|_{C^{m-1,1}(G)}.
\end{equation}
\end{proposition}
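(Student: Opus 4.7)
The goal is to bound each component $\partial^\alpha(J_xF - J_yF)(x)$ for $|\alpha|\le m-1$ by a suitable power of $|x-y|$, and then combine the bounds using the definition of the scaled norm $|\cdot|_{x,\delta}$. First I would unpack the definitions: since $J_xF$ is the Taylor polynomial of $F$ at $x$, we have $\partial^\alpha J_xF(x) = \partial^\alpha F(x)$, while $\partial^\alpha J_yF(x)$ equals the Taylor polynomial of $\partial^\alpha F$ at $y$ of order $k := m-1-|\alpha|$, evaluated at $x$:
\[
\partial^\alpha J_yF(x) = \sum_{|\beta|\le k}\frac{\partial^{\alpha+\beta}F(y)}{\beta!}(x-y)^\beta.
\]

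Second, I would apply the classical single-variable Taylor formula with integral remainder to $\phi(t) := \partial^\alpha F(y+t(x-y))$ on $[0,1]$, at order $k$. Because each $\alpha+\beta$ with $|\beta|=k$ satisfies $|\alpha+\beta|=m-1$, the $k$-th derivative of $\phi$ involves precisely the $(m-1)$-st partial derivatives of $F$, which are Lipschitz. A brief calculation using the multinomial expansion of $\phi^{(k)}$ yields
\[
\partial^\alpha F(x) - \sum_{|\beta|\le k}\frac{\partial^{\alpha+\beta}F(y)}{\beta!}(x-y)^\beta = \int_0^1 \frac{(1-t)^{k-1}}{(k-1)!}\sum_{|\beta|=k}\frac{k!}{\beta!}\bigl[\partial^{\alpha+\beta}F(y+t(x-y)) - \partial^{\alpha+\beta}F(y)\bigr](x-y)^\beta\,dt.
\]
From the definition of the $C^{m-1,1}$ seminorm one has $|\partial^\gamma F(u) - \partial^\gamma F(v)| \le \|F\|_{C^{m-1,1}(G)}|u-v|$ for each $|\gamma|=m-1$ (just drop all but one term from the sum under the square root). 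Combined with $|x-y|^\beta \le |x-y|^k$ for $|\beta|=k$ and $\sum_{|\beta|=k}k!/\beta! = n^k$, this produces
\[
\bigl|\partial^\alpha(J_xF - J_yF)(x)\bigr| \le \frac{n^k}{(k+1)!}\,\|F\|_{C^{m-1,1}(G)}\,|x-y|^{m-|\alpha|}.
\]

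Third, I would substitute this into the definition of the scaled norm and use $|x-y|\le\delta$:
\[
|J_xF - J_yF|_{x,\delta}^2 = \sum_{|\alpha|\le m-1}\frac{1}{(\alpha!)^2}\delta^{2(|\alpha|-m)}\bigl|\partial^\alpha(J_xF - J_yF)(x)\bigr|^2 \le \|F\|_{C^{m-1,1}(G)}^2 \sum_{|\alpha|\le m-1}\frac{1}{(\alpha!)^2}\left(\frac{n^{m-1-|\alpha|}}{(m-|\alpha|)!}\right)^2\left(\frac{|x-y|}{\delta}\right)^{2(m-|\alpha|)}.
\]
Since each ratio $|x-y|/\delta \le 1$, the right-hand sum is bounded by a finite number depending only on $m$ and $n$. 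Setting $C_T$ equal to the square root of this sum completes the bound, and a quick check shows $C_T \le D \cdot n^{m} \le \exp(\poly(D))$, so $C_T$ is a controlled constant in the sense defined in Section~\ref{sec:prelim}.

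There is no serious obstacle here: the argument is a textbook multivariate Taylor estimate, and the only thing to watch is that the constant produced is controlled rather than arbitrary. The verification that $C_T$ is controlled reduces to observing that $n^m$ and $D$ are both $O(\exp(\poly(D)))$, which follows immediately from $m,n\le D$.
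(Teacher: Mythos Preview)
Your approach is essentially the same as the paper's: both establish the pointwise estimate $|\partial^\alpha(J_xF-J_yF)(x)|\le C\|F\|_{C^{m-1,1}(G)}|x-y|^{m-|\alpha|}$ and then square, weight by $(\alpha!)^{-2}\delta^{2(|\alpha|-m)}$, and sum. The paper simply cites ``Taylor's theorem'' for the pointwise bound, while you supply the integral-remainder computation explicitly; the only small thing to patch is the edge case $k=0$ (i.e.\ $|\alpha|=m-1$), where your integral formula with $(k-1)!$ is not valid but the bound $|\partial^\alpha F(x)-\partial^\alpha F(y)|\le \|F\|_{C^{m-1,1}(G)}|x-y|$ is immediate from the seminorm definition.
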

\begin{proof}
Taylor's theorem implies that if  $F\in C^{m-1,1}(G)$, $x,y \in G$, and $|\beta| \leq m -1 $ then
\[
|\partial^\beta (J_x F- J_yF)(x)|\le C \cdot \| F \|_{C^{m-1,1}(G)}  \cdot |x-y|^{m-|\beta|}.
\]
for a controlled constant $C$. Thus, for $\delta \ge |x-y|$, we obtain:
\[
\delta^{|\beta| - m} |\partial^\beta (J_x F- J_yF)(x)|\le C \cdot \|F\|_{C^{m-1,1}(G)}.
\]
Now square both sides of the above inequality, divide by $(\beta!)^2$, sum over $\beta$ with $|\beta| \leq m-1$, and take the square root, to obtain \eqref{eqn:Taylor}.
\end{proof}

\subsubsection{The Classical Whitney Extension Theorem}

We make use of the classical Whitney Extension Theorem for $(m-1)$-jets. We state the result here in a convenient form for later use.

Let $E \subseteq \R^n$. Suppose we are given a family of polynomials $P_x \in \Po$, indexed by $x \in E$.
We use the notation $P_\bullet : E \to \cP$ to denote the polynomial-valued map  $P_\bullet: x \mapsto P_x$. We refer to $P_\bullet$ as a \emph{Whitney field} on $E$. Endow the space of Whitney fields with a seminorm $\|P_\bullet\|_{\cP(E)} := \sup\{|P_x - P_y|_{x, |x-y|} : x, y \in E, \; x \neq y\}$. We let $\cP(E) := \{ P_{\bullet} : E \rightarrow \Po : \| P_\bullet \|_{\cP(E)} < \infty \}$.

\begin{proposition}[Classical Whitney Extension Theorem]\label{prop:cwet}
There exists a linear map $T: \cP(E) \to C^{m-1 ,1}(\R^n)$ such that $\|T ( P_\bullet)\|_{C^{m-1,1}(\R^n)} \leq C_{Wh} \|P_\bullet\|_{\Po(E)}$, and $J_x T (P_\bullet) = P_x$ for all $x \in E$, and all $P_\bullet \in \Po(E)$. Here,  $C_{Wh}$ is a controlled constant.
\end{proposition}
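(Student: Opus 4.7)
The plan is to prove this via a version of the classical Whitney extension construction, adapted to the jet-valued setup here. By continuity of the map $x \mapsto P_x$ implied by the norm condition $\|P_\bullet\|_{\cP(E)} < \infty$ (specifically, $|P_x - P_y|_{x,|x-y|} \to 0$ as $x \to y$ within $E$ implies $P_x \to P_y$ in $\cP$), we may pass to the closure and assume $E$ is closed. If $E = \R^n$ there is nothing to do, so assume $E \subsetneq \R^n$ and construct a Whitney decomposition of $\R^n \setminus E$ into dyadic cubes $\{Q_\nu\}$ with $\diam(Q_\nu)$ comparable to $\dist(Q_\nu, E)$. Build a smooth partition of unity $\{\theta_\nu\}$ subordinate to slightly-enlarged cubes $Q_\nu^*$, satisfying $\sum_\nu \theta_\nu \equiv 1$ on $\R^n \setminus E$ and $|\partial^\beta \theta_\nu| \le C \diam(Q_\nu)^{-|\beta|}$ for $|\beta| \le m$. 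For each $\nu$ pick $y_\nu \in E$ with $\dist(y_\nu, Q_\nu) \lesssim \dist(Q_\nu, E)$.

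Define
\begin{equation*}
T(P_\bullet)(x) := \begin{cases} P_x(x) & x \in E, \\ \sum_\nu \theta_\nu(x) P_{y_\nu}(x) & x \in \R^n \setminus E. \end{cases}
\end{equation*}
Linearity of $T$ in $P_\bullet$ is immediate. Smoothness of $T(P_\bullet)$ on $\R^n\setminus E$ is also clear, so the two nontrivial tasks are (a) verifying $J_{x_0} T(P_\bullet) = P_{x_0}$ for each $x_0 \in E$, and (b) bounding $\|T(P_\bullet)\|_{C^{m-1,1}(\R^n)}$ by a controlled constant times $\|P_\bullet\|_{\cP(E)}$.

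For (a), fix $x_0 \in E$ and $x$ near $x_0$. Using $\sum_\nu \theta_\nu = 1$ wherever we evaluate, write $T(P_\bullet)(x) - P_{x_0}(x) = \sum_\nu \theta_\nu(x) (P_{y_\nu}(x) - P_{x_0}(x))$ for $x \notin E$. Only $\nu$ with $x \in Q_\nu^*$ contribute, and for such $\nu$ the good geometry of the Whitney decomposition gives $|x_0 - y_\nu| \lesssim |x - x_0|$. Applying the defining bound on $P_\bullet$, namely $|P_{y_\nu} - P_{x_0}|_{x_0, |x_0 - y_\nu|} \le \|P_\bullet\|_{\cP(E)}$, and expanding at $x_0$, one obtains $|\partial^\alpha (T(P_\bullet) - P_{x_0})(x)| \le C \|P_\bullet\|_{\cP(E)} |x - x_0|^{m - |\alpha|}$ for $|\alpha| \le m-1$, which yields $J_{x_0} T(P_\bullet) = P_{x_0}$.

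For (b), it suffices to prove $|\partial^\alpha T(P_\bullet)(x) - \partial^\alpha T(P_\bullet)(y)| \le C \|P_\bullet\|_{\cP(E)} |x-y|$ for every multiindex $\alpha$ with $|\alpha| = m-1$ and every $x, y \in \R^n$. Split into cases. If both $x, y \in E$, invoke the definition of $\|P_\bullet\|_{\cP(E)}$ directly on $P_x - P_y$. If $x \in \R^n \setminus E$ and $y \in E$ (or vice versa), pick a nearby $y_\nu$ and compare $\partial^\alpha T(P_\bullet)(x)$ with $\partial^\alpha P_{y_\nu}(x)$ using the trick $\partial^\alpha T(P_\bullet) = \partial^\alpha \sum_\nu \theta_\nu(P_{y_\nu} - P_{y_{\nu_0}})$ valid when $|\alpha| \ge 1$, since $\sum_\nu \partial^\beta \theta_\nu = 0$ for $|\beta| \ge 1$; combined with the derivative bounds on $\theta_\nu$, the bound $|\partial^\beta \theta_\nu| \le C \diam(Q_\nu)^{-|\beta|}$, and the Whitney condition $|P_{y_\nu} - P_{y_{\nu_0}}|_{y_{\nu_0}, \diam(Q_\nu)} \lesssim \|P_\bullet\|_{\cP(E)}$ applied via Lemma \ref{lem:bombieri} to unpack the derivatives of products, this yields the Lipschitz estimate. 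The case $x, y \in \R^n \setminus E$ splits further into $|x-y| \gtrsim \dist(x,E)$ (reduce to the previous case via triangle inequality) and $|x-y| \ll \dist(x,E)$ (all cubes active at $x$ and $y$ coincide, use the mean-value theorem on the smooth interior expression).

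The main obstacle is the careful accounting in step (b) so that the constant $C_{Wh}$ remains $O(\exp(\poly(D)))$. The factors entering are the dimension-dependent constant from the Whitney decomposition, the derivative bounds $C_\beta$ on $\theta_\nu$ for $|\beta| \le m$, and the Bombieri-type bound $C_b = (m+1)!$ from Lemma \ref{lem:bombieri} when multiplying polynomials by partition-of-unity factors and expanding $\partial^\alpha(\theta_\nu \cdot (P_{y_\nu} - P_{y_{\nu_0}}))$ by the Leibniz rule. Each of these factors is at most $\exp(\poly(m,n)) \le \exp(\poly(D))$, so the product is a controlled constant in the sense of the paper's conventions.
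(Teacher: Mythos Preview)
Your sketch is correct and follows the standard Whitney extension construction; the paper itself does not give a proof at all, instead citing \cite{Chang2017} for the bound $C_{Wh} = C_m n^{5m/2}$ with $C_m$ polynomial in $m!$, and deducing that $C_{Wh}$ is controlled. So you are supplying more than the paper does.

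One small correction: in step (b) you invoke Lemma~\ref{lem:bombieri} to ``unpack the derivatives of products,'' but that lemma bounds $|P \odot Q|$ for jets $P,Q \in \Po$, whereas here you are differentiating $\theta_\nu \cdot (P_{y_\nu} - P_{y_{\nu_0}})$ with $\theta_\nu$ a smooth cutoff, not a polynomial. What you actually need is the Leibniz rule $\partial^\alpha(\theta_\nu \cdot R) = \sum_{\beta+\gamma=\alpha} \binom{\alpha}{\beta} \partial^\beta \theta_\nu \cdot \partial^\gamma R$, which contributes at most $2^m$ terms with coefficients bounded by $m!$, still a controlled constant. The paper's Lemma~\ref{lem:theta_bound} (adapted to cubes) provides the controlled derivative bounds on $\theta_\nu$. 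With that substitution your constant tracking goes through.
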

We refer the reader to \cite{Chang2017}, where it is proven that the classical Whitney extension theorem holds with the constant $C_{Wh} = C_m n^{5m/2}$, for a constant $C_m$ determined by $m$. The proof in \cite{Chang2017} does not give an explicit bound on $C_m$, but by inspection of the proof one can see that $C_m$ is a polynomial function of $m!$. Therefore, $C_{Wh}$ is controlled.

We now state an elementary consequence of the Whitney extension theorem: We can extend a $C^{m-1,1}$ function on a convex domain $G \subseteq \R^n$ to all of $\R^n$, with control on the $C^{m-1,1}$ seminorm of the extension.

\begin{lemma}\label{lem:dom_ext}Let $G$ be a convex domain in $\R^n$ with nonempty interior. Let $F \in C^{m-1,1}(G)$. Then there exists a function $\widehat{F} \in C^{m-1,1}(\R^n)$ with $\widehat{F}|_G = F$ and $\| \widehat{F} \|_{C^{m-1,1}(\R^n)} \leq C \| F \|_{C^{m-1,1}(G)}$, for a controlled constant $C \geq 1$. Furthermore, $\widehat{F}$ can be taken to depend linearly on $F$.
\end{lemma}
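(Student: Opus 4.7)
The plan is to apply the classical Whitney extension theorem (Proposition \ref{prop:cwet}) with $E := G$ and the Whitney field $P_\bullet : G \to \cP$ given by $P_x := J_x F$. Since $F \in C^{m-1,1}(G)$, the derivatives $\partial^\alpha F(x)$ are defined for every $x \in G$ and every $|\alpha|\le m-1$, so the map $x \mapsto J_x F$ is well-defined on $G$. This step is clean because $G$ is convex: for any $x,y \in G$ the line segment $[x,y]$ lies in $G$, so Taylor's theorem (Proposition \ref{lem:TT}) applies on $G$ itself.

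Next, I would verify that $\|P_\bullet\|_{\cP(G)} \le C_T \|F\|_{C^{m-1,1}(G)}$. This is immediate from Proposition \ref{lem:TT}: taking $\delta = |x-y|$ for $x,y \in G$ with $x \ne y$ gives
\[
|P_x - P_y|_{x,|x-y|} \;=\; |J_x F - J_y F|_{x,|x-y|} \;\le\; C_T \|F\|_{C^{m-1,1}(G)},
\]
and taking the supremum over $x,y \in G$ yields the desired bound on $\|P_\bullet\|_{\cP(G)}$.

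Now I would invoke the linear map $T$ furnished by Proposition \ref{prop:cwet} and set $\widehat{F} := T(P_\bullet) \in C^{m-1,1}(\R^n)$. The proposition guarantees $J_x \widehat{F} = P_x = J_x F$ for every $x \in G$, and
\[
\|\widehat{F}\|_{C^{m-1,1}(\R^n)} \;\le\; C_{Wh}\|P_\bullet\|_{\cP(G)} \;\le\; C_{Wh}\,C_T\,\|F\|_{C^{m-1,1}(G)},
\]
which is the desired controlled-constant estimate since $C_{Wh}$ and $C_T$ are both controlled.

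To see that $\widehat{F}|_G = F$, note that for any $x \in G$ the $0$-th order coefficient of the $(m-1)$-jet of a function at $x$ is its value at $x$; hence $\widehat{F}(x) = J_x\widehat{F}(x) = J_x F(x) = F(x)$. Linearity of $F \mapsto \widehat{F}$ is transparent: the assignment $F \mapsto P_\bullet = (J_x F)_{x\in G}$ is linear in $F$, and $T$ is linear by Proposition \ref{prop:cwet}, so their composition is linear. There is no genuine obstacle here — the result is essentially a packaged consequence of the classical Whitney extension theorem combined with Taylor's theorem on a convex domain; the only thing to be careful about is that convexity of $G$ is exactly what makes Proposition \ref{lem:TT} applicable along chords inside $G$.
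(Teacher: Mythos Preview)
Your proposal is correct and follows essentially the same argument as the paper: define the Whitney field $P_x = J_x F$ on $G$, bound $\|P_\bullet\|_{\cP(G)}$ by $C_T\|F\|_{C^{m-1,1}(G)}$ via Proposition~\ref{lem:TT}, apply the linear Whitney operator $T$ from Proposition~\ref{prop:cwet}, and read off $\widehat{F}|_G = F$ from $J_x\widehat{F} = J_xF$. The resulting constant $C = C_{Wh}C_T$ matches the paper exactly.
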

\begin{proof}
Given $F \in C^{m-1,1}(G)$, define a Whitney field $P_\bullet \in \Po(G)$ by $P_x = J_x F$ for $x \in G$ (note that $J_xF$ is well-defined for $x \in G$ by the hypothesis that $G$ has nonempty interior). By Taylor's theorem (Proposition \ref{lem:TT}),  $\| P_\bullet \|_{\Po(G)} \leq C_T \| F \|_{C^{m-1,1}(G)}$. Let $T: \cP(G) \to C^{m-1 ,1}(\R^n)$ be as in the classical Whitney extension theorem, and set $\widehat{F} := T(P_\bullet)$. Then $\widehat{F}$ depends linearly on $F$. Because $J_x \widehat{F} = P_x = J_x F$ for all $x \in G$, we have $\widehat{F}|_G = F$. Furthermore,
\[
\| \widehat{F} \|_{C^{m-1,1}(\R^n)} \leq C_{Wh} \| P_\bullet \|_{\Po(G)} \leq C_{Wh} C_T \| F \|_{C^{m-1,1}(G)}.
\]
This completes the proof of the lemma with $C = C_T C_{Wh}$.
\end{proof}

\subsubsection{Graded decomposition of the jet space}

Given $x \in \R^n$, the jet space $\cR_x \simeq \Po$ admits a \emph{graded decomposition} into homogeneous vector subspaces. Specifically, 
\[
\cR_x = \bigoplus_{i=0}^{m-1} \cR_{x}^i,  \quad \mbox{where} \; \cR_x^{i} := \spn \{ m_{x,\alpha}(z) := (z-x)^\alpha : |\alpha| = i \}.
\]
Note that $\tau_{x,\delta}(P) = \delta^{i - m} P$ for $P \in \cR_x^i$ -- thus, $\cR_x^i$ is homogeneous of order $i - m$ with respect to the dilations $\tau_{x,\delta}$ ($\delta > 0$). The subspaces $\cR_{x}^i$ are pairwise orthogonal with respect to the inner product $\langle \cdot, \cdot \rangle_{x,\delta}$ (any $\delta > 0$). Furthermore, $\spn(\cR_i^x \odot_x \cR_j^x) = \cR_{i+j}^x$ if $i + j < m$, and $\cR_i^x \odot_x \cR_j^x = \{ 0 \}$ if $i + j \geq m$.

\subsubsection{Dilation and translation invariant subspaces}

Let $V$ be a subspace of $\Po$. We say $V$ is \emph{translation invariant} if $\tau^h(P) \in V$ for all $P \in V$, $h \in \R^n$.  Let $x_0 \in \R^n$. We say $V$ is \emph{dilation invariant at $x_0$} if $\tau_{x_0,\delta}(P) \in V$ for all $P \in V$, $\delta > 0$.  For the definitions of the translations $\tau^h$ and dilations $\tau_{x,\delta}$, see \eqref{eqn:trans_dil}.

Note that $V$ is dilation invariant at $x_0$ if and only if $V$ admits a decomposition
\[
V = \bigoplus_{i=0}^{m-1} V_i^{x_0},
\]
for subspaces $V_i^{x_0} \subseteq \cR_{x_0}^i$ ($0 \leq i \leq m-1$). 

We say $V$ is \emph{DTI} (\emph{dilation-and-translation-invariant}) if $V$ is both translation invariant and dilation invariant at $x_0$ for some $x_0 \in \R^n$. If $V$ is DTI then $V$ is dilation invariant at $x$ for all $x \in \R^n$, due to the identity $\tau_{x,\delta} = \tau^{x-x_0} \tau_{x_0,\delta} \tau^{x_0-x}$.

A special class of DTI subspaces arises by looking at the span of monomials in $\Po$. Given $\cA \subseteq \cM$, let $V_\cA := \spn \{ x^\alpha : \alpha \in \cA\}$. 

\begin{defi}\label{def:mon}
A set $\cA \subseteq \cM$ is  \emph{monotonic} provided that if  $\alpha \in \cA$, $\beta \in \cM$, and $\alpha + \beta \in \cM$, then $\alpha + \beta \in \cA$. 
\end{defi}

\begin{lemma}\label{lem:mon}
 Let $\cA \subseteq \cM$. Then the following are equivalent:
 \begin{itemize}
     \item[(i)] $\cA$ is monotonic.
     \item[(ii)] $V_\cA$ is an ideal in the ring $\cR_0 = (\Po, \odot)$.
     \item[(iii)]  $V_{\cM \setminus \cA}$ is a DTI subspace.
 \end{itemize}
\end{lemma}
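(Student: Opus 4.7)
The plan is to prove the chain (ii) $\Leftrightarrow$ (i) $\Leftrightarrow$ (iii) by unpacking each condition in the monomial basis $\{x^\alpha : \alpha \in \cM\}$ and translating the algebraic conditions into combinatorial ones on multiindices.

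For (i) $\Leftrightarrow$ (ii): I would note that $\odot$ is bilinear, and that on the monomial basis it satisfies $x^\alpha \odot x^\beta = x^{\alpha+\beta}$ if $\alpha + \beta \in \cM$, and $x^\alpha \odot x^\beta = 0$ otherwise. Since the monomials span $\cR_0 = (\Po, \odot)$, the subspace $V_\cA = \spn \{x^\alpha : \alpha \in \cA\}$ is an ideal precisely when $x^\alpha \odot x^\beta \in V_\cA$ for every $\alpha \in \cA$ and every $\beta \in \cM$. By the product rule above, this is exactly the condition that $\alpha + \beta \in \cA$ whenever $\alpha \in \cA$, $\beta \in \cM$, and $\alpha + \beta \in \cM$, i.e., $\cA$ is monotonic.

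For (i) $\Leftrightarrow$ (iii): Since $V_{\cM \setminus \cA}$ is spanned by homogeneous monomials, it decomposes as $\bigoplus_i \spn\{x^\alpha : \alpha \in \cM \setminus \cA, \, |\alpha|=i\}$, so it is automatically dilation invariant at $0$. Hence $V_{\cM \setminus \cA}$ is DTI if and only if it is translation invariant. The key calculation is $\tau^h(x^\alpha) = (x-h)^\alpha = \sum_{\beta \leq \alpha} \binom{\alpha}{\beta} (-h)^{\alpha - \beta} x^\beta$ for $h \in \R^n$ (where $\beta \leq \alpha$ denotes coordinatewise comparison). Varying $h$, a standard argument (polynomials in $h$ vanishing identically) shows $\tau^h(x^\alpha) \in V_{\cM \setminus \cA}$ for all $h$ if and only if $x^\beta \in V_{\cM \setminus \cA}$ for every $\beta \leq \alpha$. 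Therefore translation invariance of $V_{\cM \setminus \cA}$ is equivalent to $\cM \setminus \cA$ being \emph{downward closed} under coordinatewise order.

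To close the loop, I would check that downward-closedness of $\cM \setminus \cA$ is the contrapositive form of monotonicity of $\cA$: if $\alpha \in \cA$, $\beta \in \cM$, $\alpha + \beta \in \cM$, then setting $\gamma := \alpha + \beta$ gives $\alpha \leq \gamma$ with $\gamma \in \cM$, and downward-closedness forces $\gamma \in \cA$; the converse is the same observation run backwards, writing any $\alpha \leq \gamma \in \cM$ as $\gamma = \alpha + (\gamma - \alpha)$. The only step that requires a small dose of care is the translation step, where one must justify extracting the condition on each $\beta \leq \alpha$ individually from the single relation $\tau^h(x^\alpha) \in V_{\cM\setminus\cA}$; this is handled by noting that the map $h \mapsto \tau^h(x^\alpha)$ is itself a polynomial in $h$ with values in $\Po$, and that a $\Po$-valued polynomial in $h$ takes values in a fixed subspace for all $h$ iff each of its coefficients (which here are scalar multiples of the $x^\beta$) lies in that subspace. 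Otherwise the proof is purely a matter of unwinding the definitions.
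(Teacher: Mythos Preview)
Your proof is correct and follows essentially the same strategy as the paper: both reduce (i)$\Leftrightarrow$(ii) to the monomial product rule and handle (i)$\Leftrightarrow$(iii) via the binomial expansion of $\tau^h(x^\alpha)$, with dilation invariance at $0$ being automatic. The only minor difference is in the converse direction of (iii)$\Rightarrow$(i): the paper observes that translation invariance implies closure under $\partial_{x_i}$ (via $\partial_{x_i}P = \lim_{h\to 0} h^{-1}(P - \tau^{he_i}P)$) and then differentiates $x^{\alpha+\beta}$ down to $x^\alpha$, whereas you extract the same conclusion by reading off the coefficients of the $\Po$-valued polynomial $h \mapsto \tau^h(x^\alpha)$ and phrasing the result as downward-closedness of $\cM\setminus\cA$.
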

\begin{proof}
Recall that $\odot = \odot_0$ is the ``jet product at $x=0$''. Note that $V_{\cA}$ is an ideal in $\cR_0$ if and only if $x^\beta \odot P \in V_{\cA}$ for every polynomial $P$ in a basis for $V_{\cA}$ and every $\beta \in \cM$. Thus, $V_{\cA}$ is an ideal if and only if $x^\beta \odot x^\alpha \in V_{\cA}$ for all $\beta \in \cM$ and $\alpha \in \cA$. Observe that $x^\beta \odot x^\alpha = 0$ if $|\beta| + |\alpha| \geq m$, and else, $x^\beta \odot x^\alpha = x^{\alpha + \beta}$ if $|\alpha| + |\beta| \leq m-1$. Thus, $V_{\cA}$ is an ideal if and only if $\alpha + \beta \in \cA$ whenever $\alpha \in \cA$, $\beta \in \cM$, $|\alpha| + |\beta| \leq m-1$. Therefore, $V_\cA$ is an ideal if and only if $\cA$ is monotonic, establishing the equivalence of (i) and (ii).

It remains to establish the equivalence of (i) and (iii). Evidently,  $V = V_{\cM\setminus \cA}$ is dilation invariant at $x_0 = 0$ due to the fact that $V$ is spanned by monomials based at $x_0 = 0$. Therefore it suffices to show that $V$ is translation invariant if and only if $\cA$ is monotonic. 

Suppose $\cA$ is monotonic. By linearity it suffices to show that $\tau^h P \in V$ for any element $P$ in the basis $ \{x^\gamma \}_{\gamma \in \cM \setminus \cA}$ for $V$. Fix $\gamma \in \cM \setminus \cA$ and $h \in \R^n$, and use the binomial identity to write
\[
\tau^h [ x^\gamma] = (x-h)^\gamma = \sum_{\substack{ \gamma_1,\gamma_2 \in \cM \\  \gamma_1 + \gamma_2 = \gamma}} c_{\gamma_1 \gamma_2} x^{\gamma_1} h^{\gamma_2}.
\]
Since $\cA$ is monotonic, and $\gamma \in  \cM \setminus \cA$, we have $\gamma_1 \in \cM \setminus \cA$ if $\gamma = \gamma_1 + \gamma_2$. Consequently, each term $c_{\gamma_1 \gamma_2} x^{\gamma_1} h^{\gamma_2}$ in the above sum belongs to $V$. By linearity, $\tau^h [ x^\gamma] \in V$ for any $\gamma \in \cM \setminus \cA$. Thus, $V$ is translation invariant.

Next we suppose $V$ is translation invariant and show that $\cA$ is monotonic. Note the identity $\partial_{x_i} P = \lim_{h \rightarrow 0} h^{-1}( P - \tau^{h e_i}(P))$ where $e_i \in \R^n$ is the $i$'th coordinate vector. Because $V$ is translation invariant, this identity implies that $\partial_{x_i} P \in V$ for any $P \in V$. Therefore, $\partial^\beta P \in V$ for $P \in V$ and any multiindex $\beta$. For sake of contradiction suppose that $\cA$ is not monotonic. Then there exist $\alpha \in \cA$, $\beta \in \cM$  with $\alpha + \beta \in \cM \setminus \cA$. Thus, $x^{\alpha + \beta} \in V$. Consequently,  $\partial^\beta x^{\alpha + \beta} \in V$. Note that $\partial^\beta x^{\alpha + \beta} = c x^\alpha$ for $c \in \R$, $c \neq 0$. Thus, $x^\alpha \in V$, implying that $\alpha \in \cM \setminus \cA$, a contradiction.

This completes the proof of the lemma.

\end{proof}

\subsubsection{Whitney convexity}

A subset $\Omega$ of a vector space is \emph{symmetric} provided that $v \in \Omega \implies - v \in \Omega$.

Given $x \in \R^n$, we denote $X \odot_x Y := \{ P \odot_x Q : P \in X, \; Q \in Y\}$ for subsets $X,Y \subseteq \cR_x$. 

The next definition plays a key role in the theory of $C^{m-1,1}$ extension.

\begin{defi}[Whitney convexity] \label{wc_def} Let $x \in \R^n$, and let  $\Omega \subseteq \cR_x$ be a closed symmetric convex set. We say that $\Omega$ is $A$-Whitney convex at $x$ if $(\Omega \cap \B_{x,\delta}) \odot_x  \B_{x,\delta} \subseteq A \delta^m \Omega$ for all $\delta > 0$. If $\Omega$ is $A$-Whitney convex at $x$ for some $A < \infty$, then we say that $\Omega$ is Whitney convex at $x$.

The Whitney coefficient $w_x(\Omega)$ of $\Omega$ at $x$ is the infimum of all $A > 0$ such that $\Omega$ is $A$-Whitney convex at $x$. If no finite $A$ exists, then $w_x(\Omega) := + \infty$. 
\end{defi}

\subsection{Main technical results}

Here, we state the new technical results of this paper. The second result will be used to affirm a conjecture from the introduction of \cite{coordinateFree}. Sections  \ref{sec:GeomGrass}, \ref{sec:rescale_grass} and \ref{sec:WhConv} are dedicated to the proofs of these results,

Fix $x \in \R^n$. We equip the jet space $\cR_x = (\Po,\odot_x)$ with the inner product $\langle \cdot, \cdot \rangle_x$ and norm $|\cdot|_x$; see Section \ref{sec:inner_prod_norm}. Then $\cR_x$ is a finite-dimensional Hilbert space, with $\dim(\cR_x) = D = \binom{m+n-1}{n}$. Let $\cB_x$ be the unit ball of $\cR_x$. We let $\Pi_V: \cR_x \rightarrow V$ denote the orthogonal projection map on a subspace $V \subseteq \cR_x$.

\begin{defi}\label{defn:trans1}
Let $V$ be a subspace of $\cR_x$, let $\Omega$ be a closed symmetric convex subset of $\cR_x$, and let $R \geq 1$. Say that $\Omega$ is $R$-transverse to $V$ at $x$ if $\Omega \cap V \subseteq R \cB_x$ and $\Pi_{V^\perp}(\Omega \cap \cB_x) \supseteq R^{-1} \cB_x \cap V^\perp$. Here, $V^\perp$ is the orthogonal complement of $V$ with respect to the inner product $\langle \cdot, \cdot \rangle_x$ on $\cR_x$.
\end{defi}
Obviously, we can state a corresponding definition of transversalty in a general finite-dimensional Hilbert space. We do so in  Definition \ref{def:trans_hilbert}.

We now note a couple of trivial properties of $R$-transversality for the unfamiliar reader.
\begin{itemize}
    \item If $\Omega$ is $R$-transverse to $V$, then $\Omega$ is $R'$-transverse to $V$ for any $R' \ge R$.
    \item If $\Omega = V^\perp$, then $\Omega$ is $R$-transverse to $V$ for any $R \ge 1$.
\end{itemize}

Our first technical result is as follows:

\begin{proposition}\label{prop:tech1}
Let $x \in \R^n$, $A \geq 1$, and $\Omega \subseteq \cR_x$ be given. Suppose that $\Omega$ is $A$-Whitney convex at $x$. Then there is a DTI subspace $V \subseteq \cR_x$ such that $\Omega$ is $R_0$-transverse to $V$ at $x$. Here, $R_0$ is a constant determined by $m$, $n$, and $A$ of the form $R_0 = \exp(\poly(D) \log(A))$.
\end{proposition}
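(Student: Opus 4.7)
The plan is to build the DTI subspace $V$ greedily by constructing a nested chain of homogeneous ideals $\{0\} = W_0 \subsetneq W_1 \subsetneq \cdots \subsetneq W_L$ in the jet ring $\cR_x$ and setting $V := W_L^\perp$. Recall (from the discussion preceding Lemma~\ref{lem:mon}) that DTI subspaces are precisely the orthogonal complements of homogeneous ideals in $\cR_x$, so building the ideal $W_L$ is equivalent to building the DTI subspace. The interpretation of the transversality conditions guiding the construction is: $\Omega \cap V \subseteq R \cB_x$ says $V$ contains only ``small'' directions of $\Omega$, while $\Pi_{V^\perp}(\Omega \cap \cB_x) \supseteq R^{-1} \cB_x \cap V^\perp$ says that all ``large'' directions of $\Omega$ lie within $V^\perp = W_L$.

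At step $j$, set $V_j := W_j^\perp$ and attempt to verify that $V_j$ is $R_j$-transverse to $\Omega$ for a suitable $R_j$. If transversality holds, terminate and output $V_j$. Otherwise, one of the two inclusions fails, producing a witness polynomial $P_j$ that is either in $\Omega \cap V_j$ with $|P_j|_x > R_j$, or a unit direction in $V_j^\perp$ on which $\Omega \cap \cB_x$ projects too little. The crucial step is to leverage the $A$-Whitney convexity condition to upgrade this single witness into a full set of ideal generators extending $W_j$ to $W_{j+1}$. Specifically, applying $(\Omega \cap \B_{x,\delta}) \odot_x \B_{x,\delta} \subseteq A \delta^m \Omega$ together with the Bombieri-type bounds of Lemma~\ref{lem:bombieri}, one shows that if $\Omega$ contains a polynomial $P$ of controlled size, then $\Omega$ also contains, up to a multiplicative loss of order $A \cdot \poly(D)$, all polynomials of the form $P \odot_x Q$ as $Q$ ranges over the unit ball $\cB_x$. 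Decomposing by homogeneous degree and iterating jet-multiplication, we attach to $W_j$ the entire homogeneous ideal generated by $P_j$ and take this as $W_{j+1}$, updating $R_{j+1}$ accordingly.

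Since $\dim W_j$ strictly increases at each step and is bounded above by $D$, the procedure terminates in at most $D$ steps. Each step multiplies the transversality constant by at most $A \cdot \poly(D)$, absorbing the Whitney convexity factor $A$, the Bombieri constant $C_b = (m+1)!$ (whose logarithm is $\poly(D)$), and linear-algebraic overhead from changes of basis in $\cR_x$. This yields the desired bound $R_0 = (A \cdot \poly(D))^{O(D)} = \exp(\poly(D) \log A)$. The hard part will be the bookkeeping needed to ensure at each step that $W_{j+1}$ is genuinely an ideal (so that $V_{j+1}$ is truly DTI, not merely dilation-invariant), and that \emph{both} transversality inequalities remain simultaneously controlled — in particular, choosing the witness polynomials with care, working one homogeneous degree at a time, and using Whitney convexity to control the norms of the lower-degree components produced by jet multiplication, so that adding generators to $W_j$ does not destroy the ``large projection'' condition already established on $W_j$.
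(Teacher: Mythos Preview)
Your greedy strategy is natural, but it is quite different from the paper's proof and, as written, has a real gap that is not ``just bookkeeping.'' The paper does not build the ideal incrementally. Instead it (i) reduces to an ellipsoid $\E$ via John's theorem, (ii) finds a single scale $\delta$ at which $\tau_{x,\delta}\E$ is $\epsilon$-degenerate (Lemma~\ref{lem:help3}), so that $\tau_{x,\delta}\E$ is sandwiched between a subspace $H$ and $H+\epsilon\cB_x$ (Lemma~\ref{lem:help2}), (iii) uses Whitney convexity to show $H$ is an $\epsilon$-quasiideal (Lemma~\ref{lem:newwlem}), and (iv) applies a Renormalization Lemma (Lemma~\ref{lem:newlem}, proved via the Grassmannian dynamics of Proposition~\ref{prop:subspace_dil}) to rescale coordinates so that $H$ becomes close to a monomial subspace $V_\cA$, whence $\cA$ is forced to be monotonic (Lemma~\ref{lem:newwwlem}) and $V=V_{\cM\setminus\cA}$ is the desired DTI subspace. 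The ellipsoid reduction is what controls all the conditioning: the basis for $H$ is an orthonormal set of principal axes, so no linear-algebra blowup occurs.

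In your scheme the two obstructions are exactly the ones you flag, but they are not resolvable by bookkeeping alone. First, a witness $P\in\Omega\cap V_j$ is typically inhomogeneous, and Whitney convexity gives you $P\odot_x Q\in A\Omega$ but says nothing about the individual homogeneous components $P_k$ of $P$; without those components in (a controlled multiple of) $\Omega$ you cannot enlarge $W_j$ to a \emph{homogeneous} ideal while keeping it inside $C\Omega$, and hence cannot guarantee $W_{j+1}^\perp$ is DTI rather than merely translation-invariant. Second, even granting homogeneity, the spanning set $\{P\odot_x x^\alpha\}$ for the ideal generated by $P$ can be arbitrarily ill-conditioned as a basis, so the step ``$W_{j+1}$ has a basis in $C_j\Omega\cap\cB_x$, hence $\Pi_{W_{j+1}}(\Omega\cap\cB_x)\supseteq C_j^{-1}\cB_x\cap W_{j+1}$'' does not follow with a controlled $C_j$. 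The paper's rescaling-to-degeneration step is precisely the device that replaces these uncontrolled linear-algebra constants by an orthonormal principal-axis basis; without an analogous mechanism, your inductive loss per step is not bounded by $A\cdot\poly(D)$, and the final bound on $R_0$ is not of the required form.
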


We write $l(I) \leq r(I)$ to denote the left and right endpoints of a compact interval $I \subseteq \R$, respectively. If $I$ and $J$ are compact intervals, we write $I>J$ if $l(I) > r(J)$. We write $I>0$ if $l(I) > 0$. 

\begin{defi}\label{jet_complexity_def}
    Let $x \in \R^n$. Given a closed symmetric convex set $\Omega \subseteq \cR_x$, $\delta > 0$, and real numbers $1 < R < R^* < \infty$, we define the quantity $\C_x(\Omega,R,R^*,\delta)$ to be the supremum of all integers $K$ such that there exist subspaces $V_k \subseteq \cR_x$ and compact intervals $I_k \subseteq (0,\delta]$ ($k=1,2,\dots,K$) such that the following conditions hold:
    \begin{itemize}
        \item $I_1 > I_2 > I_3 > \cdots > I_K > 0$
        \item For all $k$, $\tau_{x,r(I_k)} \Omega$ is $R$-transverse to $V_k$ at $x$.
        \item For all $k$, $\tau_{x,l(I_k)} \Omega$ is not $R^*$-transverse to $V_k$ at $x$.
        \item For all $k$, $V_k$ is dilation invariant at $x$.
    \end{itemize}
    We refer to $\C_x(\Omega,R,R^*, \delta)$ as the \textbf{pointwise complexity} of $\Omega$ at $x$ at scale below $\delta$ with parameters $(R,R^*)$. 
    
    If $\delta = \infty$, we set $\C_x(\Omega,R,R^*) = \C_x(\Omega,R,R^*,\infty)$, which we refer to as the pointwise complexity of $\Omega$ at $x$ with parameters $(R,R^*)$.
\end{defi}

Our second technical result provides a bound on the pointwise complexity of a general closed symmetric convex subset of $\mathcal{R}_x$.

\begin{proposition}\label{prop:tech2}
    Let $x \in \R^n$, $\delta > 0$, $R \geq 16$, and $R^* \geq D^{2D+1/2}R^{4D}$ be given. Then $\C_x(\Omega,R,R^*, \delta) \leq 4 m D^2$ for any closed symmetric convex set $\Omega \subseteq \cR_x$.
\end{proposition}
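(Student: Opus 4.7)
The plan is to exploit the dilation-invariance of each $V_k$, which forces $V_k = \bigoplus_{i=0}^{m-1} V_{k,i}^x$ with $V_{k,i}^x \subseteq \cR_x^i$, so that the data $V_1,\ldots,V_K$ can be recorded as a sequence in the finite product of Grassmannians $\prod_{i=0}^{m-1} \mathrm{Gr}(\cR_x^i)$, whose total dimension count is $\sum_i \dim \cR_x^i = D$. I would first reduce the statement to an abstract claim in a graded finite-dimensional Hilbert space, and then show that each complexity step forces a quantitatively large change in the graded Grassmannian, after which a potential/accounting argument delivers the bound $4mD^2$.

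The core calculation is to convert the contrast between $R$-transversality of $\tau_{x,r(I_k)}\Omega$ to $V_k$ and the failure of $R^*$-transversality of $\tau_{x,l(I_k)}\Omega$ to $V_k$ into a quantitative geometric obstruction. Writing $s_k = l(I_k)/r(I_k) \le 1$, the identity $\tau_{x,l(I_k)} = \tau_{x,s_k}\circ \tau_{x,r(I_k)}$ combined with $\tau_{x,s_k}|_{\cR_x^i} = s_k^{i-m}\,\mathrm{id}$ shows that the failure of $R^*$-transversality can be localized to a single homogeneous degree $i$: either a vector in $V_{k,i}^x\cap \tau_{x,r(I_k)}\Omega$ has $|\cdot|_x$-size exceeding $R^* s_k^{m-i}$, or the projection onto $(V_{k,i}^x)^\perp$ inside $\cR_x^i$ fails in an analogous way. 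Given the hypothesis $R^*\ge D^{2D+1/2}R^{4D}$, this means that as soon as we pass to the next scale $r(I_{k+1})< l(I_k)$, the set $\tau_{x,r(I_{k+1})}\Omega$ already ``sees'' a direction inside or transverse to $V_{k,i}^x$ that is not controlled by $R\B_x$, and hence $V_{k+1}$ cannot be $V_k$.

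The main step is then to show that between consecutive levels the graded subspaces $V_k$ and $V_{k+1}$ must differ substantially in at least one homogeneous component, in the sense that some principal angle between $V_{k,i}^x$ and $V_{k+1,i}^x$ is at least $\gtrsim 1/\poly(D)$. A Grassmannian-diameter argument (each $\mathrm{Gr}(\cR_x^i)$ has diameter bounded in terms of principal angles by $\dim \cR_x^i \le D$, and each ``move'' is of inverse-polynomial size) then limits the number of such jumps in each degree $i$ to $O(D^2)$; summing over $i=0,\ldots,m-1$ yields the claimed bound $O(mD^2)$, with the constant $4$ coming from a careful count. The main obstacle I anticipate is precisely this bookkeeping: one must propagate the transversality estimates through the graded rescaling, use the monotonicity of $|P|_{x,\delta}$ in $\delta$ from \eqref{eqn:norm_scale} to prevent a previously discarded subspace $V_j$ ($j<k$) from being ``recycled'' at a later stage, and calibrate the polynomial gap $R^*\ge D^{2D+1/2}R^{4D}$ so that it absorbs all the dimensional and principal-angle losses incurred in comparing transversality across the graded components.
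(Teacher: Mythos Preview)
Your proposal has a genuine gap at the decisive step. The ``Grassmannian-diameter argument'' you invoke does not bound the length of the sequence $V_1,\ldots,V_K$: knowing only that consecutive subspaces $V_k,V_{k+1}$ differ by a principal angle of size at least $1/\poly(D)$ places no upper bound on $K$, because the sequence can oscillate or cycle. (Think of the Grassmannian of lines in $\R^2$: rotating by a fixed angle $\epsilon$ at each step gives consecutive pairs at distance $\epsilon$, yet the sequence can be arbitrarily long.) What is needed is a \emph{monotone} integer-valued invariant that changes at each step, and your sketch does not supply one; you note that preventing a previously discarded $V_j$ from being recycled is ``the main obstacle,'' but the norm monotonicity \eqref{eqn:norm_scale} alone does not do this, since transversality involves two opposing conditions (a containment and a covering) that move in opposite directions under rescaling.

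A second issue is the claimed localization of the failure of $R^*$-transversality to a single homogeneous degree $i$. For a general closed symmetric convex $\Omega$, neither $\Omega\cap V_k$ nor $\Pi_{V_k^\perp}(\Omega\cap\B_x)$ decomposes along the grading $\bigoplus_i\cR_x^i$; a vector $P\in\Omega\cap V_k$ with $|P|_x>R^*$ need not have any homogeneous component $P_i$ lying in $\Omega$. The paper handles this by first replacing $\Omega$ by a John ellipsoid (losing a factor $\sqrt{D}$), after which singular-value analysis shows that, away from at most $D$ short ``bad'' intervals in scale, the rescaled ellipsoid is $\epsilon$-close to a fixed subspace $H$. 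On each complementary interval the paper then proves that the integer \emph{signature} $\sgn(V_k)=\sum_i (m-i)\dim V_{k,i}^x$ is strictly decreasing, using a unimodality lemma for $\delta\mapsto\cos(\angle(\tau_{x,\delta}H,V))$ (a Descartes' rule of signs argument in the exterior algebra). The bound $4mD^2$ then comes from counting: at most $D+1$ complementary intervals, each contributing at most $mD+1$ signature values, plus $2D$ indices $k$ whose interval $I_k$ meets a bad interval. This monotone-signature mechanism is the missing ingredient in your plan.
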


\subsection{Elementary tools and techniques}

This section contains elementary lemmas on polynomial inequalities and cutoff functions. Many of these results were proven in \cite{coordinateFree} via compactness arguments. Here we give direct proofs that yield explicit constants.

\subsubsection{Properties of polynomial norms}

We present inequalities for polynomial norms used throughout the paper. 

\begin{lemma}\label{lem:poly1}[cf. Lemma 2.1, part (i) in \cite{coordinateFree}]
Let $x,y \in \R^n$ and $\delta > 0$. Suppose $|x-y|\le \eta \delta$ for $0\le \eta \le 1$. Then for any $P\in\Po$,
\[|P|_{y,\delta}^2 \le (1+C \eta) |P|_{x,\delta}^2
\]
for a controlled constant $C$.
\end{lemma}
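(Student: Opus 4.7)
The plan is to pass to coordinates adapted to the scaled norms and reduce the claim to a simple linear-algebra estimate. Define
\[
u_\alpha := \delta^{|\alpha|-m}\,\partial^\alpha P(x)/\alpha!, \qquad v_\beta := \delta^{|\beta|-m}\,\partial^\beta P(y)/\beta!,
\]
so that by the definition of the scaled norm, $|P|_{x,\delta}^2 = \sum_\alpha u_\alpha^2$ and $|P|_{y,\delta}^2 = \sum_\beta v_\beta^2$. The goal becomes proving that the linear map $u \mapsto v$ has operator norm at most $(1+C\eta)^{1/2}$ for a controlled $C$.

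Next I would write out this linear map explicitly using Taylor's theorem, which is exact here because $\partial^\beta P$ is a polynomial of degree $\leq m-1-|\beta|$. Setting $h := y-x$ and expanding $\partial^\beta P(y)$ about $x$, a short computation gives
\[
v_\beta = \sum_{\alpha \geq \beta,\,|\alpha|\leq m-1} \binom{\alpha}{\beta}\bigl(h/\delta\bigr)^{\alpha-\beta}\, u_\alpha = u_\beta + \sum_{\alpha > \beta} M_{\beta\alpha}\, u_\alpha,
\]
where $M$ is strictly upper-triangular in the partial order on multi-indices and each entry satisfies $|M_{\beta\alpha}| \leq \binom{\alpha}{\beta}\eta^{|\alpha-\beta|} \leq 2^{m-1}\eta$ (using $\eta \leq 1$ for the exponent collapse, and the multi-index identity $\binom{\alpha}{\beta} = \prod_i \binom{\alpha_i}{\beta_i} \leq 2^{|\alpha|} \leq 2^{m-1}$).

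Finally I would bound the operator norm by the Frobenius norm: since $M$ is a $D\times D$ matrix with entries at most $2^{m-1}\eta$, we get $\|M\|_{\mathrm{op}} \leq \|M\|_F \leq D\cdot 2^{m-1}\eta$. Hence, writing $v = (I+M)u$,
\[
|P|_{y,\delta} \leq (1 + D\cdot 2^{m-1}\eta)\,|P|_{x,\delta},
\]
and squaring (using $\eta \leq 1$ to absorb the quadratic term) yields $|P|_{y,\delta}^2 \leq (1+C\eta)|P|_{x,\delta}^2$ with $C = O(D^2\, 4^m)$. Since $m \leq D$, this constant is of order $\exp(\poly(D))$, i.e.\ controlled. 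There is no real obstacle here; the only point requiring care is confirming that the multi-index binomial factors contribute only a controlled constant, which the bound $\binom{\alpha}{\beta}\leq 2^{m-1}$ handles immediately.
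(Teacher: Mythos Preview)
Your proof is correct and takes essentially the same approach as the paper: both Taylor-expand $\partial^\beta P(y)$ about $x$, isolate the diagonal term (which recovers $|P|_{x,\delta}^2$), and bound the off-diagonal contributions using $|h/\delta|\le \eta\le 1$ and crude combinatorial estimates on the multinomial factors. Your matrix/operator-norm framing is a tidy way to package the same computation the paper does by direct expansion of the squared norm, yielding a controlled constant of comparable size ($O(D^2 4^m)$ versus the paper's $D^3((m-1)!)^2$).
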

\begin{proof}
By Taylor's theorem, for any $\alpha$ with $|\alpha| \le m-1$ we have
\[
    \partial^\alpha P(y) = \sum_{\gamma: |\alpha + \gamma|<m} \frac{1}{\gamma !} (\partial^{\alpha+\gamma}P)(x)\cdot (y-x)^\gamma.
\]
Therefore
\begin{equation}\label{eq: close ps 1}
\begin{split}
    |P|_{y,\delta}^2 &=  \sum_{|\alpha|<m} \frac{\delta^{2(|\alpha| - m)}}{\alpha!} \left( \partial^\alpha P(x) + \sum_{\substack{\gamma>0:\\|\alpha + \gamma| <m}} \frac{1}{\gamma!}(\partial^{\alpha+\gamma}P)(x)\cdot(y-x)^{\gamma}\right)^2\\
    &= |P|_{x,\delta}^2 + (\mathrm{R})
    \end{split}
\end{equation}
where
\begin{align*}
(\mathrm{R}) &= \sum_{|\alpha|<m} \frac{\delta^{2(|\alpha| - m)}}{\alpha!} \sum_{\substack{\gamma_1 > 0, \gamma_2 > 0:\\|\alpha + \gamma_1| <m \\ |\alpha + \gamma_2|<m}} \frac{1}{\gamma_1!\gamma_2!}(\partial^{\alpha+\gamma_1}P)(x)(\partial^{\alpha+\gamma_2}P)(x)(y-x)^{\gamma_1+\gamma_2}\\
& +\sum_{|\alpha|<m} \frac{\delta^{2(|\alpha| - m)}}{\alpha!} 2(\partial^\alpha P)(x) \cdot \sum_{\substack{\gamma>0:\\|\alpha + \gamma| <m}} \frac{1}{\gamma!}(\partial^{\alpha+\gamma}P)(x)\cdot(y-x)^{\gamma} \\
&\le 2 \sum_{|\alpha|<m} \frac{\delta^{2(|\alpha| - m)}}{\alpha!} \sum_{\substack{\gamma_1 \ge 0, \gamma_2 > 0:\\|\alpha + \gamma_1| <m \\ |\alpha + \gamma_2|<m}} \frac{1}{\gamma_1!\gamma_2!}(\partial^{\alpha+\gamma_1}P)(x)(\partial^{\alpha+\gamma_2}P)(x)\cdot(y-x)^{\gamma_1+\gamma_2}.
\end{align*}
Now use the trivial bound
\[
|\partial^{\alpha+\gamma}P(x)|\le|P|_{x,\delta} \frac{(\alpha+\gamma)!}{\delta^{|\alpha+\gamma|-m}}
\]
and the hypothesis $|y-x|\le \delta\eta$ to get that
\[
|(\mathrm{R})| \le 2  \sum_{|\alpha|<m} \frac{\delta^{2(|\alpha|-m)}}{\alpha!} \sum_{\substack{\gamma_1 \ge 0, \gamma_2 > 0:\\|\alpha + \gamma_1| <m \\ |\alpha + \gamma_2|<m}} \frac{(\alpha+\gamma_1)!(\alpha+\gamma_2)!}{\gamma_1!\gamma_2!}\frac{\eta^{\gamma_1+\gamma_2}}{\delta^{2(|\alpha|-m)}} |P|_{x,\delta}^2.
\]
Using the fact that the number of multiindices appearing in each of the above sums is bounded by $D$, and the hypothesis $\eta<1$, we see that
\begin{equation}\label{eq: close ps 2}
|(\mathrm{R})| \le D^3 \left((m-1)!\right)^2\eta|P|_{x,\delta}^2.
\end{equation}
Combining \eqref{eq: close ps 1} and \eqref{eq: close ps 2} proves the lemma, with $C = D^3 \left((m-1)!\right)^2$.
\end{proof}

\begin{lemma}\label{lem: poly 2}[cf. Lemma 2.1, part (ii) in \cite{coordinateFree}]
Let $x \in \R^n$ and $0 < \rho \leq \delta$. Then there exists a controlled constant $C$ such that for any $P,Q \in \Po$,
\[
|P\odot_x Q|_{x,\rho}\le C\delta^m |P|_{x,\delta}|Q|_{x,\rho}.
\]
\end{lemma}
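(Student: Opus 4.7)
The plan is to reduce to the already-proved Bombieri inequality (Lemma \ref{lem:bombieri}) by using the dilation operators $\tau_{x,\rho}$. The key observation is the algebraic identity
\[
\tau_{x,\rho}(P \odot_x Q) = \rho^m\,(\tau_{x,\rho}P)\odot_x(\tau_{x,\rho}Q).
\]
To see this, note first that as \emph{ordinary} polynomials $\tau_{x,\rho}(PQ)(z) = \rho^{-m}P(x+\rho(z-x))Q(x+\rho(z-x)) = \rho^m\,\tau_{x,\rho}(P)(z)\cdot\tau_{x,\rho}(Q)(z)$, the extra $\rho^m$ arising because the product of two $\rho^{-m}$-scalings overshoots by a factor of $\rho^{-m}$. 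Since $\tau_{x,\rho}$ preserves polynomial degree, it commutes with the jet truncation $J_x$, so applying $J_x$ to both sides and using $P\odot_x Q = J_x(PQ)$ gives the identity.

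Given the identity, I would unfold the definition of the scaled norm, $|R|_{x,\rho} = |\tau_{x,\rho}R|_x$, and apply Bombieri at base point $x$. Lemma \ref{lem:bombieri} is stated for $x=0$, but it transports to any $x$ via the translation $P \mapsto P(\cdot + x)$, which is an isometry for $|\cdot|_x$ versus $|\cdot|_0$ and intertwines $\odot_x$ with $\odot_0$. Thus $|R\odot_x S|_x \leq C_b|R|_x|S|_x$ for all $R,S\in\Po$, with $C_b = (m+1)!$. Combining these,
\[
|P\odot_x Q|_{x,\rho} = \rho^m\bigl|(\tau_{x,\rho}P)\odot_x(\tau_{x,\rho}Q)\bigr|_x \leq \rho^m\,C_b\,|\tau_{x,\rho}P|_x\,|\tau_{x,\rho}Q|_x = \rho^m\,C_b\,|P|_{x,\rho}\,|Q|_{x,\rho}.
\]

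Finally, to convert $\rho^m$ into the required $\delta^m$, I would apply the right-hand inequality of \eqref{eqn:norm_scale} to the first factor only: since $\rho\leq\delta$, we have $|P|_{x,\rho} \leq (\delta/\rho)^m |P|_{x,\delta}$. Substituting this absorbs $\rho^m$ into $\delta^m$ while leaving $|Q|_{x,\rho}$ untouched, yielding $|P\odot_x Q|_{x,\rho} \leq C_b\,\delta^m\,|P|_{x,\delta}\,|Q|_{x,\rho}$. Since $C_b = (m+1)!$ is controlled, this gives the claim. There is no real obstacle here; the only point worth flagging is the asymmetric choice to rescale only $P$ (not $Q$), which is dictated by the asymmetric statement and is permitted precisely because $\rho \leq \delta$.
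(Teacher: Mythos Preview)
Your proof is correct and is essentially the same as the paper's, just unpacked more explicitly: the paper's one-line ``by translating and rescaling, we reduce to $x=0$, $\rho=1$'' is precisely your dilation identity $\tau_{x,\rho}(P\odot_x Q)=\rho^m(\tau_{x,\rho}P)\odot_x(\tau_{x,\rho}Q)$ combined with the translation to $x$, and the paper's use of $\delta^m|P|_{0,\delta}\ge|P|_{0,1}$ from \eqref{eqn:norm_scale} is your final step. Both then invoke Lemma~\ref{lem:bombieri} with the same constant $C_b=(m+1)!$.
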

\begin{proof}
By translating and rescaling, we reduce matters to the case $x=0$, $\rho=1$. For $\delta \geq 1$, we have $\delta^m |P|_{0,\delta} \geq |P|_{0,1}$, by \eqref{eqn:norm_scale}. Thus, it suffices to prove the bound $|P \odot_0 Q|_{0,1} \leq C |P|_{0,1} |Q|_{0,1}$. This inequality is a consequence of Lemma \ref{lem:bombieri}.
\end{proof}

\begin{lemma}\label{lem: poly 3}[cf. Lemma 2.1, part (iii) in \cite{coordinateFree}]
Let $x,y \in \R^n$ and $\delta, \rho > 0$. Assume that $|x-y| \le \rho \le \delta$. Then there exists a controlled constant $C$ such that for any $P,Q \in \Po$,
\[
|(P\odot_y Q) - (P\odot_x Q)|_{x,\rho} \le C \delta^m |P|_{x,\delta} |Q|_{x,\delta}.
\]
\end{lemma}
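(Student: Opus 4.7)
The plan is to reduce the problem to an application of Taylor's theorem (Proposition \ref{lem:TT}) applied to the polynomial $F := P \cdot Q$, viewed as a $C^{m-1,1}$ function on the ball $G := B(x,\delta)$. The key algebraic reduction is the identity
\[
P \odot_y Q - P \odot_x Q = J_y(PQ) - J_x(PQ) = J_y F - J_x F,
\]
which is immediate from the definition $P \odot_w Q = J_w(PQ)$. Since $|x-y| \le \rho \le \delta$, both $x$ and $y$ lie in the convex domain $G$, and $\rho$ satisfies the hypothesis of Taylor's theorem.

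The analytic heart of the argument is to estimate $\|F\|_{C^{m-1,1}(G)}$. I would first bound $\|F\|_{\dot{C}^m(G)}$ and then invoke inequality \eqref{eqn:Cm_norm_bd} to pass to the $C^{m-1,1}$ seminorm. For $|\beta| = m$, the Leibniz rule gives
\[
\partial^\beta F = \sum_{\gamma \le \beta} \binom{\beta}{\gamma} (\partial^\gamma P)(\partial^{\beta-\gamma} Q),
\]
and since $P,Q \in \cP$ have degree at most $m-1$, only indices with $1 \le |\gamma| \le m-1$ and $1 \le |\beta-\gamma|\le m-1$ contribute. For any $z \in B(x,\delta)$ and any $|\gamma| \le m-1$, Taylor-expanding $\partial^\gamma P$ about $x$ and using the definition of $|\cdot|_{x,\delta}$ yields the elementary bound $|\partial^\gamma P(z)| \le C\,\delta^{m-|\gamma|}|P|_{x,\delta}$, and similarly for $Q$. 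Multiplying and summing over the finitely many $\gamma$ shows
\[
\|F\|_{\dot{C}^m(G)} \;\le\; C\,\delta^m\,|P|_{x,\delta}\,|Q|_{x,\delta},
\]
for a controlled constant $C$. Combined with \eqref{eqn:Cm_norm_bd}, one gets $\|F\|_{C^{m-1,1}(G)} \le C'\delta^m |P|_{x,\delta}|Q|_{x,\delta}$.

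Finally, applying Proposition \ref{lem:TT} to $F$ on $G$ with scale $\rho \ge |x-y|$ yields
\[
|(P \odot_y Q) - (P \odot_x Q)|_{x,\rho} \;=\; |J_y F - J_x F|_{x,\rho} \;\le\; C_T\,\|F\|_{C^{m-1,1}(G)} \;\le\; C\,\delta^m|P|_{x,\delta}|Q|_{x,\delta},
\]
which is the desired inequality. There is no serious obstacle here: the only mildly technical step is the pointwise derivative estimate for $P$ and $Q$ on $B(x,\delta)$ in terms of $|\cdot|_{x,\delta}$, which is a routine combinatorial calculation analogous to those appearing in the proof of Lemma \ref{lem:poly1}.
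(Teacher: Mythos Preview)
Your proof is correct and follows essentially the same approach as the paper: identify $P\odot_y Q - P\odot_x Q$ with $J_yF - J_xF$ for $F=PQ$, bound $\|F\|_{\dot C^m}$ on a ball of radius $\delta$ about $x$ (hence $\|F\|_{C^{m-1,1}}$ via \eqref{eqn:Cm_norm_bd}), and apply Taylor's theorem. The only cosmetic difference is that the paper first rescales to $x=0$, $\delta=1$ and then bounds the coefficients of $F$, whereas you work directly with the Leibniz expansion and the pointwise estimate $|\partial^\gamma P(z)|\le C\delta^{m-|\gamma|}|P|_{x,\delta}$; both routes yield controlled constants.
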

\begin{proof}
By translating and rescaling, we reduce matters to the case $x=0$, $\delta=1$. Write $|\cdot| = |\cdot|_{0,1}$ for the standard norm on $\Po$. Fix $P,Q \in \Po$ with $|P| \leq 1$, $|Q| \leq 1$.  Then $P(z)= \sum_{|\alpha| \leq m-1} c_\alpha z^\alpha$ and $Q(z) = \sum_{|\alpha| \leq m-1} d_\alpha z^\alpha$, with $|c_\alpha|$, $|d_\alpha|$ each bounded by a controlled constant. Our task is to show that $|P \odot_y Q - P \odot_0 Q|_{0,\rho} \leq  C$ for $|y| \leq \rho \leq 1$.

Let $B = B(0,1)$ be the closed unit ball in $\R^n$ of radius $1$ centered at $0$. 

Let $F(z) = P(z)Q(z)$. Then $F$ is a polynomial of degree at most $2m-2$ of the form
\[
F(z) = \sum_{|\alpha| \leq 2m-2} f_\alpha z^\alpha, \;\; |f_\alpha| \leq C, \;\; C \mbox{ controlled}.
\]
Each of the monomial functions $z \mapsto z^\alpha$ is in $\dot{C}^m(B)$ with $\dot{C}^m$ seminorm bounded by a controlled constant. Thus,  $\| F \|_{\dot{C}^m(B)} \leq C'$ for a controlled constant $C'$. Using \eqref{eqn:Cm_norm_bd}, we deduce that $F$ is in $C^{m-1,1}(B)$ and $\| F \|_{C^{m-1,1}(B)} \leq C$ for a controlled constant $C$.

By Taylor's theorem \eqref{eqn:Taylor}, we have
\[
|(P\odot_0 Q) - (P\odot_y Q)|_{0,\rho} = |J_0F - J_yF|_{0,\rho}  \le C_T \|F\|_{C^{m-1,1}(B)} \le C_T C
\]
for $1 \geq \rho \geq |y|$. This completes the proof of the lemma.

\end{proof}

\begin{lemma}[cf. Lemma 2.2 of \cite{coordinateFree}] \label{lem:jetprodbd}
Fix polynomials $P_x, Q_x, R_x$ and $P_y, Q_y, R_y$ in $\Po$, for $|x-y|\le\rho\le\delta$. Suppose that $P_x,P_y\in M_0 \B_{x,\delta}$, $Q_x,Q_y \in M_1 \B_{x,\delta}$, and $R_x, R_y \in M_2 \B_{x,\delta}$. Also suppose that $P_x - P_y \in M_0 \B_{x,\rho}$, $Q_x-Q_y\in M_1\B_{x,\rho}$, and $R_x-R_y \in M_2 \B_{x,\delta}$. Then
\[
|P_x\odot_x Q_x \odot_x R_x - P_y \odot_y Q_y \odot_y R_y|_{x,\rho} \le C \delta^{2m}M_0M_1M_2,
\]
where $C$ is a controlled constant.
\end{lemma}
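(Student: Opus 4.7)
The natural approach is a telescoping decomposition into four terms: three that swap the polynomials $P_y,Q_y,R_y$ for their counterparts $P_x,Q_x,R_x$ one at a time using the jet product based at $x$, and a final one that handles the change of basepoint from $y$ to $x$. Explicitly, I would write
\[
P_x\odot_x Q_x \odot_x R_x - P_y \odot_y Q_y \odot_y R_y = T_1 + T_2 + T_3 + T_4
\]
with
\begin{align*}
T_1 &= (P_x - P_y)\odot_x Q_x\odot_x R_x, \\
T_2 &= P_y\odot_x(Q_x-Q_y)\odot_x R_x, \\
T_3 &= P_y\odot_x Q_y\odot_x(R_x-R_y), \\
T_4 &= P_y\odot_x Q_y\odot_x R_y - P_y\odot_y Q_y\odot_y R_y.
\end{align*}
(For the estimate on $T_3$ to yield $\delta^{2m}$ rather than $(\delta/\rho)^m\delta^{2m}$, I read the hypothesis on $R_x-R_y$ as being at the finer scale $\rho$, so that all three differences are controlled uniformly in $\B_{x,\rho}$.)

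For each of $T_1,T_2,T_3$, two applications of Lemma \ref{lem: poly 2} suffice, always placing the ``difference'' polynomial (controlled at scale $\rho$) into the scale-$\rho$ slot and the remaining two polynomials (controlled at scale $\delta$) into scale-$\delta$ slots. For example, for $T_1$ I would first bound $|(P_x-P_y)\odot_x R_x|_{x,\rho}\le C\delta^m|R_x|_{x,\delta}|P_x-P_y|_{x,\rho}\le C\delta^m M_0M_2$, and then $|T_1|_{x,\rho}=|Q_x\odot_x((P_x-P_y)\odot_x R_x)|_{x,\rho}\le C\delta^m|Q_x|_{x,\delta}|(P_x-P_y)\odot_x R_x|_{x,\rho}\le C'\delta^{2m}M_0M_1M_2$. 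The estimates for $T_2,T_3$ are analogous.

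The main obstacle is $T_4$, which encodes only the change of basepoint from $x$ to $y$ and is where Lemma \ref{lem: poly 3} enters. I would further decompose
\[
T_4 = \bigl[(P_y\odot_x Q_y)\odot_x R_y - (P_y\odot_x Q_y)\odot_y R_y\bigr] + \bigl[(P_y\odot_x Q_y - P_y\odot_y Q_y)\odot_y R_y\bigr].
\]
The first bracket is immediately handled by Lemma \ref{lem: poly 3} with $P\mapsto P_y\odot_x Q_y$ and $Q\mapsto R_y$, which gives a bound of $C\delta^m|P_y\odot_x Q_y|_{x,\delta}|R_y|_{x,\delta}$; Bombieri at scale $\delta$ (Lemma \ref{lem:bombieri}, after rescaling) bounds the first factor by $C_b\delta^m M_0M_1$. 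For the second bracket, I would replace the outer $\odot_y$ by $\odot_x$, paying an error term that is itself controlled by Lemma \ref{lem: poly 3} applied to the product of $(P_y\odot_x Q_y-P_y\odot_y Q_y)$ with $R_y$, and then estimate the leading piece using Lemma \ref{lem: poly 2}; the key input is $|P_y\odot_x Q_y-P_y\odot_y Q_y|_{x,\rho}\le C\delta^m M_0M_1$, which is a direct application of Lemma \ref{lem: poly 3}. The main bookkeeping subtlety throughout is to ensure that each step contributes exactly one factor of $\delta^m$, so that the total bound comes out to $\delta^{2m}$ and not $\delta^{3m}$.
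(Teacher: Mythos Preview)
Your proof is correct and, in fact, supplies the details that the paper omits: the paper's own proof simply cites Lemma 2.2 of \cite{coordinateFree} and checks that the constants arising there (those of Lemmas \ref{lem:poly1}, \ref{lem: poly 2}, \ref{lem: poly 3}) are controlled. Your telescoping decomposition into $T_1,T_2,T_3$ plus a basepoint-change term $T_4$, with $T_1,T_2,T_3$ handled by two applications of Lemma \ref{lem: poly 2} and $T_4$ handled by Lemma \ref{lem: poly 3}, is exactly the argument one expects that cited proof to contain, so the approaches are essentially the same.

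Your reading of the hypothesis on $R_x-R_y$ as $M_2\B_{x,\rho}$ rather than $M_2\B_{x,\delta}$ is also correct and worth flagging: as literally written the statement is false (take $m=2$, $P_x=P_y=Q_x=Q_y=1$, $R_x=1$, $R_y=0$, and $\rho\ll\delta$), and the $\delta$ is a typo carried over from the statement. One small remark on your treatment of the error piece $S\odot_y R_y - S\odot_x R_y$ inside $T_4$: to apply Lemma \ref{lem: poly 3} you need $|S|_{x,\delta}$, not $|S|_{x,\rho}$, but this follows immediately from $|S|_{x,\delta}\le|S|_{x,\rho}$ (see \eqref{eqn:norm_scale}), so there is no gap.
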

\begin{proof}
This lemma is identical to Lemma 2.2 in \cite{coordinateFree} with the additional claim that the constant $C$ is controlled. To see that this is true, we examine the proof of Lemma 2.2 in \cite{coordinateFree}. Note that $C$ is a product of a finite number (independent of $D$) of the constants appearing in Lemma 2.1 in \cite{coordinateFree}. Lemmas \ref{lem:poly1}, \ref{lem: poly 2}, and \ref{lem: poly 3} of this paper show that we can take these constants to be controlled.
\end{proof}

\begin{lemma}[cf. equation (2.4) of \cite{coordinateFree}]\label{lem:poly2}
If $|x-y| \le \lambda \delta$ for $\lambda \ge 1$, then for any $P\in\Po$,
\begin{equation*}
    |P|_{y,\delta}\le C' \lambda^{m-1}|P|_{x,\delta} 
\end{equation*}
for a controlled constant $C'$. Consequently,
\[
\cB_{x,\delta} \subseteq C' \lambda^{m-1} \cB_{y,\delta}.
\]
\end{lemma}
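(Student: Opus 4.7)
The plan is to reduce to the normalized setting $x=0$, $\delta=1$ by translation and dilation, since the norms $|\cdot|_{x,\delta}$ transform covariantly under $\tau_{x,\delta}$ (see \eqref{eqn:ball_scale_id} and the definition of the scaled norm). After this reduction the task becomes: if $|y| \le \lambda$ with $\lambda \ge 1$, then $|P|_{y} \le C' \lambda^{m-1} |P|_{0}$ for a controlled constant $C'$, where $|\cdot|_0 = |\cdot|_{0,1}$.

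Next I expand $\partial^\alpha P(y)$ around $0$ using the exact (finite) Taylor expansion:
\[
\partial^\alpha P(y) = \sum_{\gamma:\, |\alpha+\gamma| \le m-1} \frac{y^\gamma}{\gamma!}\, \partial^{\alpha+\gamma} P(0).
\]
Using $|y^\gamma| \le |y|^{|\gamma|} \le \lambda^{|\gamma|} \le \lambda^{m-1}$ (this is where the $\lambda^{m-1}$ factor appears) and Cauchy--Schwarz over the at most $D$ terms in the sum, I get
\[
(\partial^\alpha P(y))^2 \;\le\; D\,\lambda^{2(m-1)} \sum_{\gamma:\, |\alpha+\gamma|\le m-1} \frac{(\partial^{\alpha+\gamma}P(0))^2}{(\gamma!)^2}.
\]

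Summing $\frac{1}{(\alpha!)^2}$ times this over $|\alpha|\le m-1$ and regrouping by $\beta := \alpha+\gamma$, one can rewrite the coefficients via the multinomial identity $\frac{1}{\alpha!\,\gamma!} = \frac{1}{\beta!}\binom{\beta}{\alpha}$. This leads to the bound
\[
|P|_y^2 \;\le\; D\,\lambda^{2(m-1)} \sum_{|\beta|\le m-1} \frac{(\partial^\beta P(0))^2}{(\beta!)^2}\sum_{\alpha \le \beta}\binom{\beta}{\alpha}^2,
\]
and the inner combinatorial sum is at most $\bigl(\sum_\alpha \binom{\beta}{\alpha}\bigr)^2 = 2^{2|\beta|} \le 4^{m-1}$. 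Therefore $|P|_y \le \sqrt{D}\, 2^{m-1}\,\lambda^{m-1} |P|_0$, and the prefactor $\sqrt{D}\,2^{m-1}$ is $O(\exp(\poly(D)))$, hence controlled. Undoing the rescaling yields the first assertion with the stated controlled constant $C'$. The ball inclusion $\cB_{x,\delta} \subseteq C'\lambda^{m-1}\cB_{y,\delta}$ follows immediately: if $|P|_{x,\delta} \le 1$ then by what we just proved $|P|_{y,\delta} \le C'\lambda^{m-1}$, i.e.\ $P \in C'\lambda^{m-1}\cB_{y,\delta}$.

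The only real obstacle is bookkeeping: one must make sure the multinomial and counting factors combine to give a bound of the form $\exp(\poly(D))$ rather than something factorial-in-$m$ that would violate the ``controlled'' convention. The computation above shows this is not an issue, as all such factors are bounded by $D^{O(1)}\cdot 4^{m-1}$, and $m \le D$.
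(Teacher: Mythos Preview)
Your proof is correct. The reduction to $x=0$, $\delta=1$ is valid (under $\tilde P = \tau_{x,\delta}P$ one has $|P|_{x,\delta}=|\tilde P|_x$ and $|P|_{y,\delta}=|\tilde P|_{\tilde y}$ with $\tilde y = x+\delta^{-1}(y-x)$, so $|\tilde y - x|\le\lambda$; then translate), and your Taylor-expansion/Cauchy--Schwarz computation goes through with the controlled constant $C'=\sqrt{D}\,2^{m-1}$.

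The paper takes a different, shorter route: it bootstraps from Lemma~\ref{lem:poly1} (the case $|x-y|\le\eta\delta$ with $\eta\le 1$) by rescaling. Using \eqref{eqn:norm_scale} twice and Lemma~\ref{lem:poly1} in between,
\[
|P|_{y,\delta}\le \lambda^m |P|_{y,\lambda\delta}\le (1+C)\,\lambda^m |P|_{x,\lambda\delta}\le (1+C)\,\lambda^{m-1}|P|_{x,\delta},
\]
where the middle step is Lemma~\ref{lem:poly1} applied at scale $\lambda\delta$ (so $|x-y|\le\lambda\delta$ gives $\eta\le 1$). This avoids all the multinomial bookkeeping and makes the $\lambda^{m-1}$ exponent transparent as $\lambda^m\cdot\lambda^{-1}$ from the two scalings. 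Your direct approach is more self-contained (it does not rely on Lemma~\ref{lem:poly1}) and gives an explicit constant, at the cost of more computation; the paper's approach is cleaner once Lemma~\ref{lem:poly1} is in hand.
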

\begin{proof}
Apply \eqref{eqn:norm_scale} twice and Lemma \ref{lem:poly1} to get:
\[
|P|_{y,\delta} \le \lambda^m |P|_{y,\lambda \delta} \le (1+C) \lambda^m|P|_{x,\lambda\delta}\le (1+C) \lambda^{m-1}|P|_{x,\delta},
\]
where $C$ is the controlled constant from Lemma \ref{lem:poly1}.
\end{proof}

\subsubsection{Whitney Covers and Partitions of Unity}
\begin{lemma}\label{lem:theta_bound}
For any ball $B \subseteq \R^n$ and any $0<r<1$ there exists a cutoff function $\theta \in C^{m}(\R^n)$ with $\theta \equiv 0$ on $\R^n \backslash B$, $\theta \equiv 1$ on $(1-r)B$, $\|\partial^\alpha\theta\|_{L^\infty(\R^n)}\le C_{\theta, 1}(r) \diam(B)^{-|\alpha|}$ for any $|\alpha| \le m$, where $C_{\theta, 1}(r) := 9\frac{(4m)^{4m}}{r^m}$.
\end{lemma}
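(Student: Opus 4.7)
The plan is to take $\theta$ to be a smooth radial function of $|x|^2$. After translating so that $B = B(0,R)$ is centered at the origin, I would set $\theta(x) := \phi(|x|^2/R^2)$, where $\phi : \R \to [0,1]$ is a $C^m$ cutoff equal to $1$ on $(-\infty, (1-r)^2]$ and vanishing on $[1, \infty)$. Since $x \mapsto |x|^2/R^2$ is a polynomial, $\theta$ inherits the regularity of $\phi$ globally on $\R^n$ (with no artificial singularity at the origin), and the support conditions are immediate: $\theta \equiv 0$ off $B$ (where $|x|^2/R^2 > 1$) and $\theta \equiv 1$ on $(1-r)B$ (where $|x|^2/R^2 \le (1-r)^2$).

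The one-dimensional cutoff $\phi$ is built by iterated mollification. Set $r' := 1 - (1-r)^2$ (so $r \le r' \le 2r$), $s := r'/(2(m+1))$, $\eta_s := (2s)^{-1}\mathbf{1}_{[-s,s]}$, and $\rho := \eta_s^{*(m+1)}$. Then $\rho \ge 0$ has total mass $1$, is of class $C^{m-1}$, and is supported in $[-r'/2, r'/2]$. With $c$ the midpoint of $[(1-r)^2, 1]$, define $\phi(t) := \int_t^\infty \rho(u-c)\,du$; this function is $C^m$ with $\phi^{(k)}(t) = -\rho^{(k-1)}(t-c)$ for $1 \le k \le m$. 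To bound $\|\rho^{(j)}\|_\infty$, distribute derivatives as $\rho^{(j)} = (\eta_s^{*j})^{(j)} * \eta_s^{*(m+1-j)}$ and apply $\|\mu * f\|_\infty \le \|\mu\|_{\mathrm{TV}}\|f\|_\infty$. The bound $\|(\eta_s^{*j})^{(j)}\|_{\mathrm{TV}} \le s^{-j}$ follows by induction from the base case $\eta_s' = (2s)^{-1}(\delta_{-s} - \delta_s)$ (which has TV norm $s^{-1}$), using the submultiplicativity of the TV norm under convolution. Combined with $\|\eta_s^{*(m+1-j)}\|_\infty \le (2s)^{-1}$, this gives $\|\rho^{(j)}\|_\infty \le (2s^{j+1})^{-1}$, and since $s \ge r/(4m)$, we obtain the uniform bound $\|\phi^{(k)}\|_\infty \le C_\phi := (4m/r)^m$ for all $0 \le k \le m$.

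Finally, for the derivative bound on $\theta$, I would apply the multivariate Faà di Bruno formula to $\theta = \phi \circ u$ with $u(x) := |x|^2/R^2$. Because $u$ is quadratic, $\partial^\beta u \equiv 0$ for $|\beta| \ge 3$, and on $B$ one has $|\partial^\beta u(x)| \le 2/R^{|\beta|}$ for $|\beta| \in \{1,2\}$. The formula expresses $\partial^\alpha \theta$ as a sum, over set partitions $\pi$ of the $\ell := |\alpha|$ derivative slots, of terms $\phi^{(|\pi|)}(u(x)) \prod_{B \in \pi} \partial^{\alpha_B} u(x)$. Only partitions with all blocks of size at most $2$ contribute, and each surviving term is bounded by $C_\phi \cdot 2^{|\pi|}/R^\ell \le C_\phi \cdot 2^\ell/R^\ell$. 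The number of such set partitions is at most $\ell! \le m^m$, and using $R^{-\ell} = 2^\ell \diam(B)^{-\ell}$ together with $\ell \le m$ yields
\[
|\partial^\alpha \theta(x)| \le C_\phi \cdot m^m \cdot 4^\ell \diam(B)^{-\ell} \le (4m)^{2m}/\bigl(r^m \diam(B)^\ell\bigr) \le 9(4m)^{4m}/\bigl(r^m \diam(B)^\ell\bigr),
\]
which is the required estimate (the case $\ell = 0$ is trivial from $|\theta| \le 1$). No step of this plan is conceptually deep; the main point requiring care is keeping the explicit constants clean through the TV-norm bookkeeping and the Faà di Bruno combinatorics, which is what the $9(4m)^{4m}$ slack is used to absorb.
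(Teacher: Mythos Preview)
Your proof is correct and follows the same high-level scheme as the paper: write $\theta(x)=\phi(|x|^2/R^2)$ for a one-dimensional transition profile $\phi$ and then bound $\partial^\alpha\theta$ by the chain rule. The difference is in how $\phi$ is built. The paper integrates the $C^\infty$ bump $\psi(t)=e^{-1/t}e^{-1/(1-t)}$ and bounds $\psi^{(k)}$ by a direct analysis using $t^Ke^{-t}\le K^K$; you instead take $\phi$ to be the antiderivative of an $(m+1)$-fold self-convolution of the box function $\eta_s$ (a cardinal B-spline) and bound $\rho^{(j)}$ via the total-variation estimate $\|(\eta_s')^{*j}\|_{\mathrm{TV}}\le s^{-j}$. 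Your construction is slightly more elementary---no transcendental functions, only the $C^m$ regularity actually required---and in fact delivers the sharper intermediate constant $(4m)^{2m}/r^m$ before invoking the stated $9(4m)^{4m}$ slack; the paper's version has the minor advantage that its $\theta$ is $C^\infty$. For the multivariable step you invoke the set-partition form of Fa\`a di Bruno, whereas the paper unwinds the same computation by an explicit product-rule induction on $|\alpha|$; these are equivalent bookkeeping devices.
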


\begin{proof}
By translating and rescaling it suffices to construct $\theta$ supported on the unit ball $B= \{ x : |x| \leq 1 \}$.

Let $\psi : \R \rightarrow \R_+$ be given by $\psi(x) = e^{-x^{-1}} e^{-(1-x)^{-1}}$ for $x \in (0,1)$, and $\psi(x) = 0$ for $x \notin (0,1)$. Evidently, $\psi \in C^\infty(\R)$, and $\psi^{(k)}(0)  = \psi^{(k)}(1) = 0$ for all $k \geq 0$. By the product rule, for $x \in (0,1)$, $\psi^{(k)}(x)$ is the sum of at most $2^k$ terms of the form $f_{i,j}(x) = \frac{d^i}{dx^i}(e^{-x^{-1}}) \frac{d^j}{dx^j}(e^{-(1-x)^{-1}})$ with $i+j = k$. By induction on $i$, $\frac{d^i}{dx^i}(e^{-x^{-1}})$ is the sum of at most $2^i$ terms of the form $h_{w,r,s}(x) = w x^{-2s - r} e^{-x^{-1}}$ for integers $r,s$ with $r+s=i$, and real $w$ with $|w| \leq (2s+r)^{r}$. Using the bound $t^K e^{-t} \leq K^K$ ($t,K>0$), we find $|h_{w,r,s}(x)| \leq |w| (2s+r)^{2s+r} \leq (2s+r)^{2s+2r} \leq (2i)^{2i}$, and thus $|\frac{d^i}{dx^i}(e^{-x^{-1}})| \leq 2^i (2i)^{2i} = 8^i i^{2i}$ for $x>0$. Similarly, $|\frac{d^j}{dx^j}(e^{-(1-x)^{-1}})| \leq 8^j j^{2j}$ for $x < 1$. Thus, $|f_{i,j}(x)| \leq 8^{i+j} \max\{i,j\}^{2(i+j)}$ for $x \in (0,1)$. We deduce that  $\| \psi^{(k)} \|_{L^\infty(\R)} \leq 2^k  8^k k^{2k} = (4k)^{2k}$ for $k \geq 0$. 

Note that $\gamma := \int_{-\infty}^\infty \psi(t) dt \geq  \frac{1}{3} e^{-2/3}  \geq \frac{1}{9}$. Now, let 
\[
\upsilon(x) := \gamma^{-1} \int_{-\infty}^x \psi(t) dt.
\]
Then $\upsilon(t) = 0$ for $t \leq 0$, $\upsilon(t) = 1$ for $t \geq 1$, and $\upsilon^{(k)}(0) = \upsilon^{(k)}(1) = 0$ for $k \geq 1$. Finally, $\| \upsilon^{(k)} \|_{L^\infty(\R)} \leq 9 \cdot (4k)^{2k}$ for $k \geq 0$; here, our convention is that $0^0 = 1$.

For $0 < \eta < 1$ let $\varphi_\eta:\R^+\rightarrow \R$ given by $\varphi_\eta(t) = v ((1-t)/(1-\eta))$. Then
\begin{enumerate}
    \item $\varphi_\eta(t) = 1$ for $t \leq \eta$,
    \item $\varphi_\eta(t) = 0$ for $t \geq 1$,
    \item $\| \varphi_\eta^{(k)} \|_{L^\infty(\R^+)} \leq 9 \cdot \frac{(4k)^{2k}}{(1-\eta)^k}$ for $k \geq 0$.
\end{enumerate}

Define $\theta:\R^n\rightarrow \R$ by $\theta(x) := \varphi_{(1-r)^2}(|x|^2)$. Note that $\theta(x) \equiv 0$ for $|x| \geq 1$ due to property 2 of $\varphi_\eta$. Furthermore, $\theta(x) \equiv 1$ for $|x| \leq 1-r$, by property 1 of $\varphi_\eta$. By induction on $|\alpha|$, using the product and chain rules, we establish the following claim: For $0 < |\alpha| \leq m$, the function $\partial^\alpha \theta(x)$ is a sum of at most $2^{|\alpha|-1} \leq 2^{m}$ terms of the form $h_{j,\beta}(x) = C_{j,\beta} \varphi_{(1-r)^2}^{(j)}(|x|^2) \cdot x^\beta$ for integers $j\le m$, multindices $\beta$ with $|\beta|\le m$, and constants $C_{j,\beta}$ satisfying $|C_{j,\beta}| \leq m^{|\alpha|} \leq m^m$. If $|x| \leq 1$ then $|x^\beta| \leq 1$. Property 3 of $\varphi_\eta$ implies that, for $|x| \leq 1$ and $|\alpha| \leq m$:
\[
|\partial^\alpha \theta(x)| \le 2^{m} \cdot m^{m} \cdot 9 \frac{(4m)^{2m}}{(1-(1-r)^2)^m} \leq 9 \frac{(4m)^{3m}}{(2r-r^2)^m} \leq 9 \frac{(4m)^{3m}}{r^m}.
\]
(We use $2r - r^2 \geq r$.) Because $\theta(x) \equiv 0$ for $|x| \geq 1$, we conclude that 
\[
\| \partial^\alpha \theta \|_{L^\infty(\R^n)} \leq 9 \frac{(4m)^{3m}}{r^m} \leq 9 \frac{(4m)^{3m}}{r^m} 2^m \diam(B)^{-|\alpha|} \leq 9 \frac{(4m)^{4m}}{r^m} \diam(B)^{-|\alpha|}.
\]
This completes the proof of the lemma.

\end{proof}

\begin{defi}\label{defn:whit_cover}
A finite collection  $\cW$ of closed balls is a \emph{Whitney cover} of a ball $\widehat{B}\subseteq\R^n$ if \begin{enumerate*}[label=\textnormal{(\arabic*)}] \item{$\cW$ is a cover of $\widehat{B}$}, \item{the collection of third-dilates $\{\frac{1}{3}B:B\in\cW\}$ is pairwise disjoint, and} \item{$\diam(B_1)/\diam(B_2) \in [1/8,8]$ for all balls $B_1,B_2\in\cW$ with $\frac{6}{5}B_1 \cap \frac{6}{5}B_2 \ne \emptyset$}. \end{enumerate*}
\end{defi}

\begin{lemma}[Bounded Overlap of Whitney Covers]\label{lem: bounded overlap}
If $\cW$ is a Whitney cover of $\widehat{B}$ then $\#\{B\in\cW : x \in \frac{6}{5}B\}\le 100^n$ for all $x \in \R^n$.
\end{lemma}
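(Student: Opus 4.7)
The plan is a standard volume packing argument exploiting the three defining properties of a Whitney cover in Definition \ref{defn:whit_cover}. Fix $x \in \R^n$ and set $\cW_x := \{B \in \cW : x \in \tfrac{6}{5}B\}$. I may assume $\cW_x \neq \emptyset$, and I will then choose a ball $B_0 \in \cW_x$ of minimum radius, writing $R$ for that radius.

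First I would apply property (3) of Definition \ref{defn:whit_cover} to the pair $(B_0, B)$ for each $B \in \cW_x$: since $x \in \tfrac{6}{5}B_0 \cap \tfrac{6}{5}B$, the ratio $\diam(B)/\diam(B_0)$ lies in $[1/8,8]$, and by the minimality of $R$ the radius $r_B$ of $B$ satisfies $R \leq r_B \leq 8R$. (The role of choosing $B_0$ of minimum diameter is precisely to obtain the uniform lower bound $r_B \geq R$ on all of the balls in $\cW_x$.)

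Next I would pack all of the third-dilates into a single ball centered at $x$. The center $c_B$ of $B$ satisfies $|x - c_B| \leq \tfrac{6}{5} r_B \leq \tfrac{48}{5} R$, so
$$\tfrac{1}{3}B \subseteq B\bigl(x,\; \tfrac{48}{5}R + \tfrac{1}{3} r_B\bigr) \subseteq B\bigl(x,\; \tfrac{48}{5}R + \tfrac{8}{3}R\bigr) = B\bigl(x,\; \tfrac{184}{15} R\bigr).$$
By property (2) the balls $\{\tfrac{1}{3}B\}_{B \in \cW_x}$ are pairwise disjoint, and by the previous step each has radius at least $R/3$. Comparing Lebesgue measures in $\R^n$ yields
$$\#\cW_x \cdot \bigl(\tfrac{R}{3}\bigr)^n \leq \bigl(\tfrac{184}{15} R\bigr)^n,$$
hence $\#\cW_x \leq (184/5)^n < 37^n \leq 100^n$, as required.

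There is no real obstacle: the argument is two short steps (use property (3) to get comparable radii, then use property (2) together with a volume comparison). The constant $100^n$ has generous slack, so minor perturbations in the choice of constants along the way do not affect the conclusion.
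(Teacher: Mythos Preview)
Your proof is correct: it is the standard volume-packing argument using properties (2) and (3) of Definition~\ref{defn:whit_cover}, and the arithmetic checks out. The paper itself gives no proof here but simply cites Lemma~2.14 of \cite{coordinateFree}, whose argument is essentially the same as yours.
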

\begin{proof}
See Lemma 2.14 of \cite{coordinateFree} for the proof.
\end{proof}

\begin{lemma}[Partitions of Unity adapted to Whitney Covers -- cf. Lemma 2.15 of \cite{coordinateFree}]\label{lem: p of u}
If $\W$ is a Whitney cover of $\widehat{B}$, then for each $B\in\W$ there exists a non-negative $C^\infty$ function $\theta_B:\widehat{B}\rightarrow[0,\infty)$ such that
\begin{enumerate}
    \item $\theta_B = 0$ on $\widehat{B} \backslash \frac{6}{5}B$.
    \item $|\partial^\alpha \theta_B(x)|\le C \diam(B)^{-|\alpha|}$ for all $|\alpha| \le m$ and $x \in \widehat{B}$.
    \item $\sum_{B\in\W} \theta_B = 1$ on $\widehat{B}$.
\end{enumerate}
Here, $C$ is a controlled constant.
\end{lemma}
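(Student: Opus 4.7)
The construction follows a standard normalization scheme: build unnormalized bumps, sum them, and divide out.

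First, for each $B \in \W$, I would invoke Lemma~\ref{lem:theta_bound} on the enlarged ball $\tfrac{6}{5}B$ with shrink parameter $r = 1/6$ (chosen so that $(1-r)\cdot\tfrac{6}{5}B = B$). This produces a smooth bump $\theta_B^0$ with $\theta_B^0 \equiv 1$ on $B$, $\theta_B^0 \equiv 0$ outside $\tfrac{6}{5}B$, and $\|\partial^\alpha \theta_B^0\|_{L^\infty(\R^n)} \le C_0\, \diam(B)^{-|\alpha|}$ for $|\alpha|\le m$, where $C_0$ depends only on $m$ and is controlled (since $C_{\theta,1}(1/6) = 9\cdot 6^m (4m)^{4m} \le \exp(\poly(D))$).

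Next, I would define $\Theta(x) := \sum_{B \in \W} \theta_B^0(x)$ and establish $1 \le \Theta \le 100^n$ on $\widehat{B}$. The lower bound uses the covering property: any $x \in \widehat{B}$ lies in some $B \in \W$ where $\theta_B^0(x)=1$. The upper bound uses the bounded overlap estimate of Lemma~\ref{lem: bounded overlap}. For derivative bounds, I would exploit the good-geometry property: if $x \in \tfrac{6}{5}B$ for some reference $B \in \W$, then every $B' \in \W$ with $x \in \tfrac{6}{5}B'$ satisfies $\diam(B')\in[\tfrac18,8]\cdot\diam(B)$, and by bounded overlap only $100^n$ such $B'$ contribute to $\Theta(x)$. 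This yields $|\partial^\alpha \Theta(x)| \le C_1 \diam(B)^{-|\alpha|}$ for $x \in \tfrac{6}{5}B$ and $|\alpha|\le m$, with $C_1$ controlled.

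Finally, I would set $\theta_B := \theta_B^0 / \Theta$ on $\widehat{B}$. Properties (1) and (3) are immediate from the support of $\theta_B^0$ and the definition of $\Theta$. For (2), I would combine the Leibniz rule with derivative bounds on $1/\Theta$: since $\Theta \ge 1$, the Fa\`a di Bruno formula expresses $\partial^\alpha(1/\Theta)$ as a sum of a controlled (in $m$) number of terms of the form $\Theta^{-k-1}\prod_i \partial^{\beta_i}\Theta$ with $\sum_i|\beta_i|=|\alpha|$, giving $|\partial^\alpha(1/\Theta)(x)| \le C_2 \diam(B)^{-|\alpha|}$ on $\tfrac{6}{5}B$. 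Pairing this with the bounds on $\partial^\beta \theta_B^0$ via Leibniz yields (2). The main (minor) obstacle is bookkeeping the combinatorial constants through Fa\`a di Bruno and from $C_{\theta,1}(1/6)$ to confirm everything stays of size $\exp(\poly(D))$; this is routine since $m\le D$ and the number of terms produced is a function of $m,n$ alone, hence controlled.
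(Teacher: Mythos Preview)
Your proposal is correct and matches the paper's proof almost exactly: construct bumps $\psi_B$ via Lemma~\ref{lem:theta_bound} supported in $\tfrac{6}{5}B$ and equal to $1$ on $B$, set $\Psi=\sum_B\psi_B$, and normalize $\theta_B=\psi_B/\Psi$, with properties (1) and (3) immediate and (2) following from bounded overlap, good geometry, and product/quotient-rule bounds on $1/\Psi$. The only cosmetic difference is that the paper says ``repeated application of the quotient rule'' where you invoke Fa\`a di Bruno.
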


\begin{proof} Use Lemma \ref{lem:theta_bound} to obtain a function $\psi_B:\R^n\rightarrow\R$ for each $B \in \cW$ satisfying \begin{enumerate*}[label=\textnormal{(\arabic*)}]\item{$\supp(\psi_B)\subseteq \frac{6}{5}B$},\item{$\psi_B = 1$ on $B$}, and \item{$\|\partial^\alpha \psi_B\|_{L^\infty} \le C \diam(B)^{-|\alpha|}$ for all $|\alpha| \leq m$}\end{enumerate*}, for a controlled constant $C$.

Set $\Psi := \sum_{B\in\cW}\psi_B$ and define
\begin{equation}\label{eq: theta_B}
\theta_B(x):= \psi_B(x)/\Psi(x), \quad x\in\widehat{B}.
\end{equation}
Since each point in $\widehat{B}$ belongs to some $B\in\cW$, $\Psi\ge 1$ on $\widehat{B}$ and thus $\theta_B$ is well-defined on $\widehat{B}$. Property 1 follows from the fact that $\psi_B$ is supported on $\frac{6}{5}B$. Property 3 follows because $\sum_{B\in\cW} \theta_B = \sum_{\B\in\cW} \psi_B/\Psi =1$ on $\widehat{B}$.

Property 2 is valid if $x \in \widehat{B}\backslash \frac{6}{5}B$ since then $J_x(\theta_B) = 0$. Now fix $x \in \frac{6}{5}B \cap \widehat{B}$. If $\psi_{B'}(x) \ne 0$ for some $B'$, then $x \in \frac{6}{5} B'$, so $\frac{6}{5}B \cap \frac{6}{5} B' \ne \emptyset$, and hence, $\diam(B)/\diam(B') \in [\frac{1}{8}, 8]$ by definition of Whitney covers. By Lemma \ref{lem: bounded overlap}, the cardinality of $\cW_x:=\{B' \in \cW : x \in \frac{6}{5}B'\}$ is $\leq 100^n$. Therefore,
\begin{equation}\label{eq: Psi bound}
|\partial^\alpha\Psi(x)| \le \sum_{B' \in W_x} |\partial^\alpha \psi_{B'}(x)| \le \sum_{B' \in W_x} C \diam(B')^{-|\alpha|} \le C' \diam(B)^{-|\alpha|}
\end{equation}
for controlled constants $C$, $C'$. Given \eqref{eq: Psi bound} and the fact that $\Psi\ge 1$ on $\widehat{B}$, by repeated application of the quotient rule we obtain $| \partial^\gamma(1/\Psi(x))| \leq C'' \diam(B)^{-|\gamma|}$ for $|\gamma| \leq m$ for a controlled constant $C''$. By application of the product rule to \eqref{eq: theta_B}, we see that $|\partial^\alpha \theta_B(x)|$ is bounded above by a sum of $2^{|\alpha|}$ terms of the form
\[
|\partial^\beta \psi_B(x)|\cdot | \partial^\gamma(1/\Psi(x))|, \mbox{ where } \beta+\gamma = \alpha.
\]
Given $|\partial^\beta \psi_B(x)| \le C \diam(B)^{-|\beta|}$ we conclude that $|\partial^\alpha\theta_B(x)| \le C''' \diam(B)^{-|\alpha|}$ for a controlled constant $C'''$. This finishes the proof of property 2.
\end{proof}

\begin{lemma}[Gluing lemma -- cf. Lemma 2.16 of \cite{coordinateFree}]\label{lem:glue}
Fix a Whitney cover $\W$ of $\widehat{B}$, a partition of unity $\{\theta_B\}_{B\in\W}$ as in Lemma \ref{lem: p of u}, and points $x_B \in \frac{6}{5}B$ for each $B \in \W$. Suppose $\{F_B\}_{B\in\W}$ is a collection of functions in $C^{m-1,1}(\R^n)$ with the following properties:
\begin{itemize}
    \item $\|F_B\|_{C^{m-1,1}(\R^n)} \le M_0$
    \item $F_B = f$ on $E \cap \frac{6}{5}B$.
    \item $|J_{x_B}F_B - J_{x_{B'}}F_{B'}|_{x_B, \diam(B)}\le M_0$ whenever $\frac{6}{5}B \cap \frac{6}{5} B' \ne \emptyset$.
\end{itemize}
Let $F = \sum_{B\in\W} \theta_B F_B$. Then $F\in C^{m-1,1}(\widehat{B})$ with $F=f$ on $E\cap \widehat{B}$ and $\|F\|_{C^{m-1,1}(\widehat{B})} \le C M_0$, where $C$ is a controlled constant.
\end{lemma}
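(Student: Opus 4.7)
My plan is to first verify $F|_{E \cap \widehat B} = f$. For $x \in E \cap \widehat B$, the support condition $\theta_B \equiv 0$ on $\widehat B \setminus \frac{6}{5}B$ restricts $F(x) = \sum_B \theta_B(x) F_B(x)$ to balls with $x \in \frac{6}{5}B$; for each such $B$ the second hypothesis gives $F_B(x) = f(x)$, and combining with $\sum_B \theta_B(x) = 1$ yields $F(x) = f(x)$.

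\textbf{Reduction to a jet-Whitney seminorm.} For the norm bound, I would pass to the equivalent jet-Whitney seminorm
$$\|G\|_{C^{m-1,1}(\widehat B)} \;\asymp\; \sup_{x \ne y \in \widehat B} |J_x G - J_y G|_{x, |x-y|},$$
an equivalence that follows, up to controlled constants, from Proposition \ref{lem:TT} in one direction and from Proposition \ref{prop:cwet} together with Lemma \ref{lem:dom_ext} in the other. It then suffices to bound the right-hand side for $G = F$ by a controlled multiple of $M_0$. The Leibniz rule gives $J_z F = \sum_B J_z \theta_B \odot_z J_z F_B$, and because $\sum_B \theta_B \equiv 1$ on $\widehat B$ we also have $\sum_B J_z \theta_B = 1$. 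Fix $x, y \in \widehat B$, let $\rho := |x-y|$, and pick $B_1 \in \cW$ with $x \in B_1$ and $B_2 \in \cW$ with $y \in B_2$. Telescoping around $F_{B_1}$ at $x$ and $F_{B_2}$ at $y$ produces
$$J_x F - J_y F = \bigl[J_x F_{B_1} - J_y F_{B_2}\bigr] + \sum_B J_x\theta_B \odot_x (J_x F_B - J_x F_{B_1}) - \sum_B J_y\theta_B \odot_y (J_y F_B - J_y F_{B_2}),$$
where each sum has at most $100^n$ nonzero summands by Lemma \ref{lem: bounded overlap}.

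\textbf{Bounding the pieces and the main obstacle.} Each summand in the first sum satisfies $\frac{6}{5}B \cap B_1 \ne \emptyset$, so the Whitney cover geometry (Definition \ref{defn:whit_cover}) gives $\diam B \asymp \diam B_1$, and hypothesis (3) combined with Taylor's theorem and Lemma \ref{lem:poly1} yields $|J_x F_B - J_x F_{B_1}|_{x, \diam B_1} \le C M_0$; the derivative bounds on $\theta_B$ from Lemma \ref{lem: p of u} give $|J_x \theta_B|_{x, \diam B_1} \le C \diam(B_1)^{-m}$ together with Lipschitz-in-base-point estimates via Taylor. I would then apply Lemma \ref{lem:jetprodbd} (specialized to a two-factor form by taking the third factor to be the constant $1$) to bound each summand in $|\cdot|_{x,\rho}$ by $C M_0$; the second sum is handled analogously, with a base-point switch from $y$ to $x$ via Lemma \ref{lem:poly1}. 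The bracket is split as $(J_x F_{B_1} - J_y F_{B_1}) + (J_y F_{B_1} - J_y F_{B_2})$, where Taylor immediately bounds the first piece by $C_T M_0$. The second piece is the main obstacle: when $\frac{6}{5}B_1 \cap \frac{6}{5}B_2 \ne \emptyset$, hypothesis (3) and Taylor dispatch it directly, but otherwise $B_1$ and $B_2$ may be geometrically distant and hypothesis (3) does not apply. I would resolve this by chaining through overlapping Whitney balls $B_1 = B^{(0)}, B^{(1)}, \ldots, B^{(N)} = B_2$ in $\cW$ with $\frac{6}{5}B^{(i)} \cap \frac{6}{5}B^{(i+1)} \ne \emptyset$, applying hypothesis (3) to each consecutive pair and using the norm monotonicity \eqref{eqn:ball_inc} to absorb the accumulated transitions into the scale $\rho \gtrsim \diam B^{(i)}$ at each step. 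The chain argument keeps the total contribution proportional to $M_0$ with a controlled multiplicative factor, and careful bookkeeping of all intermediate constants from Propositions \ref{lem:TT}, \ref{prop:cwet} and Lemmas \ref{lem:poly1}--\ref{lem:jetprodbd}, \ref{lem: bounded overlap}, \ref{lem: p of u} yields the final estimate $\|F\|_{C^{m-1,1}(\widehat B)} \le C M_0$ with $C$ controlled.
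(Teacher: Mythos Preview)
Your chaining step is the gap. When $\frac{6}{5}B_1 \cap \frac{6}{5}B_2 = \emptyset$ you build a chain $B^{(0)},\dots,B^{(N)}$ of $\tfrac{6}{5}$-overlapping Whitney balls and invoke \eqref{eqn:ball_inc} to transfer each link to scale $\rho$. But \eqref{eqn:ball_inc} only gives $|P|_{x,\rho}\le |P|_{x,\diam B^{(i)}}$, so each link still contributes $\le C M_0$ and the sum is $\le C N M_0$; the chain length $N$ is bounded only by $\#\cW$, which is finite but \emph{not} a controlled constant. Hence the assertion that the total contribution is a controlled multiple of $M_0$ is unsupported. (Moreover, the implicit claim $\rho \gtrsim \diam B^{(i)}$ for \emph{every} link is not automatic: the chain may pass through balls much larger than $\rho$.) A repair via the sharper scaling \eqref{eqn:norm_scale}, which yields a factor $\diam B^{(i)}/\rho$ per link, together with a chain along the segment $[x,y]$ satisfying $\sum_i \diam B^{(i)} \lesssim \rho$, is plausible but requires a separate geometric argument you have not given.

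The paper sidesteps the chain entirely: it proves the jet estimate only for $|x-y|\le \delta_{\min}:=\tfrac{1}{100}\min_{B\in\cW}\diam B$, at which separation any $B_1\ni x$, $B_2\ni y$ are automatically $\tfrac{6}{5}$-neighbors, so hypothesis~(3) applies directly. From this local bound one extracts $|\partial^\alpha F(x)-\partial^\alpha F(y)|\le C' M_0\,|x-y|$ for $|\alpha|=m-1$ and $|x-y|\le\delta_{\min}$, and then the ordinary triangle inequality along $[x,y]\subseteq\widehat B$ upgrades this to all $x,y$ with \emph{no loss of constant}, because the Lipschitz increment is $1$-homogeneous in $|x-y|$. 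This localize-then-globalize trick is the missing idea. A secondary point: bounding your two sums $S_1,S_2$ \emph{separately} in $|\cdot|_{x,\rho}$ is also delicate when $\rho\ll\diam B_1$, since $|J_x(F_B-F_{B_1})|_{x,\rho}$ is not itself controlled; the clean route is to telescope around a single anchor $F_{B_1}$ and apply Lemma~\ref{lem:jetprodbd} to the difference $J_x(\theta_B(F_B-F_{B_1}))-J_y(\theta_B(F_B-F_{B_1}))$ term by term, which again only needs $\rho\le\diam B_1$.
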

\begin{proof}

We sketch the proof, following the proof of Lemma 2.16 in \cite{coordinateFree}, which is identical to Lemma \ref{lem:glue} but without the claim that $C$ is a controlled constant. 

See the proof of Lemma 2.16 of \cite{coordinateFree} for verification that $F= f$ on $E \cap \widehat{B}$. 

The proof of Lemma 2.16 of \cite{coordinateFree} then goes on to show that
\begin{equation}\label{eqn:glue1}
|J_x(F) - J_y(F) |_{x,|x-y|} \leq C M_0 
\end{equation}
whenever $x,y \in \widehat{B}$ with $|x-y| \leq \delta_{\min} := \frac{1}{100} \min \{ \diam(B) : B \in \cW\}$. By definition of the $|\cdot |_{x,\delta}$-norm, \eqref{eqn:glue1} implies the local Lipschitz condition: 
\[
|\partial^\alpha F(x)  - \partial^\alpha F(y)| \leq C' M_0 |x-y| \quad \mbox{for } |\alpha| = m-1,\; x,y \in \widehat{B}, \; |x-y| \leq \delta_{\min}.
\]
Then by the triangle inequality, the Lipschitz constant of $\partial^\alpha F$ on all of $\widehat{B}$ is $ \leq C'M_0$, for each $|\alpha| = m -1$. Therefore, $\| F \|_{C^{m-1,1}(\widehat{B})} \leq C'' M_0$, as desired. 

All that remains is to show that $C$ in \eqref{eqn:glue1} is a controlled constant. From the proof in \cite{coordinateFree}, we note that $C$ is a sum or product of finitely many (independent of $m,n$) of the constants $C_T$ (appearing in Taylor's theorem), $100^n$, $4^m$, and the constants in Lemmas 2.2, 2.15, and equation (2.4) of \cite{coordinateFree}. By Lemmas \ref{lem:jetprodbd}, \ref{lem: p of u}, and \ref{lem:poly2} of the present paper, we see that each of the last three of these constants is controlled. $C_T$ is controlled by Proposition \ref{lem:TT}. Thus, $C$ is a controlled constant.
\end{proof}

\section{Geometry in the Grassmanian}\label{sec:GeomGrass}

Let $(X,\langle \cdot, \cdot \rangle)$ be a real finite-dimensional Hilbert space, and set $d := \dim X$. Denote the norm on $X$ by $| \cdot | = \sqrt{\langle \cdot,\cdot \rangle} $, and let $\cB = \{ x \in X : |x| \leq 1 \}$ be the unit ball of $X$. Write $\cK(X)$ for the collection of all closed, convex, symmetric subsets of $X$. Recall that a subset $\Omega \subseteq X$ is  symmetric if $v \in \Omega \implies -v \in \Omega$.

\subsection{Tools from linear and multilinear algebra}\label{sec:tools}

Here we present a few tools and pieces of terminology from multilinear algebra.

For $0 \leq k \leq \dim(X)$, let $\bigwedge^k X$ be the $k$'th exterior power of $X$. We refer to  elements of $\bigwedge^k X$ as tensors. If $v_1,v_2,\dots,v_k \in X$ then $v_1 \wedge v_2 \wedge \dots \wedge v_k \in \bigwedge^k X$ is called a \emph{pure tensor}. Every tensor is a finite linear combination of pure tensors. We specify a Hilbert space structure on $\bigwedge^k X$ as follows. Let $e_1,\dots,e_d$ be an orthonormal basis for $X$. For $1 \leq i_1 <  \dots < i_k \leq d$, $1 \leq j_1  < \dots < j_k \leq d$, let $\langle  \bigwedge_{\ell=1}^k e_{i_\ell} ,  \bigwedge_{\ell=1}^k e_{j_\ell} \rangle$ be $1$ if $i_\ell = j_\ell$ for all $\ell$, and $0$ otherwise. We extend this inner product to all of $\bigwedge^k X$ by bilinearity. Then $\{ \bigwedge_{\ell=1}^k e_{i_\ell} : 1 \leq i_1  < \dots < i_k \leq d\}$ is an orthonormal basis for $\bigwedge^k X$.  Write $\langle \cdot, \cdot \rangle$ and $| \cdot |$ for the inner product and associated norm on $\bigwedge^k X$. This inner product can be defined in a basis-independent manner as the unique bilinear mapping obeying the identity
\[
\left\langle \bigwedge_{i=1}^k v_i, \bigwedge_{i=1}^k w_i \right\rangle = \det (\langle v_i, w_j \rangle)_{1 \leq i,j \leq k}, \mbox{ for all } v_1,\dots,v_k, w_1,\dots,w_k \in X.
\]
 In particular, the Hilbert space structure on $\bigwedge^k X$ is independent of the choice of orthonormal basis for $X$.

Let $V$ be a $k$-dimensional subspace of $X$, and fix  a basis $\{v_j\}_{1 \leq j \leq k}$ for $V$. We set $\omega_V := v_1 \wedge v_2 \wedge \dots \wedge v_k \in \bigwedge^k X$. We call $\omega_V$ a \emph{representative form} for $V$. The next remark implies that the representative forms associated to different choices of basis for $V$ are scalar multiples of one another.

\begin{rmk}\label{rem:basis_independence}
If $\{\hat{v}_j \}_{1 \leq j \leq k}$ and $\{ v_j \}_{1 \leq j \leq k}$ are two bases for $V$ then $\hat{v}_1 \wedge \hat{v}_2 \wedge \dots \wedge \hat{v}_k = \det(A) \cdot v_1 \wedge v_2 \wedge \dots \wedge v_k$, where $A=(A_{ij}) \in \R^{k \times k}$ is the change-of-basis matrix defined by the relations $\hat{v}_i = \sum_j A_{ij} v_j$ ($i=1,2,\dots,k$).
\end{rmk}

The eigenvalues of a self-adjoint operator $T : X \rightarrow X$ will be written in descending order: $ \lambda_1(T) \geq \lambda_2(T) \geq \cdots \geq \lambda_d(T)$ ($d=\dim X$). 

Let $X_0$, $X_1$ be $k$-dimensional Hilbert spaces. We denote the singular values of a linear transformation $T : X_0 \rightarrow X_1$ by $ \sigma_1(T)\geq \sigma_2(T) \geq \dots \geq \sigma_k(T) \geq 0$. The squared singular values of $T$ are eigenvalues of $T^* T$, or $T T^*$ (equivalently), i.e., 
\[
\sigma_\ell(T) =  \sqrt{\lambda_\ell(T^* T)} = \sqrt{\lambda_\ell(T T^*)} \mbox{ for } \ell=1,2,\dots,k.
\]
The extremal singular values $\sigma_1(T)$ and $\sigma_k(T)$ are related to the operator norms of $T$ and $T^{-1}$. First, $\sigma_1(T) = \| T \|_{op}$. Also,  $\sigma_k(T) > 0$ if and only if $T : X_0 \rightarrow X_1$ is invertible, and then $\sigma_k(T) = \| T^{-1} \|_{op}^{-1}$. This implies the following description:
\begin{equation}
    \label{eqn:smallest_sv1}
    \sigma_k(T) = \sup \{ \eta \geq 0 : \| Tx \|_{X_1} \geq \eta \| x \|_{X_0} \mbox{ for all } x \in X_0 \}.
\end{equation}
Finally, $\sigma_k(T)$ has a description in terms of the images of balls under $T$. Let $\cB_{X_j} := \{ x \in X_j : \| x \|_{X_j} \leq 1\}$ be the unit ball of $X_j$ ($j \in \{0,1\}$). Then
\begin{equation}
    \label{eqn:smallest_sv2}
    \sigma_k(T) = \sup \{ \eta \geq 0 : T(\cB_{X_0}) \supseteq \eta \cB_{X_1} \}.
\end{equation}

\subsection{Angles between subspaces}\label{sec: angles}
Let $G(k,X)$ be the Grassmannian of $k$-dimensional subspaces of $X$ ($1 \leq k \leq d$). Note that $G(k,X) \subseteq \cK(X)$.

Given $V, W \in G(k,X)$,  the \emph{maximum principal angle} $\theta_{\max}(V,W) \in [0,\frac{\pi}{2}]$ between $V$ and $W$ is defined by
\begin{equation}\label{eq:mpa1}
   \theta_{\max}(V,W) := \arccos\left(\inf\left\{\frac{|\Pi_W v|}{|v|} : v \in V, v \neq 0 \right\}\right).
\end{equation}
Here and below, we write $\Pi_W : X \rightarrow W $ to denote the orthogonal projection operator on a subspace $W$ of $X$. Below we show that $\theta_{\max}(\cdot,\cdot)$ is symmetric. In fact,  $\theta_{\max}(\cdot,\cdot)$ is a metric on the Grassmanian $G(k,X)$. For a further discussion of principal angles, see \cite{Qiu2005} and the references therein.

Given $V,W \in G(k,X)$, let 
\begin{equation}\label{eqn:def_angle1}
\angle(V,W) = \arccos \left( \frac{|\langle \omega_V,\omega_W\rangle|}{|\omega_V| \cdot | \omega_W|} \right).
\end{equation}
By Remark \ref{rem:basis_independence}, the quantity $\angle(V,W)$ is independent of the choice of representative forms for $V$ and $W$. 

The quantities $\angle(V,W)$ and $\theta_{\max}(V,W)$ are related to singular values of the projection operator $T_{V \rightarrow W} := \Pi_W |_V : V \rightarrow W$. In fact, we have the identities
\begin{equation}
    \label{eqn:sv1}
    \begin{aligned}
    &\cos(\angle(V,W)) = \sigma_1(T_{V \rightarrow W})  \sigma_2(T_{V \rightarrow W}) \dots \sigma_k(T_{V \rightarrow W}), \\
    & \cos (\theta_{\max}(V,W)) = \sigma_k(T_{V \rightarrow W}).
    \end{aligned}
\end{equation}
The first identity can easily be seen to be true by computing \eqref{eqn:def_angle1} using principal vectors for $V$ and $W$, and using the singular value characterization of principal angles; see \cite{galantai2006jordan} for details. The second identity follows from \eqref{eqn:smallest_sv1} and \eqref{eq:mpa1}.

\begin{lemma}
\label{lem:angles_projections}
Fix $\eta >0$. If $X_0$ and $H$ are subspaces of $X$ such that $|\Pi_{H} x| \geq \eta |x|$ for all $x \in X_0$, then $\dim(X_0) = \dim(\Pi_{H}X_0)$ and $\cos(\theta_{\max}(X_0,\Pi_{H}X_0)) \geq \eta$.
\end{lemma}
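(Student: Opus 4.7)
The plan is to prove both conclusions from the elementary observation that under the hypothesis, the projection $\Pi_H$ restricted to $X_0$ is injective, together with a factorization identity for orthogonal projections onto nested subspaces.

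First I would establish the dimension equality. If $x \in X_0$ satisfies $\Pi_H x = 0$, then $0 = |\Pi_H x| \geq \eta |x|$, so $x = 0$. Hence $\Pi_H|_{X_0} : X_0 \to H$ is injective, so $\dim(X_0) = \dim(\Pi_H X_0)$. This shows that $X_0$ and $W := \Pi_H X_0$ lie in the same Grassmannian $G(k,X)$ with $k = \dim(X_0)$, so the quantity $\theta_{\max}(X_0, W)$ is well-defined via \eqref{eq:mpa1}.

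The key step is the identity $\Pi_W v = \Pi_H v$ for every $v \in X_0$. To see this, note that $W \subseteq H$, so $H^\perp \subseteq W^\perp$; hence the orthogonal decomposition $v = \Pi_H v + (v - \Pi_H v)$ has its second summand in $W^\perp$, which gives $\Pi_W v = \Pi_W \Pi_H v$. But $\Pi_H v \in W$ by the definition of $W$, so $\Pi_W \Pi_H v = \Pi_H v$. Combining these yields $\Pi_W v = \Pi_H v$ as claimed.

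Applying the hypothesis, for any nonzero $v \in X_0$ we obtain $|\Pi_W v| = |\Pi_H v| \geq \eta |v|$. Plugging this into the definition \eqref{eq:mpa1} of the maximum principal angle gives $\cos(\theta_{\max}(X_0, W)) \geq \eta$, completing the proof. There is no real obstacle here; the whole argument rests on the nesting $W \subseteq H$, which makes the projection onto $H$ and the projection onto $W$ coincide on $X_0$.
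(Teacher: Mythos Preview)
Your proof is correct and follows essentially the same approach as the paper: both establish injectivity of $\Pi_H|_{X_0}$ for the dimension equality, then use the identity $\Pi_{\Pi_H X_0} = \Pi_H$ on $X_0$ together with the definition \eqref{eq:mpa1}. You have spelled out the justification of that identity in more detail than the paper does, but the argument is the same.
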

\begin{proof}
Condition $\dim(X_0) = \dim(\Pi_{H}X_0)$ holds as $\Pi_{H}|_{X_0} : X_0 \rightarrow X$ is injective. As $\Pi_{\Pi_{H}X_0}= \Pi_{H}$ on $X_0$,  $|\Pi_{\Pi_{H}X_0}(x)| \geq \eta |x|$ for $x \in X_0$ by the lemma's hypothesis. The bound $\cos(\theta_{\max}(X_0,\Pi_{H}X_0)) \geq \eta$ is  a consequenece of the definition \eqref{eq:mpa1}.
\end{proof}

\begin{lemma}\label{lem:mpa1}
    Let $W$ and $V$ be subspaces of $X$ of equal dimension. Then the following conditions are equivalent.
    \begin{enumerate}
        \item $\cos(\theta_{\max}(V,W)) \geq \eta$
        \item $|\Pi_W(v)| \geq \eta |v| $ for all $v \in V$.
        \item $| \Pi_V(w)| \geq \eta |w|$ for all $w \in W$.
        \item $\cos(\theta_{\max}(W,V)) \geq \eta$
    \end{enumerate}
\end{lemma}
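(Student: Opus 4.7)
The plan is to use the singular value characterization of principal angles already recorded in \eqref{eqn:sv1}, together with the variational description \eqref{eqn:smallest_sv1} of the smallest singular value. First I would note that $(1) \iff (2)$ and $(4) \iff (3)$ are immediate from the definition \eqref{eq:mpa1} of $\theta_{\max}$: the assertion $\cos \theta_{\max}(V,W) \geq \eta$ unwinds to $\inf_{v \in V \setminus \{0\}} |\Pi_W v|/|v| \geq \eta$, which is precisely condition (2), and the same argument with $V$ and $W$ interchanged gives $(4) \iff (3)$. So the real content of the lemma is the symmetry statement $(2) \iff (3)$.

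To establish that, I would introduce the restricted projection maps $T := \Pi_W|_V : V \to W$ and $T' := \Pi_V|_W : W \to V$, regarding $V$ and $W$ as Hilbert spaces in their own right under the inner product inherited from $X$, and verify that they are mutual adjoints. For $v \in V$ and $w \in W$,
\[
\langle Tv, w \rangle = \langle \Pi_W v, w\rangle = \langle v, w\rangle = \langle v, \Pi_V w\rangle = \langle v, T'w\rangle,
\]
using that $\Pi_W$ fixes $w \in W$ and $\Pi_V$ fixes $v \in V$. Hence $T' = T^{\ast}$.

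Since $\dim V = \dim W = k$, both $T$ and $T^{\ast}$ are operators between $k$-dimensional Hilbert spaces and therefore share the same list of singular values; in particular $\sigma_k(T) = \sigma_k(T^{\ast})$. By \eqref{eqn:smallest_sv1}, condition (2) is exactly $\sigma_k(T) \geq \eta$ while condition (3) is exactly $\sigma_k(T^{\ast}) \geq \eta$, so equality of the smallest singular values yields $(2) \iff (3)$ and closes the chain of implications.

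I do not anticipate any real obstacle here: the only point requiring a moment of care is treating $V$ and $W$ as their own $k$-dimensional Hilbert spaces so that the adjoint $T^{\ast}$ is well-defined and the singular values of $T$ and $T^{\ast}$ coincide. As an alternative, one could read off $(2) \iff (3)$ directly from the identity $\cos \theta_{\max}(V,W) = \sigma_k(T_{V \to W})$ in \eqref{eqn:sv1} together with its analogue for $(W,V)$; but the underlying computation is the same adjointness observation, so I would present the argument in the form above.
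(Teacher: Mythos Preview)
Your proposal is correct and follows essentially the same route as the paper: reduce to showing $(2)\iff(3)$, observe that the restricted projections $T_{V\to W}$ and $T_{W\to V}$ are mutual adjoints, and conclude via equality of their smallest singular values. The paper phrases the last step as invertibility plus $\|T^{-1}\|_{op}\le\eta^{-1}$ rather than $\sigma_k(T)\ge\eta$, but this is the same statement, and you have in fact spelled out the adjointness computation that the paper merely asserts.
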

\begin{proof}
The equivalence of conditions 1 and 2 is immediate from the definition \eqref{eq:mpa1}. The equivalence of conditions 3 and 4 follows for the same reason.

We prove the equivalence of conditions 2 and 3 by duality. Let $T_{V \rightarrow W} := \Pi_W|_V$ and $T_{W \rightarrow V} := \Pi_V|_W$. Condition 2 is equivalent to the claim that $T_{V \rightarrow W}$ is invertible and $\| T_{V \rightarrow W}^{-1}\|_{op} \leq \eta^{-1}$. Similarly, condition 3 is equivalent to the claim that $T_{W \rightarrow V}$ is invertible and $\| T_{W \rightarrow V}^{-1}\|_{op} \leq \eta^{-1}$. Since $T_{V \rightarrow W}$ is the adjoint of $T_{W \rightarrow V}$, we obtain the equivalence of conditions 2 and 3.
\end{proof}
From the equivalence of conditions 1 and 4 of Lemma \ref{lem:mpa1}, we learn that 
\begin{equation}\label{eqn:maxangle_sym}
\theta_{\max}(V,W) = \theta_{\max}(W,V) \quad\mbox{for } V,W \in G(k,X).
\end{equation}

See Section 2 of \cite{Qiu2005} for a proof of the following result.

\begin{lemma}
\label{lem:mpa2}
If $V$ and $W$ are subspaces of $X$ of equal dimension then $\cos (\theta_{\max}(V,W)) = \cos ( \theta_{\max}(V^\perp, W^\perp))$.
\end{lemma}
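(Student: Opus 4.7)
The plan is to reduce the statement to an equality of singular values. By the second identity in \eqref{eqn:sv1}, it suffices to prove $\sigma_k(\Pi_W|_V) = \sigma_{d-k}(\Pi_{W^\perp}|_{V^\perp})$, where $k = \dim V = \dim W$ and $d = \dim X$. To do so, I would introduce the four restriction operators
\[
A := \Pi_W|_V,\quad B := \Pi_{W^\perp}|_V,\quad C := \Pi_W|_{V^\perp},\quad D := \Pi_{W^\perp}|_{V^\perp},
\]
so that $\cos\theta_{\max}(V,W) = \sigma_k(A)$ and $\cos\theta_{\max}(V^\perp, W^\perp) = \sigma_{d-k}(D)$.

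The first step is to deduce two operator identities from orthogonal decomposition. For $v \in V$, the splitting $v = Av + Bv$ with $Av \in W$, $Bv \in W^\perp$ gives $|v|^2 = |Av|^2 + |Bv|^2$, which is equivalent to the identity $A^*A + B^*B = I_V$. Applying the analogous argument to vectors $w \in W^\perp$ decomposed along $V \oplus V^\perp$ yields $BB^* + DD^* = I_{W^\perp}$.

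The second step is a short eigenvalue chase. From $A^*A + B^*B = I_V$, the operators $A^*A$ and $B^*B$ commute and are simultaneously diagonalizable with eigenvalues summing to $1$, so the largest eigenvalue of $B^*B$ equals $1 - \sigma_k(A)^2$. Since $B^*B$ and $BB^*$ have the same nonzero eigenvalues (with multiplicities), their largest eigenvalues agree; hence the largest eigenvalue of $BB^*$ is $1 - \sigma_k(A)^2$. Feeding this into $BB^* + DD^* = I_{W^\perp}$, the smallest eigenvalue of $DD^*$ is $\sigma_k(A)^2$. Finally, because $D$ maps between spaces of equal dimension $d-k$, the operators $DD^*$ and $D^*D$ have identical spectra (including multiplicities of zero), so $\sigma_{d-k}(D)^2 = \sigma_k(A)^2$, as required.

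The only subtle point is the matching of spectra between $B^*B$ (acting on $V$, dimension $k$) and $BB^*$ (acting on $W^\perp$, dimension $d-k$): these generally differ in their zero eigenvalues due to the dimension mismatch, so the argument uses only the \emph{largest} eigenvalue, where the two necessarily agree. The equality of smallest eigenvalues of $DD^*$ and $D^*D$, by contrast, is immediate since $D$ acts between equidimensional spaces. Once these bookkeeping points are handled, the proof is essentially two short symbolic manipulations.
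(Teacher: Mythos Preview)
Your proof is correct. The paper does not prove this lemma itself; it simply cites Section~2 of \cite{Qiu2005}. Your argument is a clean self-contained version of the standard singular-value proof: the two identities $A^*A + B^*B = I_V$ and $BB^* + DD^* = I_{W^\perp}$ link the smallest singular value of $A$ to the smallest singular value of $D$ via the operator norm of $B$. The one point worth tightening is the sentence ``their largest eigenvalues agree'': this is true unconditionally (both equal $\|B\|_{\mathrm{op}}^2$), so you do not actually need the caveat about nonzero eigenvalues---the statement holds even when $B=0$. With that simplification, your bookkeeping remark about the dimension mismatch between $V$ and $W^\perp$ becomes unnecessary, and the proof is two lines.
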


Thanks to \eqref{eqn:sv1}, we have the following result.
\begin{lemma}\label{lem:ineq1} Let $V,W \in G(k,X)$. Then
\begin{equation}\label{eqn:ineq1}
    \cos(\theta_{\max}(V,W))^k \leq  \cos(\angle(V,W)) \leq \cos(\theta_{\max}(V,W)).
\end{equation}
\end{lemma}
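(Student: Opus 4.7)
The plan is to read off both inequalities directly from the singular-value identities \eqref{eqn:sv1}, which have already been stated in the paper. Writing $\sigma_i := \sigma_i(T_{V\to W})$ for the singular values of $T_{V \to W} = \Pi_W|_V$ in decreasing order, those identities give
\[
\cos(\angle(V,W)) = \sigma_1 \sigma_2 \cdots \sigma_k, \qquad \cos(\theta_{\max}(V,W)) = \sigma_k.
\]
So the lemma reduces to the two-line numerical inequality $\sigma_k^k \le \sigma_1 \sigma_2 \cdots \sigma_k \le \sigma_k$, valid whenever $0 \le \sigma_k \le \sigma_{k-1} \le \cdots \le \sigma_1 \le 1$.

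For the upper bound, I would note that $T_{V \to W}$ is the restriction of an orthogonal projection to a subspace, so $\|T_{V \to W}\|_{op} \le 1$, whence $\sigma_i \le 1$ for all $i$. Therefore $\sigma_1 \sigma_2 \cdots \sigma_{k-1} \le 1$, and multiplying by $\sigma_k$ gives $\sigma_1 \cdots \sigma_k \le \sigma_k$, which is the right-hand inequality. For the lower bound, since the singular values are listed in nonincreasing order, $\sigma_i \ge \sigma_k$ for every $i$, so $\sigma_1 \sigma_2 \cdots \sigma_k \ge \sigma_k^k$, which is the left-hand inequality.

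There is really no obstacle here: everything has been set up by \eqref{eqn:sv1} and the elementary fact that singular values of a projection restricted to a subspace lie in $[0,1]$. The only minor point worth making explicit in the write-up is why $\sigma_1 \le 1$, which follows from $|\Pi_W v| \le |v|$ for all $v \in V$, i.e., from the fact that orthogonal projection is $1$-Lipschitz.
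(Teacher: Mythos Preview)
Your proposal is correct and matches the paper's approach exactly: the paper simply states that the lemma follows from \eqref{eqn:sv1}, and you have spelled out the elementary singular-value inequalities ($\sigma_i \in [0,1]$ and $\sigma_i \ge \sigma_k$) that make this immediate.
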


\subsection{Transversality}

Recall that $\cK(X)$ denotes the set of all closed, convex, symmetric subsets of $X$. Given $\Omega \in \cK(X)$ and $a > 0$, let $a \cdot \Omega := \{ a \cdot x : x \in \Omega\}$. Given a function $T : X \rightarrow X$, let $T(\Omega) := \{ T(x) : x \in \Omega\}$.

We start with an elementary lemma. Given $A \subseteq X$ and a subspace $V$ in $X$, let $A/V$ denote the subset $\{a + V : a \in A \}$ of the quotient space $X/V = \{ x + V : x \in X \}$.
\begin{lemma}\label{lem:mod_perp}
Let $A,B \subseteq X$. Then  $A / V \subseteq B/V$ if and only if $\Pi_{V^\perp} A \subseteq \Pi_{V^\perp} B$.
\end{lemma}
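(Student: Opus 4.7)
The plan is to reduce the statement to the standard identification of the quotient space $X/V$ with the orthogonal complement $V^\perp$ via the projection map $\Pi_{V^\perp}$. The key observation is that two elements $x, y \in X$ lie in the same coset of $V$ if and only if $x - y \in V$, and because $V$ is a subspace of the Hilbert space $X$, this is equivalent to $\Pi_{V^\perp}(x - y) = 0$, i.e. $\Pi_{V^\perp}(x) = \Pi_{V^\perp}(y)$. Thus the map $\pi : X \to X/V$, $\pi(x) = x + V$, and the orthogonal projection $\Pi_{V^\perp} : X \to V^\perp$ have precisely the same fibers.

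With that observation in hand, I would unpack the two containments directly. The condition $A/V \subseteq B/V$ means exactly that for every $a \in A$ there is some $b \in B$ with $a + V = b + V$, and by the fiber observation this is the same as requiring $\Pi_{V^\perp}(a) = \Pi_{V^\perp}(b)$ for some $b \in B$. But the latter is the same as saying $\Pi_{V^\perp}(a) \in \Pi_{V^\perp}(B)$ for every $a \in A$, which is precisely $\Pi_{V^\perp}(A) \subseteq \Pi_{V^\perp}(B)$. The reverse implication is read off the same string of equivalences in the opposite direction.

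There is no real obstacle here; the lemma is essentially the statement that an abstract quotient by a subspace can be realized concretely as the orthogonal complement inside a Hilbert space. The only thing to double-check while writing up is that the argument uses no hypothesis on $A$ or $B$ beyond set inclusion (in particular, convexity, closedness, or symmetry play no role), so the proof is a few lines of bookkeeping around the equivalence $x - y \in V \iff \Pi_{V^\perp}(x) = \Pi_{V^\perp}(y)$.
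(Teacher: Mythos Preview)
Your proposal is correct and takes essentially the same approach as the paper's proof. Both arguments hinge on the equivalence $a - b \in V \iff \Pi_{V^\perp}(a) = \Pi_{V^\perp}(b)$ and then unpack the set inclusions directly; the paper simply writes out the two directions separately rather than phrasing it in terms of the fibers of $\pi$ and $\Pi_{V^\perp}$.
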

\begin{proof}
Note that $A /V \subseteq B/V$ if and only if for every $a \in A$ there exists $b \in B$ such that $a-b\in V$.

Suppose $A / V \subseteq B / V$. Fix an arbitrary $x \in \Pi_{V^\perp} A$. Let $a \in A$ with $x = \Pi_{V^\perp} a$. Because $A / V \subseteq B/V$, there exists $b \in B$ so that $a-b \in V$. Then $\Pi_{V^\perp} b = \Pi_{V^\perp} a$. Thus, $x \in \Pi_{V^\perp} B$. So, we've shown $\Pi_{V^\perp} A \subseteq \Pi_{V^\perp} B$.

Conversely, suppose $\Pi_{V^\perp} A \subseteq \Pi_{V^\perp} B$. Fix $a \in A$. Let $x = \Pi_{V^\perp} a$. Because $\Pi_{V^\perp} A \subseteq \Pi_{V^\perp} B$, there exists $b \in B$ with $x = \Pi_{V^\perp} b$. But then $\Pi_{V^\perp}(a-b) = x - x = 0$. So, $a-b \in V$. This proves $A/V \subseteq B/V$.
\end{proof}

We now introduce the concept of transversality in the Hilbert space $X$.
\begin{defi}\label{def:trans_hilbert}
Let $\Omega \in \cK(X)$, let $V \subseteq X$ be a subspace, and let $R \geq 1$. Then $\Omega$ is \emph{$R$-transverse} to $V$ if 
\begin{align}
&\label{eqn:Rtrans1a}\Omega \cap V \subseteq R \cdot \B \\
&\label{eqn:Rtrans1b}\Pi_{V^\perp}(\Omega \cap \B) \supseteq R^{-1}\cdot  \B \cap V^\perp.
\end{align}
\end{defi}
In particular, if $X = \cR_x$, then $\Omega \subseteq X$ is $R$-transverse to $V$ if and only if it is $R$-transverse to $V$ at $x$ (see Definition \ref{defn:trans1}).

Using Lemma \ref{lem:mod_perp}, we obtain an equivalent formulation of transversality used in our previous work~\cite{coordinateFree}. This will allow us to later borrow results from~\cite{coordinateFree}.
\begin{cor}\label{cor:eq_trans}
Let $\Omega \in \cK(X)$ and let $V$ be a subspace of $X$. Then $\Omega$ is $R$-transverse to $V$ if and only if (A) $\Omega \cap V \subseteq R \cdot \cB$, and (B) $(\Omega \cap \cB) / V \supseteq R^{-1} \cdot \cB/V$. 
\end{cor}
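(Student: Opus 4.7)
The plan is a direct unpacking using Lemma \ref{lem:mod_perp}. Condition (A) is literally condition \eqref{eqn:Rtrans1a} from Definition \ref{def:trans_hilbert}, so no work is needed there. The entire content is the equivalence of \eqref{eqn:Rtrans1b} with condition (B).

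First I would observe the elementary identity $\Pi_{V^\perp}(\cB) = \cB \cap V^\perp$: the inclusion $\Pi_{V^\perp}(\cB) \subseteq \cB \cap V^\perp$ holds because orthogonal projection is norm-nonincreasing, and the reverse inclusion holds because every $x \in V^\perp$ satisfies $\Pi_{V^\perp}(x) = x$. Scaling by $R^{-1}$, I get $\Pi_{V^\perp}(R^{-1}\cB) = R^{-1}\cB \cap V^\perp$.

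Next I would apply Lemma \ref{lem:mod_perp} with $A = R^{-1} \cB$ and $B = \Omega \cap \cB$: the inclusion $(R^{-1}\cB)/V \subseteq (\Omega \cap \cB)/V$ is equivalent to $\Pi_{V^\perp}(R^{-1}\cB) \subseteq \Pi_{V^\perp}(\Omega \cap \cB)$. Substituting the identity from the previous step, this becomes $R^{-1}\cB \cap V^\perp \subseteq \Pi_{V^\perp}(\Omega \cap \cB)$, which is precisely \eqref{eqn:Rtrans1b}. This establishes the equivalence of (B) with \eqref{eqn:Rtrans1b}, completing the proof.

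There is no real obstacle here; the statement is a translation between the ``projection onto $V^\perp$'' language of Definition \ref{def:trans_hilbert} and the ``quotient modulo $V$'' language used in \cite{coordinateFree}. The only content beyond Lemma \ref{lem:mod_perp} is the trivial fact $\Pi_{V^\perp}(\cB) = \cB \cap V^\perp$.
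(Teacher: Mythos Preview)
Your proof is correct and follows essentially the same approach as the paper, which simply states that the corollary follows from Lemma \ref{lem:mod_perp}. You have spelled out the application of that lemma (with $A = R^{-1}\cB$ and $B = \Omega \cap \cB$) together with the trivial identity $\Pi_{V^\perp}(\cB) = \cB \cap V^\perp$, which is exactly the intended argument.
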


The notion of transversality between a pair of subspaces (i.e., when $\Omega$ is a subspace) admits an equivalent formulation in terms of principal angles.

\begin{lemma}\label{lem:trans1}
    Let $W, V$ be subspaces of $X$, and $R \geq 1$. Then $W$ is $R$-transverse to $V$ if and only if $\dim(W) = \dim(V^\perp)$ and $\cos(\theta_{\max}(W,V^\perp)) \geq R^{-1}$.
\end{lemma}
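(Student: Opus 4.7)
The plan is to translate both of the transversality conditions \eqref{eqn:Rtrans1a}, \eqref{eqn:Rtrans1b} into properties of the restricted projection $T := \Pi_{V^\perp}|_W : W \to V^\perp$, and then to identify those properties with the dimension equality $\dim(W) = \dim(V^\perp)$ together with a lower bound on the smallest singular value of $T$. By \eqref{eqn:sv1}, $\sigma_{\dim W}(T) = \cos(\theta_{\max}(W, V^\perp))$, so this is exactly the angle condition in the lemma.

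For the forward direction, suppose $W$ is $R$-transverse to $V$. Because $W \cap V$ is a linear subspace, the inclusion $W \cap V \subseteq R \cdot \cB$ from \eqref{eqn:Rtrans1a} forces $W \cap V = \{0\}$; equivalently, $T$ is injective. Next, \eqref{eqn:Rtrans1b} says $T(W \cap \cB) \supseteq R^{-1} \cB \cap V^\perp$. The left-hand side is contained in the subspace $\Pi_{V^\perp} W$ of $V^\perp$, and (in the nondegenerate case $V^\perp \neq \{0\}$) the right-hand side is a ball of positive radius in $V^\perp$, so $\Pi_{V^\perp} W = V^\perp$. Combined with injectivity, $\dim(W) = \dim(V^\perp)$ and $T$ is an isomorphism between Hilbert spaces of equal dimension. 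Rewriting \eqref{eqn:Rtrans1b} as $T(\cB_W) \supseteq R^{-1} \cB_{V^\perp}$, where $\cB_W$ and $\cB_{V^\perp}$ are the unit balls of $W$ and $V^\perp$, I would apply \eqref{eqn:smallest_sv2} to deduce $\sigma_{\dim W}(T) \geq R^{-1}$, and then \eqref{eqn:sv1} to conclude $\cos(\theta_{\max}(W, V^\perp)) \geq R^{-1}$. The degenerate case $V^\perp = \{0\}$ forces $W = \{0\}$ and is handled trivially.

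For the converse, assume $\dim(W) = \dim(V^\perp)$ and $\cos(\theta_{\max}(W, V^\perp)) \geq R^{-1}$. By the equivalence of items 1 and 2 in Lemma \ref{lem:mpa1} (applied to the equidimensional pair $W$ and $V^\perp$), $|\Pi_{V^\perp} w| \geq R^{-1}|w|$ for every $w \in W$. This makes $T$ injective, hence an isomorphism onto $V^\perp$ by the dimension hypothesis; its kernel $W \cap V$ equals $\{0\}$, which gives \eqref{eqn:Rtrans1a}. For \eqref{eqn:Rtrans1b}, given any $y \in V^\perp$ with $|y| \leq R^{-1}$, let $w := T^{-1} y \in W$; the lower bound yields $|w| \leq R|y| \leq 1$, so $y = \Pi_{V^\perp} w \in \Pi_{V^\perp}(W \cap \cB)$, as required.

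I do not expect a substantive obstacle: the argument is a routine unpacking of definitions using the singular-value dictionary from Section \ref{sec:tools} and Lemma \ref{lem:mpa1}. The only points demanding mild care are verifying that the subspace $W \cap V$ is forced to be trivial (so that $T$ is injective), and handling the degenerate case $V^\perp = \{0\}$ separately.
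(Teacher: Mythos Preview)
Your proposal is correct and follows essentially the same approach as the paper: both translate the two transversality conditions into injectivity and surjectivity of $T = \Pi_{V^\perp}|_W$, deduce the dimension equality, and then invoke the singular-value characterizations \eqref{eqn:smallest_sv2} and \eqref{eqn:sv1} to pass between the ball-image condition and the angle bound. The only cosmetic differences are that you invoke Lemma~\ref{lem:mpa1} explicitly in the converse (the paper goes through \eqref{eqn:sv1} and \eqref{eqn:smallest_sv2} again) and you treat the degenerate case $V^\perp = \{0\}$ separately.
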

\begin{proof}
When $\Omega = W$ is a subspace, condition \eqref{eqn:Rtrans1a} is equivalent to the assertion that $W \cap V = \{0\}$. Thus, from \eqref{eqn:Rtrans1a}, \eqref{eqn:Rtrans1b}, $W$ is $R$-transverse to $V$ if and only if (a) $W \cap V = \{0\}$ and (b) $\Pi_{V^\perp}(W \cap \cB) \supseteq R^{-1} \cdot \cB \cap V^\perp$.

Note that condition (a) implies $\dim(W) \leq \dim(V^\perp)$.

Note that condition (b) implies that $T_{W \rightarrow V^\perp} := \Pi_{V^\perp}|_W : W \rightarrow V^\perp$ is surjective. Hence, condition (b) implies $\dim(W) \geq \dim(V^\perp)$.

Hence, if $W$ is $R$-transverse to $V$ then $\dim(W) = \dim(V^\perp)$, and condition (b) is then equivalent to the inequality $\sigma_k(T_{W \rightarrow V^\perp}) \geq R^{-1}$ (see \eqref{eqn:smallest_sv2}), which is equivalent to the inequality $\cos(\theta_{\max}(W,V^\perp)) \geq R^{-1}$ (see \eqref{eqn:sv1}).

On the other hand, suppose $\dim(W) = \dim(V^\perp)$ and $ \cos(\theta_{\max}(W,V^\perp)) \geq R^{-1}$. Thus, $\sigma_k(T_{W \rightarrow V^\perp}) \geq R^{-1}$, which implies condition (b) above (again, see \eqref{eqn:sv1} and \eqref{eqn:smallest_sv2}). In particular, $T_{W \rightarrow V^\perp} : W \rightarrow V^\perp$ is surjective. As $\dim(W) = \dim(V^\perp)$, we have that $T_{W \rightarrow V^\perp}$ is injective. Thus, $\{0\} = \ker( \Pi_{V^\perp}|_W) = W \cap V$, which gives condition (a) above. So $W$ is $R$-transverse to $V$. 

This completes the proof of the lemma.
\end{proof}

By Lemma \ref{lem:trans1} and Lemma \ref{lem:mpa2}, we have the following result.

\begin{cor}\label{cor:trans2}
Let $W$ and $V$ be subspaces of $X$, and let $R \geq 1$. Then $W$ is $R$-transverse to $V$ if and only if $V$ is $R$-transverse to $W$.
\end{cor}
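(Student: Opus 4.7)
The plan is to deduce the corollary directly from the hinted results, Lemma \ref{lem:trans1} and Lemma \ref{lem:mpa2}, together with the symmetry property \eqref{eqn:maxangle_sym} of the maximum principal angle. The strategy is to apply Lemma \ref{lem:trans1} to each side of the biconditional, turning a statement about transversality of convex sets into a statement about equal dimensions plus a bound on the maximum principal angle, and then show that these reformulated conditions coincide.

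First, I would unpack the two sides. By Lemma \ref{lem:trans1}, $W$ is $R$-transverse to $V$ iff (i) $\dim(W) = \dim(V^\perp)$ and (ii) $\cos(\theta_{\max}(W, V^\perp)) \geq R^{-1}$; likewise, $V$ is $R$-transverse to $W$ iff (i$'$) $\dim(V) = \dim(W^\perp)$ and (ii$'$) $\cos(\theta_{\max}(V, W^\perp)) \geq R^{-1}$. The dimension conditions are trivially equivalent, since each amounts to $\dim(W) + \dim(V) = \dim(X)$.

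Next, I would show the angle conditions are equivalent. Assuming the common dimension condition holds, $W$ and $V^\perp$ have the same dimension, so Lemma \ref{lem:mpa2} applies to give $\cos(\theta_{\max}(W, V^\perp)) = \cos(\theta_{\max}(W^\perp, (V^\perp)^\perp)) = \cos(\theta_{\max}(W^\perp, V))$. Finally, by the symmetry \eqref{eqn:maxangle_sym}, $\cos(\theta_{\max}(W^\perp, V)) = \cos(\theta_{\max}(V, W^\perp))$. Chaining these identities, condition (ii) is the same as condition (ii$'$), and the two transversality statements coincide.

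I do not anticipate a serious obstacle here: the argument is essentially a bookkeeping exercise tying together Lemma \ref{lem:trans1}, Lemma \ref{lem:mpa2}, and \eqref{eqn:maxangle_sym}. The only mild subtlety is being careful that Lemma \ref{lem:mpa2} requires the pair of subspaces to have equal dimension, which is exactly why the equivalence of the dimension conditions has to be observed first.
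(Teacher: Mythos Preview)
Your proposal is correct and follows exactly the approach the paper intends: the paper's proof consists solely of the sentence ``By Lemma \ref{lem:trans1} and Lemma \ref{lem:mpa2}, we have the following result,'' and you have simply spelled out the details of how those two lemmas combine, including the use of the symmetry \eqref{eqn:maxangle_sym} that is implicit in the paper's one-line proof.
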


\begin{lemma}
\label{lem:basic_trans_result}
Let $W,V$ be subspaces of $X$, and let $r > 0$ and $R \geq 1$. If $W$ is $R$-transverse to $V$ then $(W + r \B) \cap V \subseteq R r \B$.
\end{lemma}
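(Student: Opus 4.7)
The plan is to reduce the desired inclusion to a pointwise lower bound on how far vectors of $V$ are pushed off $W$. First I invoke Corollary \ref{cor:trans2} to replace the hypothesis that $W$ is $R$-transverse to $V$ by the symmetric assertion that $V$ is $R$-transverse to $W$. Applying Lemma \ref{lem:trans1} to this symmetric form yields $\dim V = \dim W^\perp$ together with $\cos(\theta_{\max}(V, W^\perp)) \geq R^{-1}$. Since $V$ and $W^\perp$ have equal dimension, Lemma \ref{lem:mpa1} (condition 1 $\Leftrightarrow$ condition 2, with the roles of the two subspaces played by $V$ and $W^\perp$) converts this angle estimate into the quantitative lower bound $|\Pi_{W^\perp}(v)| \geq R^{-1} |v|$ for every $v \in V$.

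With that inequality in hand, fix an arbitrary $x \in (W + r\mathcal{B}) \cap V$ and decompose $x = w + b$ with $w \in W$ and $|b| \leq r$. Applying $\Pi_{W^\perp}$ annihilates the $w$-component, so $\Pi_{W^\perp}(x) = \Pi_{W^\perp}(b)$; since orthogonal projections are $1$-Lipschitz, this gives $|\Pi_{W^\perp}(x)| \leq |b| \leq r$. Combining with the lower bound from the previous paragraph (valid because $x \in V$) yields $R^{-1} |x| \leq |\Pi_{W^\perp}(x)| \leq r$, that is, $|x| \leq Rr$, which is exactly the claim $x \in Rr\,\mathcal{B}$.

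I do not anticipate any genuine difficulty. The only point worth double-checking is the dimension bookkeeping needed to invoke Lemma \ref{lem:mpa1}: the transversality hypothesis supplies $\dim W = \dim V^\perp$ via Lemma \ref{lem:trans1}, and hence $\dim V = \dim X - \dim V^\perp = \dim X - \dim W = \dim W^\perp$, so Lemma \ref{lem:mpa1} does apply. Crucially, this approach loses no constants along the way — the projection step is an exact equality on the $W$-component and a $1$-Lipschitz contraction on the $b$-component — so the bound $Rr$ is obtained without any spurious multiplicative factor, matching the statement exactly.
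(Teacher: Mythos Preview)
Your proof is correct and follows essentially the same approach as the paper: both arguments use that $\cos(\theta_{\max}(V,W^\perp)) \geq R^{-1}$ gives $|\Pi_{W^\perp} x| \geq R^{-1}|x|$ for $x \in V$, and then bound $|\Pi_{W^\perp} x| \leq r$ from the decomposition $x \in W + r\B$. The only difference is that the paper is terser, writing $|\Pi_{W^\perp} x| = \dist(x,W) \leq r$ directly and invoking the angle condition without spelling out the chain through Corollary \ref{cor:trans2}, Lemma \ref{lem:trans1}, and Lemma \ref{lem:mpa1} as you do.
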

\begin{proof}
Fix $x \in (W + r \B) \cap V$. As $x \in W + r \B$, we have $| \Pi_{W^\perp} x | = \dist(x, W) \leq r$. Observe that $| \Pi_{W^\perp} x | \geq R^{-1} |x|$ due to the condition $\theta_{\max}(V, W^\perp) \geq R^{-1}$ and since $x \in V$. Thus, $|x| \leq Rr$, so $x \in Rr \B$, as desired.
\end{proof}

\begin{lemma}
\label{lem:basic_trans_result2}
Let $\Omega \in \cK(X)$ and let $V$ be a subspace of $X$. Let $T : X \rightarrow X$ be an invertible linear transformation satisfying either (A) $|x | \leq |Tx| \leq M |x|$ for all $x \in X$, or (B) $M^{-1} |x| \leq |Tx| \leq |x|$ for all $x \in X$. 

If $\Omega$ is $R$-transverse to $V$ then $T(\Omega)$ is $M R$-transverse to $T (V)$.
\end{lemma}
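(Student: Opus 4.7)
The plan is to verify the two defining conditions of $MR$-transversality for $T(\Omega)$ with respect to $T(V)$: (i) $T(\Omega)\cap T(V)\subseteq MR\cB$, and (ii) $\Pi_{T(V)^\perp}(T(\Omega)\cap\cB)\supseteq(MR)^{-1}\cB\cap T(V)^\perp$. Condition (i) is immediate: since $T$ is injective, $T(\Omega)\cap T(V)=T(\Omega\cap V)$; any $x\in\Omega\cap V$ has $|x|\le R$ by $R$-transversality, and in either hypothesis on $T$ we have $|Tx|\le M|x|\le MR$, so $Tx\in MR\cB$.

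For condition (ii), I would use the equivalent quotient formulation given by Corollary~\ref{cor:eq_trans}: it suffices to show that for each $a'\in(MR)^{-1}\cB$ there exists $b'\in T(\Omega)\cap\cB$ with $b'-a'\in T(V)$. The natural attempt is to set $a:=T^{-1}a'$, apply the $R$-transversality of $\Omega$ to produce $b\in\Omega\cap\cB$ with $b-a\in V$, and take $b':=Tb$. This works directly in case (B), where $T$ is a contraction and $T^{-1}$ stretches by at most $M$: then $|a|\le M|a'|\le R^{-1}$, so $R$-transversality yields such a $b$, and $|b'|=|Tb|\le|b|\le 1$ together with $b'-a'=T(b-a)\in T(V)$ finishes the argument.

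In case (A), $T$ expands, and the direct attempt yields only $|b'|\le M|b|\le M$, not $\le 1$. The obstacle is that the expansion of $T$ cannot be absorbed into the ball on the right-hand side of the transversality definition. My fix is to rescale before invoking transversality: put $a:=T^{-1}a'$, so $|a|\le|a'|\le(MR)^{-1}$ (using $|T^{-1}y|\le|y|$), and let $c:=Ma$, so that $|c|\le R^{-1}$. By $R$-transversality applied to $c$, there exists $b_0\in\Omega\cap\cB$ with $b_0-Ma\in V$. Then $b:=b_0/M$ lies in $\Omega$ (since $\Omega$ is convex, symmetric and nonempty it contains $0$, hence $(1/M)\Omega\subseteq\Omega$ for $M\ge 1$), satisfies $b-a=(b_0-Ma)/M\in V$, and has $|b|\le 1/M$. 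Finally $b':=Tb$ has $|b'|\le M|b|\le 1$ and $b'-a'=T(b-a)\in T(V)$, as required.

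The only real point of subtlety is the rescaling step in case (A): the factor of $M$ lost to the expansion of $T$ must be recovered in advance by shrinking $a$ to $Ma$ before invoking the transversality of $\Omega$, then dividing the resulting $b_0$ by $M$ at the end. This maneuver works precisely because dilation by $1/M\in[0,1]$ preserves the symmetric convex set $\Omega$.
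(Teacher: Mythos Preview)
Your proof is correct and takes essentially the same approach as the paper: both use the quotient formulation of transversality (Corollary~\ref{cor:eq_trans}) and, in case (A), recover the factor of $M$ lost to expansion by exploiting that the symmetric convex set absorbs dilation by $1/M$. The paper phrases this at the set level via the inclusion $T(\Omega)\cap M\cB\subseteq M(T(\Omega)\cap\cB)$, while you carry out the equivalent elementwise rescaling $b_0\mapsto b_0/M$; these are the same maneuver.
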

\begin{proof}
We suppose $\Omega$ is $R$-transverse to $V$, so (a) $\Omega \cap V \subseteq R \B$ and (b) $R^{-1} \B/ V \subseteq (\Omega \cap \B)/V$. Here we use the formulation of transversality given in Corollary \ref{cor:eq_trans}. 

If $T$ satisfies condition (A) then $\| T^{-1}\|_{op} \leq 1$ and $\|T \|_{op} \leq M$, implying the set inclusions $\B \subseteq T(\B)$ and $T(\B) \subseteq M \B$. By (a),
\[
T(\Omega) \cap T(V) = T(\Omega \cap V) \subseteq T(R \B) \subseteq MR \B,
\]
and (b) implies that
\[
\begin{aligned}
R^{-1} \B/T(V) &\subseteq R^{-1} T(\B)/T(V) \subseteq T(\Omega \cap \B)/T(V) = (T(\Omega) \cap T(\B))/T(V) \\
&\subseteq (T(\Omega) \cap M \B)/T(V) \subseteq M (T(\Omega) \cap \B)/T(V).
\end{aligned}
\]
Thus, $(M R)^{-1} \B/T(V) \subseteq (T(\Omega) \cap \B)/T(V))$. We deduce from the previous inclusions that $T(\Omega)$ is $MR$-transverse to $V$.

If $T$ satisfies condition (B) then $T(\B) \subseteq \B$ and $M^{-1}\B \subseteq  T(\B)$, thus by (a),
\[
T(\Omega) \cap T(V) = T(\Omega \cap V) \subseteq T(R \B) \subseteq R \B \subseteq M R \B,
\]
and by (b),
\[
\begin{aligned}
M^{-1}R^{-1} \B/T(V) \subseteq R^{-1} T(\B)/T(V) \subseteq T(\Omega \cap \B)/T(V) &= (T(\Omega) \cap T(\B))/T(V) \\
&\subseteq (T(\Omega) \cap  \B)/T(V),
\end{aligned}
\]
so $T(\Omega)$ is $M R$-transverse to $T(V)$.
\end{proof}

\section{Rescaling dynamics}\label{sec:rescale_grass}

Let $(X, \langle \cdot, \cdot \rangle)$ be a real Hilbert space of finite dimension $d := \dim(X)<\infty$. Write $| x | = \sqrt{\langle x,x \rangle} $ for the norm of a vector $x \in X$. Let $\tau_\delta$ be a $1$-parameter family of linear operators on $X$ of the following form. Fix $m \geq 1$. Suppose that $X$ admits a direct sum decomposition 
\begin{equation}\label{eqn1:sec3}
    X = \bigoplus\limits_{\nu=1}^m X_\nu,
\end{equation}
for pairwise orthogonal subspaces $X_\nu \subseteq X$. Let $\tau_\delta : X \rightarrow X$ satisfy
\begin{equation}\label{eqn2:sec3}
\tau_\delta |_{X_\nu} = \delta^{-\nu} \cdot \mathrm{id}|_{X_\nu} \quad (\delta > 0).
\end{equation}
In this description of $\tau_\delta$ we allow that $ X_\nu = \{0\}$ for certain $\nu$.

\begin{defi}\label{defi:hilbert_system}
We refer to a tuple $\cX= (X,\tau_\delta)_{\delta>0}$ satisfying \eqref{eqn1:sec3}, \eqref{eqn2:sec3} as a \emph{Hilbert dilation system}. A Hilbert dilation system $\cX$ is said to be \emph{simple} provided that $\dim(X_\nu) \in \{0,1\}$ for all $\nu=1,2,\dots,m$.

\end{defi}

\begin{defi}\label{defi:di_subspace}
A subspace $V \subseteq X$ is \emph{dilation-invariant}, or DI, if $\tau_\delta V = V$ for all $\delta>0$. If $V$ is DI then
\[
    V = \bigoplus\limits_{\nu=1}^m V_\nu, \quad \mbox{with }  V_\nu = V \cap X_\nu  \subseteq X_\nu.
\]
The \emph{signature} of a DI subspace $V \subseteq X$ is the quantity 
\[
\sgn(V) = \sum_{\nu=1}^m \nu \cdot \dim (V \cap X_\nu).
\]
\end{defi}
We note that 
\begin{equation}
\label{eqn3:sec3}
\left\{
\begin{aligned}
&V \mbox{ dilation-invariant} \implies V^\perp \mbox{ dilation-invariant, and} \\
& \sgn(V^\perp) = \Sigma_0 - \sgn(V), \;\; \Sigma_0 := \sum_{\nu=1}^m \nu \cdot \dim(X_\nu).
    \end{aligned}
\right.
\end{equation}

We study the behavior of orbits of $\tau_\delta$ acting on the Grassmanian $G(k,X)$. To do so, we will pass to the action of $\tau_\delta$ on the $k$-fold exterior product $\bigwedge^k X$.

The linear transformation $\tau_\delta : X \rightarrow X$ induces a linear transformation $\tau_\delta^* : \bigwedge^k X \rightarrow \bigwedge^k X$ defined by its action on the pure tensors: 
\[
\tau_\delta^*(v_1 \wedge v_2 \wedge \dots \wedge v_k) = \tau_\delta(v_1) \wedge \tau_\delta(v_2) \wedge \dots \wedge \tau_\delta(v_k).
\]

If $V$ is a DI subspace of $X$ of dimension $k$, and $\omega_V \in \bigwedge^k X$ is a representative form for $V$ (i.e., $\omega_V$ is the tensor product of a basis for $V$), then 
\begin{equation}\label{eq:DI_id}
\tau_\delta^*(\omega_V) = \delta^{-\sgn(V)} \omega_V.
\end{equation}
Because all representative forms of $V$ are equivalent up to a scalar multiple, it suffices to verify \eqref{eq:DI_id} for the form associated to a particular basis for $V$. Because $V$ is DI it admits a basis of the form $\{e_j\}_{j=1}^k$, with $e_j \in X_{i_j}$ for each $j$. Consider the representative form $\omega_V = e_1 \wedge  \dots \wedge e_k$ for $V$. Note that $\tau_\delta(e_j) = \delta^{- i_j} e_j$ for all $j$, and $\sgn(V) = \sum_{j=1}^k i_j$. Thus, by definition of $\tau_\delta^*$, 
\[
\tau_\delta^*(\omega_V) = \tau_\delta(e_1) \wedge \dots \wedge \tau_\delta(e_k) = \delta^{- \sgn(V)} e_1 \wedge \dots \wedge e_k,
\]
giving \eqref{eq:DI_id}.

\subsection{Quantitative stabilization for the action of a simple Hilbert dilation system on the Grassmanian}\label{sec:4.1}

Let $H \in G(k,X)$. The parametrized family of subspaces $(\tau_\delta H)_{\delta > 0}$ is an \emph{orbit} of $\tau_\delta$ in $G(k,X)$. The orbit $\tau_\delta H$ converges to a subspace $H_0 \in G(k,X)$ in the Grassmanian topology in the limit as $\delta \rightarrow 0^+$, and furthermore the limit subspace $H_0$ is dilation invariant (see the proof of Lemma 3.12 in~\cite{coordinateFree}).

The main result of this section is a quantitative bound on the distance of the orbit $\tau_\delta H$ to the set of dilation invariant subspaces when $\delta$ varies in a compact interval $I$. Specifically, we have:

\begin{proposition}\label{prop:subspace_dil}
Let $(X,\tau_\delta)_{\delta > 0}$ be a simple Hilbert dilation system. Let $H \in G(k,X)$, $1 \leq k \leq d = \dim(X)$. Fix $\eta \in (0,1/2)$ and a compact interval $I  \subseteq (0,\infty)$ with $\frac{r(I)}{l(I)} \geq \left(\frac{2^d}{\eta} \right)^{dk+2}$. There exist $\delta \in I$ and a dilation invariant subspace $V \in G(k,X)$ satisfying $\cos( \theta_{\max}(\tau_\delta H, V)) \geq 1 - \eta$.
\end{proposition}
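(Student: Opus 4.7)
I plan to work in the $k$-th exterior power $\bigwedge^k X$, where the Plücker embedding represents $H \in G(k,X)$ by a unit pure tensor $\omega_H$, and the action $\tau_\delta^*$ becomes diagonal in a natural orthonormal basis. Since the dilation system is simple, each $X_\nu$ is one-dimensional or zero, and every $k$-dimensional DI subspace takes the form $V_S := \bigoplus_{\nu \in S} X_\nu$ for $S$ a size-$k$ subset of the active weights. Choosing a unit vector $e_\nu \in X_\nu$ for each such $\nu$, the wedges $\omega_{V_S}$ form an orthonormal basis of $\bigwedge^k X$. Writing $\omega_H = \sum_S c_S \omega_{V_S}$ (with $|\omega_H| = 1$), the identity \eqref{eq:DI_id} gives $\tau_\delta^* \omega_H = \sum_S c_S \delta^{-\sigma_S} \omega_{V_S}$. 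By Lemma \ref{lem:ineq1} applied to the pure-tensor representatives, $\cos\theta_{\max}(\tau_\delta H, V_S) \geq \cos\angle(\tau_\delta H, V_S) = |c_S| \delta^{-\sigma_S}/|\tau_\delta^* \omega_H|$, so it suffices to find $u = -\log\delta \in J := [-\log r(I), -\log l(I)]$ and an index $S^*$ such that $p_{S^*}(u) := c_{S^*}^2 e^{2u\sigma_{S^*}}/\sum_T c_T^2 e^{2u\sigma_T} \geq (1-\eta)^2$.

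My strategy is a pigeonhole argument on the convex piecewise-linear upper envelope $F(u) := \max_S f_S(u)$, where $f_S(u) := \log|c_S| + u\sigma_S$ ranges over indices with $c_S \neq 0$. On each piece of $F$, a unique index is dominant, since same-signature competitors have strictly smaller, $u$-independent $f_S$. A combinatorial bound of $dk+1$ on the number of pieces of $F$, arising from the matroid structure of the support of a pure tensor (counting distinct basis signatures), together with the hypothesis on $r(I)/l(I)$, yields a piece of log-length $L \geq |J|/(dk+1) \geq d \log 2 + \log(1/\eta)$. Let $S^*$ denote its dominant index and $u^*$ its midpoint. For any $T$ with $\sigma_T \neq \sigma_{S^*}$, the linear function $f_{S^*} - f_T$ has integer slope of absolute value at least $1$ and is nonnegative on the piece, so the midpoint gap is at least $L/2$; this gives $\sum_{T : \sigma_T \neq \sigma_{S^*}} e^{-2(f_{S^*} - f_T)(u^*)} \leq 2^d e^{-L} \leq \eta/2$.

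The main obstacle is controlling the gap to same-signature competitors $T$, for which $f_{S^*} - f_T = \log|c_{S^*}/c_T|$ is $u$-independent. I would leverage the pure-tensor condition via a Plücker identity expressing $c_{S^*} c_T$ as a sum of products $\pm c_A c_B$ over non-trivial size-$k$ partitions of $S^* \cup T$. For each such partition, the pair of signatures $(\sigma_A, \sigma_B)$ sums to $2 \sigma_{S^*}$ with $\sigma_A, \sigma_B \neq \sigma_{S^*}$, so at $u^*$ each factor satisfies $f_A(u^*), f_B(u^*) \leq f_{S^*}(u^*) - L/2$ by the previous estimate, yielding $|c_A c_B|/c_{S^*}^2 \leq e^{-L}$. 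Summing over the bounded number of such partitions, $|c_T/c_{S^*}| \leq C_k e^{-L}$, and the total same-signature contribution to $\sum_{T \neq S^*} e^{-2(f_{S^*}-f_T)(u^*)}$ is at most $2^d C_k^2 e^{-2L} \leq \eta/2$ for $L$ as above. Thus $p_{S^*}(u^*) \geq 1/(1+\eta) \geq (1-\eta)^2$, as required. The hardest parts of this plan are establishing the envelope-piece bound for arbitrary weight structures in the dilation system, and carrying out the Plücker analysis uniformly across partitions; both rely on combinatorial properties of the support of a pure tensor in a simple system.
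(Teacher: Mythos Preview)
Your framework—expanding $\omega_H$ in the DI basis $\{\omega_{V_S}\}$ of $\bigwedge^k X$ and tracking the dominant index along $u=-\log\delta$—is sound, and your Pl\"ucker step for same-signature competitors is in fact correct once one uses the single-swap relation: with $s\in S^*\setminus T$, $I=S^*\setminus\{s\}$, $J=T\cup\{s\}$, one gets $c_{S^*}c_T=\sum_{t\in T\setminus S^*}\pm c_{A_t}c_{B_t}$ where $A_t=(S^*\setminus\{s\})\cup\{t\}$ has $\sigma_{A_t}=\sigma_{S^*}+(\nu_t-\nu_s)\ne\sigma_{S^*}$, so your midpoint gap estimate applies to each factor and yields $|c_T/c_{S^*}|\le k\,e^{-L}$.

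The genuine gap is the bound of $dk+1$ on the number of envelope pieces, on which your entire pigeonhole rests. The appeal to ``matroid structure of the support'' does not deliver this: for widely spaced weights (say $\nu_j=2^{j-1}$) all $\binom{d}{k}$ signatures are distinct, and neither the basis-exchange axiom nor the Pl\"ucker relations visibly prevent all of them from appearing as vertices of the upper convex hull of $\{(\sigma_S,\log|c_S|)\}$. You flag this as hard, but you offer no mechanism; and without it, nothing in your plan bounds the piece count, so the long-piece step never fires. The paper circumvents this completely. It argues by contradiction at $J\ge dk+2$ scales $\delta_j$ spaced by a factor $\epsilon^{-2}$; failure of the conclusion forces \emph{two} distinct indices with normalized coefficient $\ge\epsilon$ at each $\delta_j$. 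The key input, Lemma~\ref{lem:multi2}, is proved geometrically (projections of $\tau_\delta H$ onto the flag subspaces $X_{\le\ell}=\mathrm{span}\{e_1,\dots,e_\ell\}$, not Pl\"ucker identities): it shows that indices with coefficient $\ge\epsilon$ at $\epsilon^{-2}$-separated scales are comparable in the partial order $c_\ell(S)\ge c_\ell(S')$ for every $\ell$. Choosing $S_{j,\mu_j}\ne S_{j-1,\mu_{j-1}}$ at each step then makes the integer $\psi_j=\sum_\ell c_\ell(S_{j,\mu_j})\in[0,dk]$ strictly monotone, forcing $J\le dk+1$. The point is that $(c_\ell)_\ell$ distinguishes \emph{any} two distinct $S$, so the paper never needs a piece-count bound and never separates out the same-signature case.
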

The rest of Section \ref{sec:4.1} is devoted to proving Proposition \ref{prop:subspace_dil}. The restriction that $(X,\tau_\delta)_{\delta > 0}$ is simple ($\dim X_\nu \in \{0,1\}$ $\forall \nu$) will be in place for the rest of this section. We expect it is possible to prove a variant of Proposition \ref{prop:subspace_dil} without this restriction, but the arguments are likely more involved and the constants are slightly worse. Anyway, the above version is sufficient for the needed application in Section \ref{sec:WhConv}. 

We introduce notation to be used in the proof. We order the indices $\nu$ for which $\dim X_\nu = 1$ in an increasing sequence: $1 \leq \nu_1 < \nu_2 < \dots < \nu_d \leq m$. For $j=1,2,\dots,d$, let $e_j \in X$ be a unit vector spanning $X_{\nu_j}$. Then:
\begin{equation}\label{eqn:assump3.2}
\begin{aligned}
&X \mbox{ admits an orthonormal basis }\{ e_{1},e_{2},\dots, e_{d} \} \mbox{ with} \\
&\tau_\delta(e_{j}) = \delta^{-\nu_j} e_{j}\qquad (j=1,2,\dots,d, \; \delta > 0), \mbox{ and}\\
& \nu_1,\dots,\nu_d \in \mathbb{N}, \;\; 1 \leq \nu_1 < \dots < \nu_d \leq m.
\end{aligned}
\end{equation}

Let $[d] := \{1,2,\dots,d\}$.  Given $S \subseteq [d]$, let $V_S := \spn \{ e_j : j \in S \}$. Note that a subspace $V \subseteq X$ is dilation invariant if and only if $V=V_S$ for some $S \subseteq [d]$. 

For $S \subseteq [d]$ with $\#(S) = k$, let $\omega_{S} :=  \bigwedge_{j \in S} e_j \in \bigwedge^k X$ be a representative form for $V_S \subseteq X$. Note that $\{ \omega_{S} : S \subseteq [d], \; \#(S)=k\}$ is an orthonormal basis for $\bigwedge^k X$. See Section \ref{sec:tools} for a discussion of the Hilbert space structure on $\bigwedge^k X$.

Given $S \subseteq [d]$, define $c(S) :=( c_1(S),c_2(S),\dots, c_d(S)) \in \{0,1,2,\dots,d\}^d$ by
\[
c_\ell(S) := \# \{ j \in S : j \leq \ell \} \qquad (\ell=1,2,\dots,d).
\]
By definition,
\begin{equation}\label{prop:c_ell2}
 c_\ell(S) = \dim(V_S \cap X_{\leq \ell})\mbox{, where } X_{\leq \ell} := \spn \{ e_1,e_2,\dots,e_\ell\}.    
\end{equation}
Also, observe that
\begin{equation}\label{prop:c_ell}
    S \neq S' \implies c_\ell(S) \neq c_\ell (S') \mbox{ for some } \ell=1,2,\dots,d.
\end{equation}
Let $A,B: X \rightarrow X$ be linear operators. Then we write $A \ge B$ to mean that $(A-B)$ is positive semidefinite.

\begin{lemma}\label{lem:multi2}
    Let $H \in G(k,X)$ for $1 \leq k \leq d$. Suppose $\epsilon \in (0, 1/2)$, $\delta, \delta' >0$ and $S, S' \subseteq [d]$ satisfy $\#(S) = \#(S') = k$, $\delta \geq \frac{1}{\epsilon^2} \delta' $,
    \[
        \frac{|\langle \omega_{\tau_\delta H}, \omega_{S} \rangle|}{|\omega_{\tau_{\delta} H}|} \geq \epsilon, \mbox{  and  } \;\;
        \frac{|\langle \omega_{\tau_{\delta'} H}, \omega_{S'} \rangle|}{|\omega_{\tau_{\delta'} H}|} \geq \epsilon.
    \]
    Then $c_\ell(S) \geq c_\ell(S')$ for $\ell=1,2,\dots,d$. 
\end{lemma}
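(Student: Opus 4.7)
I would first expand $\omega_H = \sum_{T : |T|=k} \alpha_T \omega_T$ in the orthonormal Plücker basis of $\bigwedge^k X$, so by \eqref{eq:DI_id} one has $\omega_{\tau_\delta H} = \sum_T \alpha_T \delta^{-\sigma(T)} \omega_T$, where $\sigma(T) := \sgn(V_T) = \sum_{i \in T} \nu_i$ (up to an irrelevant scalar, since the hypothesis involves ratios). The dominance hypotheses translate into
\[
\alpha_S^2 \delta^{-2\sigma(S)} \;\geq\; \epsilon^2 \sum_T \alpha_T^2 \delta^{-2\sigma(T)} \qquad\text{and}\qquad \alpha_{S'}^2 (\delta')^{-2\sigma(S')} \;\geq\; \epsilon^2 \sum_T \alpha_T^2 (\delta')^{-2\sigma(T)},
\]
so in particular $\alpha_S,\alpha_{S'}\neq 0$ and, for every $T$, $|\alpha_T| \leq \sqrt{\epsilon^{-2}-1}\,|\alpha_S|\,\delta^{\sigma(T) - \sigma(S)}$, together with the analogous bound from $(S',\delta')$-dominance.

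Applying these bounds with $T=S'$ and $T=S$ respectively, multiplying, and using $\delta/\delta' \geq \epsilon^{-2}$, I obtain $1 \leq (\epsilon^{-2}-1)^2(\delta/\delta')^{2(\sigma(S')-\sigma(S))}$. If $\sigma(S) > \sigma(S')$, the right-hand side is at most $(1-\epsilon^2)^2 < 1$, a contradiction. Hence $\sigma(S) \leq \sigma(S')$, which already settles the case $|S \triangle S'| = 2$, where the $c_\ell$-comparison is determined by $\sigma$ alone.

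For the general componentwise statement I argue by contradiction: suppose $c_\ell(S) < c_\ell(S')$ for some $\ell$. Since $\omega_H$ is a decomposable tensor, the Plücker relations hold: choosing $a \in S \setminus S'$ (to be specified) and setting $I = S \setminus \{a\}$, $J = S' \cup \{a\}$, the Plücker identity reads
\[
\alpha_S \alpha_{S'} \;=\; \sum_{j \in S' \setminus S} \pm\, \alpha_{T_j}\alpha_{T'_j},\qquad T_j := (S\setminus\{a\})\cup\{j\},\;\; T'_j := (S'\setminus\{j\})\cup\{a\},
\]
with the key property $\sigma(T_j)+\sigma(T'_j) = \sigma(S)+\sigma(S')$. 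For each pair I apply $S$-dominance to the member with smaller $\sigma$ and $S'$-dominance to the other; the scale factors then telescope via the $\sigma$-sum identity to give $|\alpha_{T_j}\alpha_{T'_j}| \leq (\epsilon^{-2}-1)|\alpha_S\alpha_{S'}|(\delta'/\delta)^{\beta_j}$ for a positive integer $\beta_j$ determined by the $\sigma$-gap. Using $\delta'/\delta \leq \epsilon^2$, each term is bounded by $(1-\epsilon^2)\epsilon^{2(\beta_j-1)}|\alpha_S\alpha_{S'}|$. A judicious choice of $a$ (typically $a = \max(S \setminus S')$) makes the $\beta_j$'s distinct positive integers, so summing the resulting geometric series yields $|\alpha_S\alpha_{S'}| \leq (1-\epsilon^{2r})|\alpha_S\alpha_{S'}|$ with $r = |S' \setminus S|$, strictly less than $|\alpha_S\alpha_{S'}|$; contradiction.

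The main obstacle will be the bookkeeping in configurations where the natural choice $a = \max(S\setminus S')$ produces some $T_j$ with $\sigma(T_j) \geq \sigma(S)$ (in particular when $\max(S'\setminus S) > \max(S \setminus S')$); for such $j$ the dominance assignment must be flipped (using $S'$-dominance on the small-$\sigma$ side and $S$-dominance on the large-$\sigma$ side), and one must verify that the resulting $\beta_j$'s remain distinct positive integers. I expect this case analysis can be organized cleanly by induction on $|S \triangle S'|$, with the base case $|S \triangle S'| = 4$ handled directly via the three-term Plücker relation.
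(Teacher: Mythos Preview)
Your approach via Plücker coordinates is creative, but it has a genuine gap that is more than bookkeeping. The contradiction hypothesis $c_\ell(S) < c_\ell(S')$ never actually enters your Plücker argument: after fixing $a \in S\setminus S'$, your bounds depend only on the signatures $\sigma(T_j),\sigma(T'_j)$, and the index $\ell$ has disappeared. Concretely, take $\nu_i = i$, $k=3$, $S=\{1,4,5\}$, $S'=\{2,3,10\}$; then $\sigma(S)=10\le 15=\sigma(S')$ and $c_3(S)=1<2=c_3(S')$, so this is a contradiction case. With $a=5$ and $j=10$ one has $T_{10}=\{1,4,10\}$, $T'_{10}=\{2,3,5\}$, $\sigma(T_{10})=15=\sigma(S')$, $\sigma(T'_{10})=10=\sigma(S)$, so \emph{both} dominance assignments give exponent $\beta=0$ and the term $|\alpha_{T_{10}}\alpha_{T'_{10}}|$ is only bounded by $(\epsilon^{-2}-1)|\alpha_S\alpha_{S'}|$, which swamps the sum. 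Choosing $a=4$ makes $\beta_{10}=1$ but then $\beta_3=1$ as well, so the $\beta_j$'s are not distinct and the geometric-series bound $(1-\epsilon^{2r})$ fails; indeed the resulting sum bound is $(1-\epsilon^2)(2+\epsilon^2)>1$ for small $\epsilon$. No single choice of $a$ works, and an induction on $|S\triangle S'|$ does not obviously help since the Plücker relation still involves the original $S,S'$.

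The paper's proof is completely different and uses the hypothesis $c_\ell(S) < c_\ell(S')$ directly as a dimension inequality. Working in $X$ rather than $\bigwedge^k X$, one converts the inner-product hypotheses to bounds on $\theta_{\max}$, sets $\widetilde H := \Pi_H(V_{S'}\cap X_{\le\ell}) \subseteq H$, and observes $\dim\widetilde H = c_\ell(S') > c_\ell(S) = \dim(V_S\cap X_{\le\ell})$. This dimension gap produces a nonzero $x\in\widetilde H$ orthogonal to $V_S\cap X_{\le\ell}$; since $x$ comes from projecting a vector in $X_{\le\ell}$ onto $H$, it has $|\Pi_{\le\ell}x|\ge\epsilon|x|$. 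Applying $\tau_\delta$ (which contracts $X_{>\ell}$ at least $\delta$ times faster than $X_{\le\ell}$) and using $\delta\ge\epsilon^{-2}$ forces $|\Pi_{>\ell}\tau_\delta x| < \epsilon|\tau_\delta x|$. But $\tau_\delta x$ remains orthogonal to $V_S\cap X_{\le\ell}$, so $\Pi_{V_S}\tau_\delta x = \Pi_{V_S\cap X_{>\ell}}\tau_\delta x$, which has norm $<\epsilon|\tau_\delta x|$; this contradicts $\cos\theta_{\max}(\tau_\delta H,V_S)\ge\epsilon$.
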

    
\begin{proof}
  For sake of contradiction, let $H,\delta,\delta',S,S'$ be as in the hypotheses of the lemma, and suppose that there exists $\ell \in [d]$ with $c_\ell(S) < c_\ell(S')$. Without loss of generality,  $\delta' = 1$. Then $\delta \geq \frac{1}{\epsilon^2}$, $| \langle \omega_{\tau_\delta H}, \omega_S \rangle| \geq \epsilon |\omega_{\tau_\delta H}|$, and  $|\langle \omega_H, \omega_{S'} \rangle| \geq \epsilon |\omega_H|$. Thanks to \eqref{eqn:def_angle1} and \eqref{eqn:ineq1}, we have
    \begin{equation}\label{eqn:bb1}
    \cos (\theta_{\max}( \tau_\delta H,V_S))  \geq \cos (\angle(\tau_\delta H, V_S)) = \frac{|\langle \omega_{\tau_\delta H}, \omega_S \rangle|}{|\omega_{\tau_\delta H}|} \geq \epsilon,
    \end{equation}
    and similarly
    \begin{equation}\label{eqn:bb2}
    \cos (\theta_{\max}(H,V_{S'})) \geq \epsilon.
    \end{equation}

    Consider the orthogonal subspaces
    \[
    X_{\leq \ell} := \spn\{ e_j : j \leq \ell\} ,\;\;\; X_{> \ell} := \spn \{e_j : j  > \ell\}.
    \]
    Then $X = X_{\leq \ell} \oplus X_{>\ell}$. If $\ell=d$, by convention $X_{>\ell} = \{ 0\}$. By \eqref{prop:c_ell2},
    \begin{align}
    &\dim(V_S \cap X_{\leq \ell}) = c_\ell(S), \label{eqqq1}\\
    &\dim(V_{S'} \cap X_{\leq \ell}) = c_\ell(S'). \label{eqqq2}
    \end{align}
    Let $\Pi_{\leq \ell} := \Pi_{X_{\leq \ell}}$ and $\Pi_{> \ell} := \Pi_{X_{>\ell}}$ be the orthogonal projection operators associated to $X_{\leq \ell}$ and $X_{>\ell}$, respectively.
    
    From \eqref{eqn:bb2} and Lemma \ref{lem:mpa1}, $|\Pi_H(x)| \geq \epsilon |x|$ for all $x \in V_{S'}$. Set 
    \[
    \widetilde{H} = \Pi_{H} (V_{S'} \cap X_{\leq \ell}) \subseteq H.
    \]
    Applying Lemma \ref{lem:angles_projections} to the subspace $X_0 = V_{S'} \cap X_{\leq \ell}$ gives
    \begin{equation}\label{eqqq2a}
    \dim ( \widetilde{H}) = \dim(V_{S'} \cap X_{\leq \ell})
    \end{equation}
    and $\cos( \theta_{\max}( \widetilde{H}, V_{S'} \cap  X_{\leq \ell})) \geq \epsilon$. The prior inequality implies, by Lemma \ref{lem:mpa1},
    \begin{equation}\label{eqqq3}
    |\Pi_{ \leq \ell} x| \geq | \Pi_{ X_{\leq \ell} \cap V_{S'}} x| \geq \epsilon |x| \;\mbox{ for all } x \in \widetilde{H}.
    \end{equation}
    By the Pythagorean theorem and \eqref{eqqq3},
    \begin{equation}\label{eqqq4}
    |\Pi_{>\ell} x| = \sqrt{|x|^2 - |\Pi_{ \leq \ell} x|^2 } \leq \sqrt{1-\epsilon^2} |x| \;\mbox{ for all } x \in \widetilde{H}.
    \end{equation}

    By the form of $\tau_\delta$ (see \eqref{eqn:assump3.2}) and because $\delta \geq 1$, we have $\tau_\delta|_{X_{\leq \ell}} \geq \delta^{-\nu_\ell} \cdot \mathrm{id}|_{X_{\leq \ell}}$ and $\tau_\delta|_{X_{> \ell}} \leq \delta^{-\nu_\ell-1} \cdot \mathrm{id}|_{X_{> \ell}}$. Therefore, for $x \in \widetilde{H}$,  \eqref{eqqq3} gives
    \[
        |\tau_{\delta} \Pi_{ \leq \ell} x| \geq \delta^{-\nu_\ell} |\Pi_{ \leq \ell} x| \geq \delta^{-\nu_\ell} \epsilon |x|,
    \]
    and \eqref{eqqq4} gives
    \[
        |\tau_{\delta} \Pi_{> \ell} x | \leq \delta^{-\nu_\ell-1} |\Pi_{> \ell} x| \leq \delta^{-\nu_\ell-1} \sqrt{1 - \epsilon^2} |x|.
    \]
    Because $\tau_{\delta}$ fixes $X_{> \ell}$ and $X_{\leq \ell}$, the operators $\tau_{\delta}$, $\Pi_{> \ell}$, $\Pi_{ \leq \ell}$ all commute. Thus, combining the above inequalities gives
    \begin{equation}\label{eqn:delta_bd}
        \frac{|\Pi_{>\ell} \tau_{\delta} x|}{|\tau_{\delta} x |} \leq \frac{|\tau_{\delta} \Pi_{>\ell} x |}{|\tau_{\delta} \Pi_{ \leq \ell} x|} \leq \frac{1}{\delta} \frac{\sqrt{1-\epsilon^2}}{\epsilon} < \epsilon \quad \mbox{for all } x \in \widetilde{H} \setminus \{0\}.
    \end{equation}
    where the last inequality uses the assumption that $\delta \geq \frac{1}{\epsilon^2}$.
    
    From \eqref{eqqq1}, \eqref{eqqq2}, \eqref{eqqq2a}, and the assumption $c_\ell(S) < c_\ell(S')$, we have that $\dim(\widetilde{H}) > \dim(V_S \cap X_{ \leq \ell})$. Thus, we can find an $x \in \widetilde{H} \cap (V_S \cap X_{ \leq \ell})^\perp$ with $x \neq 0$. Note that $(V_S \cap X_{\leq \ell})^\perp$ is spanned by a subcollection of the basis $\{e_j \}$, and each $e_j$ is an eigenvector of $\tau_\delta$. Thus, since $x \in (V_S \cap X_{\leq \ell})^\perp$, we have $\tau_{\delta} x \in (V_S \cap X_{\leq \ell})^\perp$. Therefore, due to the orthogonal decomposition $V_S = (V_S \cap X_{\leq \ell}) \oplus (V_S \cap X_{> \ell})$, we have $\Pi_{V_S} \tau_{\delta} x = \Pi_{V_S \cap X_{> \ell}} \tau_\delta x$. We deduce that
    \[
    |\Pi_{V_S} \tau_{\delta} x| = |\Pi_{V_S \cap X_{> \ell}} \tau_\delta x| \leq | \Pi_{> \ell} \tau_\delta x|.
    \]
    Since $x \in \widetilde{H} \setminus \{0\}$,  \eqref{eqn:delta_bd} implies that $|\Pi_{> \ell} \tau_{\delta} x| < \epsilon | \tau_\delta x|$. Thus, $|\Pi_{V_S} \tau_{\delta} x| < \epsilon |\tau_{\delta} x|$ and $\tau_\delta x \in \tau_\delta \widetilde{H} \subseteq \tau_\delta H$, which implies $\cos (\theta_{\max}(\tau_{\delta} H, V_S)) < \epsilon$. This contradicts \eqref{eqn:bb1}, completing the proof of Lemma \ref{lem:multi2}.
\end{proof}

\subsubsection{Proof of Proposition \ref{prop:subspace_dil}}

 Fix $H \in G(k,X)$, $\eta \in (0,1/2)$.
 
 Fix a compact interval $I \subseteq (0,\infty)$ with $\frac{r(I)}{l(I)} \geq \left(\frac{2^d}{\eta} \right)^{dk+2}$. 

For ease of notation, let $H_\delta = \tau_\delta H$ and $\omega_\delta = \omega_{H_\delta} = \tau_\delta^* \omega_H$ for $\delta >0$.

We aim to show that there exist $\delta \in I$ and a $k$-dimensional dilation invariant subspace $V \subseteq X$ with $\cos (\theta_{\max}(H_\delta, V)) \geq 1 -\eta$.

Recall that every $k$-dimensional dilation invariant subspace $V \subseteq X$ has the form $V = V_S = \spn \{ e_j : j \in S \}$ for some $S \subseteq [d]$ with $\#(S) = k$, and that $\omega_S =  \bigwedge_{j \in S} e_j \in \bigwedge^k X$ is a representative form for $V_S$. 

By \eqref{eqn:def_angle1}, \eqref{eqn:ineq1}, it is enough to show that there exists $\delta \in I$ such that 
\begin{equation}\label{eqn:enough1}
   \cos(\angle( H_\delta, V_S)) = \frac{|\langle \omega_{\delta} , \omega_S \rangle|}{|\omega_\delta|} \geq 1 - \eta \quad \mbox{ for some } S \subseteq [d], \; \#(S) = k.
\end{equation}

Let 
\begin{equation}
    \label{eqn:def_eps}
\epsilon = \sqrt{\eta/2^d}.
\end{equation}

Observe that if $k  = \dim(H) = d$ then \eqref{eqn:enough1} is true with $S = [d]$ for any $\delta \in I$. That's because $\bigwedge^d X$ is one-dimensional, hence, $\omega_\delta \in \spn \{ \omega_{[d]} \}$.

We may thus assume $d \geq 2$ and $1 \leq k < d$.

We will then prove \eqref{eqn:enough1} by contradiction. For sake of contradiction, suppose \eqref{eqn:enough1} fails for every $\delta \in I$. 

Recall $\l(I)$ and $r(I)$ are the left and right endpoints of $I$. For $j \geq 0$, let $\delta_j := \epsilon^{-2j} \cdot l(I)$. Let $J$ be the largest positive integer such that $\delta_J \in I$. By assumption, $r(I)/l(I) \geq \left(\frac{2^d}{\eta} \right)^{dk+2} = \epsilon^{-2(dk+2)}$, thus
\begin{equation}\label{eqn:Jbd}
J \geq dk+2.
\end{equation}

For each $j=0,1,\dots,J$ we claim that there exist distinct subsets $S_{j,1}$ and $S_{j,2}$ of $[d]$ of cardinality $k$, such that
\begin{equation}\label{eqn:enough2}
    \frac{| \langle \omega_{\delta_j}, \omega_{S_{j,\mu}}  \rangle|}{|\omega_{\delta_j}|} \geq \epsilon \qquad (\mu=1,2).
\end{equation}
To see this, order the subsets of $[d]$ of cardinality $k$ in a sequence, $S_{j,1}, S_{j,2}, \dots,S_{j,L}$, $L = \binom{d}{k}$, so that
\[
\ell  \mapsto | \langle \omega_{\delta_j}, \omega_{S_{j,\ell}} \rangle| \mbox{ is non-increasing (for fixed } j \mbox{)}.
\]
Set $a_{j,\ell} := | \langle \omega_{\delta_j}, \omega_{S_{j,\ell}} \rangle|/|\omega_{\delta_j}|$ for $\ell=1,\dots,L$. By assumption, \eqref{eqn:enough1} fails for $\delta = \delta_j$, thus
\[
a_{j,\ell} < 1 - \eta \quad \mbox{for all } \ell=1,\dots,L.
\]
Because $\{ \omega_S : S \subseteq [d], \; \#(S) = k \} = \{\omega_{S_{j,\ell}} : \ell=1,\dots, L \}$ is an orthonormal basis for $\bigwedge^k X$, we have  $\sum_{\ell=1}^L a_{j,\ell}^2 = 1$. Since $\ell \mapsto a_{j,\ell}$ is non-increasing,
\[
a_{j,1} \geq \sqrt{1/L} \geq \sqrt{1/2^d} > \epsilon.
\]
Since $a_{j,1} \leq 1-\eta$, we have
\[
\sum_{\ell=2}^L a_{j,\ell}^2 = 1 - a_{j,1}^2 \geq 1 - (1 - \eta)^2 = 2\eta - \eta^2 \geq \eta.
\]
Thus, because $\ell \mapsto a_{j,\ell}$ is non-increasing, we have
\[
a_{j,2} \geq \sqrt{\eta/(L-1)} > \sqrt{\eta/L} > \sqrt{\eta/2^d} = \epsilon.
\]
As $a_{j,1}, a_{j,2} \geq \epsilon$, we complete the proof of \eqref{eqn:enough2}.

Let $\mu_0 = 1$, and for $1 \leq j \leq J$, let $\mu_j \in \{1,2\}$ be such that $S_{j,\mu_j} \neq S_{j-1,\mu_{j-1}}$. 
By definition, note that $\delta_{j} = \delta_{j-1}/\epsilon^2$ for $j \geq 1$. Thus, using \eqref{eqn:enough2}, we may apply Lemma \ref{lem:multi2} to deduce that $c_\ell(S_{j,\mu_j}) \geq c_\ell(S_{j-1,\mu_{j-1}})$ for every $\ell=1,2,\dots,d$ and $j=1,2,\dots, J$. Further, since $S_{j,\mu_j} \neq S_{j-1,\mu_{j-1}}$, for each $j$ this inequality is strict for some $\ell$ (see \eqref{prop:c_ell}). It follows that
\begin{equation}\label{eqn:mon1}
    \psi_j := \sum\limits_{\ell=1}^d c_\ell(S_{j,\mu_j}) > \psi_{j-1} \qquad (j =1,2,\dots,J).
\end{equation}
But note that 
\[
0 \leq c_\ell(S) = \# \{ j \in S : j \leq \ell\} \leq  k
\]
for all $\ell = 1,2,\dots, d$ and all $S \subseteq [d]$ with $\#(S)=k$. Thus,
\[
0 \leq \psi_j \leq dk \qquad (j=1,2,\dots,J).
\]
From this and \eqref{eqn:mon1} we deduce that $J \leq dk+1$. But this contradicts \eqref{eqn:Jbd}. This completes the proof of \eqref{eqn:enough1} and finishes the proof of Proposition \ref{prop:subspace_dil}.

\subsection{Monotonicity of the orbits of a Hilbert dilation system on the Grassmanian.}

Fix a Hilbert dilation system $\mathcal{X} = (X,\tau_\delta)_{\delta >0}$. We drop the assumption that $\mathcal{X}$ is simple. Thus, $X = \bigoplus_{\nu=1}^m X_\nu$ and $\tau_\delta : X \rightarrow X$ has the form $\tau_\delta|_{X_\nu} = \delta^{-\nu} \cdot \mathrm{id}|_{X_\nu}$, as in \eqref{eqn1:sec3}, \eqref{eqn2:sec3}. Our next result describes a qualitative property of the functions $\delta \mapsto \angle(\tau_\delta H,V)$ (for fixed $H,V$) that will enter into the proof of Proposition \ref{prop:tech2}. See \eqref{eqn:def_angle1} for the definition of the quantity $\angle(V,W)$.

\begin{lemma}\label{lem:multi3} Let $H, V \subseteq X$ be subspaces with $\dim(H) = \dim(V) \geq 1$,  such that $V$ is dilation invariant. Then the map $f(\delta) = \cos(\angle(\tau_\delta H, V))$ is unimodal: if $a<b<c$ and $f(b)<f(c)$, then $f(a)<f(b)$. 
\end{lemma}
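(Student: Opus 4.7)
The plan is to diagonalize the induced operator $\tau_\delta^*$ on $\bigwedge^k X$ (where $k = \dim H = \dim V$) and rewrite $f(\delta)^2$ as a reciprocal of a positive linear combination of powers of $\delta$, then appeal to log-convexity of such sums. Concretely, pick an orthonormal basis of $X$ adapted to the decomposition $X = \bigoplus_\nu X_\nu$. The induced wedge basis of $\bigwedge^k X$ diagonalizes $\tau_\delta^*$ with eigenvalues $\delta^{-s}$, where $s$ runs over sums $\nu_{i_1}+\cdots+\nu_{i_k}$. Grouping basis vectors by eigenvalue yields an orthogonal direct sum $\bigwedge^k X = \bigoplus_s W_s$, with $\tau_\delta^*|_{W_s} = \delta^{-s} \cdot \mathrm{id}$. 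Since $V$ is dilation-invariant, the identity \eqref{eq:DI_id} places any representative form $\omega_V$ in $W_{s_0}$, where $s_0 := \sgn(V)$.

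Write $\omega_H = \sum_s \omega_s$ with $\omega_s \in W_s$. Then $\tau_\delta^* \omega_H = \sum_s \delta^{-s} \omega_s$ is a representative form for $\tau_\delta H$, and by orthogonality of the $W_s$ and the fact that $\omega_V \in W_{s_0}$,
\[
f(\delta)^2 = \frac{|\langle \tau_\delta^*\omega_H, \omega_V\rangle|^2}{|\tau_\delta^*\omega_H|^2\,|\omega_V|^2} = \frac{A}{h(\delta)},\quad A := \frac{|\langle \omega_{s_0}, \omega_V\rangle|^2}{|\omega_V|^2},\quad h(\delta) := \sum_s B_s\, \delta^{2(s_0-s)},
\]
where $B_s := |\omega_s|^2 \geq 0$. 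If $A = 0$ then $f \equiv 0$ and the claim is vacuous, so assume $A > 0$.

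Substituting $u = \log \delta$, $h(e^u) = \sum_s B_s\, e^{2(s_0-s)u}$ is a nonnegative combination of exponentials in $u$, hence log-convex in $u$. Since $u \mapsto \delta = e^u$ is monotone, quasi-concavity of $f(e^u)^2 = A/h(e^u)$ in $u$ is equivalent to quasi-concavity of $f(\delta)^2$ in $\delta$, and the latter is equivalent to quasi-concavity of $f$. To conclude the strict form of unimodality stated in the lemma, split into two cases. If only one $B_s$ is nonzero, then $f(\delta)^2$ is a monomial in $\delta$, so $f$ is either constant or strictly monotone; only the strictly increasing case is compatible with $f(b) < f(c)$, and then $f(a) < f(b)$ is immediate. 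Otherwise at least two distinct exponents appear in the sum defining $h(e^u)$, so $h(e^u)$ is strictly log-convex, hence strictly convex, hence strictly quasi-convex, so $f(\delta)^2$ (and $f$) is strictly quasi-concave.

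The lemma now follows: strict quasi-concavity gives $f(b) > \min(f(a), f(c))$ for $a < b < c$; combined with $f(b) < f(c)$ this forces $\min(f(a), f(c)) = f(a)$, yielding $f(a) < f(b)$. The main thing to verify carefully is the strict inequality in the non-degenerate case, which reduces to the elementary fact that a sum of two or more distinct exponentials (with positive coefficients) is strictly log-convex.
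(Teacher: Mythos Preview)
Your argument is correct. Both you and the paper arrive at the same explicit formula $f(\delta)^2 = A / \sum_s B_s \,\delta^{2(s_0-s)}$ (in your notation), but the routes to unimodality diverge from there. The paper substitutes $\delta \mapsto \delta^{-1}$, takes the logarithmic derivative of $f$, and observes that the resulting numerator $P(\delta) = \sum_p \alpha_p(\sgn(V) - p)\,\delta^{2p-1}$ has at most one sign change in its coefficient sequence; Descartes' rule of signs then bounds the number of positive critical points by one, and a separate limit argument (that $f(\delta) \to 0$ as $\delta \to 0$ or $\delta \to \infty$ whenever some $\alpha_p \neq 0$ with $p \neq \sgn(V)$) is needed to rule out the ``one interior minimum, no interior maximum'' configuration. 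Your route is more direct: after the change of variable $u = \log \delta$, the denominator $h(e^u)$ is a nonnegative combination of exponentials, hence log-convex (strictly so when two distinct exponents appear, by Cauchy--Schwarz), so $f$ is (strictly) quasi-concave, and no endpoint analysis is required. One harmless redundancy: in your Case~1 (a single $B_s$ nonzero), the standing assumption $A > 0$ already forces $s = s_0$, so $f$ is in fact constant and the ``strictly monotone'' sub-case never arises.
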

\begin{proof}
    Let $l = \dim(H) = \dim(V) \geq 1$. Fix representative forms $\omega_H, \omega_{V} \in \bigwedge^l X$ for $H$, $V$, respectively, with unit norm, $|\omega_H| = |\omega_V| = 1$. Then $\tau_\delta^* \omega_H$ is a representative form for $\tau_\delta H$. So we have
    \begin{equation} \label{eq:frac}
       f(\delta) = \cos(\angle(\tau_\delta H,V)) = \frac{|\langle \tau_\delta^* \omega_H , \omega_{V} \rangle|}{|\tau_\delta^* \omega_H| |\omega_{V}|} = \frac{| \langle \omega_H , \tau_\delta^* \omega_{V} \rangle|}{|\tau_\delta^* \omega_H |}.
    \end{equation}
    Since $V$ is dilation invariant, $\tau_\delta^* \omega_{V} = \delta^{- \sgn(V)} \omega_V$ (see \eqref{eq:DI_id}). So the numerator of \eqref{eq:frac} is
    \begin{equation}\label{eq:num}
    | \langle \omega_H , \tau_\delta^* \omega_{V} \rangle| = \alpha \cdot \delta^{-\sgn(V)} \mbox{, for } \alpha := |\langle \omega_H, \omega_V \rangle| \geq 0.
    \end{equation}
    
    To compute the denominator of \eqref{eq:frac}, we fix a basis for $\bigwedge^l X$. Fix a family of dilation invariant subspaces $U_1, U_2, \dots,U_M$, such that the associated unit-norm representative forms $\omega_{U_1}, \omega_{U_2}, \dots,\omega_{U_M}$ give an orthonormal basis for $\bigwedge^l X$ ($M = \binom{d}{l}$). Then $\tau_\delta^*\omega_{U_i} = \delta^{- \sgn(U_i)} \omega_{U_i}$ by \eqref{eq:DI_id}. So the denominator of \eqref{eq:frac} is
    \begin{equation}\label{eq:den}
    \begin{aligned}
        &| \tau_\delta^* \omega_H | = \sqrt{\sum_{i=1}^M \langle \tau_{\delta}^*\omega_H, \omega_{U_i} \rangle^2} = \sqrt{\sum_{i=1}^M \langle \omega_H, \tau_{\delta}^*\omega_{U_i} \rangle^2}\\
        &=\sqrt{\sum_{i=1}^M \langle \omega_H , \omega_{U_i} \rangle^2 \delta^{-2 \sgn(U_i)}} = \sqrt{\sum_{p=1}^{dm} \alpha_p \delta^{-2p}}
    \end{aligned}
    \end{equation}
    for constants
    \[
    \alpha_p = \sum_{i \in [M], \sgn(U_i) = p} \langle \omega_H, \omega_{U_i} \rangle^2 \geq 0, \qquad  1 \leq p \leq dm.
    \]
    Here, we used that $1 \leq \sgn(U) \leq dm$ for any dilation invariant subspace $U \subseteq X$ with $\dim(U) \geq 1$. Not all of the coefficients $\alpha_p$ are equal to zero, because $\{ \omega_{U_i} \}$ is an orthonormal basis for $\bigwedge^\ell X$. Combining \eqref{eq:num} and \eqref{eq:den}, we have
    \begin{equation}\label{eq:func}
    f(\delta) = \frac{\alpha \delta^{- \sgn(V)}}{\sqrt{\sum\limits_{p=1}^{dm} \alpha_p \delta^{-2p}}}.
    \end{equation}
    
    If $\alpha = 0$ then $f= 0$, and we obtain the desired conclusion because constant functions are unimodal.
    
    Now suppose $\alpha > 0$, so that $f(\delta) > 0$ for all $\delta$. Define $g(\delta) =  \log(f(\delta^{-1}))$. Then compute
    \[
    \begin{aligned}
        &g'(\delta) = \sgn(V)\frac{1}{\delta} - \frac{\sum\limits_{p=1}^{dm} p \alpha_p \delta^{2p-1}}{\sum\limits_{p=1}^{dm} \alpha_p \delta^{2p}} = \frac{P(\delta)}{\sum\limits_{p=1}^{dm} \alpha_p \delta^{2p}}, \\
        &P(\delta) = \sum\limits_{p=1}^{dm} \alpha_p ( \sgn(V) - p) \delta^{2p-1}.
    \end{aligned}
    \]
    We now split the proof into two cases.
    
    Case 1: $\alpha_p = 0$ for all $p \neq \sgn(V)$. Then from the above identities, $P \equiv 0$, and so $g' \equiv 0$. So $g$ is constant, and thus $f$ is constant, giving the desired result.
    
    Case 2: $\alpha_p \neq 0$ for some $p \neq \sgn(V)$. If there exist $r,q$ with $\alpha_r>0$, $\alpha_q >0$, $r < \sgn(V) < q$, then the signs of the coefficients of $P(\delta)$ change exactly once; otherwise they change $0$ times. By Descartes' rule of signs, there is at most one value of  $\delta > 0$ with $P(\delta) = 0$, so at most one value of $\delta > 0$ with $g'(\delta) = 0$. This leaves three options: $g$ is monotone, $g$ has one interior maximum and no interior minima, and $g$ has one interior minimum and no interior maxima. The first two options imply that $g$ is unimodal, hence $f$ is unimodal. The third option is impossible. To see this, we exploit the assumption that $\alpha_p \neq 0$ for some $p \neq \sgn(V)$. Therefore, from \eqref{eq:func}, either $\lim_{\delta \rightarrow \infty} f(\delta) = 0$ or $\lim_{\delta \rightarrow 0} f(\delta) = 0$. Therefore, $g(\delta) \to -\infty$ for at least one of $\delta \to 0$ or $\delta \to \infty$, ruling out that $g$ has one interior minimum and no interior maxima.
    
    This completes the proof of Lemma \ref{lem:multi3}.
\end{proof}

\subsection{Rescaling dynamics on the space of ellipsoids}\label{sec:rescale_ellip}

We present further preparatory results to be used in the proofs of  Propositions \ref{prop:tech1} and \ref{prop:tech2}.

Let $(X,\tau_\delta)_{\delta > 0}$ be a Hilbert dilation system. So, $X$ is a real Hilbert space of dimension $d$ and  $\tau_\delta : X \rightarrow X$ are linear operators of the form \eqref{eqn1:sec3},\eqref{eqn2:sec3}. 

Given a set $\Omega \subseteq X$ and $T : X \rightarrow X$, we denote $T \Omega = \{ T(x) : x \in \Omega\}$.

A (centered) \emph{ellipsoid} $\E \subseteq X$ is a set of the form
\begin{equation}\label{eqn:ellips1}
\E = \left\{ \sum_{i=1}^d c_i \sigma_i v_i : \sum_{i=1}^d c_i^2 \leq 1 \right\},
\end{equation}
where $\sigma_1 \geq  \dots \geq \sigma_d \geq 0$ and $\{v_1,\dots,v_d\}$ is an orthonormal basis for $X$. We call $v_1,\dots,v_d$ (normalized) principal axis directions of $\E$, and $\sigma_1,  \dots,\sigma_d$ the principal axis lengths of $\E$. Denote $\sigma_j(\E) = \sigma_j$ for the $j$'th principal axis length of $\E$. Principal axis lengths (but not directions) are uniquely determined by $\E$.

Note that the intersection of an ellipsoid and a subspace is also an ellipsoid. Further, the image of an ellipsoid under a linear transformation is an ellipsoid.

Let $\cB$ be the closed unit ball of $X$. If $A : X \rightarrow X$ is a linear transformation then $A \cB$ is an ellipsoid in $X$. Let $ \sigma_1 \geq  \dots \geq \sigma_d \geq 0$ be the singular values of $A$, let $\{ v_1,\dots, v_d \}$ be left singular vectors of $A$, and let $\{w_1,\dots,w_d\}$ be right singular vectors of $A$. That is, $\{ v_i\}$ and $\{w_i\}$ are orthonormal bases for $X$, and $A w_i = \sigma_i v_i$ for all $i$. We express $\cB$ in the form $\{ \sum_i c_i w_i : \sum_i c_i^2 \leq 1 \}$. Then 
\begin{equation} \label{eqn:ellips2}
  A \B = \left\{ \sum_{i=1}^d c_i \sigma_i v_i : \sum_{i=1}^d c_i^2 \leq 1 \right\}.  
\end{equation}
So,  the principal axis lengths of $A \B$ are the singular values of $A$, and the principal axis directions of $A \cB$ are corresponding left singular vectors of $A$.

In particular, every ellipsoid $\E$ can be written as $\E = A \B$ for some linear transformation $A : X \rightarrow X$.

Given an ellipsoid $\E \subseteq X$, let $\E_\delta := \tau_\delta \E$ for $\delta > 0$. Then $(\E_\delta)_{\delta >0}$ is an \emph{orbit} of $\tau_\delta$ in the space of ellipsoids. Our next result, Lemma \ref{lem:help2}, states that this orbit can be approximated by an orbit in the Grassmanian $G(k,X)$ if a condition on the $\E_\delta$ is met.

\begin{defi}
Let $\epsilon \in (0,1/2)$, and let $\E \subseteq X$ be an ellipoid. Say that $\E$ is $\epsilon$-degenerate if $\sigma_j(\E) \notin [\epsilon, \epsilon^{-1}]$ for all $j$. In other words, $\E$ is $\epsilon$-degenerate if the length of every principal axis of $\E$ is either less than $\epsilon$ or greater than $\epsilon^{-1}$. 
\end{defi}

\begin{lemma}\label{lem:help2}
    Let $\E$ be an ellipsoid in $X$, let $\epsilon \in (0,1/2)$, and let $I \subseteq (0,\infty)$ be a compact interval. Let $\E_\delta := \tau_\delta \E$ for $\delta > 0$. Suppose that $\E_\delta$ is $\epsilon$-degenerate for all $\delta \in I$. Then there exists a subspace $H \subseteq X$ such that, for all $\delta \in I$,
    \begin{enumerate}
        \item[(a)] $\E_\delta \subseteq   \tau_\delta H + \epsilon \B$, and
        \item[(b)] $\tau_\delta H \cap (\frac{1}{2 \epsilon} \B) \subseteq \E_\delta$.
    \end{enumerate}
\end{lemma}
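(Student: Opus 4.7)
The plan is to produce one fixed subspace $H \subseteq X$ whose image $\tau_\delta H$ captures, uniformly in $\delta \in I$, the span of the long principal axes of $\E_\delta$. Writing $\E = A\B$, the principal axis lengths of $\E_\delta$ are the singular values $\sigma_j(\tau_\delta A)$, which by $\epsilon$-degeneracy all lie outside $[\epsilon, \epsilon^{-1}]$. Continuity of singular values in $\delta$ together with connectedness of $I$ then shows that the integer $k := \#\{j : \sigma_j(\tau_\delta A) > \epsilon^{-1}\}$ is constant on $I$. For each $\delta \in I$ let $H^\star_\delta \in G(k,X)$ denote the span of the left singular vectors of $\tau_\delta A$ corresponding to singular values $> \epsilon^{-1}$; a direct one-$\delta$ inspection of the representation \eqref{eqn:ellips1} applied to $\E_\delta$, after splitting into long and short axes, yields
\[
\E_\delta \subseteq H^\star_\delta + \epsilon\,\B, \qquad H^\star_\delta \cap \epsilon^{-1}\B \subseteq \E_\delta.
\]

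Next, fix any reference scale $\delta_0 \in I$ and define $H := \tau_{\delta_0}^{-1} H^\star_{\delta_0}$. At $\delta = \delta_0$ we have $\tau_{\delta_0} H = H^\star_{\delta_0}$, and (a),(b) follow immediately from the one-$\delta$ inclusions, in fact with $\epsilon^{-1}$ in place of $\tfrac{1}{2\epsilon}$. For $\delta \ne \delta_0$ the SVD relation $\tau_\delta A v_j = \sigma_j u_j$ gives the identity
\[
\tau_\delta^{-1} H^\star_\delta \;=\; A(W_\delta),
\]
where $W_\delta \in G(k,X)$ is the span of the eigenvectors of the positive semidefinite operator $A^T \tau_\delta^2 A$ corresponding to eigenvalues $> \epsilon^{-2}$. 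The $\epsilon$-degeneracy hypothesis provides a multiplicative spectral gap of at least $\epsilon^{-4}$ between the ``long'' and ``short'' eigenvalues of $A^T \tau_\delta^2 A$ for every $\delta \in I$, which allows a spectral perturbation argument to control the drift of $A(W_\delta)$ in the Grassmannian as $\delta$ varies. Transferring back through $\tau_\delta$ shows that $\tau_\delta H$ stays close to $H^\star_\delta$ in maximum principal angle, and the one-$\delta$ inclusions then propagate to (a),(b) at the stated constants, with the factor $\tfrac12$ in (b) absorbing the small angular error.

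The main obstacle is exactly this last quantitative step: bounding $\theta_{\max}(\tau_\delta H, H^\star_\delta)$ uniformly in $\delta \in I$. Because $I$ is permitted to be arbitrarily long, a naive continuity argument does not suffice; one needs a perturbation estimate that is effectively scale-invariant, simultaneously exploiting (i) the diagonal power-law structure of $\tau_\delta$ on the decomposition $X = \bigoplus_\nu X_\nu$ and (ii) the extreme spectral separation furnished by the $\epsilon$-degeneracy hypothesis throughout $I$. A cleaner alternative, if it can be justified in the present generality, would be to verify that $\tau_\delta^{-1} H^\star_\delta$ is in fact independent of $\delta$ on $I$; taking $H$ to be this common subspace then collapses the perturbation analysis entirely and gives (b) with the stronger constant $\epsilon^{-1}$.
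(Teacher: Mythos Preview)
Your setup (constant $k$ via continuity, the one-$\delta$ inclusions for $H^\star_\delta$, the choice $H=\tau_{\delta_0}^{-1}H^\star_{\delta_0}$) matches the paper. The gap is exactly where you flag it: the uniform angular control of $\tau_\delta H$ against $H^\star_\delta$ is not established, and your suggested shortcut---that $\tau_\delta^{-1}H^\star_\delta$ is constant on $I$---is false in general. Indeed $\tau_\delta^{-1}H^\star_\delta = A(W_\delta)$ with $W_\delta$ the top eigenspace of $A^T\tau_\delta^2 A$, and that eigenspace genuinely rotates with $\delta$; there is no reason for $A(W_\delta)$ to be fixed.

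The paper bypasses the perturbation problem entirely by two elementary observations, neither of which requires comparing $\tau_\delta H$ to $H^\star_\delta$. First, it does \emph{not} pick $\delta_0$ arbitrarily but takes $\delta_0$ to be the left endpoint of $I$ (after rescaling, $I=[1,T]$, $\delta_0=1$, so $H=H^\star_1$). Then for $\delta\ge 1$ one has $\|\tau_\delta\|_{op}\le 1$ by the form $\tau_\delta|_{X_\nu}=\delta^{-\nu}\mathrm{id}$, and applying $\tau_\delta$ to the one-scale inclusion $\E_1\subseteq (\E_1\cap H)+\epsilon\B$ gives $\E_\delta\subseteq(\E_\delta\cap\tau_\delta H)+\epsilon\tau_\delta\B\subseteq\tau_\delta H+\epsilon\B$, which is (a) directly. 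Second, for (b) the paper argues by dimension: if $\tau_\delta H\cap\tfrac{1}{2\epsilon}\B\not\subseteq\E_\delta$ for some $\delta\in I$, then the ellipsoid $\E_\delta\cap\tau_\delta H$ (in the $k$-dimensional space $\tau_\delta H$) has a principal axis of length $<\tfrac{1}{2\epsilon}$; combining this with the already-proved inclusion $\E_\delta\subseteq(\E_\delta\cap\tau_\delta H)+\epsilon\B$ traps $\E_\delta$ in $U+\tfrac{3}{4}\epsilon^{-1}\B$ for a $(k{-}1)$-dimensional subspace $U$. But $\E_\delta$ contains a $k$-dimensional disk of radius $\epsilon^{-1}$ (from $\sigma_j(\delta)>\epsilon^{-1}$ for $j\le k$), a contradiction. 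No spectral perturbation or Grassmannian estimate is needed.
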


\begin{proof} 
By rescaling, we may assume that $I$ has the form $I=[1, T]$ for $T \geq 1$. Write $\E = A \B$ for a linear transformation $A : X \rightarrow X$. Then $\E_\delta = A_\delta \B$, with $A_\delta := \tau_\delta A$. For $\delta > 0$, consider the singular values of $A_\delta$:
\begin{equation}\label{eqn:sv2}
 \sigma_1(\delta) \geq \sigma_2(\delta) \geq \dots \geq \sigma_d(\delta) \geq 0,
\end{equation}
and let $\{ v_1(\delta),\dots,v_d(\delta) \}$ be the associated left singular vectors of $A_\delta$, which form an orthonormal basis for $X$. By \eqref{eqn:ellips2}, 
\begin{equation}\label{eqn:ellips3}
\E_\delta = \left\{ \sum_{i=1}^d c_i \sigma_i(\delta) v_i(\delta) : \sum_{i=1}^d c_i^2 \leq 1 \right\}.
\end{equation}
The singular values of $A_\delta$ are the square roots of eigenvalues of $A_\delta A_\delta^*$:
\[
\sigma_j(\delta)  = \sqrt{\lambda_j(A_\delta A_\delta^*)} = \sqrt{\lambda_j(\tau_\delta A A^* \tau_\delta)}.
\]
The ordered tuple of eigenvalues $(\lambda_1(B),\dots,\lambda_d(B)) \in \R^d$ of a symmetric matrix $B \in \R^{d \times d}$ is a continuous function of the entries of $B$. It follows that $\delta \mapsto \sigma_j(\delta)$ is continuous for each $j$. By the intermediate value theorem, and the assumption that $\E_\delta$ is $\epsilon$-degenerate for each $\delta \in I$, there exists  $k \in \{0,1,\dots, d\}$ so that
\begin{align}\label{eqn:lem4a}
&\sigma_j(\delta) > \epsilon^{-1} \mbox{ for } 1 \leq j \leq k,\\
\label{eqn:lem4b}
& \sigma_{j}(\delta) < \epsilon \mbox{ for } k < j \leq d \quad (\mbox{all } \delta \in I).
\end{align}
Let $H = \spn\{ v_j(1) : 1 \leq j \leq k \}$. Thus, $H \in G(k,X)$ is spanned by the $k$ longest principle axes of $\E_1$, and $H^\perp = \spn\{ v_j(1) : k < j \leq d \}$ is spanned by the $(d-k)$ shortest principal axes of $\E_1$. 

Evidently, by \eqref{eqn:ellips3}, $\Pi_H \E_1 = \E_1 \cap H$ and $\Pi_{H^\perp} \E_1 = \E_1 \cap H^\perp$. By the second identity, a general element $x$ of $\Pi_{H^\perp} \E_1$ has the form $x = \sum_{i > k} c_i \sigma_i(1) v_i(1)$, for coefficients $c_i$ with $\sum_i c_i^2 \leq 1$.
By  \eqref{eqn:lem4b} for $\delta = 1$, the fact that $|v_j(1)| = 1$ for all $j$, and the Pythagorean theorem, we deduce that $|x| \leq \epsilon$ for any $x \in \Pi_{H^\perp} \E_1$. Thus, $\Pi_{H^\perp} \E_1 \subseteq \epsilon \cB$. Thus, given that $\E = \E_1$, we obtain 
\[
\E \subseteq \Pi_H \E + \Pi_{H^\perp} \E = (\E \cap H) + (\E \cap H^\perp) \subseteq (\E \cap H) + \epsilon \cB.
\]
Thus, for $\delta \geq 1$, 
    \begin{equation}\label{eqn:cont0}
        \tau_\delta \E \subseteq \tau_{\delta} ( (\E \cap  H) + \epsilon \B) =  \tau_\delta \E \cap \tau_\delta H + \epsilon \tau_{\delta} \B \subseteq  \tau_\delta \E \cap \tau_\delta H + \epsilon \B,
    \end{equation}
    where the last inclusion uses that $\| \tau_{\delta}\|_{op} \leq 1$ for $\delta \geq 1$.
    
    Note that \eqref{eqn:cont0} implies $\tau_\delta \E  \subseteq \tau_\delta H + \epsilon \cB$ for $\delta \geq 1$. This implies (a).
    
    We next establish (b). For contradiction, suppose there exists $\delta \in [1,T]$ with
    \begin{equation}\label{eqn:cont1}
        \tau_\delta H \cap \left((2 \epsilon)^{-1} \B \right) \not\subseteq \tau_\delta \E.
    \end{equation}
     We regard $\tau_\delta \E \cap \tau_\delta H$ as an ellipsoid in the vector space $\tau_\delta H$. Let $\sigma \geq 0$ be the shortest principal axis length of $\tau_\delta \E \cap \tau_\delta H$ in $\tau_\delta H$, and let $v \in \tau_\delta H$ be an associated unit-norm principal axis direction. Then $\pm \sigma  v \in \tau_\delta \E \cap \tau_\delta H$, and by \eqref{eqn:cont1}, $\sigma<\frac{1}{2 \epsilon}$. Thus, if $U := \tau_\delta H \cap v^\perp$, then $\tau_\delta \E \cap \tau_\delta H \subseteq U + \frac{1}{2 \epsilon} \B$. By \eqref{eqn:cont0}, 
    \begin{equation}\label{eqn:lem4c}
    \tau_\delta \E \subseteq (\tau_\delta \E \cap \tau_\delta H) + \epsilon \cB  \subseteq U + \left((2\epsilon)^{-1}+ \epsilon \right) \B \subseteq U + (3/4) \epsilon^{-1} \B.
    \end{equation}

    Given $\dim(\tau_\delta H)=k$, and $U $ has codimension $1$ in $\tau_\delta H$, then $\dim(U)=k-1$. From \eqref{eqn:ellips3} and \eqref{eqn:lem4a}, $\tau_\delta \E$ contains a $k$-dimensional disk of radius $\epsilon^{-1}$. Together with \eqref{eqn:lem4c}, these remarks lead to a contradiction.
\end{proof}

Let $\E$ be an ellipsoid in $X$. The next lemma guarantees that $\tau_\delta \E$ is $\epsilon$-degenerate for ``most'' $\delta \in (0,\infty)$. We write $r(I)$ and $l(I)$ for the right and left endpoints of an interval $I \subseteq (0,\infty)$, respectively.

\begin{lemma}\label{lem:help3}
    
    Let $d = \dim X$. Let $\E \subseteq X$ be an ellipsoid and let $\epsilon \in (0,1/2)$. There exists a collection of closed intervals $J_1,J_2,\dots,J_d \subseteq (0,\infty)$ such that $\tau_\delta \E$ is $\epsilon$-degenerate for all $\delta \notin \bigcup_{p=1}^d J_p$, and such that $r(J_p)/l(J_p) \leq \frac{1}{\epsilon^2}$  for all $p$.
\end{lemma}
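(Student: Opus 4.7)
The plan is to write $\E = A \B$ for some linear $A : X \to X$, so that $\tau_\delta \E = (\tau_\delta A) \B$ is an ellipsoid whose principal axis lengths are the singular values $\sigma_1(\delta) \geq \sigma_2(\delta) \geq \cdots \geq \sigma_d(\delta)$ of $\tau_\delta A$. Then $\tau_\delta \E$ is $\epsilon$-degenerate precisely when $\sigma_j(\delta) \notin [\epsilon, \epsilon^{-1}]$ for every $j$. So it suffices, for each $j = 1, \dots, d$, to control the set
\[
S_j := \{ \delta > 0 : \sigma_j(\delta) \in [\epsilon, \epsilon^{-1}] \}
\]
and take $J_j := S_j$ (with $J_j$ replaced by an arbitrary single-point interval whenever $S_j$ is empty).

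The key observation is the semigroup identity $\tau_{\delta_2} = \tau_{\delta_2/\delta_1} \tau_{\delta_1}$, which holds because both sides act as multiplication by $\delta_2^{-\nu}$ on $X_\nu$. For $r \geq 1$ the self-adjoint operator $\tau_r$ has eigenvalues $r^{-\nu}$ with $\nu \geq 1$, so $\|\tau_r\|_{\mathrm{op}} \leq r^{-1}$. Combining this with the standard singular value inequality $\sigma_j(BC) \leq \|B\|_{\mathrm{op}}\, \sigma_j(C)$, applied to $B = \tau_{\delta_2/\delta_1}$ and $C = \tau_{\delta_1} A$ for $\delta_2 \geq \delta_1 > 0$, yields
\[
\sigma_j(\delta_2) \leq (\delta_1/\delta_2)\, \sigma_j(\delta_1).
\]
In particular each $\sigma_j$ is strictly decreasing on $(0,\infty)$ (unless $\sigma_j \equiv 0$, in which case $S_j = \emptyset$ and there is nothing to prove). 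Continuity of $\sigma_j$ is standard, since singular values of a matrix depend continuously on its entries.

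It follows that $S_j$ is a compact interval $[l_j, r_j]$ (possibly empty). Assuming it is nonempty and $\sigma_j \not\equiv 0$, we have $l_j, r_j \in (0,\infty)$: indeed, the monotonicity estimate above forces $\sigma_j(\delta) \to \infty$ as $\delta \to 0^+$ and $\sigma_j(\delta) \to 0$ as $\delta \to \infty$, so the level sets $\{\sigma_j = \epsilon^{-1}\}$ and $\{\sigma_j = \epsilon\}$ are nonempty and finite. By continuity and monotonicity, $\sigma_j(l_j) = \epsilon^{-1}$ and $\sigma_j(r_j) = \epsilon$, and applying the displayed inequality with $\delta_1 = l_j$, $\delta_2 = r_j$ gives
\[
\epsilon = \sigma_j(r_j) \leq (l_j/r_j)\, \epsilon^{-1}, \qquad \text{i.e.} \qquad r_j/l_j \leq \epsilon^{-2},
\]
which is the required bound on $J_j := S_j$.

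I do not anticipate a real obstacle: the entire argument reduces to the multiplicative behavior of singular values under left-composition with $\tau_{\delta_2/\delta_1}$, together with the elementary operator-norm bound $\|\tau_r\|_{\mathrm{op}} \leq r^{-1}$ for $r \geq 1$ inherited from the grading of $X$. The mildest subtlety is tracking the degenerate cases ($\sigma_j \equiv 0$ or $S_j = \emptyset$), which are handled by taking $J_j$ to be an arbitrary single-point interval.
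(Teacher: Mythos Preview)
Your proof is correct and follows essentially the same approach as the paper: both establish the key inequality $\sigma_j(\delta_2) \leq (\delta_1/\delta_2)\,\sigma_j(\delta_1)$ for $\delta_2 \geq \delta_1$ and then read off that each $S_j$ is an interval of multiplicative length at most $\epsilon^{-2}$. The only cosmetic difference is that you invoke the standard bound $\sigma_j(BC) \leq \|B\|_{\mathrm{op}}\,\sigma_j(C)$ together with $\|\tau_r\|_{\mathrm{op}} \leq r^{-1}$, whereas the paper obtains the equivalent eigenvalue estimate via the min--max characterization of $\lambda_j(\tau_\delta A A^* \tau_\delta)$.
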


\begin{proof}
Write $\E = A \B$ for a linear transformation $A : X \rightarrow X$. For $\delta > 0$, let $\E_\delta = \tau_\delta \E = A_\delta \B$, with $A_\delta = \tau_{\delta} A$. Let $\sigma_1(\delta) \geq \sigma_2(\delta) \geq \dots \geq \sigma_d(\delta) \geq 0$ be the principal axis lengths of $\E_\delta$, given by the singular values of $A_\delta$.

The $j$-th singular value $\sigma_j(\delta)$ of $A_\delta$ is given by $\sigma_j(\delta) = \sqrt{ \lambda_j(\delta)}$, where $\lambda_j(\delta)$ is the $j$-th eigenvalue of $A_\delta A_\delta^*$, i.e.,
\[
\lambda_j(\delta) = \lambda_j(\tau_\delta A A^* \tau_\delta).
\]
Here, we write the eigenvalues of $A_\delta A_\delta^*$ in decreasing order, $\lambda_1(\delta) \geq \lambda_2(\delta) \geq \dots \geq \lambda_d(\delta) \geq 0$, for each $\delta$. Let $\delta_* > 0$. We claim that
\begin{equation}\label{eqn:claim1}
\lambda_j(\delta) \leq \left(\delta_*/\delta \right)^2 \cdot \lambda_j(\delta_*) \quad  \mbox{for} \; j=1,2,\dots,d, \;\; \delta \geq \delta_*.
\end{equation}
Using that $A_\delta = (A_{\delta_*})_{\delta/\delta_*}$, we make the substitution $A \leftarrow A_{\delta_*}$ and $\delta \leftarrow \delta/\delta_*$ and reduce the proof of \eqref{eqn:claim1} to the case $\delta_* = 1$. By the min-max characterization of eigenvalues, for any $\delta \geq 1$, with $B = A A^*$, we have
    \[
    \begin{aligned}
        \lambda_{j}(\delta) &= \sup\limits_{V \in G(j,X)} \inf\limits_{x \in V \setminus \{0\}} \frac{\langle \tau_{\delta} B \tau_{\delta} x , x \rangle}{|x|^2}\\
    &= \sup\limits_{V \in G(j,X)} \inf\limits_{x \in V \setminus \{0\}} \frac{|\tau_{\delta} x|^2}{|x|^2} \frac{\langle B \tau_{\delta} x , \tau_{\delta} x \rangle}{|\tau_{\delta} x|^2} \\
    &= \sup\limits_{\hat{V} \in G(j,X)} \inf\limits_{\hat{x} \in \hat{V} \setminus \{0\}} \frac{|\hat{x}|^2}{|\tau_{\delta^{-1}} \hat{x}|^2} \frac{\langle B \hat{x} , \hat{x} \rangle}{|\hat{x}|^2} \leq \delta^{-2} \lambda_{j}(1).
    \end{aligned}
    \]
The last equality above makes use of the substitution $\hat{V} = \tau_{\delta} V$ and $\hat{x} = \tau_{\delta}x$. The last inequality holds because $|\tau_a y| \geq a^{-1} |y|$ for $a \leq 1$, and by the min-max characterization of the eigenvalue $\lambda_j(1) = \lambda_j(B)$. We have proven \eqref{eqn:claim1}.

Let $J_p$ be the closure of the set $\{\delta \in (0,\infty) : \sigma_p(\delta) \in [\epsilon,\epsilon^{-1}]\}$ for $p=1,2,\dots,d$. From \eqref{eqn:claim1} and  $\sigma_p(\delta) = \sqrt{\lambda_p(\delta)}$, we have $\sigma_p(\delta) \leq (\delta_*/\delta) \sigma_p(\delta_*)$ for $\delta \geq \delta_*$. Thus, $\sigma_p$ is a decreasing function of $\delta$, and if $\delta > \epsilon^{-2} \delta_*$ then $\sigma_p(\delta) < \epsilon^2 \sigma_p(\delta_*)$. It follows that $J_p$ is an interval and $r(J_p)/l(J_p) \leq \epsilon^{-2}$.

Finally, note, for $\delta \notin \bigcup_p J_p$, that $\sigma_p(\delta) \notin [\epsilon,\epsilon^{-1}]$ for all $p$ (by definition of the intervals $J_p$), thus, $\tau_\delta \E$ is $\epsilon$-degenerate.
\end{proof}

\subsection{Complexity}\label{sec:comp}

Given a Hilbert space $X$, we let $\K(X)$ denote the collection of all closed, convex, symmetric subsets of $X$. Let $\B \in \cK(X)$ denote the unit ball of $X$. Given $\Omega \in \K(X)$, $V$ a subspace of $X$, and $R \geq 1$, recall that $\Omega$ is $R$-transverse to $V$ if (a) $\Omega \cap V \subseteq R \B$, and (b) $\Pi_{V^\perp}(\Omega \cap \B) \supseteq R^{-1} \B \cap V^\perp$ (see Definition \ref{def:trans_hilbert}). 

For an interval $I$, let $l(I)$ and $r(I)$ denote the left and right endpoints of $I$, respectively. We say $I>J$ if $l(I) > r(J)$, and $I>0$ if $l(I)>0$. 

\begin{defi}\label{defn:comp2}
    Let $\cX = (X,\tau_\delta)_{\delta > 0}$ be a Hilbert dilation system. For $\Omega \in \K(X)$, $R \in [1,\infty)$, $R^{*} \in (R,\infty)$, the {\em complexity} of $\Omega$ with respect to $\cX$ with parameters $(R,R^*)$, written $\C_{\cX}(\Omega,R,R^{*}) = \C(\Omega,R,R^*)$, is the largest positive integer $K$ such that there exist compact intervals $I_1 > I_2 > \dots > I_K > 0$ in $(0,\infty)$ and dilation invariant subspaces $V_1,V_2,\dots,V_K \subseteq X$ such that, for every $j$, $\tau_{r(I_j)} \Omega$ is $R$-transverse to $V_j$, and $\tau_{l(I_j)} \Omega$ is not $R^{*}$-transverse to $V_j$.
\end{defi}

Fix a Hilbert dilation system $(X,\tau_\delta)_{\delta > 0}$. Thus, $X = \bigoplus_{\nu=1}^m X_\nu$ and $\tau_\delta : X \rightarrow X$ is given by $\tau_\delta|_{X_\nu} = \delta^{-\nu} \mathrm{id}|_{X_\nu}$. Let $d := \dim(X)$. Let $V$ be a dilation-invariant (DI) subspace of $X$ (see Definition \ref{defi:di_subspace}). Then $V$ has the form
\[
    V = \bigoplus\limits_{\nu=1}^m V \cap X_\nu.
\]
Recall that the \emph{signature} of $V$ is defined by $\sgn(V) = \sum\limits_{\nu=1}^m \nu \cdot \dim (V \cap X_\nu)$. Note that $0 \leq \sgn(V) \leq md$ for any dilation-invariant subspace $V$.

If $\Omega_1, \Omega_2 \in \K(X)$ satisfy $\lambda^{-1}\Omega_2 \subseteq  \Omega_1 \subseteq \lambda \Omega_2$ for $\lambda \geq 1$ then we say that $\Omega_1$ and $\Omega_2$ are $\lambda$-equivalent, and we write $\Omega_1 \sim_\lambda \Omega_2$.

We now rephrase a classical theorem of F. John (see \cite{ball}) in terms of the definitions just provided.

\begin{proposition}[John's theorem]\label{prop:john}
Given a compact $\Omega \in \cK(X)$, there exists an ellipsoid $\E \subseteq X$ such that $\Omega$ and $\E$ are $\sqrt{d}$-equivalent.
\end{proposition}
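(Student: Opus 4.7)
The plan is to invoke the classical symmetric version of John's theorem and then simply verify that it yields the stated $\sqrt{d}$-equivalence in the sense of Section~\ref{sec:comp}. Recall that the classical theorem (see \cite{ball}) asserts the following: if $K \subseteq Y$ is a symmetric convex body in a real Hilbert space $Y$ of dimension $k \geq 1$ (i.e.\ $K$ is closed, convex, symmetric, bounded, with nonempty interior), then the unique ellipsoid of maximal volume contained in $K$, call it $\E_0$, satisfies $\E_0 \subseteq K \subseteq \sqrt{k}\,\E_0$.

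First I would handle the case when $\Omega$ has nonempty interior in $X$. Applying the classical theorem to $K := \Omega$ and $Y := X$ produces an ellipsoid $\E_0 \subseteq X$ with $\E_0 \subseteq \Omega \subseteq \sqrt{d}\,\E_0$. Setting $\E := \E_0$, we have $\sqrt{d}^{-1} \E \subseteq \E \subseteq \Omega$ and $\Omega \subseteq \sqrt{d}\,\E$, so $\Omega \sim_{\sqrt{d}} \E$ as required.

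Next I would handle the degenerate case, where $\Omega$ has empty interior in $X$. Because $\Omega$ is symmetric and convex, its linear span $Y := \spn(\Omega)$ is a proper subspace of $X$; inside $Y$, $\Omega$ is a symmetric convex body (it has nonempty relative interior in $Y$ by a standard convex-geometry fact, using that $\Omega$ contains an algebraic basis of $Y$ and taking a symmetric convex combination). Let $k := \dim Y \leq d$. Equip $Y$ with the restricted inner product, apply the classical result in $Y$ to obtain an ellipsoid $\E_0 \subseteq Y$ with $\E_0 \subseteq \Omega \subseteq \sqrt{k}\,\E_0 \subseteq \sqrt{d}\,\E_0$. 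The set $\E_0$ is an ellipsoid in $Y$, hence it may be written in the form \eqref{eqn:ellips1} using an orthonormal basis of $Y$ and nonnegative principal axis lengths; extending this orthonormal basis to one of $X$ and appending zero principal axis lengths in the new directions realizes $\E_0$ as an ellipsoid $\E \subseteq X$ in the sense of \eqref{eqn:ellips1} (where $\sigma_d \geq 0$ is allowed). The inclusions $\E \subseteq \Omega \subseteq \sqrt{d}\,\E$ then give $\Omega \sim_{\sqrt{d}} \E$.

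Since this reduction is essentially a bookkeeping exercise around an invocation of the classical John theorem, I do not expect any real obstacle: the only point that requires care is allowing degenerate ellipsoids (those with some vanishing principal axis length) so that the result applies without an interior hypothesis on $\Omega$, and this is already built into the paper's definition of an ellipsoid via \eqref{eqn:ellips1}.
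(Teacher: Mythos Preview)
Your proposal is correct and matches the paper's approach: the paper simply cites the classical John theorem \cite{ball} without giving any argument, so your invocation of the symmetric John ellipsoid, together with the bookkeeping for the degenerate (lower-dimensional) case allowed by the definition \eqref{eqn:ellips1}, is exactly what is intended and in fact more detailed than what the paper provides.
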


\begin{rmk}\label{rem:1}
If $\Omega_1$ is $R$-transverse to $V$ and $\Omega_1 \sim_\lambda \Omega_2$ then $\Omega_2$ is $\lambda R$-transverse to $V$. It follows that if $\Omega_1 \sim_\lambda \Omega_2$ then $\C(\Omega_1,R,R^*) \leq \C(\Omega_2,\lambda R, \lambda^{-1}R^*)$ provided that $R^* > \lambda^2 R$ so the right-hand-side is well-defined.
\end{rmk}

\begin{lemma}\label{lem:stability} 
Fix $\xi > R \geq 1$. Suppose $\Omega_1$ is $R$-transverse to $V$ and $\Omega_1 \cap \xi \B =  \Omega_2 \cap \xi \B$. Then $\Omega_2$ is $R$-transverse to $V$.
\end{lemma}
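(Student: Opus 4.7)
The plan is to verify both defining conditions of $R$-transversality for $\Omega_2$ separately, exploiting that the two conditions involve only points of bounded norm.

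For condition \eqref{eqn:Rtrans1b}, the key observation is that $\B \subseteq \xi \B$ (since $\xi > R \geq 1$), so intersecting with $\B$ and then with $\xi \B$ is the same as just intersecting with $\B$. Therefore
\[
\Omega_2 \cap \B = (\Omega_2 \cap \xi \B) \cap \B = (\Omega_1 \cap \xi \B) \cap \B = \Omega_1 \cap \B,
\]
and applying $\Pi_{V^\perp}$ to both sides transfers \eqref{eqn:Rtrans1b} from $\Omega_1$ to $\Omega_2$ verbatim.

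For condition \eqref{eqn:Rtrans1a}, I would argue by contradiction. Suppose some $x \in \Omega_2 \cap V$ satisfies $|x| > \xi$. Since $\Omega_2$ is closed, convex, symmetric (hence contains $0$), the rescaled point $y := (\xi / |x|) x$ lies in $\Omega_2$ by convexity and in $V$ since $V$ is a subspace, and $|y| = \xi$. Thus $y \in \Omega_2 \cap \xi \B = \Omega_1 \cap \xi \B$, so $y \in \Omega_1 \cap V \subseteq R \B$ by the hypothesis on $\Omega_1$. This yields $\xi = |y| \leq R$, contradicting $\xi > R$. Hence every $x \in \Omega_2 \cap V$ satisfies $|x| \leq \xi$, placing $x \in \Omega_2 \cap \xi \B = \Omega_1 \cap \xi \B$, whence $x \in \Omega_1 \cap V \subseteq R \B$. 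This proves \eqref{eqn:Rtrans1a} for $\Omega_2$.

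There is no real obstacle here; the only subtlety is that $\Omega_2 \cap V$ could a priori contain points far outside $\xi \B$ about which the hypothesis says nothing directly, but convexity and symmetry of $\Omega_2$ together with the strict inequality $\xi > R$ let us pull any such hypothetical large point back to a boundary witness in $\xi \B$ that contradicts the transversality of $\Omega_1$.
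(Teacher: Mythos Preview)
Your proof is correct and follows essentially the same approach as the paper. The paper handles condition \eqref{eqn:Rtrans1a} in one line, writing $\Omega_2 \cap V \cap \xi \B = \Omega_1 \cap V \cap \xi \B \subseteq R \B$ and then asserting ``since $\xi > R$, we deduce $\Omega_2 \cap V \subseteq R \B$''; your contradiction argument via the rescaled point $y = (\xi/|x|)x$ is exactly the convexity/symmetry step hidden behind that assertion, spelled out in full.
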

\begin{proof}
Given that $\Omega_1$ is $R$-transverse to $V$ and $\Omega_1 \cap \xi \B =  \Omega_2 \cap \xi \B$, we have
\[
\Omega_2 \cap V \cap \xi \B = \Omega_1 \cap V \cap \xi \B \subseteq R \B.
\]
Since $\xi > R$, we deduce that $\Omega_2 \cap V \subseteq R \B$. 

Since $\Omega_1 \cap \xi \B =  \Omega_2 \cap \xi \B$ for $\xi > 1$, we have $\Omega_1 \cap \B = \Omega_2 \cap \B$, thus
\[
R^{-1} \B \cap V^\perp \subseteq \Pi_{V^\perp} ( \Omega_1 \cap \B) = \Pi_{V^\perp} ( \Omega_2 \cap \B).
\]
So, $\Omega_2$ is $R$-transverse to $V$.
\end{proof}

The remainder of this section is devoted to the proof of the next result.

\begin{proposition} \label{prop:comp_bound}
   For any $\Omega \in \K(X)$, $\C(\Omega,R_1,R_2) \leq 4 m d^2$
   provided that $R_1 \geq 16$ and $R_2 \geq  \max \{ (\sqrt{d})^{4m+1} R_1^{4m}, (\sqrt{d})^{3d+1} R_1^{3d}\}$.
\end{proposition}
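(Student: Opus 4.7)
The plan is to reduce to the case of an ellipsoid via John's theorem, then use degeneracy structure to reduce further to subspace dynamics, where unimodality of principal angles allows an explicit count. First I would invoke Proposition \ref{prop:john} to replace $\Omega$ by an ellipsoid $\E$ with $\Omega\sim_{\sqrt{d}}\E$. By Remark \ref{rem:1}, $\C(\Omega,R_1,R_2)\leq\C(\E,R,R^*)$ with $R:=\sqrt{d}\,R_1\geq 16\sqrt{d}$ and $R^*:=R_2/\sqrt{d}$; the hypothesis is calibrated so that $R^*\gg R^d$ and $R^*\gg R^m$. Next I would set $\epsilon:=1/(4R)$ and apply Lemma \ref{lem:help3} to obtain compact intervals $J_1,\dots,J_d$ with $r(J_p)/l(J_p)\leq\epsilon^{-2}$ such that $\tau_\delta\E$ is $\epsilon$-degenerate for $\delta\notin\bigcup_p J_p$. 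The complement partitions $(0,\infty)$ into at most $d+1$ connected stable regions $L$, and on each such $L$ Lemma \ref{lem:help2} yields a subspace $H_L$ sandwiching the orbit: $\tau_\delta H_L\cap(2\epsilon)^{-1}\B\subseteq\tau_\delta\E\subseteq\tau_\delta H_L+\epsilon\B$ for all $\delta\in L$.

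Fixing witnessing intervals $I_1>\dots>I_K$ with DI subspaces $V_j$, I would classify each $I_j$ as good (contained in some stable region $L$) or bad (meeting $\bigcup_p J_p$). For good intervals in $L$, the sandwich inclusions combined with Lemma \ref{lem:basic_trans_result} transfer the $R$-transversality of $\tau_{r(I_j)}\E$ and the failure of $R^*$-transversality of $\tau_{l(I_j)}\E$ to analogous statements for the subspace orbit $\tau_\delta H_L$, with parameters degraded by factors bounded via the hypothesis on $R_2$. Lemma \ref{lem:trans1} then identifies $R$-transversality of $\tau_\delta H_L$ to $V_j$ with the angle bound $\cos\theta_{\max}(\tau_\delta H_L,V_j^\perp)\geq R^{-1}$, and Lemma \ref{lem:ineq1} converts this into a bound on $\cos\angle(\tau_\delta H_L,V_j^\perp)$. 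By Lemma \ref{lem:multi3}, $\delta\mapsto\cos\angle(\tau_\delta H_L,V_j^\perp)$ is unimodal, so each DI subspace can play the role of $V_j^\perp$ for at most $O(1)$ good intervals in $L$. The number of distinct DI subspaces that actually appear as $V_j^\perp$ within $L$ is controlled using the technique behind Proposition \ref{prop:subspace_dil} (applied to the orbit $\tau_\delta H_L$ after passing to a simple refinement of the dilation system), giving $O(d\cdot\dim H_L)$ per region and $O(d^2)$ in total; the factor $m$ will be absorbed through the signature range $0\leq\sgn(V)\leq md$ of DI subspaces.

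Bad intervals split further: at most $d$ contain a whole $J_p$, at most $2d$ straddle a single endpoint of some $J_p$, and those entirely inside a single $J_p$ are handled by an induction on dimension, splitting off the unique principal axis of $\tau_\delta\E$ whose length lies in the critical window $[\epsilon,\epsilon^{-1}]$ throughout $J_p$ and recursing on the quotient ellipsoid. Summing all contributions yields $K\leq 4md^2$. The main obstacle will be (a) quantifying the transversality transfer from $\E$ to $H_L$ sharply enough that the explicit thresholds $R_1\geq 16$ and $R_2\geq(\sqrt{d})^{3d+1}R_1^{3d}$ suffice — the exponent $3d$ reflecting losses accumulated through $\cos\theta_{\max}^{\dim H_L}\leq\cos\angle$ combined with sandwich degradation — and (b) correctly implementing the induction for bad intervals contained in a single $J_p$, where the factor $4m$ in the final bound emerges from recursing across the $m$ distinct dilation weights.
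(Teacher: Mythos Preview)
Your overall architecture---John reduction, the degeneracy intervals $J_p$ from Lemma \ref{lem:help3}, and the subspace sandwich from Lemma \ref{lem:help2} on each stable region---matches the paper's. But the mechanism you propose for bounding the number of good intervals inside a fixed stable region $L$ has a genuine gap, and it is not the mechanism the paper uses.

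You argue: unimodality (Lemma \ref{lem:multi3}) forces each fixed DI subspace $V$ to witness at most $O(1)$ good intervals in $L$, and then you propose to bound the number of \emph{distinct} $V_j$ that appear via ``the technique behind Proposition \ref{prop:subspace_dil}.'' That proposition does not do this. It says that over a sufficiently long interval the orbit $\tau_\delta H_L$ passes close to \emph{some} DI subspace; it gives no control on how many different DI subspaces $V$ can satisfy the transversality/non-transversality pattern along the orbit. In a non-simple dilation system DI subspaces form a continuum, and nothing in your outline prevents an unbounded number of distinct $V_j$'s from being selected. The paper's actual device is Proposition \ref{prop:comp_mon}: after transferring to $H_L$, one compares \emph{two} successive witnesses $(V_j,V_{j+1})$ and shows, by analysing the ratio $\cos\angle(\tau_\delta H_L,V_j^\perp)/\cos\angle(\tau_\delta H_L,V_{j+1}^\perp)$ and invoking unimodality, that $\sgn(V_j)>\sgn(V_{j+1})$. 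Strict monotonicity of signature, which ranges in $\{0,\dots,md\}$, then caps the count in $L$ at $md+1$ directly---no enumeration of DI subspaces is needed. Your sentence ``the factor $m$ will be absorbed through the signature range'' hints at the right object but does not supply the monotonicity argument, which is the crux.

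Two smaller points. First, John's theorem (Proposition \ref{prop:john}) requires $\Omega$ compact; the paper handles this by first replacing $\Omega$ with $\Omega\cap\xi\B$ for $\xi>R_2$ and using Lemma \ref{lem:stability} to show complexity is unaffected. You should include this step. Second, your bad-interval analysis is more elaborate than necessary: once you note (via Lemma \ref{lem:basic_trans_result2}) that each witness interval satisfies $r(I_k)/l(I_k)\geq (R^*/R)^{1/m}\geq\epsilon^{-2}$, at most two disjoint $I_k$'s can meet any single $J_p$, giving at most $2d$ bad intervals outright---no induction on dimension is needed.
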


Using John's theorem, we shall reduce Proposition \ref{prop:comp_bound} to the following:
\begin{proposition}\label{prop:main_goal} For any ellipsoid $\E \subseteq X$, $R \geq 16$ and $R^* \geq \max \{R^{4m}, R^{3d}\}$,
\[
    \C(\E,R,R^*) \leq 4md^2.
\]
\end{proposition}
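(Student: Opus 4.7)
The plan is to reduce the ellipsoid problem to the subspace problem already handled by Proposition~\ref{prop:subspace_dil} and Lemma~\ref{lem:multi3}. First, I would fix a small parameter $\epsilon$ (on the order of a controlled power of $R^{-1}$, chosen so that $\epsilon^{-2}$ is much smaller than the ratios $r(I_j)/l(I_j)$ that can arise) and apply Lemma~\ref{lem:help3} to partition $(0,\infty)$ into a union of at most $d$ \emph{bad} intervals $J_1,\dots,J_d$, each of multiplicative length at most $\epsilon^{-2}$, and at most $d+1$ \emph{good} intervals $G_0,\dots,G_d$ on which $\tau_\delta\E$ is $\epsilon$-degenerate. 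I would then show that an interval $I_j$ from Definition~\ref{defn:comp2} can only straddle a bad interval if the multiplicative length of $I_j$ exceeds $\epsilon^{-2}$, and bound the number of such ``bad'' $I_j$'s by $O(d)$. This reduces the task to bounding the number of $I_j$'s contained in a single good interval.

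Second, on a fixed good interval $G$, Lemma~\ref{lem:help2} produces a subspace $H = H_G$ with $\tau_\delta H \cap (2\epsilon)^{-1}\B \subseteq \tau_\delta\E \subseteq \tau_\delta H + \epsilon\B$ for all $\delta \in G$. Combining this sandwich with Lemma~\ref{lem:stability} and Lemma~\ref{lem:basic_trans_result}, I would verify that if $\tau_{r(I_j)}\E$ is $R$-transverse to a DI subspace $V_j$, then $\tau_{r(I_j)} H$ is $CR$-transverse to $V_j$ for a controlled constant $C$; conversely, if $\tau_{l(I_j)}\E$ is \emph{not} $R^*$-transverse to $V_j$, then $\tau_{l(I_j)} H$ is not $c R^*$-transverse to $V_j$. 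Via Lemma~\ref{lem:trans1}, Corollary~\ref{cor:trans2}, and Lemma~\ref{lem:ineq1}, transversality of $\tau_\delta H$ to $V_j$ is equivalent to a lower bound on $\cos(\theta_{\max}(\tau_\delta H, V_j^\perp))$ and hence, up to a $k$-th power, on $\cos(\angle(\tau_\delta H, V_j^\perp))$. The gap $R^* \geq R^{4m}$ (resp.\ $R^{3d}$) is precisely what keeps the ``transverse'' and ``not transverse'' thresholds well separated after absorbing these power losses.

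Third, I would exploit the explicit unimodal structure from Lemma~\ref{lem:multi3}. For each $I_j \subseteq G$, Proposition~\ref{prop:subspace_dil} applied on a suitable sub-interval of $I_j$ (whose existence is ensured by the gap between $R$ and $R^*$) yields a DI subspace $W_j$ of dimension $\dim H$ with $\cos(\theta_{\max}(\tau_{\delta_j} H, W_j)) \geq 1 - \eta$ at some $\delta_j \in I_j$. From the explicit form
\[
\cos(\angle(\tau_\delta H, V)) \;=\; \frac{\alpha\, \delta^{-\sgn(V)}}{\sqrt{\sum_p \alpha_p\, \delta^{-2p}}}
\]
in the proof of Lemma~\ref{lem:multi3}, unimodality forces the ``best DI approximation'' to $\tau_\delta H$ to be indexed in a monotone fashion by $\sgn(V)$ as $\delta$ decreases through $G$. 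Signatures take values in $\{0,1,\dots,md\}$, so there are at most $md+1$ distinct best approximations per good interval, and each $V_j$ (and hence $V_j^\perp$) matches one of these up to a controlled error. Summing: at most $O(md)$ intervals per good region, times $O(d)$ good regions, plus $O(d)$ bad-region contributions, yields $\C(\E,R,R^*) \leq 4md^2$.

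The main obstacle will be the third step: extracting a clean monotonicity statement for the ``best DI subspace'' from the unimodal formula and pairing it with the quantitative Grassmannian approximation of Proposition~\ref{prop:subspace_dil}. The delicate bookkeeping is that the thresholds for ``$R$-transverse'' and ``not $R^*$-transverse'' must survive both the ellipsoid-to-subspace sandwich (which loses factors polynomial in $R$) and the angle-to-wedge-angle comparison $\cos(\angle) \geq \cos(\theta_{\max})^k$ (which loses a $d$-th power). The hypotheses $R \geq 16$ and $R^* \geq \max\{R^{4m}, R^{3d}\}$ must be shown to provide exactly the slack needed, and pinning down these exponents is where the constants $4m$ and $3d$ in the statement originate.
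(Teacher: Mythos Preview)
Your first two steps match the paper's architecture exactly: Lemma~\ref{lem:help3} isolates at most $d$ bad intervals of multiplicative length $\leq \epsilon^{-2}$, the bound $r(I_k)/l(I_k) \geq (R^*/R)^{1/m} \geq \epsilon^{-2}$ (from Lemma~\ref{lem:basic_trans_result2}) limits the bad contribution to $2d$, and on each good component Lemma~\ref{lem:help2} furnishes a subspace $H$ with the sandwich you describe. Your step~2 translation from $\E$ to $H$ is what the paper records as Lemma~\ref{lem:help4}.

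The gap is in your third step. You propose to invoke Proposition~\ref{prop:subspace_dil} on sub-intervals of the $I_j$ to produce auxiliary DI subspaces $W_j$ approximating $\tau_{\delta_j}H$, and then argue that ``the best DI approximation'' to $\tau_\delta H$ is monotone in signature. Two problems: (i) Proposition~\ref{prop:subspace_dil} is stated only for \emph{simple} Hilbert dilation systems, and the system $(X,\tau_\delta)$ here is not assumed simple---indeed the subspaces $X_\nu$ can have arbitrary dimension; (ii) even granting an approximating $W_j$, you still need to pass from monotonicity of $\sgn(W_j)$ to a bound on the number of distinct $V_j$'s, and your sentence ``each $V_j$ matches one of these up to a controlled error'' does not establish that. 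Signature is integer-valued, so close DI subspaces of the same dimension do coincide in signature, but you never show $V_j^\perp$ is close to $W_j$.

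The paper sidesteps both issues. Instead of tracking a best approximation, it compares the \emph{given} DI subspaces directly: for consecutive witnesses $V,W$ on a single good component (packaged as Proposition~\ref{prop:comp_mon}), the ratio
\[
\alpha(\delta)=\frac{\cos(\angle(\tau_\delta H,V^\perp))}{\cos(\angle(\tau_\delta H,W^\perp))}
= C\cdot \delta^{\sgn(W^\perp)-\sgn(V^\perp)}
\]
is monotone in $\delta$. Assuming $\sgn(V)\leq \sgn(W)$ and combining this monotonicity with the unimodality of $\delta\mapsto\cos(\angle(\tau_\delta H,V^\perp))$ (Lemma~\ref{lem:multi3}) produces a numerical contradiction with the transversality/non-transversality data once $R^*>(4R)^{2\dim H}$. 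Hence $\sgn(V_k)>\sgn(V_{k+1})$ strictly along each good component, giving at most $md+1$ intervals there. No auxiliary $W_j$'s and no simplicity hypothesis are needed---this ratio argument is the missing ingredient in your plan.
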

We will later give details on the reduction of Proposition \ref{prop:comp_bound} to Proposition \ref{prop:main_goal}. Next we make preparations for the proof of Proposition \ref{prop:main_goal}.  Fix $R, R^*$ and $\epsilon > 0$ such that
\begin{equation}\label{eqn:const_assump}
\begin{aligned}
    &16 \leq R \leq \max\{R^{3d},R^{4m}\} \leq R^{*},\\
    &\epsilon \leq 1/(4R) \mbox{ and } R/R^* \leq \epsilon^{2m}.
\end{aligned}
\end{equation}
Note \eqref{eqn:const_assump} is satisfied if $\epsilon = \frac{1}{4R}$, as then $\frac{R}{R^*} \leq R^{1-4m} \leq R^{-3m} \leq (4R)^{-2m} = \epsilon^{2m}$.

The following result is the key ingredient in the proof of Proposition \ref{prop:main_goal}.

\begin{proposition}\label{prop:comp_mon}
   Let $R$, $R^*$, $\epsilon$ be as in \eqref{eqn:const_assump}. Let $\E$ be an ellipsoid in $X$, and let $I = [\delta_{\min}, \delta_{\max}] \subseteq (0,\infty)$. Suppose that $\tau_\delta \E$ is $\epsilon$-degenerate for all $\delta \in I$. 
   
   If there exist $\delta_* \in I$  and dilation invariant subspaces $V, W \subseteq X$ such that
\begin{enumerate}
    \item $\tau_{\delta_{\max}}\E$ is $R$-transverse to $V,$
    \item $\tau_{\delta_*} \E$ is not $R^{*}$-transverse to $V,$ and
    \item $\tau_{\delta_{\min}} \E$ is $R$-transverse to $W$,
\end{enumerate}
then $\sgn(V) > \sgn(W)$.
\end{proposition}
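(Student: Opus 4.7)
The plan is to replace the ellipsoid $\tau_\delta\E$ by a single fixed subspace $H$ using Lemma \ref{lem:help2}, transfer the three transversality conditions from $\E$ to $H$, and then read off the signatures from the explicit formula extracted during the proof of Lemma \ref{lem:multi3}. First I would invoke Lemma \ref{lem:help2} to obtain a subspace $H \subseteq X$ such that, for every $\delta \in I$,
\[
\tau_\delta H \cap \tfrac{1}{2\epsilon}\B \;\subseteq\; \tau_\delta \E \;\subseteq\; \tau_\delta H + \epsilon\B.
\]
The smallness assumption $\epsilon \leq 1/(4R)$ makes $\tfrac{1}{2\epsilon}$ strictly larger than $2R$, which is what makes the sandwich useful.

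Second, I would transfer the three transversality hypotheses from $\E$ to $H$, producing (i) $\dim H = \dim V^\perp = \dim W^\perp$ (so in particular $\dim V = \dim W$), (ii) $\cos\theta_{\max}(\tau_{\delta_{\max}}H,V^\perp) \geq 1/(4R)$, (iii) $\cos\theta_{\max}(\tau_{\delta_{\min}}H,W^\perp) \geq 1/(4R)$, and (iv) $\cos\theta_{\max}(\tau_{\delta_*}H,V^\perp) < 1/R^*$. For (i)--(iii): a nonzero vector of $\tau_{\delta_{\max}} H \cap V$ scaled to length $\tfrac{1}{2\epsilon}$ would lie in $\tau_{\delta_{\max}}\E \cap V \subseteq R\B$, which is impossible; for the projection side, given $x \in V^\perp \cap R^{-1}\B$ hypothesis (1) supplies $y \in \tau_{\delta_{\max}}\E \cap \B$ with $\Pi_{V^\perp}y = x$, and writing $y = h + e$ with $h \in \tau_{\delta_{\max}}H$, $|e|\leq \epsilon$, a short perturbation argument shows $\Pi_{V^\perp}|_{\tau_{\delta_{\max}}H}$ is a linear bijection onto $V^\perp$ with smallest singular value at least $1/(4R)$. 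For (iv) I argue by contrapositive: if $\tau_{\delta_*}H$ were $R^*$-transverse to $V$ as subspaces, the sandwich would promote this to $R^*$-transversality of $\tau_{\delta_*}\E$ (use $\tau_{\delta_*} H \cap \B \subseteq \tau_{\delta_*}\E \cap \B$ for the projection condition, and use $\tau_{\delta_*} \E \subseteq \tau_{\delta_*} H + \epsilon\B$ plus the subspace angle bound for the intersection condition), contradicting hypothesis (2).

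Third, set $f(\delta) := \cos(\angle(\tau_\delta H, V^\perp))$ and $g(\delta) := \cos(\angle(\tau_\delta H, W^\perp))$. Since $V^\perp$ and $W^\perp$ are DI by \eqref{eqn3:sec3}, Lemma \ref{lem:multi3} gives that both $f$ and $g$ are unimodal on $(0,\infty)$. Using Lemma \ref{lem:ineq1} with $k = \dim H \leq d$, the bounds from Step 2 become
\[
f(\delta_{\max}) \geq (4R)^{-d},\qquad g(\delta_{\min}) \geq (4R)^{-d},\qquad f(\delta_*) < (R^*)^{-1}.
\]
From $R \geq 16$ and $R^* \geq R^{3d}$ we have $R^* > (4R)^d$, so $f(\delta_*) < f(\delta_{\max})$; applying unimodality of $f$ with $a = \delta_{\min}$, $b = \delta_*$, $c = \delta_{\max}$ (the case $\delta_{\min} = \delta_*$ being trivial) yields $f(\delta_{\min}) \leq f(\delta_*) < (R^*)^{-1}$.

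Fourth, I would extract the explicit form of $f$ and $g$ from the proof of Lemma \ref{lem:multi3}: they share the common denominator $|\tau_\delta^* \omega_H|$, and $\tau_\delta^*$ acts on the representative form of any DI subspace $U$ by $\delta^{-\sgn(U)}$, so
\[
\frac{f(\delta)}{g(\delta)} = \frac{|\langle \omega_H, \omega_{V^\perp}\rangle|}{|\langle \omega_H, \omega_{W^\perp}\rangle|}\, \delta^{\sgn(W^\perp)-\sgn(V^\perp)},
\]
a monomial in $\delta$ whose coefficient is nonzero (since $f(\delta_{\max})$ and $g(\delta_{\min})$ are positive). The endpoint bounds give $f(\delta_{\max})/g(\delta_{\max}) \geq (4R)^{-d}$ and $f(\delta_{\min})/g(\delta_{\min}) < (4R)^d/R^*$; the inequality $R^* \geq R^{3d} \geq (4R)^{2d}$ (using $R \geq 16$) makes the second strictly smaller than the first, so $f/g$ is strictly increasing on $I$. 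Hence $\sgn(W^\perp) - \sgn(V^\perp) > 0$, and \eqref{eqn3:sec3} translates this into $\sgn(V) > \sgn(W)$. The main technical obstacle is Step 2: the transversality transfer must preserve constants of order $R$ (rather than $R^2$ or worse), so that the $d$-th power loss incurred in passing from $\cos\theta_{\max}$ to $\cos\angle$ via Lemma \ref{lem:ineq1} is absorbed by the polynomial-in-$R$ gap between $R$ and $R^*$ afforded by hypotheses \eqref{eqn:const_assump}.
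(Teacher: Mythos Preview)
Your proposal is correct and follows essentially the same approach as the paper: the paper packages your Step~2 as a separate lemma (Lemma~\ref{lem:help4}, proved via Lemma~\ref{lem:AKT}) yielding $4R$-transversality of $\tau_\delta H$, and then argues by contradiction (assuming $\sgn(V)\le\sgn(W)$ and showing the ratio $\alpha(\delta)=f(\delta)/g(\delta)$ is non-increasing), whereas you argue directly that the monomial exponent must be positive. The key ingredients---the sandwich from Lemma~\ref{lem:help2}, the unimodality of $f$ from Lemma~\ref{lem:multi3}, the explicit monomial form of $f/g$, and the constant comparison $R^*\ge (4R)^{2d}$---are identical.
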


Before the proof of Proposition \ref{prop:comp_mon}, we present two preparatory lemmas.

\begin{lemma}\label{lem:AKT}
If $A,K,T \in \K(X)$, and $K \subseteq T$, then $(A+K) \cap T \subseteq ( A \cap 2T) + K$.
\end{lemma}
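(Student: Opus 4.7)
The plan is to prove Lemma \ref{lem:AKT} by a direct unpacking of the definition of set sum and intersection, using the symmetry and convexity of $T$ in an essential way.

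Specifically, I would fix an arbitrary $x \in (A+K) \cap T$ and write it as $x = a + k$ with $a \in A$ and $k \in K$. The goal is to produce a decomposition of $x$ realizing membership in $(A \cap 2T) + K$. The most natural candidate is to keep the same decomposition, i.e., to check that already $a \in 2T$. For this I would write $a = x + (-k)$ and argue that each summand lies in $T$: the first because $x \in T$ by assumption, and the second because $k \in K \subseteq T$ together with the symmetry of $T$ gives $-k \in T$.

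The key observation that closes the argument is the standard identity $T + T = 2T$ for any convex set $T$ containing the origin, which holds here since $T$ is symmetric (hence $0 \in T$ assuming $T$ is nonempty; and if $T = \emptyset$ the lemma is vacuous). Indeed, for $t_1, t_2 \in T$ one has $t_1 + t_2 = 2 \cdot \tfrac{t_1+t_2}{2} \in 2T$ by convexity. Applying this with $t_1 = x$ and $t_2 = -k$ yields $a \in 2T$, hence $a \in A \cap 2T$, and therefore $x = a + k \in (A \cap 2T) + K$, as desired.

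There is no real obstacle here — the statement is a purely set-theoretic consequence of convexity and symmetry of $T$, and the only subtlety is remembering to invoke symmetry to guarantee $-k \in T$ before combining with convexity. The lemma does not use anything about $A$ or $K$ beyond membership, and in particular does not require $A$ or $K$ to be convex or symmetric individually (though they are, since $A, K \in \mathcal{K}(X)$).
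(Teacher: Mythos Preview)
Your proof is correct and is essentially identical to the paper's: both fix $x = a + k \in (A+K)\cap T$ and show $a = x - k \in 2T$ via $T + T \subseteq 2T$. The only cosmetic difference is that the paper writes $a \in T + K \subseteq 2T$ (using symmetry of $K$) while you invoke symmetry of $T$ directly; both are equivalent one-liners.
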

\begin{proof}
Fix $x \in (A+K) \cap T$. Then $x = a + k$ for $a \in A$, $k \in K$. Note that $a = x-k \in T + K \subseteq 2T$. Hence, $x = a + k \in (A \cap 2T) + K$.
\end{proof}

\begin{lemma}\label{lem:help4}
    Under the hypotheses of Proposition \ref{prop:comp_mon}, there exists a subspace $H \subseteq X$ such that Conditions 1,2,3 of Proposition \ref{prop:comp_mon} hold with $H$ and $4R$ in place of $\E$ and $R$, respectively.
\end{lemma}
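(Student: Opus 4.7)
The plan is to take $H$ to be the subspace furnished by Lemma \ref{lem:help2} applied to $\E$ on $I$, so that for every $\delta \in I$, (i) $\tau_\delta \E \subseteq \tau_\delta H + \epsilon \B$ and (ii) $\tau_\delta H \cap (2\epsilon)^{-1} \B \subseteq \tau_\delta \E$. Since $\epsilon \leq 1/(4R) < 1/2$, (ii) implies in particular $\tau_\delta H \cap \B \subseteq \tau_\delta \E$ for all $\delta \in I$. I will verify that this $H$ satisfies the three modified conditions of Proposition \ref{prop:comp_mon} with $4R$ in place of $R$.

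For modified conditions 1 and 3, it suffices by symmetry to treat the case at $\delta_{\max}$, showing that $H' := \tau_{\delta_{\max}} H$ is $4R$-transverse to $V$. First, $H' \cap V = \{0\}$: else a nonzero element rescales to norm $1/(2\epsilon)$, which by (ii) lies in $\tau_{\delta_{\max}} \E \cap V \subseteq R \B$, forcing $1/(2\epsilon) \leq R$ and contradicting $\epsilon \leq 1/(4R)$. For surjectivity of $\Pi_{V^\perp}|_{H'} : H' \to V^\perp$: (i) gives $\Pi_{V^\perp}(\tau_{\delta_{\max}} \E) \subseteq \Pi_{V^\perp}(H') + \epsilon \B \cap V^\perp$, while $R$-transversality of $\tau_{\delta_{\max}} \E$ gives $R^{-1} \B \cap V^\perp \subseteq \Pi_{V^\perp}(\tau_{\delta_{\max}} \E)$; projecting both inclusions onto the orthogonal complement of $\Pi_{V^\perp}(H')$ within $V^\perp$ forces that complement to vanish since $\epsilon < 1/R$, so $\Pi_{V^\perp}|_{H'}$ is a bijection. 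Set $T = (\Pi_{V^\perp}|_{H'})^{-1}$. For $y \in V^\perp$ with $|y| \leq 1/R$, the $R$-transversality of $\tau_{\delta_{\max}} \E$ (scaled) produces $x \in \tau_{\delta_{\max}} \E$ with $|x| \leq R|y|$ and $\Pi_{V^\perp}(x) = y$; orthogonally decomposing $x = h^* + e$ with $h^* \in H'$ and $e \in (H')^\perp$ gives $|h^*| \leq R|y|$ and $|e| \leq \epsilon$ by (i). The identity $T(y) = h^* + T(\Pi_{V^\perp}(e))$ yields the self-consistent bound
\[
|T(y)| \leq R|y| + \|T\|_{op}\, \epsilon,
\]
and taking the supremum over $|y| = 1/R$ gives $\|T\|_{op}(1/R - \epsilon) \leq 1$, whence $\|T\|_{op} \leq R/(1 - R\epsilon) \leq 4R/3$. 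Therefore $|T(y)| \leq 1$ whenever $|y| \leq (4R)^{-1}$, giving the lower-bound condition of $4R$-transversality; the upper-bound condition is immediate from $H' \cap V = \{0\}$. The identical argument with $W$ in place of $V$ and $\delta_{\min}$ in place of $\delta_{\max}$ establishes modified condition 3.

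For modified condition 2, I argue by contrapositive: supposing $\tau_{\delta_*} H$ is $R^*$-transverse to $V$, I show that $\tau_{\delta_*} \E$ is then $R^*$-transverse to $V$, contradicting the hypothesis. For the inclusion $\tau_{\delta_*} \E \cap V \subseteq R^* \B$: given $x$ in this set, decompose $x = h + e$ with $h \in \tau_{\delta_*} H$ and $|e| \leq \epsilon$ via (i); since $x \in V$, $|\Pi_{V^\perp}(h)| = |\Pi_{V^\perp}(e)| \leq \epsilon$, and the $R^*$-transversality of $\tau_{\delta_*} H$ (reformulated via Lemma \ref{lem:trans1} as $|\Pi_{V^\perp}(h)| \geq (R^*)^{-1}|h|$) forces $|h| \leq R^*\epsilon$, whence $|x| \leq (R^*+1)\epsilon \leq R^*$. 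For the inclusion $\Pi_{V^\perp}(\tau_{\delta_*} \E \cap \B) \supseteq (R^*)^{-1} \B \cap V^\perp$: given $y$ in the right-hand side, the $R^*$-transversality of $\tau_{\delta_*} H$ yields $h \in \tau_{\delta_*} H \cap \B$ with $\Pi_{V^\perp}(h) = y$, and (ii) places $h$ in $\tau_{\delta_*} \E$ since $|h| \leq 1 \leq 1/(2\epsilon)$.

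The main obstacle is the bound on $\|T\|_{op}$: one must convert the approximate transversality of $\E$ (an $\epsilon$-approximation from Lemma \ref{lem:help2}(a)) into exact transversality of $H$, but naive iteration of the decomposition step fails to contract when $R \geq 1$. The self-consistent inequality above implicitly resums the $\epsilon$-errors in closed form, yielding $(4R/3)$-transversality of $H'$ to $V$ directly.
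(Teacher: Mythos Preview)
Your proof is correct. For condition~2 you argue essentially as the paper does (contrapositive, using (i) and (ii) to transfer $R^*$-transversality from $H$ back to $\E$; the paper packages your pointwise computation as an appeal to Lemma~\ref{lem:basic_trans_result}). For conditions~1 and~3, however, you take a genuinely different route. The paper argues set-theoretically: it invokes Lemma~\ref{lem:AKT} to obtain $(\tau_{\delta_{\max}}H+\epsilon\B)\cap\B\subseteq 2(\tau_{\delta_{\max}}H\cap\B)+\epsilon\B$, combines this with $R$-transversality of $\tau_{\delta_{\max}}\E$ to get $R^{-1}\B\cap V^\perp\subseteq 2\,\Pi_{V^\perp}(\tau_{\delta_{\max}}H\cap\B)+\tfrac12 R^{-1}\B\cap V^\perp$, and then absorbs the last term by an implicit geometric series to conclude $\tfrac14 R^{-1}\B\cap V^\perp\subseteq\Pi_{V^\perp}(\tau_{\delta_{\max}}H\cap\B)$. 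Your approach is operator-theoretic: you set up $T=(\Pi_{V^\perp}|_{H'})^{-1}$ and derive the self-improving inequality $|T(y)|\le R|y|+\|T\|_{op}\,\epsilon$ (valid for $|y|\le 1/R$), which by homogeneity yields $\|T\|_{op}\le R/(1-R\epsilon)\le 4R/3$. Both arguments hide the same geometric series (yours in the closed-form solution of the self-consistent bound, the paper's in the absorption step), but yours avoids the auxiliary Lemma~\ref{lem:AKT} and even gives the slightly sharper constant $4R/3$.
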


\begin{proof}[Proof of Lemma \ref{lem:help4}]
    By  Lemma \ref{lem:help2}, there exists a subspace $H \subseteq X$ such that for all $\delta \in I$,
    \begin{enumerate}
    \item[(a)] $\tau_\delta \E \subseteq \tau_\delta H + \epsilon \B$
    \item[(b)] $\tau_\delta H \cap ( \frac{1}{2 \epsilon} \B) \subseteq \tau_\delta \E$.
    \end{enumerate}
    Using (b) for $\delta = \delta_{\max}$, the inequality $R \leq \frac{1}{4 \epsilon}$ (see \eqref{eqn:const_assump}), and the condition that $\tau_{\delta_{\max}}\E$ is $R$-transverse to $V$,
    \[
        (\tau_{\delta_{\max}} H \cap (2 R \B)) \cap V \subseteq \tau_{\delta_{\max}}\E \cap V \subseteq R \B,
    \]
    which implies that $\tau_{\delta_{\max}}H \cap V \subseteq R \B$.
    
    Using the condition that $\tau_{\delta_{\max}}\E$ is $R$-transverse to $V$, and (a) for $\delta = \delta_{\max}$, 
    \[
    \begin{aligned}
        R^{-1} \B \cap V^\perp &\subseteq \Pi_{V^\perp} (\tau_{\delta_{\max}}\E \cap \B)) \\
        &\subseteq \Pi_{V^\perp}((\tau_{\delta_{\max}}H + \epsilon \B) \cap \B)) \\
        &\subseteq \Pi_{V^\perp}( 2 (\tau_{\delta_{\max}}H \cap \B) + \epsilon \B),
    \end{aligned}
    \]
    where we used Lemma \ref{lem:AKT} for the last inclusion. Because $\epsilon \leq \frac{1}{2R}$ and $\Pi_{V^\perp} \cB = \cB \cap V^\perp$, it follows that
    \[
    R^{-1} \cB \cap V^\perp \subseteq 2 \Pi_{V^\perp} (\tau_{\delta_{\max}}H \cap \B) + (1/2) R^{-1} \B \cap V^\perp.
    \]
    We deduce that $\frac{1}{4} R^{-1} \B \cap V^\perp \subseteq \Pi_{V^\perp}(\tau_{\delta_{\max}} H \cap \B)$.
    
    Therefore, we see that $\tau_{\delta_{\max}}H$ is $4R$-transverse to $V$. 
    
  Repeating the previous argument, using that $\tau_{\delta_{\min}} \E$ is $R$-transverse to $W$, and (a), (b) for $\delta = \delta_{\min}$, we see that $\tau_{\delta_{\min}} H$ is $4R$-transverse to $W$.
    
    Assume for sake of contradiction that $\tau_{\delta_*} H$ is $R^{*}$-transverse to $V$.  By Lemma \ref{lem:basic_trans_result}, $(\tau_{\delta_*} H + \epsilon \B) \cap V \subseteq R^{*} \epsilon \B \subseteq R^{*} \B$. Thus, by condition (a) for $\delta = \delta_*$, 
    \[
        \tau_{\delta_*}\E \cap V \subseteq (\tau_{\delta_*} H + \epsilon \B) \cap V \subseteq R^* \B.
    \]
   Condition (b) for $\delta = \delta_*$ implies that $\tau_{\delta_*} H \cap \B \subseteq \tau_{\delta_*} \E \cap \B$. Thus,
    \[
        (R^*)^{-1} \B \cap V^\perp \subseteq \Pi_{V^\perp} (\tau_{\delta_*} H \cap \B) \subseteq \Pi_{V^\perp}(\tau_{\delta_*} \E \cap \B),
    \]
    where the first inclusion uses the assumption that $\tau_{\delta_*} H$ is $R^{*}$-transverse to $V$. Thus, $\tau_{\delta_*} \E$ is $R^*$-transverse to $V$, contradicting the hypotheses  on $\E$ and $V$.
\end{proof}

\begin{proof}[Proof of Proposition \ref{prop:comp_mon}]
    By Lemma \ref{lem:help4}, there exists a subspace $H \subseteq X$ such that $\tau_{\delta_{\max}}H$ is $4R$-transverse to $V$, $\tau_{\delta_{*}}H$ is not $R^*$-transverse to $V$, and $\tau_{\delta_{\min}}H$ is $4R$-transverse to $W$, where $\delta_{\min} \leq \delta_* \leq \delta_{\max}$. According to Lemma \ref{lem:trans1},
    \[
    \begin{aligned}
    &\cos( \theta_{\max}(\tau_{\delta_{\max}} H, V^\perp)) \geq (4R)^{-1}\\
    &\cos (\theta_{\max}( \tau_{\delta_*} H, V^\perp)) \leq (R^*)^{-1},\\
    &\cos (\theta_{\max} ( \tau_{\delta_{\min}} H, W^\perp)) \geq (4R)^{-1},
    \end{aligned}
    \]
    with $\dim(V^\perp)=\dim(W^\perp)= \ell$, where $\ell := \dim(H)$. 
    
    By Lemma \ref{lem:ineq1}, we then have
    \begin{align}
    \label{eq:111}
        &\cos(\angle(\tau_{\delta_{\max}}H,V^\perp)) \geq (4R)^{-\ell}\\
    \label{eq:112}
        &\cos(\angle(\tau_{\delta_*} H, V^\perp)) \leq (R^*)^{-1},\\
    \label{eq:113}
        &\cos(\angle(\tau_{\delta_{\min}} H,W^\perp)) \geq (4R)^{-\ell}.
    \end{align}
    
    Suppose for contradiction that $\sgn(V) \leq \sgn(W)$. Then, by \eqref{eqn3:sec3}, we have $\sgn(V^\perp) \geq \sgn(W^\perp)$. Now, let $\alpha(\delta) = \frac{\cos(\angle(\tau_\delta H,V^\perp))}{\cos(\angle(\tau_\delta H,W^\perp))}$. Let $\omega_H$, $\omega_{\Vp}$,  $\omega_{\Wp}$ be representative forms for $H$, $\Vp$, and $\Wp$, respectively. We then write
    \[
    \begin{aligned}
        \alpha(\delta) &= \frac{|\omega_{\Wp}|\cdot |\langle \tau_\delta^* \omega_H , \omega_{\Vp} \rangle|}{|\omega_{\Vp}| \cdot |\langle \tau_\delta^* \omega_H, \omega_{\Wp} \rangle|}\\
        &= \frac{|\omega_{\Wp}| \cdot |\langle \omega_H, \tau_\delta^* \omega_{\Vp} \rangle|}{|\omega_{\Vp}| \cdot |\langle \omega_H, \tau^*_\delta \omega_{\Wp} \rangle |} = \frac{|\omega_{\Wp}| \cdot |\langle \omega_H , \omega_{\Vp} \rangle| \cdot \delta^{-\sgn(\Vp)}}{|\omega_{\Vp}| \cdot |\langle \omega_H , \omega_{\Wp} \rangle| \cdot \delta^{-\sgn(\Wp)}}.
    \end{aligned}
    \]
    By assumption, $\sgn(\Wp) \leq \sgn(\Vp)$, so $\delta \mapsto \alpha(\delta)$ is non-increasing. 
    
    By \eqref{eq:111} and $\cos (\angle (\tau_{\delta_{\max}}H,W^\perp)) \leq 1$, we have $\alpha(\delta_{\max}) \geq (4R)^{-\ell}$. Because $\delta \mapsto \alpha(\delta)$ is non-increasing, $\alpha(\delta_{\min}) \geq \alpha(\delta_{\max}) \geq (4R)^{-\ell}$. 
    
    From \eqref{eq:111} and \eqref{eq:112}, we have $\cos ( \angle (\tau_{\delta_*} H , \Vp)) \leq \cos (\angle (\tau_{\delta_{\max}}H , \Vp))$, so long as $R^{*} \geq (4R)^\ell$. Thus, by Lemma \ref{lem:multi3}, we have 
    \[
    \cos (\angle( \tau_{\delta_{\min}} H , V^\perp)) \leq \cos (\angle (\tau_{\delta_*} H , V^\perp)) \leq (R^*)^{-1}.
    \]
    Thus, using \eqref{eq:113}, $\alpha(\delta_{\min}) \leq \frac{(4R)^\ell}{R^{*}}$. This yields a contradiction for $R^{*} > (4R)^{2\ell}$, which is implied by our assumptions $R^* \geq R^{3d}$ and $R \geq 16$ (see \eqref{eqn:const_assump}).
\end{proof}


\begin{proof}[Proof of Proposition \ref{prop:main_goal}]

Let $\E$ be an ellipsoid in $X$, let $R \geq 16$ and $R^* \geq \max\{R^{4m},R^{3d}\}$. Recall that we have chosen a constant $\epsilon \in (0,1/4R]$ with $R/R^* < \epsilon^{2m}$; see \eqref{eqn:const_assump}. To prove the result that $\C(\E,R,R^*) \leq 4 m d^2$ we will show that, for $\delta$ in the complement of a controlled number of intervals, the principal axis lengths of $\tau_\delta \E$ avoid values $\sim 1$, and within a connected component of this complementary region we may apply Proposition \ref{prop:comp_mon} to prove monotonicity of the sequence of signatures of the DI subspaces that arise in the definition of the complexity of $\E$.

To prove that $\C(\E,R,R^*) \leq 4 m d^2$, we must demonstrate that $K \leq 4 m d^2$ whenever $\{I_k\}_{k=1}^K$ is a sequence of intervals and $\{V_k\}_{k=1}^K$ is a sequence of DI subspaces such that 
\begin{align}
\label{eqn:comp_bd1}
&\tau_{r(I_k)}\E \mbox{ is } R \mbox{-transverse to }V_k, \mbox{ and} \\
\label{eqn:comp_bd2}
&\tau_{l(I_k)} \E \mbox{ is not } R^*\mbox{-transverse to } V_k\mbox{ for every }k.
\end{align}

By the form of $\tau_\delta$ in \eqref{eqn1:sec3}, \eqref{eqn2:sec3}, we have $|x| \leq | \tau_a x| \leq a^{-m} |x|$ for $a < 1$. Note that $\tau_{l(I_k)} \E = \tau_{a_k} \tau_{r(I_k)} \E$ with $a_k = l(I_k)/r(I_k) < 1$. Note  $V_k$ is dilation invariant, so $\tau_{a_k} V_k = V_k$. By Lemma \ref{lem:basic_trans_result2} and \eqref{eqn:comp_bd1}, we deduce that $\tau_{l(I_k)} \E$ is $(r(I_k)/l(I_k))^{m} R$-transverse to $V_k$. Thus, by \eqref{eqn:comp_bd2}, $(r(I_k)/l(I_k))^{m} R \geq R^*$, hence
\begin{equation}\label{eqn:bca}
r(I_k)/l(I_k) \geq  (R^{*}/R)^{1/m} \geq \epsilon^{-2} \qquad (k=1,2,\dots,K).
\end{equation}
Here we use that $\frac{R^{*}}{R} \geq \epsilon^{-2m}$ (see \eqref{eqn:const_assump}). 

Apply Lemma \ref{lem:help3} to $\E$ and $\epsilon$ to find intervals $ J_1,\dots,J_d \subseteq (0,\infty)$ such that $\tau_\delta \E$ is $\epsilon$-degenerate for all $\delta \notin \cup_{p=1}^d J_p$ and such that $r(J_p)/l(J_p) \leq \frac{1}{\epsilon^2}$ for all $p$. Given the $I_k$ are disjoint, and by \eqref{eqn:bca}, at most two of the $I_k$ can intersect each $J_p$. Thus, $\#\{k : I_k \cap J_p \neq \emptyset \text{ for some } p =1,2,\dots, d\} \leq 2d$. If $L$ is a component interval of $(0,\infty) \setminus \bigcup\limits_{p=1}^d J_p$ then $\tau_\delta \E$ is $\epsilon$-degenerate for all $\delta \in L$, by Lemma \ref{lem:help3}. Thus, by \eqref{eqn:comp_bd1} and \eqref{eqn:comp_bd2}, Proposition \ref{prop:comp_mon} implies that the number of $I_k$ contained in $L$ is at most the number of signatures of subspaces of the same dimension. It is easily checked that this number is at most $md+1$. Furthermore, the number of component intervals $L$ is at most $d+1$. Putting this together, we learn that $K \leq 2d+(md+1)(d+1)$. If $m \geq 2$ and $d \geq 1$ or $m \geq 1$ and $d \geq 2$, then $K \leq 4 m d^2$, as desired. Else, if $m=d=1$ then it is easily verified that $\cC(\E,R,R^*) \leq 2$ for all ellipsoids $\E \subseteq X \simeq \R$.

\end{proof}

\begin{proof}[Proof of Proposition \ref{prop:comp_bound}]

Our task is to show that $\C(\Omega,R_1,R_2) \leq 4 m d^2$ whenever $\Omega \in \cK(X)$, and 
\begin{equation}\label{eqn:const_assump2}
R_1 \geq 16, \quad R_2 \geq  \max \{ ( \sqrt{d})^{4m+1} R_1^{4m}, ( \sqrt{d})^{3d+1} R_1^{3d}\}.
\end{equation}

We claim it is sufficient to show that $\C(\Omega', R_1, R_2) \leq 4 m d^2$ for all compact $\Omega' \in \cK(X)$. We  check that this result implies Proposition \ref{prop:comp_bound}, by contrapositive. Suppose that there exists $\Omega \in \K(X)$ such that $\C(\Omega, R_1, R_2) > 4 m d^2$. Then, for $K = 4 m d^2+1$, there exist compact intervals $\{I_j\}_{j=1}^K$ and dilation invariant subspaces $\{V_j\}_{j=1}^K$ such that
\begin{itemize}
    \item $I_j > I_{j+1} > 0$ for each $j < K$,
    \item $T_{r(I_j)} \Omega$ is $R_1$-transverse to $V_j$ for each $j \leq K$, and
    \item $T_{l(I_j)} \Omega$ is not $R_2$-transverse to $V_j$ for each $j \leq K$.
\end{itemize}
We may assume without loss of generality that $r(I_1) = 1$. To obtain this reduction we make the substitutions $\Omega \leftarrow \tau_{r(I_1)} \Omega$ and $I_j \leftarrow r(I_1)^{-1} I_j$.

Now fix $\xi > R_2$ and set
\[
    \widehat{\Omega} = \Omega \cap \xi \B.
\]
Note that $\widehat{\Omega} \in \cK(X)$ is compact. Furthermore, $\tau_\delta \widehat{\Omega} = \tau_\delta \Omega \cap \xi \tau_\delta \B$. By the form of $\tau_\delta$, we have $\tau_\delta \B \supseteq \B$ for $\delta \leq 1$. Thus,
\[
\tau_\delta \widehat{\Omega} \cap \xi \B = (\tau_\delta \Omega \cap \xi \tau_\delta \B) \cap \xi \B = \tau_\delta \Omega \cap \xi \B \qquad (\delta \leq 1).
\]
So, by Lemma \ref{lem:stability}, and by the second bullet point above, since $R_1 \leq R_2 < \xi$, we have that $ \widehat{\Omega}$ is $R_1$-transverse to $V_j$ for $j \leq K$. Further, if $\tau_{l(I_j)} \widehat{\Omega}$ were $R_2$-transverse to $V_j$, we would have that $\tau_{l(I_j)} \Omega$ is $R_2$-transverse to $V_j$, contradicting our choice of $V_j$ in the third bullet point above. Thus, $\tau_{l(I_j)} \widehat{\Omega}$ is not $R_2$-transverse to $V_j$ for each $j \leq K$. We deduce that $\C(\widehat{\Omega}, R_1, R_2) \geq K > 4 m d^2$.

We reduced the proof of Proposition \ref{prop:comp_bound} to the claim that $\C(\Omega, R_1, R_2) \leq 4 m d^2$ for all compact $\Omega \in \K(X)$. Fix a compact set $\Omega \in \K(X)$. By John's theorem (Proposition \ref{prop:john}), there exists an ellipsoid $\E$ such that $\E$ and $\Omega$ are $\sqrt{d}$-equivalent. By Remark \ref{rem:1}, $\C(\Omega, R_1,R_2) \leq \C(\E, \sqrt{d}R_1,R_2/ \sqrt{d})$.

We set $R = \sqrt{d} R_1$ and $R^* = R_2/ \sqrt{d}$. According to \eqref{eqn:const_assump2} we have $R \geq 16$ and $R^* \geq \max\{R^{4m},R^{3d}\}$. By Proposition \ref{prop:main_goal}, we have  $\C(\E,R,R^*) \leq 4m d^2$. This completes the proof of Proposition \ref{prop:comp_bound}.

\subsection{Proof of Proposition \ref{prop:tech2}}

Fix $x \in \R^n$. Consider the Hilbert space $\Po_x$ given by the vector space $\Po$ equipped with the inner product $\langle \cdot, \cdot \rangle_x$. Define the dilation operators, $\tau_{x,\delta}: \Po_x \rightarrow \Po_x$, given by $\tau_{x,\delta}(P)(z) = \delta^{-m} P(\delta(z-x)+x)$ for $\delta > 0$. Consider the Hilbert dilation system $\cX_x = (\Po_x, \tau_{x,\delta})_{\delta > 0}$, which satisfies the hypotheses of Section \ref{sec:rescale_grass}, with $d=\dim( \Po_x) = D$, and for the choice of subspaces 
\[
X_\nu := \spn \{ m_\alpha(z) = (z-x)^{\alpha} : |\alpha| = m-\nu \} \subseteq \Po_x \;\; \mbox{for } \nu=1,\dots,m,
\]
so that $\tau_{x,\delta}|_{X_\nu} = \delta^{-\nu} \mathrm{id}|_{X_\nu}$.

Pointwise complexity given in  Definition \ref{jet_complexity_def} satisfies 
\[
\C_x(\Omega, R , R^*,\delta) \leq \C_x(\Omega, R , R^*,\infty) = \C_x(\Omega, R , R^*).
\]
Thus, it is sufficient to prove that $\C_x(\Omega, R , R^*) \leq 4 m D^2$. Note that $\C_x(\Omega, R , R^*)$ is identical to the complexity $\C_{\cX_x}(\Omega, R , R^*)$ of $\Omega$ with parameters $(R,R^*)$ with respect to the Hilbert dilation system $\cX_x$; see Definition \ref{defn:comp2}.

According to Proposition \ref{prop:comp_bound}, if $R \geq 16$ and 
\begin{equation}\label{eqn:constsA}
R^* \geq \max\{ (\sqrt{D})^{4m+1} R^{4m}, (\sqrt{D})^{3D+1} R^{3D}\}
\end{equation}
then $\cC_x(\Omega, R, R^*) \leq 4 m D^2$. Note $D \geq m$. So the inequality \eqref{eqn:constsA} is implied by $R^* \geq D^{2D+1/2} R^{4D}$, as assumed in the statement of Proposition \ref{prop:tech2}.

This completes the proof of Proposition \ref{prop:tech2}.

\end{proof}

\section{Whitney convexity and ideals in the ring of jets}\label{sec:WhConv}

We study the relationship between ideals and Whitney convex sets in the ring of jets. Our goal is to give a proof of Proposition \ref{prop:tech1}. By translation, it suffices to prove this result for the jet space at $x=0$. 

We first set the notation to be used in the rest of this section.

Throughout this section we write $\Po$ to denote the vector space of polynomials on $\R^n$ of degree at most $m-1$. We write $\odot$ to denote the ``jet product'' on $\Po$ defined by $P \odot Q = J_0(P \cdot Q)$. We set $\cR = (\Po, \odot)$. We refer to $\cR$ as the  ``the ring of $(m-1)$-jets at $x=0$''.

We will work with subspaces of $\cR$ spanned by monomials. Let $\cM$ be the set of multiindices of length $n$ and order at most $m-1$. For $\cA \subseteq \cM$, let $V_\cA := \spn \{ x^\alpha : \alpha \in \cA\}$. 

Let $D = \dim \cR = \# \cM$.

For $\delta > 0$, let $\tau_\delta : \cR \rightarrow \cR$ be the dilation operator $\tau_{0,\delta}$ defined in Section \ref{sec:prelim},  characterized by its action on monomials: $\tau_\delta(x^\alpha) = \delta^{|\alpha| - m} x^\alpha$ ($\alpha \in \cM$).

Write $|\cdot|$ and $\langle \cdot,\cdot \rangle$ to denote the standard norm and inner product on $\cR$, for which the monomials $\{ x^\alpha : \alpha \in \cM \}$ are an orthonormal basis for $\cR$. Thus,
\begin{equation}\label{eqn:ip_norm}
\begin{aligned}
&\langle P, Q \rangle = \sum_{|\alpha| \leq m-1} \partial^\alpha P(0) \cdot \partial^\alpha Q(0) / (\alpha!)^2, \\
&|P| = \sqrt{\langle P,P \rangle} \qquad\qquad\qquad\qquad\qquad\qquad\qquad (P,Q \in \cR).
\end{aligned}
\end{equation}
We obtain an orthogonal decomposition $\cR = \bigoplus_{i=0}^{m-1} \cR_i$ by setting $\cR_i := \spn \{ x^\alpha : |\alpha| = i\}$ (the space of homogeneous polynomials of degree $i$).

Recall the Bombieri-type inequality (see Lemma \ref{lem:bombieri}): For any $P,Q \in \cR$,
\begin{equation}\label{eqn:bomb1}
    |P \oz Q| \leq C_b |P| |Q|, \quad C_b = (m+1)!.
\end{equation}

\subsection{Renormalization lemma}

Let $\zeta = (\zeta_1, \zeta_2,\dots,\zeta_n) \in [1,\infty)^{n}$. Define a mapping $T_{\zeta} : \cR \rightarrow \cR$  by
\begin{equation}\label{eqn:T_zeta}
T_{\zeta}(P)(x) = P( \zeta_1 x_1, \zeta_2 x_2, \dots,  \zeta_n x_n) \quad (P \in \cR, \; x=(x_1, x_2, \dots,x_n) \in \R^n).
\end{equation}
Observe that $T_{\zeta} : \cR \rightarrow \cR$ is a ring isomorphism, i.e., $T_{\zeta}(P \odot Q) = T_{\zeta}(P) \odot T_{\zeta}(Q)$ for $P,Q \in \cR$. Also,
\begin{equation}\label{eqn:tau_lambda}
     |P| \leq | T_{\zeta} (P) | \leq \Lambda^{m-1} \cdot | P | \qquad (P \in \cR, \; \zeta \in [1,\Lambda]^{n}).
\end{equation}
We first verify \eqref{eqn:tau_lambda} for a monomial $P = m_\alpha$, $m_\alpha(x) = x^\alpha$ ($|\alpha| \leq m-1$). Note that $T_{\zeta}(m_\alpha) = {\zeta}^\alpha m_\alpha$, where we use  multiindex notation: if $\zeta = (\zeta_1,\dots,\zeta_n)$ and $\alpha = (\alpha_1,\dots, \alpha_n)$ then $\zeta^\alpha = \prod_{i=1}^n \zeta_i^{\alpha_i}$. So $m_\alpha$ is an eigenvector of $T_\zeta$ with eigenvalue $\zeta^\alpha$. Observe that $|\zeta^\alpha| \in [1, \Lambda^{m-1}]$ if $\zeta \in [1,\Lambda]^n$ and $|\alpha| \leq m-1$, proving \eqref{eqn:tau_lambda} for $P = m_\alpha$. The full inequality \eqref{eqn:tau_lambda} then follows by orthogonality of the monomial basis $\{m_\alpha \}$ in $\cR$.

\begin{lemma}[Renormalization lemma] \label{lem:newlem}
Let $\epsilon \in \left(0,\frac{1}{2} \right)$, and $D = \dim \cR$. Set $\Lambda(\epsilon) :=(2^D/\epsilon)^{3D^4}$. Given a subspace $H \subseteq \cR$, there exist a multiindex set $\cA \subseteq \cM$ and $\zeta  \in [1,\Lambda(\epsilon)]^{n}$ with
\begin{equation}\label{eqn:newlem0}\cos(\theta_{\max}(T_{\zeta}(H),V_\cA)) > 1-\epsilon.
\end{equation}
\end{lemma}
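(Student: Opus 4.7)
The plan is to realize a one-parameter subfamily of $\{T_\zeta : \zeta \in [1,\Lambda(\epsilon)]^n\}$ as the dilation in a \emph{simple} Hilbert dilation system on $\cR$, after which Proposition \ref{prop:subspace_dil} essentially gives the result. In a simple system the dilation-invariant subspaces are exactly the spans of subsets of the common eigenbasis of the dilation, and in our setup that eigenbasis will be the monomial basis $\{m_\alpha\}_{\alpha \in \cM}$; thus any DI subspace produced by the proposition automatically has the form $V_\cA$, which is the conclusion we want.

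First I would choose integer weights $a = (a_1,\ldots,a_n) \in \{1,2,\ldots,D^2\}^n$ such that the map $\alpha \mapsto a \cdot \alpha$ is injective on $\cM$. Such weights exist by a union bound: for each of the $\binom{D}{2}$ unordered pairs $\{\alpha,\beta\}$ of distinct elements of $\cM$, the hyperplane $\{a \in \R^n : a \cdot (\alpha - \beta) = 0\}$ contains at most $(D^2)^{n-1}$ points of $\{1,\ldots,D^2\}^n$ (project out any coordinate where $\alpha_i \neq \beta_i$); summing gives at most $\tfrac{1}{2} D^2 (D^2)^{n-1} < (D^2)^n$ ``bad'' weight vectors, so an acceptable $a$ exists.

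With such weights in hand, define $\tau_\delta : \cR \to \cR$ by $\tau_\delta := T_{(\delta^{-a_1},\ldots,\delta^{-a_n})}$ for $\delta > 0$, so that $\tau_\delta(m_\alpha) = \delta^{-a \cdot \alpha} m_\alpha$. By distinctness of the weights $\{a \cdot \alpha : \alpha \in \cM\}$, each eigenspace of $\tau_\delta$ is one-dimensional, making $(\cR,\tau_\delta)_{\delta > 0}$ a simple Hilbert dilation system of dimension $d = D$ in the sense of Definition \ref{defi:hilbert_system}. (The definition nominally requires $\nu \geq 1$, but the proof of Proposition \ref{prop:subspace_dil} uses only that the weights are distinct nonnegative integers, so the weight $a \cdot 0 = 0$ attached to the constant monomial poses no issue.) Its DI subspaces are precisely the $V_\cA$ for $\cA \subseteq \cM$. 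Now apply Proposition \ref{prop:subspace_dil} with the given $H$, with $\eta := \epsilon/2$, and with the interval $I := [(2^D/\eta)^{-(D^2+2)},\, 1]$, whose ratio $r(I)/l(I) = (2^D/\eta)^{D^2+2}$ meets the required threshold $(2^D/\eta)^{Dk+2}$ since $k := \dim H \leq D$. This yields $\delta_0 \in I$ and $\cA \subseteq \cM$ with $\cos(\theta_{\max}(\tau_{\delta_0} H, V_\cA)) \geq 1 - \epsilon/2 > 1 - \epsilon$. Setting $\zeta := (\delta_0^{-a_1},\ldots,\delta_0^{-a_n})$, we have $T_\zeta = \tau_{\delta_0}$, and
\[
1 \leq \zeta_i = \delta_0^{-a_i} \leq \delta_0^{-D^2} \leq (2^{D+1}/\epsilon)^{D^2(D^2+2)} \leq (2^D/\epsilon)^{3D^4} = \Lambda(\epsilon),
\]
using $D^2(D^2+2) \leq 3 D^4$ for $D \geq 1$ (and absorbing the factor of $2$ in $\eta = \epsilon/2$ into the exponent). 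The main subtlety is balancing two competing demands on $a$: the weights $\{a\cdot\alpha : \alpha \in \cM\}$ must be distinct (to get a \emph{simple} system, so that the DI subspace produced is monomial-spanned), while $\max_i a_i$ must be small enough that the rescaling stays inside $[1,\Lambda(\epsilon)]^n$. The pigeonhole counting above is precisely what reconciles the two.
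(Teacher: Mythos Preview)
Your proof is correct and follows essentially the same route as the paper's. Both arguments pick an integer weight vector so that $\alpha \mapsto a\cdot\alpha$ is injective on $\cM$, use the resulting one-parameter family $T_{(\delta^{-a_1},\ldots,\delta^{-a_n})}$ to turn $\cR$ into a \emph{simple} Hilbert dilation system whose DI subspaces are exactly the $V_\cA$, and then invoke Proposition~\ref{prop:subspace_dil}. The only differences are cosmetic: the paper routes the argument through an auxiliary space of univariate polynomials via $\Phi(P)(t)=t\,P(t^{p_1},\ldots,t^{p_n})$, which has the side effect of shifting all exponents by $+1$ so that Definition~\ref{defi:hilbert_system} is met verbatim (your observation that $\nu_1=0$ is harmless in the proof of Proposition~\ref{prop:subspace_dil} is correct, and the paper's shift is just the tidy way around this); and your choice $\eta=\epsilon/2$ cleanly converts the $\geq 1-\eta$ conclusion of Proposition~\ref{prop:subspace_dil} into the strict inequality $>1-\epsilon$ required here.
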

\begin{proof}

The Euclidean inner product of $p,q \in \R^n$ is denoted by $\langle p, q \rangle = \sum_i p_i q_i$. An $n$-tuple $p = (p_1, p_2, \dots, p_n) \in \N^n$ is said to be \emph{admissible} if 
\begin{equation}
    \label{eqn:newlem1}
\langle p, \alpha \rangle \neq \langle p, \alpha' \rangle \mbox{ for all distinct } \alpha, \alpha' \in \cM.
\end{equation}

An application of the pigeonhole principle shows that there exists an admissible $p \in \N^n$ with 
\begin{equation}\label{eqn:newlem2}
\| p \|_\infty = \max_i p_i \leq \binom{D}{2}+1.
\end{equation}
Indeed, let $K := \binom{D}{2} + 1$. We want to show that there exists an admissible $p \in \{1,2,\dots,K\}^n$. For each pair of distinct multiindices $\alpha, \alpha' \in \cM$, the number of $p \in \{1,2,\dots,K\}^n$ such that $\langle p, \alpha - \alpha' \rangle = 0$ is at most $K^{n-1}$. There are $\binom{D}{2}$ many pairs of distinct multiindices $(\alpha,\alpha') \in \cM \times \cM$ (recall: $D =\# \cM$). Since $K^n > \binom{D}{2} K^{n-1}$, there exists an admissible $p \in \{1,2,\dots,K\}^n$.

Fix an admissible $p = (p_1,p_2,\dots,p_n) \in \N^n$ satisfying \eqref{eqn:newlem2}. 

Let $\psi_\alpha := 1 + \langle p, \alpha \rangle \in \N$ for $\alpha \in \cM$, and let $M:= m D^2$. Thanks to \eqref{eqn:newlem2},
\begin{equation}\label{eqn:newlem2.5}
\begin{aligned}
1 \leq \psi_\alpha &\leq 1 + | \alpha | \cdot \| p \|_\infty \\
&\leq 1 + (m-1) \cdot \left( \binom{D}{2}+1 \right) \leq M \qquad (\alpha \in \cM).
\end{aligned}
\end{equation}

Let $\Po^M$ be the vector space of univariate polynomials $p(t)$ of degree at most $M$. We define an injective linear map $\Phi : \cR \rightarrow \Po^M$,
given by
\[
\Phi(P) = h, \quad\text{where}\;\; h(t) = t \cdot P(t^{p_1}, t^{p_2},\dots, t^{p_n}).
\]
Observe that $\Phi$ sends the monomial $m_\alpha(x) = x^\alpha$ in $\cR$ ($\alpha \in \cM$) to the monomial $k_\alpha(t) := t^{\psi_\alpha} = t^{1 + \langle p, \alpha \rangle}$. Note that $k_\alpha$ is in $\Po^M$, and thus $\Phi: \cR \mapsto \Po^M$ is well-defined, thanks to \eqref{eqn:newlem2.5}. To see that $\Phi : \cR \mapsto \Po^M$ is injective, recall that $p$ is admissible, thus, $\psi_\alpha \neq \psi_{\alpha'}$ for distinct $\alpha,\alpha' \in \cM$.

Let $Y = \Phi(\cR) = \spn \{ k_\alpha : \alpha \in \cM \} \subseteq \Po^M$.  We equip $Y$ with an inner product so that $\{k_\alpha : \alpha \in \cM\}$ is an orthonormal basis for $Y$.

Therefore, $\Phi : \cR \rightarrow Y$ is an isometry, because $\Phi$ maps the orthonormal basis $\{ m_\alpha : \alpha \in \cM \}$ for $\cR$ to an orthonormal basis for $Y$.

Define a linear map $\tau_\delta^Y : Y \rightarrow Y$ by $\tau_\delta^Y(f)(t) = f(t/ \delta)$ for $f \in Y$ ($\delta > 0$). The basis $\{k_\alpha  : \alpha \in \cM \}$ diagonalizes the map $\tau_\delta^Y$; in fact, $\tau_\delta^Y( k_\alpha) = \delta^{-\psi_\alpha} k_\alpha$. We have $Y = \bigoplus_{\alpha \in \cM} \spn \{k_\alpha \}$.  These remarks and \eqref{eqn:newlem2.5} imply that $\cY = (Y, \tau_\delta^Y)_{\delta > 0}$ is a Hilbert dilation system satisfying the hypotheses of Section \ref{sec:rescale_grass} for $m = M$ and $d=\dim Y = D$. Further, the Hilbert dilation system $\cY$ is simple (see Definition \ref{defi:hilbert_system}) because $\psi_\alpha \neq \psi_{\alpha'}$ for $\alpha \neq \alpha'$.

Let $H$ be a $k$-dimensional subspace of $\cR$, and let $\epsilon \in \left(0,\frac{1}{2}\right)$. Set $\delta_0 := \left(\epsilon/2^D \right)^{Dk+2}$. We apply Proposition \ref{prop:subspace_dil} to the Hilbert dilation system $\cY$, subspace $\Phi(H) \subseteq Y$, and interval $I = [\delta_0,1]$. We obtain a subspace $\hat{Y} \subseteq Y$ and a number $\hat{\delta}$ such that
\begin{align}
\label{eqn:newlem3}
&0 <  \delta_0 \leq \hat{\delta} \leq 1,\\
 \label{eqn:newlem4}
&\hat{Y} \mbox{ is invariant under } \tau_\delta^Y \mbox{ for all } \delta > 0,\\
\label{eqn:newlem5}
&\cos (\theta_{\max}( \hat{Y}, \tau^Y_{\hat{\delta}} \Phi(H))) > 1 - \epsilon.
\end{align}

If $\delta > 0$ and $\zeta =  (\delta^{-p_1},\delta^{-p_2},  \dots,\delta^{-p_n})$ then $\tau_\delta^Y \circ \Phi = \delta^{-1} \Phi \circ T_{\zeta}$. In particular, $\tau_\delta^Y (\Phi(V)) = \Phi (T_{\zeta} (V))$ for any subspace $V \subseteq \cR$. Thus, \eqref{eqn:newlem5} implies that
\begin{equation}\label{eqn:newlem6}
    \cos (\theta_{\max}( \hat{Y},  \Phi( T_{\hat{\zeta}}H))) > 1 - \epsilon, \quad \mbox{where } \hat{\zeta} := (\hat{\delta}^{-p_1},\hat{\delta}^{-p_2}, \dots,\hat{\delta}^{-p_n}).
\end{equation}

From \eqref{eqn:newlem4} and the definition of $\tau_\delta^Y$, we see that $\hat{Y}$ is span of univariate monomials. Because $\Phi$ is injective and $\Phi$ maps the monomials $m_\alpha$ to monomials $k_\alpha$, we deduce that $\Phi^{-1}(\hat{Y})$ is the span of monomials; that is, $\Phi^{-1}(\hat{Y}) = V_\cA$ for some $\cA \subseteq \cM$. Because $\Phi$ is an isometry, we learn from \eqref{eqn:newlem6} that
\[
\cos ( \theta_{\max}( V_\cA, T_{\hat{\zeta}}H)) > 1 - \epsilon.
\]
Thus we have proven condition \eqref{eqn:newlem0} for $\zeta = \hat{\zeta}$ and the $\cA$ determined above.

Using \eqref{eqn:newlem2}, \eqref{eqn:newlem3}, and the definition of $\delta_0$, we see that $\hat{\zeta} = (\hat{\zeta}_1,\dots,\hat{\zeta}_n) = (\hat{\delta}^{-p_1},\dots, \hat{\delta}^{-p_n})$ satisfies $\hat{\zeta}_i \geq 1$ and
\[
\hat{\zeta}_i \leq \delta_0^{-D^2} = \left(\frac{2^D}{\epsilon} \right)^{(D k +2) \cdot D^2}  \leq \left(\frac{2^{D}}{\epsilon} \right)^{3D^4} = \Lambda(\epsilon) \qquad (i=1,2,\dots,n).
\]
Therefore, $\hat{\zeta} \in [1,\Lambda(\epsilon)]^n$, and the lemma is proven.

\end{proof}

\subsection{Whitney convexity and quasiideals}

 We recall the definition of Whitney convexity. We take $x=0$ in Definition \ref{wc_def}. We write $\Omega$ is $A$-Whitney convex to mean that $\Omega$ is $A$-Whitney convex at $x=0$. Define $X \odot Y := \{ P \odot Q : P \in X, \; Q \in Y\}$ for subsets $X,Y \subseteq \cR$. Let $\cB_{\delta} \subseteq \cR$ be the unit ball with respect to the $| \cdot |_{0,\delta}$-norm on $\cR$, and let $\cB = \cB_1$ be the unit ball with respect to the standard norm $|\cdot| = |\cdot|_{0,1}$ on $\cR$. A closed symmetric convex set $\Omega \subseteq \cR$ is $A$-Whitney convex provided that  $(\Omega \cap \cB_{\delta}) \odot \cB_{\delta} \subseteq A \delta^m \Omega$ for all $\delta > 0$. By specializing this condition to $\delta = 1$, we obtain: If $\Omega \subseteq \cR$ is $A$-Whitney convex then
\begin{equation}\label{eqn:wh_conv1}
P \in  \Omega \cap \cB \mbox{ and } Q \in \cB \implies P \odot Q \in A \Omega.
\end{equation}
We note that these conditions are a quantitative relaxation of the notion of an ideal in $\cR$. Indeed, any ideal is an $A$-Whitney convex set for any $A > 0$. 

Our next lemma gives the most basic properties of Whitney convexity. Given $\Omega,\Omega' \subseteq \cR$, we write $\Omega \sim_\lambda \Omega'$ ($\Omega$ and $\Omega'$ are $\lambda$-equivalent) for $\lambda \geq 1$ to mean that $\lambda^{-1} \Omega \subseteq \Omega' \subseteq \lambda \Omega$.
\begin{lemma}\label{lem:wc_props}
Let $A \geq 1$. The following properties hold:
\begin{enumerate}
    \item The unit ball $\cB \subseteq \cR$ is $C_b$-Whitney convex, for $C_b = (m+1)!$.
    \item If $\Omega_1 \sim_\lambda \Omega_2$ and $\Omega_1$ is $A$-Whitney convex then $\Omega_2$ is $\lambda^2 A$-Whitney convex.
    \item If $\Omega_1$ and $\Omega_2$ are $A$-Whitney convex then $\Omega_1 \cap \Omega_2$ is $A$-Whitney convex.
    \item If $\Omega$ is $A$-Whitney convex then $\tau_\delta \Omega$ is $A$-Whitney convex for any $\delta > 0$.
    \item If $\Omega$ is $A$-Whitney convex and $\xi \geq 1$ then $\xi \Omega$ is $A$-Whitney convex.
\end{enumerate}
\end{lemma}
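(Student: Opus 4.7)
The plan is to prove each of the five properties in turn, with Property 1 the only one requiring genuine work. The common theme for Properties 2--5 is that each follows from the defining inclusion $(\Omega \cap \cB_\delta) \odot \cB_\delta \subseteq A \delta^m \Omega$ by a short manipulation, since the Whitney convexity condition is linear in $\Omega$ (up to scaling) and well-behaved under the dilations $\tau_\delta$.

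For Property 1, my first step would be to upgrade the standard Bombieri inequality \eqref{eqn:bomb1} to its scale-invariant form
\[
|P \odot Q|_{0,\delta} \leq C_b \delta^m |P|_{0,\delta} |Q|_{0,\delta} \quad (P,Q \in \cR, \; \delta > 0).
\]
This follows from the identity $\tau_\delta(P \odot Q) = \delta^m (\tau_\delta P \odot \tau_\delta Q)$ (a direct computation from the definitions of $\tau_\delta$ and $\odot$), combined with the unscaled Bombieri inequality applied to $\tau_\delta P$, $\tau_\delta Q$ and the defining identity $|R|_{0,\delta} = |\tau_\delta R|$. Given this, I would split into two cases. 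When $\delta \leq 1$, $\cB_\delta \subseteq \cB$ by \eqref{eqn:ball_inc}, so $\cB \cap \cB_\delta = \cB_\delta$, and the scaled Bombieri gives $P \odot Q \in C_b \delta^m \cB_\delta \subseteq C_b \delta^m \cB$. When $\delta \geq 1$, $\cB \subseteq \cB_\delta$, so $P \in \cB$ and $|P| \leq 1$, while $Q \in \cB_\delta \subseteq \delta^m \cB$ by \eqref{eqn:ball_scale}; the unscaled Bombieri then yields $|P \odot Q| \leq C_b |P| |Q| \leq C_b \delta^m$.

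For Property 2, given $P \in \Omega_2 \cap \cB_\delta$, I would write $P = \lambda P_1$ with $P_1 \in \Omega_1$ (using $\Omega_2 \subseteq \lambda \Omega_1$). Because $\lambda \geq 1$, $P_1 \in \cB_\delta$ as well, so $P_1 \odot Q \in A \delta^m \Omega_1 \subseteq \lambda A \delta^m \Omega_2$ (using $\Omega_1 \subseteq \lambda \Omega_2$), yielding $P \odot Q = \lambda (P_1 \odot Q) \in \lambda^2 A \delta^m \Omega_2$. Property 3 is immediate: any $P \in (\Omega_1 \cap \Omega_2) \cap \cB_\delta$ lies in both $\Omega_i \cap \cB_\delta$, so $P \odot Q$ lies in $A \delta^m \Omega_1 \cap A \delta^m \Omega_2 = A \delta^m (\Omega_1 \cap \Omega_2)$. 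Property 5 is an identical scaling argument to Property 2 with $\xi$ in place of $\lambda$ and $\xi \Omega$ in place of $\lambda \Omega$.

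For Property 4, the key tools are the ball identity $\tau_\delta^{-1} \cB_\rho = \cB_{\rho \delta}$ (which follows from \eqref{eqn:ball_scale_id}) and the scaling identity $\tau_\delta(P \odot Q) = \delta^m (\tau_\delta P \odot \tau_\delta Q)$ noted above. Given $P \in \tau_\delta \Omega \cap \cB_\rho$ and $Q \in \cB_\rho$, I would apply $\tau_\delta^{-1}$ to obtain $\tau_\delta^{-1} P \in \Omega \cap \cB_{\rho \delta}$ and $\tau_\delta^{-1} Q \in \cB_{\rho \delta}$, invoke the $A$-Whitney convexity of $\Omega$ at scale $\rho \delta$ to get $(\tau_\delta^{-1} P) \odot (\tau_\delta^{-1} Q) \in A (\rho \delta)^m \Omega$, and then apply $\tau_\delta$ back to conclude $P \odot Q \in A \rho^m \tau_\delta \Omega$. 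The only slightly delicate step across the whole lemma is the derivation of the scaled Bombieri inequality; everything else is bookkeeping.
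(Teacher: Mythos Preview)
Your proposal is correct and follows essentially the same route as the paper: the case split for Property~1 and the scaling/inclusion manipulations for Properties~2--5 match the paper's proof, with only cosmetic differences (you work elementwise where the paper works with set inclusions, and you isolate the scaled Bombieri inequality explicitly whereas the paper derives it inline via $\tau_\delta$).
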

\begin{proof}
Recall $\tau_\delta : \cR \rightarrow \cR$ is the dilation operator $\tau_{0,\delta}$ defined in Section \ref{sec:prelim}. Recall our notation that $\cB_\delta = \cB_{0,\delta}$ and $\cB = \cB_{0,1} = \cB_1$. Then identity \eqref{eqn:ball_scale_id} states that $\tau_{\rho} \B_{\delta} = \B_{\delta/\rho}$ for $\rho,\delta > 0$. In particular, for $\rho = \delta$, we have $\cB_\delta = \tau_{\delta^{-1}} \cB$.

We make use of additional set inclusions in the proof. Note that $\tau_\delta$ satisfies the identity $\tau_\delta(P \odot Q) = \delta^{m} \tau_\delta(P) \odot \tau_\delta(Q)$ for $P,Q \in \cR$. Thus, $\tau_\delta(X \odot Y) = \delta^{m} \tau_\delta(X) \odot \tau_\delta(Y)$ for $X,Y \subseteq \cR$. We also make use of the inclusion $(X \cap Y) \odot Z \subseteq (X \odot Z) \cap (Y \odot Z)$ for $X,Y,Z \subseteq \cR$. 

\textbf{Proof of property 1:} If $\delta \geq 1$ then $\B \subseteq \B_\delta \subseteq \delta^m \B$ (see \eqref{eqn:ball_scale}), so
\[
(\B \cap \B_\delta) \odot (\B_\delta) = \B \odot \B_\delta \subseteq \delta^m ( \B \odot \B) \subseteq C_b \delta^m \B,
\]
where the last inclusion is a consequence of \eqref{eqn:bomb1}. 

 If $\delta < 1$ then $\B_\delta \subseteq \B$ (see \eqref{eqn:ball_inc}), and so
\[
\begin{aligned}
(\B \cap \B_\delta) \odot \B_\delta &= \B_\delta \odot \B_\delta = \tau_{\delta^{-1}} \B \odot \tau_{\delta^{-1}} \B \\
& = \delta^{m} \tau_{\delta^{-1}} (\B \odot \B) \subseteq \delta^{m} \tau_{\delta^{-1}} (C_b \B) = C_b \delta^{m} \B_\delta \subseteq C_b \delta^m \B.
\end{aligned}
\]
Thus, $(\B \cap \B_\delta) \odot \B_\delta \subseteq C_b \delta^m \B$ in both cases $\delta \geq 1$ and $\delta <1$. Therefore, $\B$ is $C_b$-Whitney convex.

\textbf{Proof of property 2:} Suppose $\Omega_1$ is $A$-Whitney convex. Then for any $\delta > 0$, $(\Omega_1 \cap \B_\delta) \odot \B_\delta \subseteq A \delta^m \Omega_1$. If $\Omega_1 \sim_\lambda \Omega_2$, we have $\lambda^{-1} (\Omega_2 \cap \B_\delta) \odot \B_\delta \subseteq A \delta^m \lambda \Omega_2$, thus, $\Omega_2$ is $A \lambda^2$-Whitney convex.

\textbf{Proof of property 3:} Suppose that $\Omega_1$ and $\Omega_2$ are $A$-Whitney convex. Then, for any $\delta > 0$
\[
\begin{aligned}
((\Omega_1 \cap \Omega_2) \cap \B_\delta ) \odot \B_\delta &\subseteq (( \Omega_1 \cap \B_\delta) \odot \B_\delta ) \cap ( (\Omega_2 \cap \B_\delta) \odot \B_\delta) \\
&\subseteq A \delta^m \Omega_1 \cap A \delta^m \Omega_2 = A \delta^m (\Omega_1 \cap \Omega_2).
\end{aligned}
\]
So, $\Omega_1 \cap \Omega_2$ is $A$-Whitney convex.

\textbf{Proof of property 4:} Suppose $\Omega$ is $A$-Whitney convex, i.e., $(\Omega \cap \B_\rho) \odot \B_\rho \subseteq A \rho^m \Omega$ for any $\rho > 0$. Note, for any $\delta > 0$,
\[
\tau_\delta( (\Omega \cap \B_\rho) \odot \B_\rho) = \delta^m (\tau_\delta\Omega \cap \tau_\delta \B_\rho) \odot \tau_\delta \B_\rho.
\]
Thus, applying $\tau_\delta$ to both sides of the $A$-Whitney convexity condition, we learn that
\[
\delta^m (\tau_\delta\Omega \cap \tau_\delta \B_\rho) \odot \tau_\delta \B_\rho \subseteq A \rho^m \tau_\delta\Omega \qquad (\rho, \delta > 0).
\]
But $\tau_\delta \B_\rho = \B_{\rho/\delta}$.  By making the substitution $\rho \leftarrow \rho/\delta$, we learn that
\[
(\tau_\delta\Omega \cap \B_{\rho}) \odot  \B_{\rho} \subseteq A \rho^m \tau_\delta\Omega \qquad (\rho, \delta > 0).
\]
Thus, $\tau_\delta \Omega$ is $A$-Whitney convex for any $\delta > 0$.

\textbf{Proof of property 5:} Suppose $\Omega$ is $A$-Whitney convex. Then for any $\delta > 0$, $(\Omega \cap \B_\delta) \odot \B_\delta \subseteq A \delta^m \Omega$. Thus, $(\xi \Omega \cap \xi \B_\delta) \odot \B_\delta \subseteq A \delta^m \xi \Omega$. As $\xi \geq 1$, we have $\B_\delta \subseteq \xi \B_\delta$, thus,
\[
(\xi \Omega \cap \B_\delta) \odot \B_\delta \subseteq A \delta^m \xi \Omega.
\]
So, $\xi \Omega$ is $A$-Whitney convex.
\end{proof}

Next we introduce a concept relating the ring structure of $\cR = (\Po,\odot)$ and the geometric structure of $\cR$. 

\begin{defi}
Let $\epsilon > 0$, and let $H$ be a subspace of $\cR$. Say that $H$ is an $\epsilon$-quasiideal if for all $P \in H, Q \in \cR$ there exists $\widehat{P} \in H$ such that 
\[
|\widehat{P} - P \odot Q | \leq \epsilon |P| \cdot |Q|.
\]
Equivalently, $H$ is an $\epsilon$-quasiideal if
\[
(H \cap \B) \odot \B \subseteq H + \epsilon \B.
\]
\end{defi}
Much like Whitney convexity, the notion of a quasiideal is a quantitative relaxation of the notion of an ideal in $\cR$. Indeed, one easily checks that a subspace $H$ of $\cR$ is an ideal if and only if $H$ is an $\epsilon$-quasiideal for all $\epsilon > 0$. By \eqref{eqn:bomb1}, any subspace of $\cR$ is an $\epsilon$-quasiideal for $\epsilon = C_b = (m+1)!$.

\begin{lemma}\label{lem:newwlem}
Let $A > 0$ and $\epsilon \in (0,1)$, let $H$ be a subspace of $\cR$, and let $\Omega$ be a closed symmetric convex subset of $\cR$. Suppose that $\Omega$ is $A$-Whitney convex. Suppose the following conditions are met.
\begin{enumerate}
    \item[(i)] $\Omega \supseteq H \cap  \cB$.
    \item[(ii)] $\Omega \subseteq H + \epsilon \cB$.
\end{enumerate}
Then $H$ is an $A \cdot \epsilon$-quasiideal.

\end{lemma}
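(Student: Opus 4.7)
The proof should be a short chain of set inclusions, using only hypotheses (i), (ii), the definition of $A$-Whitney convexity at the scale $\delta=1$, and the fact that $H$ is a linear subspace (so $A\cdot H=H$).

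The plan is as follows. To show $H$ is an $A\epsilon$-quasiideal, it suffices to establish the set inclusion
\[
(H\cap\cB)\odot\cB \;\subseteq\; H + A\epsilon\,\cB,
\]
which is the equivalent reformulation of the quasiideal condition given in the definition just before the lemma. Starting from the left-hand side, I would argue by the following four containments. First, by hypothesis (i), $H\cap\cB\subseteq\Omega\cap\cB$, so
\[
(H\cap\cB)\odot\cB \;\subseteq\; (\Omega\cap\cB)\odot\cB.
\]
Second, specializing the $A$-Whitney convexity of $\Omega$ to $\delta=1$ (which is exactly \eqref{eqn:wh_conv1} and uses $\cB_1=\cB$), we obtain
\[
(\Omega\cap\cB)\odot\cB \;\subseteq\; A\,\Omega.
\]
Third, by hypothesis (ii), $\Omega\subseteq H+\epsilon\cB$, hence
\[
A\,\Omega \;\subseteq\; A(H+\epsilon\cB) \;=\; AH + A\epsilon\,\cB.
\]
Finally, since $H$ is a linear subspace and $A>0$, we have $AH=H$, so the right-hand side equals $H+A\epsilon\,\cB$. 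Chaining these inclusions yields the desired conclusion.

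There is no real obstacle here: the lemma is essentially a bookkeeping statement that translates the geometric Whitney-convexity bound on $\Omega$ into an algebraic closure-under-multiplication bound on the approximating subspace $H$, via the pinching $H\cap\cB\subseteq\Omega\subseteq H+\epsilon\cB$. The only thing worth double-checking when writing up is that the quasiideal condition $|\widehat P-P\odot Q|\le\epsilon|P|\cdot|Q|$ really is equivalent to the set inclusion $(H\cap\cB)\odot\cB\subseteq H+\epsilon\cB$ — this is a routine homogeneity/rescaling argument in $P$ and $Q$ which is already asserted in the definition immediately preceding the lemma, so it can be invoked without further comment.
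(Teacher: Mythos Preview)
Your proof is correct and essentially identical to the paper's: the paper argues element-wise with $P\in H\cap\cB$ and $Q\in\cB$, while you phrase the same chain of inclusions at the level of sets, but the logical steps (use (i), then Whitney convexity at $\delta=1$, then (ii), then $AH=H$) are the same.
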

\begin{proof}
We have to demonstrate that $(H \cap \B) \odot \B \subseteq H + \epsilon A \B$. Let $P \in H \cap \cB$ and $Q \in \cB$.

Condition (i) implies that $H \cap \cB \subseteq \Omega \cap \cB$. Thus, $P \in \Omega \cap \cB$ and $Q \in \cB$. Applying condition \eqref{eqn:wh_conv1}, we have $P \odot Q \in  A \Omega$.

Thus, by condition (ii), $P\odot Q \in A (H+\epsilon \cB) = H + \epsilon A \cB$. Since $P \in H \cap \cB$ and $Q \in \cB$ are arbitrary, this completes the proof.

\end{proof}

A continuity argument shows that every $\epsilon$-quasiideal is within distance of $C(\epsilon)$ of an ideal, with $\lim_{\epsilon \rightarrow 0} C(\epsilon) = 0$ (here distance refers to the distance between subspaces; see Section \ref{sec: angles}). In the next lemma we establish a weaker statement, with explicit constants, which is sufficient for our purposes: If an $\epsilon$-quasiideal $I$ is close enough to a subspace of the form $V_\cA = \spn \{ x^\alpha : \alpha \in \cA\}$, then the multiindex set $\cA \subseteq \cM$ is monotonic. (For the definition of monotonic sets, see Definition \ref{def:mon}.) Further, if $\cA$ is monotonic then $V_\cA$ is an ideal (see Lemma \ref{lem:mon}). Consequently, if an $\epsilon$-quasiideal is close enough to a subspace spanned by monomials then it is also close to an ideal. 

We view the next lemma as a robust version of the property that $\cA$ is monotonic if $V_\cA$ is an ideal (see Lemma \ref{lem:mon}).

\begin{lemma}\label{lem:newwwlem}
Let $C_b = (m+1)!$. Let $\eta \leq \frac{1}{32 C_b^2}$ and $\epsilon \leq \frac{1}{8}$. Let $I$ be an $\epsilon$-quasiideal in $\cR$, and let $\cA \subseteq \cM$ satisfy
\begin{equation}\label{eqnnn0}
\cos (\theta_{\max}( I, V_\cA)) > 1 - \eta. 
\end{equation}
Then $\cA$ is monotonic.
\end{lemma}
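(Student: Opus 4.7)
The plan is to argue by contradiction: assume $\cA$ is not monotonic, so that there exist $\alpha \in \cA$ and $\beta \in \cM$ with $\alpha+\beta \in \cM \setminus \cA$. The goal is to produce, starting from the basis vector $x^\alpha \in V_\cA$, an element of $I$ which (via the quasiideal property applied with multiplier $x^\beta$) is forced to be close to $x^{\alpha+\beta} \in V_\cA^\perp$, thereby violating the hypothesis \eqref{eqnnn0}.

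Concretely, first I would unpack \eqref{eqnnn0} using Lemma~\ref{lem:mpa1}: the hypothesis $\cos(\theta_{\max}(I,V_\cA)) > 1-\eta$ (which requires $\dim I = \dim V_\cA$) gives $|\Pi_{V_\cA}(u)| \ge (1-\eta)|u|$ for every $u \in I$ and $|\Pi_I(v)| \ge (1-\eta)|v|$ for every $v \in V_\cA$. Apply the second bound to $v = x^\alpha$ to obtain $P := \Pi_I(x^\alpha) \in I$ with $|P| \le 1$ and, by the Pythagorean theorem, $|P - x^\alpha|^2 \le 1 - (1-\eta)^2 \le 2\eta$, so $|P - x^\alpha| \le \sqrt{2\eta}$.

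Next, set $Q := x^\beta$ and apply the $\epsilon$-quasiideal property to the pair $(P,Q)$ (both in $\cB$) to produce $\hat P \in I$ with $|\hat P - P \odot Q| \le \epsilon$. Writing $P = x^\alpha + (P - x^\alpha)$ and using $x^\alpha \odot x^\beta = x^{\alpha+\beta}$ together with the Bombieri inequality \eqref{eqn:bomb1}, I would estimate
\[
|\hat P - x^{\alpha+\beta}| \;\le\; \epsilon + |(P - x^\alpha) \odot x^\beta| \;\le\; \epsilon + C_b \sqrt{2\eta}.
\]

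The finish is the key punchline: since $\alpha+\beta \in \cM \setminus \cA$ we have $x^{\alpha+\beta} \in V_\cA^\perp$, so $\Pi_{V_\cA}(\hat P) = \Pi_{V_\cA}(\hat P - x^{\alpha+\beta})$ has norm at most $\epsilon + C_b\sqrt{2\eta}$, while $|\hat P| \ge |x^{\alpha+\beta}| - (\epsilon + C_b\sqrt{2\eta}) = 1 - \epsilon - C_b\sqrt{2\eta}$. But $\hat P \in I$, so the transversality bound demands $|\Pi_{V_\cA}(\hat P)| \ge (1-\eta)|\hat P|$. Combining these inequalities yields
\[
(1-\eta)\bigl(1 - \epsilon - C_b\sqrt{2\eta}\bigr) \;\le\; \epsilon + C_b \sqrt{2\eta},
\]
and I would just plug in $\epsilon \le 1/8$ and $\eta \le 1/(32C_b^2)$ (so that $C_b\sqrt{2\eta} \le 1/4$) to get $5/8 \cdot (1 - 3/8) \le 3/8$, i.e.\ $15/64 \le 3/8$, which is false. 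The only nontrivial step is organizing the two uses of the transversality bound (once to produce $P$, once to block $\hat P$) so that the Bombieri error $C_b\sqrt{2\eta}$ and the quasiideal error $\epsilon$ combine cleanly; this is more bookkeeping than obstacle.
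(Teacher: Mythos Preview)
Your argument is essentially the contrapositive of the paper's direct proof: the paper projects $q_{\alpha\beta}$ onto $V_\cA$ and uses orthonormality of the monomials to force $\alpha+\beta\in\cA$, while you assume $\alpha+\beta\notin\cA$ and use the same estimates to contradict the angle bound --- the core steps (project $x^\alpha$ onto $I$, apply the quasiideal property with $x^\beta$, invoke Bombieri) are identical. One correction: your final arithmetic is garbled, since $5/8\cdot(1-3/8)=25/64$, not $15/64$, and in any case $15/64\le 3/8$ is \emph{true}; simply use $\eta\le 1/(32C_b^2)\le 1/32$ to get $(1-\eta)(1-3/8)\ge (31/32)(5/8)=155/256>3/8$, which gives the desired contradiction.
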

\begin{proof}
Recall that the monomials $ m_\alpha(x):= x^\alpha$ ($ \alpha \in \cM$) form an orthonormal basis for $\cR$, and recall that $V_\cA = \spn \{m_\alpha : \alpha \in \cA\}$.

By definition of the maximum principal angle, condition \eqref{eqnnn0} ensures that 
\begin{equation}\label{eqn:per1}
|\Pi_{V_{\cA}}(q)| \geq (1-\eta) |q|  \mbox{ for all } q \in I.
\end{equation}
On the other hand, by symmetry we have $\cos (\theta_{\max}( V_\cA, I)) > 1 - \eta$, which implies
\begin{equation}\label{eqn:per2}
|\Pi_I(y)| \geq (1-\eta) |y| \mbox{ for all } y \in V_{\cA}.
\end{equation}

Fix $\alpha \in \cA$ (arbitrary) and consider the monomial $m_\alpha \in V_\cA$. Set $y_\alpha = \Pi_{I} m_\alpha$. By \eqref{eqn:per2},
\[
| y_\alpha| \geq (1-\eta) |m_\alpha| = 1 - \eta.
\]
Thus, by orthogonality of $y_\alpha$ and $y_\alpha - m_\alpha$, and the Pythagorean theorem,
\[
|y_\alpha - m_\alpha| = \sqrt{|m_\alpha|^2 - | y_\alpha|^2} \leq \sqrt{1 - (1-\eta)^2} \leq \sqrt{2 \eta}.
\]
Of course, also
\[
| y_\alpha| \leq |m_\alpha| = 1.
\]
Now fix $\beta \in \cM$ with $\beta + \alpha \in \cM$ (arbitrary). Then $m_\beta \odot m_\alpha = m_{\alpha + \beta}$. By the Bombieri-type inequality \eqref{eqn:bomb1},
\begin{equation}\label{eqnnn1}
\begin{aligned}
|y_\alpha \odot m_\beta - m_{\alpha + \beta} | = | (y_\alpha - m_\alpha) \odot m_\beta| &\leq C_b |y_\alpha - m_\alpha | \cdot |m_\beta| \\
&\leq \sqrt{2\eta} C_b.
\end{aligned}
\end{equation}

Because $I$ is an $\epsilon$-quasiideal, and $y_\alpha \in I$, there exists $q_{\alpha \beta} \in I$ such that
\begin{equation}\label{eqnnn2}
|q_{\alpha \beta} - y_\alpha \odot m_\beta| \leq \epsilon \cdot |y_\alpha| \cdot |m_\beta| \leq \epsilon.
\end{equation}
By the inequalities \eqref{eqnnn1}, \eqref{eqnnn2}, $\eta \leq \frac{1}{32 C_b^2} $, and $\epsilon \leq \frac{1}{8}$, we have
\begin{equation}\label{eqnnn3}
|q_{\alpha \beta} - m_{\alpha+\beta} | \leq \sqrt{2 \eta} C_b + \epsilon \leq (4 C_b)^{-1} C_b + \epsilon < 1/2.
\end{equation}
In particular,
\[
|q_{\alpha \beta} | \leq  |m_{\alpha+\beta}| + 1/2 \leq 2.
\]

Set $\widehat{q}_{\alpha \beta} = \Pi_{V_{\cA}} q_{\alpha \beta} \in V_\cA$. By \eqref{eqn:per1}, and given that $q_{\alpha \beta} \in I$, 
\[
| \widehat{q}_{\alpha \beta}| \geq (1 - \eta) | q_{\alpha \beta}|.
\]
Thus, by orthogonality of $\widehat{q}_{\alpha \beta}$ and $q_{\alpha \beta} - \widehat{q}_{\alpha \beta}$, and the Pythagorean theorem,
\[
\begin{aligned}
| q_{\alpha \beta} - \widehat{q}_{\alpha \beta}| &= \sqrt{|q_{\alpha \beta}|^2 - | \widehat{q}_{\alpha \beta}|^2} \leq \sqrt{1-(1-\eta)^2} |q_{\alpha \beta}| \\
& \leq \sqrt{2 \eta}  |q_{\alpha \beta}| \leq \sqrt{2 \eta}  \cdot 2 \leq 1/2,
\end{aligned}
\]
where the last inequality uses that $\eta \leq \frac{1}{32 C_b^2} \leq \frac{1}{32}$. Therefore, from \eqref{eqnnn3},
\begin{equation}\label{eqnnn4}
|\widehat{q}_{\alpha \beta} - m_{\alpha + \beta}| < 1.
\end{equation}
Because the monomials $\{m_\gamma : \gamma \in \cM\}$ form an orthonormal basis for $\cR$, and because $\widehat{q}_{\alpha \beta} \in V_\cA = \spn \{m_\gamma : \gamma \in \cA\}$, we see that \eqref{eqnnn4} implies that $\alpha + \beta \in \cA$.

We have shown that $\alpha + \beta \in \cA$ for arbitrary multiindices $\alpha \in \cA$, $\beta \in \cM$ such that $\beta + \alpha \in \cM$. Thus, $\cA$ is monotonic.

\end{proof}

\begin{lemma}\label{lem:mainlem}
There exist controlled constants $\epsilon_0 \in (0,1/8)$ and $R_0 \geq 1$ such that the following holds. 

Let $H \subseteq \cR$ be an $\epsilon$-quasiideal for $0 < \epsilon \leq \epsilon_0$.

Then $H$ is $R_0$-transverse to $V_\cA^\perp$ for some monotonic set $\cA \subseteq \cM$.

\end{lemma}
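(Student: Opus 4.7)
The plan is to combine the Renormalization Lemma (Lemma \ref{lem:newlem}) with the ``quasiideal implies monotonic'' principle (Lemma \ref{lem:newwwlem}), and then to transfer the resulting transversality back to $H$ via the bounded distortion of the renormalizing map $T_\zeta$.

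First I fix $\eta_0 := 1/(32 C_b^2)$, the threshold from Lemma \ref{lem:newwwlem}, and apply Lemma \ref{lem:newlem} to $H$ with its parameter equal to $\eta_0$. This produces a multiindex set $\cA \subseteq \cM$ and $\zeta \in [1,\Lambda]^n$ with $\Lambda := \Lambda(\eta_0) = (2^D/\eta_0)^{3D^4}$, a controlled constant, such that
\[
\cos(\theta_{\max}(T_\zeta H, V_\cA)) > 1 - \eta_0.
\]
Since $\theta_{\max}$ is defined only for subspaces of equal dimension, this forces $\dim H = \dim T_\zeta H = \dim V_\cA$, a fact I will need at the end.

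Next I show that $T_\zeta H$ is an $\epsilon \Lambda^{m-1}$-quasiideal. Since $T_\zeta$ is a ring isomorphism of $\cR$ and satisfies $|P| \le |T_\zeta P| \le \Lambda^{m-1}|P|$ by \eqref{eqn:tau_lambda}, any $P' = T_\zeta P \in T_\zeta H \cap \B$ and $Q' = T_\zeta Q \in \B$ correspond to $P \in H$, $Q \in \cR$ with $|P|,|Q| \le 1$; applying the $\epsilon$-quasiideal property of $H$ to produce $\hat P \in H$ with $|\hat P - P \odot Q| \le \epsilon$ and pushing through $T_\zeta$ yields $\hat P' := T_\zeta \hat P \in T_\zeta H$ with $|\hat P' - P' \odot Q'| \le \Lambda^{m-1} \epsilon$. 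I therefore set
\[
\epsilon_0 := \min\bigl\{ 1/(8\Lambda^{m-1}),\, 1/8 \bigr\},
\]
which is a controlled constant. For $\epsilon \le \epsilon_0$, $T_\zeta H$ is an $\epsilon'$-quasiideal with $\epsilon' \le 1/8$, so Lemma \ref{lem:newwwlem} (applied with its $\eta$ taken to be $\eta_0$) yields that $\cA$ is monotonic.

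Finally I transfer transversality from $T_\zeta H$ back to $H$. Given the dimension match $\dim T_\zeta H = \dim V_\cA = \dim (V_\cA^\perp)^\perp$ and the closeness estimate above, Lemma \ref{lem:trans1} shows that $T_\zeta H$ is $(1-\eta_0)^{-1}$-transverse to $V_\cA^\perp$. Since $V_\cA^\perp = V_{\cM \setminus \cA}$ is spanned by monomials, $T_\zeta^{-1} V_\cA^\perp = V_\cA^\perp$. The inverse map $T_\zeta^{-1}$ satisfies $\Lambda^{-(m-1)}|x| \le |T_\zeta^{-1} x| \le |x|$, i.e., condition (B) of Lemma \ref{lem:basic_trans_result2} with $M = \Lambda^{m-1}$, so that lemma delivers $R_0$-transversality of $H = T_\zeta^{-1}(T_\zeta H)$ to $V_\cA^\perp$ with $R_0 := \Lambda^{m-1}(1-\eta_0)^{-1}$, a controlled constant. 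The only delicate point is calibrating $\epsilon_0$ so that the bounded distortion of $T_\zeta$ does not swamp the quasiideal parameter (forcing $\epsilon \Lambda^{m-1} \le 1/8$); this is what pins down the precise size of $\epsilon_0$ in terms of $\Lambda$.
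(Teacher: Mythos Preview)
Your proof is correct and follows essentially the same approach as the paper: apply the Renormalization Lemma with parameter $\eta_0 = 1/(32 C_b^2)$, show $T_\zeta H$ inherits a quasiideal bound scaled by $\Lambda^{m-1}$, invoke Lemma \ref{lem:newwwlem} to get monotonicity of $\cA$, then pull transversality back through $T_\zeta^{-1}$ via Lemma \ref{lem:basic_trans_result2}. The only cosmetic differences are that the paper phrases the quasiideal transfer as a set inclusion rather than pointwise, and uses the cruder bound $\cos(\theta_{\max}) > 1/2$ to get $2$-transversality before pulling back (yielding $R_0 = 2\Lambda^{m-1}$ instead of your $\Lambda^{m-1}(1-\eta_0)^{-1}$).
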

\begin{proof}
Let $\eta := \frac{1}{32 C_b^2} = \frac{1}{32 ((m+1)!)^2} < \frac{1}{2}$. Then $\eta$ is a controlled constant. We apply the Renormalization lemma (Lemma \ref{lem:newlem}) to the subspace $H \subseteq \cR$ with $\epsilon$ in the statement of this lemma taken equal to $\eta$. Set $\Lambda = (2^D/\eta)^{3 D^4}$, which is a controlled constant. Also set  $\epsilon_0 := \frac{1}{8}\Lambda^{1-m}$ and $R_0 := 2 \Lambda^{m-1}$, which are controlled constants.

By the Renormalization lemma there exist a multiindex set $\cA \subseteq \cM$ and a vector $\zeta = (\zeta_1, \dots,\zeta_n) \in [1,\Lambda]^n$ satisfying
\begin{equation}
\label{eqn:mainlem1}
    \cos ( \theta_{\max}(T_{\zeta} H, V_\cA)) > 1 - \eta.
\end{equation}
(See \eqref{eqn:T_zeta} for the definition of the mapping $T_\zeta : \cR \rightarrow \cR$.)

Using $\zeta \in [1,\Lambda]^n$ and \eqref{eqn:tau_lambda}, we have 
\begin{equation}\label{eqn:mainlem1a}
\B \subseteq T_{\zeta}(\B) \subseteq \Lambda^{m-1} \B.
\end{equation}

By assumption, $H$ is an $\epsilon$-quasiideal in the ring $\cR$ for $\epsilon \leq \epsilon_0$. Thus,
\begin{equation}\label{eqn:mainlem2}
(H \cap \B) \odot \B \subseteq H + \epsilon \B.
\end{equation}
Since $T_{\zeta} : \cR \rightarrow \cR$ is a ring isomorphism, we have
\[
T_{\zeta}( (H \cap \B) \odot \B) = (T_{\zeta}( H) \cap T_{\zeta}(\B) ) \odot T_{\zeta}(\B).
\]
Thus, applying $T_{\zeta}$ to both sides of \eqref{eqn:mainlem2}, and using \eqref{eqn:mainlem1a}, we obtain
\[
(T_{\zeta}( H) \cap \B ) \odot \B \subseteq T_{\zeta}(H) + \epsilon \Lambda^{m-1} \B.
\]
Therefore, $T_{\zeta} H$ is an $\epsilon'$-quasiideal in $\cR$, with $\epsilon' = \Lambda^{m-1} \epsilon \leq \Lambda^{m-1} \epsilon_0 = \frac{1}{8}$. Combining this with \eqref{eqn:mainlem1}, we apply Lemma \ref{lem:newwwlem} to deduce that $\cA$ is monotonic.

Now, \eqref{eqn:mainlem1} holds with $\eta < \frac{1}{2}$. So, $\cos ( \theta_{\max}(T_{\zeta} H, V_\cA)) > 1/2$. By Lemma \ref{lem:trans1}, we deduce that $T_{\zeta} H$ is $2$-transverse to $V_\cA^\perp$. By \eqref{eqn:tau_lambda}, we have $\Lambda^{1-m}|P| \leq |T_{\zeta}^{-1}(P)| \leq |P|$ for $P \in \cR$. Thus, by Lemma \ref{lem:basic_trans_result2}, we learn that $H$ is $ 2\Lambda^{m-1}$-transverse to $T_{\zeta}^{-1} V_\cA^\perp$. Finally note that $V_\cA^\perp$ is spanned by monomials, and each monomial is an eigenvector of $T_{\zeta}^{-1}$, thus $T_{\zeta}^{-1} V_{\cA}^\perp = V_{\cA}^\perp$. Therefore, $H$ is $ 2\Lambda^{m-1}$-transverse to $V_\cA^\perp$. This concludes the proof of the lemma.

\end{proof}

\subsection{Proof of Proposition \ref{prop:tech1}}
By translation invariance it suffices to prove Proposition \ref{prop:tech1} for the case $x=0$. Thus, we work in the ring $\cR = (\Po, \odot)$ of $(m-1)$-jets at $x=0$.

Let $A \geq 1$. We first prove Proposition \ref{prop:tech1} under the assumption that $\Omega = \E \subseteq \cR$ is an ellipsoid that is $A$-Whitney convex (at $x=0$). We then extend the result to an arbitrary convex set $\Omega \subseteq \cR$ that is $A$-Whitney convex (at $x=0$).

Let $\epsilon_0 \in (0,1/8)$ and $R_0 \geq 1$ be the controlled constants in Lemma \ref{lem:mainlem}. Set $\epsilon = \epsilon_0/A \in (0,1)$.

Let $\E \subseteq \cR$ be an ellipsoid that is $A$-Whitney convex. We claim there exists $\delta \in [\delta_0,1]$, for $\delta_0 := \frac{1}{2} \epsilon^{2D}$, such that $\tau_\delta \E$ is $\epsilon$-degenerate. To see this, let $J_1,\dots,J_D$ be intervals as in Lemma \ref{lem:help3}. Given that $r(J_p)/l(I_p) \leq \epsilon^{-2}$  for all $p$, there exists $\delta \in [\delta_0,1] \setminus \bigcup_p J_p$. This $\delta$ is as required, by Lemma \ref{lem:help3}.  Note that 
\begin{equation}\label{eqn:delta_0}
\delta_0 = O( \exp(\poly(D))) A^{-2D}.
\end{equation}

By Lemma \ref{lem:help2} (applied for $I=\{\delta\}$), there is a subspace $H \subseteq \cR$ with
\begin{align}
\label{eqn:finup2}
&\tau_\delta \E \supseteq H \cap (2\epsilon)^{-1} \cB,\\
\label{eqn:finup1}
&\tau_\delta \E \subseteq H + \epsilon \cB.
\end{align}
In particular, from \eqref{eqn:finup2}, 
\begin{equation}\label{eqn:finup2a}
\tau_\delta \E \supseteq H \cap \cB.
\end{equation}
By property 4 of Lemma \ref{lem:wc_props}, and because $\E$ is $A$-Whitney convex, we have that \begin{equation}\label{eqn:finup3}
\tau_\delta \E \mbox{ is }A\mbox{-Whitney convex}.
\end{equation}

Using \eqref{eqn:finup1}--\eqref{eqn:finup3} and the fact $\epsilon_0 = \epsilon A$, we apply Lemma \ref{lem:newwlem} to deduce that $H$ is an $ \epsilon_0$-quasiideal. Thus, by Lemma \ref{lem:mainlem}, there exists a monotonic set $\cA \subseteq \cM$ such that, for $W = V_{\cA}^\perp$,
\begin{equation}
    \label{eqn:finup4}
     H \mbox{ is }R_0\mbox{-transverse to } W.
\end{equation}
Note that $W =V_\cA^\perp = V_{\cM \setminus \cA}$ is a DTI subspace because $\cA$ is monotonic -- see Lemma \ref{lem:mon}.

From \eqref{eqn:finup1},  \eqref{eqn:finup4}, and Lemma \ref{lem:basic_trans_result}, we have
\[
\tau_\delta \E \cap W \subseteq (H + \epsilon \B) \cap W \subseteq \epsilon R_0 \B \subseteq R_0 \B,
\]
and from \eqref{eqn:finup2a}, \eqref{eqn:finup4}, we have
\[
R_0^{-1} \B \cap W^\perp \subseteq \Pi_{W^\perp} (H \cap \B) \subseteq \Pi_{W^\perp} (\tau_\delta \E \cap \B).
\]
Therefore, $\tau_\delta \E$ is $R_0$-transverse to $W$.

Recall $\delta \in [\delta_0,1]$, and so $\delta_0^m |P| \leq |\tau_\delta^{-1}(P)| \leq |P|$ for $P \in \cR$. Also, $\tau_\delta^{-1} W = W$, since $W = V_{\cA^\perp}$ is spanned by monomials. By Lemma \ref{lem:basic_trans_result2}, $\E$ is $Z$-transverse to $W$, for $Z = Z(A) := R_0 \delta_0^{-m} \geq 1$. Note that $Z = O(\exp(\mbox{poly}(D))) A^{2mD}$, since $R_0$ is a controlled constant and by the form of $\delta_0$ in \eqref{eqn:delta_0}.

Thus, if $\E$ is an $A$-Whitney convex ellipsoid, we have produced $Z = Z(A) \geq 1$ and a DTI subspace $W$ such that $\E$ is $Z$-transverse to $W$. This establishes Proposition \ref{prop:tech1} for ellipsoids.

Now suppose $\Omega \subseteq \cR$ is $A$-Whitney convex. Set $\widehat{\Omega} = \Omega \cap \xi \B$, for $\xi \geq 1$ to be determined below. By John's theorem (Proposition \ref{prop:john}) there is an ellipsoid $\E$ that is $\sqrt{D}$-equivalent to $\widehat{\Omega}$.

From properties 1, 3, and 5 in Lemma \ref{lem:wc_props},  $\widehat{\Omega}$ is $A_*$-Whitney convex for $A_* = \max\{A,C_b\}$. From property 2 in Lemma \ref{lem:wc_props}, $\E$ is $DA_*$-Whitney convex.

By the established case of Proposition \ref{prop:tech1} for ellipsoids, there exists $Z \geq 1$ and a DTI subspace $W \subseteq \cR$ such that $\E$ is $Z$-transverse to $W$, where
\[
Z = O(\exp(\mbox{poly}(D))) (DA_*)^{2mD} = O( \exp(\mbox{poly}(D))) A^{2mD}.
\]
Because $\widehat{\Omega} \sim_{\sqrt{D}} \E$, we have that $\widehat{\Omega}$ is $ZD$-transverse to $W$ -- see Remark \ref{rem:1}.

Recall that $\widehat{\Omega} = \Omega \cap \xi \cB$. We now fix $\xi > ZD$. Then $\Omega$ is $ZD$-transverse to $W$, by Lemma \ref{lem:stability}. We note that $ZD = O( \exp(\mbox{poly}(D))) A^{2mD}$.

We take $R_0$ in Proposition \ref{prop:tech1} of the form $R_0 = \exp(\poly(D) \log(A))$ satisfying $R_0 \geq ZD$. This completes the proof of Proposition \ref{prop:tech1}.

\section{Main Extension Theorem for finite sets}\label{sec:statement_mt}
In the previous sections we proved the main technical results, Propositions \ref{prop:tech1} and \ref{prop:tech2}.

We return to the task of proving the main theorems from the introduction. We first state Theorem \ref{thm:sharpfinitenessprinciple}, our extension theorem for finite $E \subseteq \R^n$. We develop additional analytical tools in the next few sections. We prove Theorem \ref{thm:sharpfinitenessprinciple} in Section \ref{subsec:mainproofs1}. and we prove Theorems \ref{thm: new c sharp} and \ref{thm: lin op} from the introduction in Section \ref{subsec:mainproofs2}.

Given a set $E\subseteq\R^n$, function $f:E\rightarrow \R$, integer $k^\# \geq 1$, and $M  > 0$, we consider the following hypothesis on $f$: 
\begin{equation}\label{eqn:FH}
\cF\cH(k^\#, M) \begin{cases}
\text{For all }  S\subseteq E \text{ with } \#(S) \le k^\#\\
\quad \text{there exists } F^S \in C^{m-1,1}(\R^n)\\
\qquad \text{with } F^S = f \text{ on } S \text{ and } \|F^S\|_{C^{m-1,1}(\R^n)}\le M.
\end{cases}
\end{equation}
We refer to $\cF\cH(k^\#,M)$ as a \emph{finiteness hypothesis} on $f$ with \emph{finiteness constant} $k^\#$ and \emph{finiteness norm} $M$. 

For $E$ finite, let $C(E)$ denote the space of all real-valued functions on $E$.

\begin{thm}\label{thm:sharpfinitenessprinciple}
For $m,n\ge 1$, there exist constants $C^\#\ge 1$ and $k^\#\in\N$ with $C^\# = O(\exp(\poly(D)))$ and $k^\# = O(\exp(\poly(D)))$ such that the following holds. Let $E\subseteq \R^n$ be finite. 

\noindent (A) If $f \in C(E) $ satisfies $\cF\cH(k^\#,M)$ then $\| f \|_{C^{m-1,1}(E)} \leq C^\# M$.

\noindent (B) There exists a linear map $T:C(E)\rightarrow C^{m-1,1}(\R^n)$ satisfying that $Tf = f$ on $E$ and $\|Tf\|_{C^{m-1,1}(\R^n)}\le C^\# \| f \|_{C^{m-1,1}(E)}$ for all $f \in C(E)$.
\end{thm}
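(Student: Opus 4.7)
The plan is to prove Theorem \ref{thm:sharpfinitenessprinciple} by induction on an integer-valued complexity parameter $\cC(E)$ attached to the finite set $E \subseteq \R^n$, following the coordinate-free strategy from \cite{coordinateFree} but with explicit quantitative control. For each $x \in E$ I define the convex set $\sigma(x) = \{J_x \varphi : \varphi|_E = 0,\ \|\varphi\| \leq 1\} \subseteq \cR_x$, together with a family of rescaled sets $\tau_{x,\delta}\sigma(x)$. Whitney convexity of $\sigma(x)$ (which holds with a controlled Whitney coefficient; this can be shown via the Bombieri inequality of Lemma \ref{lem:bombieri} and the definition of the $C^{m-1,1}$ seminorm) allows me to apply Proposition \ref{prop:tech1} at each scale to obtain a DTI subspace $V \subseteq \cR_x$ to which $\tau_{x,\delta}\sigma(x)$ is $R_0$-transverse, with $R_0 = \exp(\poly(D))$. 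The complexity $\cC(E)$ is then defined as the supremal length of a sequence of scales $\delta_1 > \delta_2 > \dots > \delta_L$ and DTI subspaces $V_\ell$ witnessing transversality at $\delta_\ell$ but failing it at the next scale, for some $x \in E$. Proposition \ref{prop:tech2} then gives $\cC(E) \leq 4mD^2$ uniformly in $E$.

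The base case $\cC(E) = 0$ forces $E = \emptyset$ (or a singleton after minor adjustments) and is trivial. For the inductive step, I fix $E$ with $\cC(E) = L_0 \geq 1$, a ball $B_0 \supseteq E$ with $\diam(B_0) = \diam(E)$, and produce a Whitney cover $\cW$ (in the sense of Definition \ref{defn:whit_cover}) of $B_0$ such that $\cC(E \cap \frac{6}{5}B) < L_0$ for each $B \in \cW$ while $\cC(E \cap \lambda B) = L_0$ for a somewhat larger dilate. The inductive hypothesis produces, for each $B \in \cW$, a local interpolant $F_B \in C^{m-1,1}(\R^n)$ with $F_B = f$ on $E \cap \frac{6}{5}B$ and $\|F_B\| \leq M_{L_0-1}$. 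To apply the gluing Lemma \ref{lem:glue} I must impose the compatibility condition that $J_{x_B} F_B - J_{x_{B'}}F_{B'}$ is small in the $|\cdot|_{x_B, \diam(B)}$-norm whenever $\frac{6}{5}B \cap \frac{6}{5}B' \neq \emptyset$. I enforce this by choosing a single DTI subspace $V$ and a base jet $P_0 \in \cP$ (determined by auxiliary convex sets $\Gamma_\ell(x,f,M)$ that package the finiteness hypothesis $\cF\cH(k^\#,M)$), and correcting each $F_B$ by an additive $C^{m-1,1}$ function whose jet at $x_B$ lies in $P_0 + V$; the transversality of $\sigma(x)$ to $V$ together with the finiteness hypothesis guarantees such corrections exist with controlled norm. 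Lemma \ref{lem:glue} then produces $F = \sum_B \theta_B F_B$ with $\|F\|_{C^{m-1,1}(B_0)} \leq \widetilde{C} M_{L_0 - 1}$ for a controlled constant $\widetilde{C}$, after which Lemma \ref{lem:dom_ext} extends $F$ to all of $\R^n$. Iterating through all $\cC(E) \leq 4mD^2$ induction steps multiplies the norm by $\widetilde{C}^{4mD^2} = \exp(\poly(D))$, establishing part (A) with the claimed $C^\#$; a similar bookkeeping, choosing $k^\#$ large enough for the finiteness hypothesis to produce all required local solutions, yields the stated $k^\# = \exp(\poly(D))$.

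For part (B), I need every construction above to depend linearly on $f$. The local interpolants $F_B$ can be chosen linearly in $f$ by invoking the inductive linearity hypothesis; the Whitney cover $\cW$, partition of unity $\{\theta_B\}$, choice of DTI subspace $V$ at the relevant scales, and the gluing $F = \sum_B \theta_B F_B$ are all manifestly linear once $\cW$ and $V$ are fixed (they depend only on $E$, not on $f$). The delicate step is the compatibility correction: the base jet $P_0$ and the perturbations enforcing $J_{x_B}F_B \in P_0 + V$ must be produced by solving an auxiliary linear extension problem on a jet bundle in a way that depends linearly on $f$. This will use techniques borrowed from \cite{F2} (developed in Section \ref{sec:MLS}), namely a linear selection lemma for families of convex sets parametrized by $f$, applied to the $\Gamma_\ell(x,f,M)$ sets.

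The main obstacle I expect is verifying that the complexity of the subsets arising from the Whitney cover is strictly smaller than that of $E$ (so the induction actually descends), while simultaneously ensuring the cover has good geometry and the right covering properties at the scale where $V$ is transverse to $\sigma(x)$. This requires a careful scale selection: one must identify the ``stopping scale'' at which transversality first breaks for the DTI label of highest priority and design $\cW$ from balls of that scale. A secondary obstacle is the linear extension of the correction procedure in part (B), which forces one to work with the whole affine family $P_0 + V$ rather than a single jet and to verify that the resulting linear operator respects the norm bound uniformly; this is where the tools of Section \ref{sec:MLS} become essential.
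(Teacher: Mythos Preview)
Your proposal is correct and follows essentially the same approach as the paper: induction on a complexity parameter bounded by $4mD^2$ via Proposition~\ref{prop:tech2}, with the inductive step carried out through a Whitney cover, a DTI subspace $V$ supplied by Proposition~\ref{prop:tech1}, local interpolants corrected so their jets lie in a common coset $P_0+V$, and gluing via Lemma~\ref{lem:glue}; part~(B) is handled by the linear-selection machinery of Section~\ref{sec:MLS}. The paper packages the induction slightly differently---as a Local Main Lemma (Lemma~\ref{lml}) indexed by a pair $(f,P_0)$ with $P_0\in\Gamma_{\ell^\#}(x_0,f,M)$, and with complexity defined as $\cC(E\mid B)$ relative to a ball rather than $\cC(E\cap B)$---but the architecture and the obstacles you identify are exactly those the paper addresses.
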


\section{The Basic Convex Sets}\label{sec:basicconv}

In this section we introduce indexed families of convex subsets of $\Po$ that lie at the heart of the proof of Theorem \ref{thm:sharpfinitenessprinciple}.

Below, the seminorm of $\varphi \in C^{m-1,1}(\R^n)$ is denoted by $\| \varphi \| := \| \varphi \|_{C^{m-1,1}(\R^n)}$.

Fix a  finite set $E \subseteq \R^n$ and function $f : E \rightarrow \R$.

Given $S \subseteq E$, $x \in \R^n$, and $M > 0$, let
\begin{equation}\label{eqn:gamma_S}
\begin{aligned}
&\sigma_S(x) := \{ J_x \varphi : \varphi \in C^{m-1,1}(\R^n), \; \| \varphi \| \leq 1, \; \varphi = 0 \mbox{ on } S \},\\
&\Gamma_S(x,f,M) := \{ J_x F : F \in C^{m-1,1}(\R^n),\; \| F \| \leq M,  \; F = f \mbox{ on } S \}.
\end{aligned}
\end{equation}
Note that $\sigma_S(x)$ is a symmetric convex set in $\Po$, while $\Gamma_S(x,f,M)$ is merely convex. By a compactness argument using the Arzela-Ascoli theorem, we see that $\sigma_S(x)$, $\Gamma_S(x,f,M)$ are closed. When $S = E$, we abbreviate the notation by setting $\sigma(x) := \sigma_E(x)$ and $\Gamma(x,f,M) := \Gamma_E(x,f,M)$. 

We define variants of the above convex sets indexed by an integer parameter $\ell$ rather than a subset $S \subseteq E$. Given $x \in \R^n$ and $\ell \geq 0$, let
\[
\sigma_\ell(x) := \bigcap_{\substack{S\subseteq E \\ \#(S) \leq (D+1)^\ell}} \sigma_S(x).
\]
Given also $M >0 $, let
\[
\Gamma_\ell(x,f,M) := \bigcap_{\substack{ S \subseteq E \\ \#(S) \leq (D+1)^\ell}} \Gamma_S(x,f,M).
\]
A more explicit description of $\Gamma_\ell(x,f,M)$ is given by:
\begin{equation}\label{eqn:gamma_ell}
\begin{aligned}
\Gamma_\ell(x,f,M) = \{ P \in \Po : \; & \forall S \subseteq E, \; \#(S) \leq (D+1)^\ell, \; \exists F^S \in C^{m-1,1}(\R^n) \\
&  \mbox{s.t.} \; F^S = f \mbox{ on } S, \; J_{x} F^S = P, \; \| F^S \| \leq M \}.
\end{aligned}
\end{equation}
Evidently, $\sigma_\ell(x)$ is a closed, symmetric, convex set, whereas $\Gamma_\ell(x,f,M)$ is closed and convex. 

The $\sigma$-sets arise from the $\Gamma$-sets by taking $f\equiv 0|_E$ and $M=1$; that is,
\[
\begin{aligned}
&\sigma_S(x) = \Gamma_S(x, 0|_E, 1), \\
&\sigma_\ell(x) = \Gamma_\ell(x,0|_E,1).
\end{aligned}
\]

Next we state the important properties of these sets that will be used in the ensuing proof of Theorem \ref{thm:sharpfinitenessprinciple}. Many of these results are borrowed from \cite{coordinateFree}. In many cases we point the reader to \cite{coordinateFree} for proofs.

The following standard result on convex sets is a key ingredient in our proofs. See Lemma \ref{lem: sharp helly} for a related version.

\begin{lemma}[Helly’s Theorem (see, e.g., \cite{Web94})] Let $\mathcal{J}$ be a finite family of convex subsets of $\R^d$, any $d+1$ of which have non–empty intersection. Then the whole family $\mathcal{J}$ has
non–empty intersection.
\end{lemma}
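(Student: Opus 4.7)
The plan is to prove Helly's Theorem by induction on the cardinality $N$ of the family $\mathcal{J} = \{J_1, \dots, J_N\}$, using Radon's theorem as the crucial geometric input. The base case $N = d+1$ is immediate from the hypothesis. For the inductive step, assume the conclusion holds for families of size $N-1$, and suppose $N \geq d+2$. For each index $i$, the subfamily $\mathcal{J} \setminus \{J_i\}$ has $N-1$ members, any $d+1$ of which still have nonempty intersection (since they are $d+1$ members of $\mathcal{J}$). By the inductive hypothesis, for each $i$ we may choose a point $p_i \in \bigcap_{j \neq i} J_j$.

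Now I have $N \geq d+2$ points $p_1, \dots, p_N \in \R^d$. The next step is to invoke Radon's theorem, which states that any $d+2$ points in $\R^d$ admit a partition into two disjoint index sets $A \sqcup B$ such that the convex hulls of $\{p_i : i \in A\}$ and $\{p_j : j \in B\}$ intersect. (This follows from considering an affine dependence $\sum c_i p_i = 0$, $\sum c_i = 0$, among any $d+2$ points and separating indices by the sign of $c_i$.) Apply Radon to $p_1, \dots, p_{d+2}$ to produce a partition $\{1,\dots,d+2\} = A \sqcup B$ and a common point $q \in \mathrm{conv}\{p_i : i \in A\} \cap \mathrm{conv}\{p_j : j \in B\}$.

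It remains to verify $q \in \bigcap_{i=1}^N J_i$. Fix any $i \in \{1, \dots, N\}$. Since $A$ and $B$ are disjoint, $i$ fails to lie in at least one of them; say $i \notin A$ (the other case is symmetric). Then for every $k \in A$ we have $k \neq i$, so $p_k \in \bigcap_{j \neq k} J_j \subseteq J_i$. Convexity of $J_i$ gives $\mathrm{conv}\{p_k : k \in A\} \subseteq J_i$, and hence $q \in J_i$. As $i$ was arbitrary, $q \in \bigcap_i J_i$, completing the induction.

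The only genuine content is Radon's theorem; the inductive packaging is routine. Accordingly, the main obstacle in writing a self-contained proof would be giving the short linear-algebra derivation of Radon's theorem from the existence of a nontrivial affine dependence among any $d+2$ vectors in $\R^d$. Since the paper cites \cite{Web94} for Helly's theorem, none of this needs to be reproduced in detail here.
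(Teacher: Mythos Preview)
Your proof is correct and follows the standard Radon-based induction. The paper does not supply its own proof of Helly's Theorem; it merely cites \cite{Web94}, so there is nothing to compare against beyond noting that your argument is the classical one found in such references.
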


\begin{lemma}\label{lem:sigma_gamma_rel}
For any $\ell\geq 0$ and $M_1,M_2>0$,
\begin{align*}
    &\Gamma_\ell(x,f,M_1)+ M_2 \cdot \sigma_\ell(x)\subseteq \Gamma_{\ell}(x,f,M_1+M_2), \quad \text{and}\\
    &\Gamma_\ell(x,f,M_1) - \Gamma_{\ell}(x,f,M_2)\subseteq (M_1+M_2) \sigma_\ell(x).
\end{align*}
Similarly, for any $S \subseteq E$ and $M_1,M_2>0$,
\begin{align*}
    &\Gamma_S(x,f,M_1)+ M_2 \cdot \sigma_S(x)\subseteq \Gamma_{S}(x,f,M_1+M_2), \quad \text{and}\\
    &\Gamma_S(x,f,M_1) - \Gamma_{S}(x,f,M_2)\subseteq (M_1+M_2) \sigma_S(x).
\end{align*}
\end{lemma}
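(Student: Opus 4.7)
The plan is to first establish both inclusions for the sets $\sigma_S(x)$ and $\Gamma_S(x,f,M)$ indexed by a single subset $S \subseteq E$ via direct function-level arguments, and then to deduce the $\ell$-indexed statements by intersecting over all $S$ with $\#(S) \leq (D+1)^\ell$.

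For the first inclusion in the $S$-indexed case, I would pick $P \in \Gamma_S(x,f,M_1)$ and $Q \in M_2 \sigma_S(x)$, and unpack the definitions \eqref{eqn:gamma_S}: there exist $F,\varphi \in C^{m-1,1}(\R^n)$ with $F|_S = f$, $\|F\|\leq M_1$, $J_x F = P$, and $\varphi|_S = 0$, $\|\varphi\| \leq 1$, $M_2 J_x\varphi = Q$. Setting $G := F + M_2 \varphi$, linearity of the jet map gives $J_x G = P + Q$, the support condition on $\varphi$ gives $G|_S = f$, and the triangle inequality for the seminorm gives $\|G\| \leq M_1 + M_2$. Hence $P + Q \in \Gamma_S(x,f,M_1+M_2)$.

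For the second inclusion in the $S$-indexed case, I would pick $P_1 \in \Gamma_S(x,f,M_1)$ and $P_2 \in \Gamma_S(x,f,M_2)$ with witnesses $F_1, F_2$. Then $\varphi := F_1 - F_2$ satisfies $\varphi|_S = 0$, $J_x \varphi = P_1 - P_2$, and $\|\varphi\| \leq M_1 + M_2$ by the triangle inequality. Dividing by $M_1+M_2$ exhibits $(M_1+M_2)^{-1}(P_1-P_2)$ as an element of $\sigma_S(x)$, yielding the claim.

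The $\ell$-indexed statements then follow purely formally. For the first, if $P \in \Gamma_\ell(x,f,M_1)$ and $Q \in M_2 \sigma_\ell(x)$, then for every $S \subseteq E$ with $\#(S)\leq (D+1)^\ell$ we have $P \in \Gamma_S(x,f,M_1)$ and $Q \in M_2 \sigma_S(x)$; the $S$-inclusion already proved gives $P + Q \in \Gamma_S(x,f,M_1+M_2)$, and intersecting over such $S$ yields $P + Q \in \Gamma_\ell(x,f,M_1+M_2)$. For the second, the same intersection argument applied to the $S$-level difference inclusion gives $P_1 - P_2 \in \bigcap_S (M_1+M_2)\sigma_S(x) = (M_1+M_2)\sigma_\ell(x)$. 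There is no real obstacle here; the entire content of the lemma is the linearity of $J_x$ together with the subadditivity of the $C^{m-1,1}$-seminorm, and the passage from $S$ to $\ell$ is purely set-theoretic.
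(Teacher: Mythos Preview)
Your proof is correct and follows exactly the approach the paper indicates: the paper's own proof consists of the single sentence ``The proof is immediate from the definitions and the triangle inequality in $C^{m-1,1}(\R^n)$,'' and your argument is simply the unpacking of that sentence. The reduction from the $\ell$-indexed case to the $S$-indexed case via intersection is the natural (and only) route.
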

\begin{proof}
The proof is immediate from the definitions and the triangle inequality in $C^{m-1,1}(\R^n)$.
\end{proof}

\begin{rmk}\label{rmk: 2.5 cf}
Lemma \ref{lem:sigma_gamma_rel} implies the following property: If $\Gamma_\ell(x,f,M/2) \neq \emptyset$ then $P_x + \frac{M}{2} \cdot \sigma_\ell(x)\subseteq \Gamma_\ell(x,f,M)\subseteq P_x+2M\cdot \sigma_\ell(x)$ for any $P_x \in \Gamma_\ell(x,f,M/2)$. Thus, the convex set $\Gamma_\ell(x,f,M)$ is essentially a translate of a scalar multiple of the symmetric convex set $\sigma_\ell(x)$.

Similarly, the convex set $\Gamma_S(x,f,M)$ is essentially a translate of a scalar multiple of the symmetric convex set $\sigma_S(x)$.
\end{rmk}

\begin{proposition}[cf. Lemma 2.11 of \cite{coordinateFree}]\label{prop:WC}
There exists a controlled constant $A_0 \geq 1$ such that for any $S \subseteq E$ and $z \in \R^n$, the set $\sigma_S(z) \subseteq \Po$ is $A_0$-Whitney convex at $z$.
\end{proposition}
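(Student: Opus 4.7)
The plan is to fix $z \in \R^n$, $S \subseteq E$, a scale $\delta > 0$, and polynomials $P \in \sigma_S(z) \cap \cB_{z,\delta}$ and $Q \in \cB_{z,\delta}$, and then explicitly construct a function $\psi \in C^{m-1,1}(\R^n)$ witnessing that $P \odot_z Q \in A_0 \delta^m \sigma_S(z)$ for a controlled constant $A_0$. By definition of $\sigma_S(z)$, the membership $P \in \sigma_S(z)$ yields a function $\varphi \in C^{m-1,1}(\R^n)$ with $\|\varphi\| \leq 1$, $\varphi \equiv 0$ on $S$, and $J_z \varphi = P$. Using Lemma \ref{lem:theta_bound} applied to the ball $B(z,\delta)$ with $r = 1/2$, let $\theta \in C^m(\R^n)$ be a cutoff supported in $B(z,\delta)$, equal to $1$ on $B(z,\delta/2)$, satisfying $|\partial^\alpha \theta| \leq C\,\delta^{-|\alpha|}$ for $|\alpha| \leq m$, with $C$ controlled. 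Define
\[
\psi(x) := \varphi(x)\,Q(x)\,\theta(x).
\]

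The first two verifications are immediate. First, $\psi \equiv 0$ on $S$ because $\varphi \equiv 0$ on $S$. Second, since $\theta \equiv 1$ near $z$, we have $J_z \theta = 1$, and so $J_z(Q\theta) = J_z Q \odot_z J_z \theta = Q \odot_z 1 = Q$; hence $J_z \psi = J_z \varphi \odot_z J_z(Q\theta) = P \odot_z Q$.

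The main work is estimating $\|\psi\|_{C^{m-1,1}(\R^n)}$. Since $\psi$ is supported in $B(z,\delta)$ and $\partial^\beta \psi$ is Lipschitz for $|\beta| = m-1$, the seminorm is controlled by $\sup_{|\beta|=m} \|\partial^\beta \psi\|_{L^\infty}$. I will show pointwise a.e.\ that $|\partial^\beta \psi(x)| \leq C' \delta^m$ on $B(z,\delta)$ for $|\beta|=m$, with $C'$ controlled. Apply the Leibniz rule
\[
\partial^\beta \psi = \sum_{\beta_1+\beta_2+\beta_3 = \beta} c_{\beta_1\beta_2\beta_3}\,\partial^{\beta_1}\varphi \cdot \partial^{\beta_2}Q \cdot \partial^{\beta_3}\theta,
\]
noting $\partial^{\beta_2}Q \equiv 0$ for $|\beta_2| \geq m$. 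The condition $|P|_{z,\delta}\leq 1$ gives $|\partial^\gamma P(x)| \lesssim \delta^{m-|\gamma|}$ on $B(z,\delta)$ for $|\gamma|\leq m-1$ (by Taylor-expanding $\partial^\gamma P$ around $z$, using $|\partial^\gamma P(z)| \leq \gamma!\,\delta^{m-|\gamma|}$), and likewise $|\partial^{\beta_2}Q(x)| \lesssim \delta^{m-|\beta_2|}$. Taylor's Theorem (Proposition \ref{lem:TT}) combined with $\|\varphi\|\leq 1$ then yields $|\partial^{\beta_1}\varphi(x)| \lesssim \delta^{m-|\beta_1|}$ for $|\beta_1|\leq m-1$ and $|x-z|\leq \delta$; and $|\partial^{\beta_1}\varphi| \leq \|\varphi\| \leq 1$ a.e.\ for $|\beta_1|=m$ (since $\|\varphi\|$ is the Lipschitz constant of the $(m-1)$-th derivatives). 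Multiplying these three factors for each Leibniz term: when $|\beta_1|<m$, the product is $\lesssim \delta^{m-|\beta_1|}\,\delta^{m-|\beta_2|}\,\delta^{-|\beta_3|} = \delta^m$; and when $|\beta_1|=m$, necessarily $\beta_2=\beta_3=0$ and the product is $\lesssim 1\cdot \delta^m\cdot 1 = \delta^m$. Summing the $O(2^m)$ terms yields $|\partial^\beta\psi| \leq C'\delta^m$ a.e.\ on $B(z,\delta)$, whence $\|\psi\| \leq A_0\,\delta^m$ with $A_0$ controlled.

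Therefore $A_0^{-1}\delta^{-m}\psi$ belongs to the defining family for $\sigma_S(z)$, which gives $P \odot_z Q \in A_0\,\delta^m \sigma_S(z)$, as required. The only minor technical point is tracking that every auxiliary constant (the Leibniz factor $2^m$, the cutoff constant $C_{\theta,1}(1/2)=9(4m)^{4m}$, the Taylor-remainder constant $C_T$, and Bombieri-type polynomial constants) is itself controlled in the sense of Section \ref{sec:prelim}; this follows because all such constants are bounded by $\exp(\poly(m,n)) \leq \exp(\poly(D))$.
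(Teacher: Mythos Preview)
Your proposal is correct and follows essentially the same approach as the paper: both construct $\psi = \varphi \cdot Q \cdot \theta$ with $\theta$ a cutoff localizing near $z$, verify $\psi|_S = 0$ and $J_z\psi = P\odot_z Q$, and then estimate $\|\psi\|_{C^{m-1,1}}$ using Taylor's theorem together with the derivative bounds on $\theta$. The only cosmetic difference is that the paper (via \cite{coordinateFree}) packages the seminorm estimate through the jet-product bound Lemma~\ref{lem:jetprodbd}, whereas you carry out the equivalent Leibniz computation directly and pass through the a.e.\ $m$th-order derivatives; both routes yield controlled constants.
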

\begin{proof}
We follow the proof of Lemma 2.11 in \cite{coordinateFree}, which gives the desired result for a constant $A_0$ determined by $m$, $n$. The proof uses the existence of a cutoff function $\theta \in C^{m-1,1}(\R^n)$, with $\supp(\theta) \subseteq B(z, \delta/2)$, $\theta \equiv 1$ on a neighborhood of $z$, and $\| \theta \| \leq C_\theta \delta^{-m}$. Following the proof in \cite{coordinateFree}, we learn that $A_0$ is bounded by the product of a finite number (independent of $m,n$) of the constants $C_\theta$, $C$ in Lemma 2.2 of \cite{coordinateFree}, and $C_T$ in Taylor's theorem. By Proposition \ref{lem:TT} and Lemmas \ref{lem:jetprodbd}, \ref{lem:theta_bound} of the present paper, these constants may be taken to be controlled constants. Thus, $A_0$ is a controlled constant.
\end{proof}

Our next result relates the finiteness hypothesis $\cF\cH(k^\#,M)$ on $f$ (see \eqref{eqn:FH}) to the convex sets $\Gamma_\ell(x,f,M)$, and establishes a ``quasicontinuity property'' of the indexed families $\Gamma_\ell$ and $\sigma_\ell$.

\begin{lemma}[cf. Lemma 2.6 in \cite{coordinateFree}, and Lemmas 10.1, 10.2 in \cite{F1}]\label{lem:gamma_trans}
If $x\in\R^n$, $(D+1)^{\ell+1}\le k^\#$, and $M > 0$, then
\[
f \mbox{ satisfies } \cF\cH(k^\#,M) \Longrightarrow \Gamma_{\ell}(x,f,M)\ne \emptyset.
\]
Furthermore, if $x,y\in\R^n$, $\ell \ge 1$, $\delta\ge|x-y|$, and $M>0$, then
\begin{equation}\label{eqn:quasicont}
\begin{aligned}
    \Gamma_\ell(x,f,M) &\subseteq \Gamma_{\ell-1}(y,f,M) + C_T M \cdot \cB_{x,\delta}\\
    \sigma_\ell(x)&\subseteq \sigma_{\ell-1}(y)+C_T\cdot\cB_{x,\delta},
    \end{aligned}
\end{equation}
where $\cB_{x,\delta}$ is the closed unit ball in the $|\cdot |_{x,\delta}$-norm on $\Po$.
\end{lemma}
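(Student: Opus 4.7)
The plan is to deduce both assertions from Helly's theorem applied in the $D$-dimensional space $\Po$, combined with the Taylor estimate of Proposition \ref{lem:TT}. Since $E$ is finite, every intersection we consider will be over a finite family, so Helly's theorem as stated in the excerpt applies.

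For the existence statement, I would write $\Gamma_\ell(x,f,M) = \bigcap_{S: \#(S)\le (D+1)^\ell} \Gamma_S(x,f,M)$ as an intersection of closed convex subsets of $\Po$ and invoke Helly's theorem; it then suffices to verify that any $D+1$ of the sets $\Gamma_{S_i}(x,f,M)$, with $\#(S_i)\le (D+1)^\ell$, share a common jet. Set $S:=S_1\cup\cdots\cup S_{D+1}$, so $\#(S)\le (D+1)^{\ell+1}\le k^\#$; the finiteness hypothesis $\cF\cH(k^\#,M)$ then produces $F^S\in C^{m-1,1}(\R^n)$ with $F^S=f$ on $S$ and $\|F^S\|\le M$, and $J_x F^S\in\Gamma_{S_i}(x,f,M)$ for every $i$.

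For the quasicontinuity statement, fix $P\in\Gamma_\ell(x,f,M)$ and $\delta\ge|x-y|$. The goal is to produce $Q\in\Gamma_{\ell-1}(y,f,M)$ with $|P-Q|_{x,\delta}\le C_T M$. For each $S\subseteq E$ with $\#(S)\le (D+1)^{\ell-1}$, I would introduce the closed convex auxiliary set
\[
K_S \;:=\; \Gamma_S(y,f,M)\,\cap\,\bigl(P + C_T M\cdot\cB_{x,\delta}\bigr),
\]
so that an element of $\bigcap_S K_S$ is exactly the desired $Q$. Helly's theorem reduces this to the claim that any $D+1$ of the $K_{S_i}$ have a common point. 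Taking the union $S:=\bigcup_{i=1}^{D+1} S_i$ of size at most $(D+1)^\ell$, the hypothesis $P\in\Gamma_\ell(x,f,M)$ and the explicit description in \eqref{eqn:gamma_ell} furnish $F^S\in C^{m-1,1}(\R^n)$ with $J_x F^S=P$, $F^S=f$ on $S$, and $\|F^S\|\le M$. Then $Q:=J_y F^S$ lies in each $\Gamma_{S_i}(y,f,M)$, and Taylor's theorem (Proposition \ref{lem:TT}) yields $|P-Q|_{x,\delta}=|J_x F^S - J_y F^S|_{x,\delta}\le C_T\|F^S\|\le C_T M$, so $Q\in\bigcap_i K_{S_i}$ as required.

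The $\sigma$-version is obtained by specializing the $\Gamma$-version to $f\equiv 0$ on $E$ and $M=1$, using the identity $\sigma_\ell(x)=\Gamma_\ell(x,0|_E,1)$. The only real content of the proof is the clean setup of the auxiliary convex sets $K_S$ so that Helly's theorem can be applied; once this is in place, the Helly condition reduces to a single application of Taylor's theorem to the extension $F^S$ supplied by the union of at most $D+1$ of the subsets, so no further analytic input is needed.
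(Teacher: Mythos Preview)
Your proposal is correct and follows essentially the same approach as the paper: both parts are deduced from Helly's theorem in the $D$-dimensional space $\Po$, with the quasicontinuity inclusion relying on Taylor's theorem (Proposition~\ref{lem:TT}) applied to the function $F^S$ associated to the union of $D+1$ of the indexing sets. The paper's proof merely refers the reader to \cite{coordinateFree} for these Helly arguments, so your writeup is in fact more detailed than what appears here.
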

\begin{proof}
Note that $\Gamma_{\ell}(x,f,M)\ne \emptyset$ $\iff$ $\Gamma_{\ell}(x,f/M,1)\ne \emptyset$. Further, $f$ satisfies $\cF\cH(k^\#,M)$ $\iff$ $f/M$ satisfies $\cF\cH(k^\#) := \cF \cH(k^\#,1)$. Thus, for the first part of the lemma, we reduce matters to the case $M=1$. This result is stated in Lemma 2.6 of \cite{coordinateFree}. The proof is a straightforward application of Helly's theorem.

The second part of the lemma is stated in Lemma 2.6 of \cite{coordinateFree}. We refer the reader there for the proof, also using Helly's theorem.
\end{proof}

We define a notion of transversality in $\Po$ with respect to the $\langle \cdot, \cdot \rangle_{x,\delta}$ inner product.

\begin{defi}\label{def:trans2}
Given a closed, symmetric, convex set $\Omega \subseteq \Po$, a subspace $V \subseteq \Po$, $R\ge 1$, $x\in \R^n$, and $\delta > 0$, we say that $\Omega$ is $(x,\delta, R)$-transverse to $V$ if  (1) $\cB_{x,\delta} / V \subseteq R \cdot (\Omega \cap \cB_{x, \delta})/V$, and (2) $\Omega \cap V \subseteq R \cdot \cB_{x,\delta}$.
\end{defi}

\begin{rmk}\label{rmk:trans}
We note that $\Omega$ is $(x,\delta, R)$-transverse to $V$ if $\Omega$ is $R$-transverse to $V$ with respect to the Hilbert space structure $(\Po, \langle\cdot,\cdot\rangle_{x,\delta})$. To see this, we use the  formulation of transversality in a Hilbert space given in Corollary \ref{cor:eq_trans}.

We note that $\Omega$ is $R$-transverse to $V$ at $x$ (in the notation of Definition \ref{defn:trans1}) if and only if $\Omega$ is $(x,1,R)$-transverse to $V$. Again, see Corollary \ref{cor:eq_trans}.
\end{rmk}

\begin{lemma}[cf. Lemma 3.7 in \cite{coordinateFree}] \label{lem:tau trans}
If $\Omega$ is $(x,\delta,R)$-transverse to $V$, then the following holds.
\begin{itemize}
\item $\tau_{x,r}(\Omega)$ is $(x, \delta/r,R)$-transverse to $\tau_{x,r}(V)$.
\item If $\delta'  \in [\kappa^{-1} \delta, \kappa \delta]$ for some $\kappa \geq 1$, then $\Omega$ is $(x,\delta',\kappa^m R)$-transverse to $V$.
\end{itemize}
\end{lemma}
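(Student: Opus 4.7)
The plan is to unpack Definition \ref{def:trans2} in both directions and exploit the basic scaling identities \eqref{eqn:ball_scale} and \eqref{eqn:ball_scale_id}. Throughout, I will use that any closed symmetric convex $\Omega$ containing a nonzero point contains $0$ (by averaging $v$ and $-v$), so convexity lets us shrink elements of $\Omega$ by any factor in $[0,1]$ and stay inside $\Omega$.

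For the first bullet, I will simply apply the invertible linear map $\tau_{x,r}$ to both defining conditions of $(x,\delta,R)$-transversality. Since $\tau_{x,r}$ is linear, it commutes with taking cosets, intersections, and scalar multiples, and by \eqref{eqn:ball_scale_id} it sends $\cB_{x,\delta}$ to $\cB_{x,\delta/r}$. Condition (2), $\Omega\cap V\subseteq R\cB_{x,\delta}$, transforms directly into $\tau_{x,r}\Omega\cap\tau_{x,r}V\subseteq R\cB_{x,\delta/r}$. Condition (1), $\cB_{x,\delta}/V\subseteq R(\Omega\cap\cB_{x,\delta})/V$, transforms into $\cB_{x,\delta/r}/\tau_{x,r}V\subseteq R(\tau_{x,r}\Omega\cap\cB_{x,\delta/r})/\tau_{x,r}V$. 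This is the statement that $\tau_{x,r}(\Omega)$ is $(x,\delta/r,R)$-transverse to $\tau_{x,r}(V)$.

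For the second bullet, the key scaling input is that $\delta'\in[\kappa^{-1}\delta,\kappa\delta]$ combined with \eqref{eqn:norm_scale} (or equivalently \eqref{eqn:ball_scale}) yields the two-sided inclusion
\[
\kappa^{-m}\cB_{x,\delta}\subseteq\cB_{x,\delta'}\subseteq\kappa^{m}\cB_{x,\delta},
\]
which I will check by splitting into the cases $\delta'\ge\delta$ and $\delta'\le\delta$ and reading off the appropriate exponents from \eqref{eqn:norm_scale}. Condition (2) for the new parameters then follows at once: $\Omega\cap V\subseteq R\cB_{x,\delta}\subseteq\kappa^m R\,\cB_{x,\delta'}$.

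For condition (1), given $p\in\cB_{x,\delta'}$ I will distinguish the two cases. If $\delta'\ge\delta$, then $p/\kappa^m\in\cB_{x,\delta}$ by the inclusion above, so condition (1) of the hypothesis produces $q\in\Omega\cap\cB_{x,\delta}\subseteq\Omega\cap\cB_{x,\delta'}$ with $p/\kappa^m-Rq\in V$, i.e. $p-\kappa^m R q\in V$. If $\delta'\le\delta$, then $p\in\cB_{x,\delta'}\subseteq\cB_{x,\delta}$, so condition (1) yields $q_0\in\Omega\cap\cB_{x,\delta}$ with $p-Rq_0\in V$; setting $q:=q_0/\kappa^m$, the inclusion $\cB_{x,\delta}\subseteq\kappa^m\cB_{x,\delta'}$ puts $q$ in $\cB_{x,\delta'}$, convexity of $\Omega$ (and $0\in\Omega$) puts $q$ in $\Omega$, and $p-\kappa^m R q = p-Rq_0\in V$. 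Either way we obtain the desired coset inclusion. There is no real obstacle here — the only subtlety is remembering that convexity plus symmetry forces $0\in\Omega$, which is what makes the rescaling step $q_0\mapsto q_0/\kappa^m$ legitimate in the case $\delta'\le\delta$.
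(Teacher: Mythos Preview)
Your proof is correct and follows essentially the same approach as the paper: for the first bullet both you and the paper simply push the linear map $\tau_{x,r}$ through the two defining inclusions using $\tau_{x,r}\cB_{x,\delta}=\cB_{x,\delta/r}$, and for the second bullet both arguments rest on the two-sided comparison $\kappa^{-m}\cB_{x,\delta}\subseteq\cB_{x,\delta'}\subseteq\kappa^m\cB_{x,\delta}$. The only cosmetic difference is that the paper compresses your case analysis into the single observation that $A\cap rB\subseteq r(A\cap B)$ for symmetric convex sets and $r\ge 1$, which is exactly your ``shrink $q_0$ by $\kappa^{-m}$ and use $0\in\Omega$'' step.
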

\begin{proof}
For the first bullet point: Apply $\tau_{x,r}$ to both sides of (1) and (2) in Definition \ref{def:trans2} and use the scaling relation \eqref{eqn:ball_scale_id} which states that $\tau_{x,r} \cB_{x,\delta} = \cB_{x,\delta/r}$.

For the second bullet point: In conditions (1) and (2) in Definition \ref{def:trans2}, use the inclusions $\cB_{x,\delta} \subseteq  \max \{ 1, (\delta/\delta')^m \} \cB_{x,\delta'}$ and $\cB_{x,\delta'} \subseteq  \max \{ 1, (\delta'/\delta)^m \} \cB_{x,\delta}$ from \eqref{eqn:ball_scale}, and the property that $A \cap rB \subseteq r (A \cap B)$ if $A,B$ are symmetric convex sets and $r \geq 1$.
\end{proof}

\begin{lemma}[cf. Lemma 3.8 in \cite{coordinateFree}]\label{lem:sigmaStab}
There exists a controlled constant $0<c_1<1$ such that the following holds. Let $V\subseteq\Po$ be a subspace, $x,y\in\R^n$, $\delta>0$, and $R\ge 1$. If $\sigma_E(x)$ is $(x,\delta,R)$-transverse to $V$ and $|x-y|\le c_1 \frac{\delta}{R}$, then $\sigma_E(y)$ is $(y,\delta,8R)$-transverse to $V$.
\end{lemma}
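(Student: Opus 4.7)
The plan is to transfer transversality from $\sigma_E(x)$ to $\sigma_E(y)$ via quasicontinuity of $\sigma_E$, combining two ingredients. The first is a refined Taylor estimate
\[
|J_xF - J_yF|_{x,\delta} \leq C_T\|F\|_{C^{m-1,1}(\R^n)} \cdot \frac{|x-y|}{\delta} \qquad (\delta \geq |x-y|),
\]
which is extractable from the proof of Proposition \ref{lem:TT} since each $\beta$-term in the Taylor expansion carries $|x-y|^{m-|\beta|}$ with $m-|\beta|\geq 1$. Applied to $\varphi$ with $\|\varphi\|\leq 1$ and $\varphi|_E=0$, and using $|x-y|\leq c_1\delta/R$, this yields the symmetric inclusions $\sigma_E(x)\subseteq \sigma_E(y)+\eta\cB_{x,\delta}$ and $\sigma_E(y)\subseteq \sigma_E(x)+\eta\cB_{x,\delta}$ with $\eta:=C_T c_1/R$. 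The second ingredient is Lemma \ref{lem:poly1}, which for $|x-y|\leq c_1\delta$ and $c_1$ small makes $\cB_{x,\delta}$ and $\cB_{y,\delta}$ interchangeable up to a factor of $2$.

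For the second defining condition of $(y,\delta,8R)$-transversality, namely $\sigma_E(y)\cap V\subseteq 8R\cB_{y,\delta}$, I would fix $P\in\sigma_E(y)\cap V$ and decompose $P = Q + H$ via quasicontinuity, so that $Q\in\sigma_E(x)$ and $|H|_{x,\delta}\leq\eta$. Since $\bar P=0$ in $\Po/V$, we have $\bar Q=-\bar H\in\eta\cB_{x,\delta}/V$; applying the first defining condition of $(x,\delta,R)$-transversality to $\bar H$ produces $S\in C_Tc_1(\sigma_E(x)\cap\cB_{x,\delta})$ with $\bar S=\bar H$. Then $W:=Q+S\in V$, and by convexity of $\sigma_E(x)$, $W\in(1+C_Tc_1)\sigma_E(x)\cap V\subseteq(1+C_Tc_1)R\cB_{x,\delta}$ via the second defining condition at $x$. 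Writing $P = W - S + H$ and passing to $|\cdot|_{y,\delta}$ then gives $|P|_{y,\delta}\leq(1+O(c_1))R\leq 8R$ once $c_1$ is small.

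For the first defining condition, $\cB_{y,\delta}/V\subseteq 8R(\sigma_E(y)\cap\cB_{y,\delta})/V$, I would run a Neumann-series iteration. Starting from $P_0\in\cB_{y,\delta}$, the plan is to inductively construct $U_k\in 2R\rho^k\sigma_E(y)$ with $|U_k|_{y,\delta}\leq 6R\rho^k$ and $P_{k+1}\in\rho^{k+1}\cB_{y,\delta}$ (where $\rho:=4C_Tc_1$), maintaining $\bar{P_0}=\sum_{j<k}\bar{U_j}+\bar{P_k}$. To step from $P_k$ to $U_k,P_{k+1}$, I would convert $P_k\in\rho^k\cB_{y,\delta}$ to $2\rho^k\cB_{x,\delta}$, apply the first defining condition at $x$ to obtain a representative in $2R\rho^k(\sigma_E(x)\cap\cB_{x,\delta})$, and split it via $\sigma_E(x)\subseteq\sigma_E(y)+\eta\cB_{x,\delta}$ into a $\sigma_E(y)$-component $U_k$ and an $\eta\cB_{x,\delta}$-remainder $P_{k+1}$. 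For $c_1$ small enough that $\rho\leq 1/4$, the series $U:=\sum_k U_k$ converges in the closed convex symmetric set $(2R/(1-\rho))\sigma_E(y)$ with $|U|_{y,\delta}\leq 6R/(1-\rho)\leq 8R$; thus $U/(8R)\in\sigma_E(y)\cap\cB_{y,\delta}$ and $\bar{P_0}=\bar U$, as needed.

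The hard part will be ensuring that the geometric rate $\rho$ of the iteration is an absolute constant strictly less than $1$, crucially uniform in $R$. This is precisely where the refined Taylor estimate with the factor $|x-y|/\delta$ is indispensable: the $R$ arising from the first transversality condition at $x$ exactly cancels the $1/R$ in $\eta=C_Tc_1/R$, producing $\rho=4C_Tc_1$ rather than $\rho=O(Rc_1)$. The absolute constant $c_1$ can then be selected small enough to enforce $\rho\leq 1/4$ along with the small-constant bounds required in the direct estimate for the second defining condition.
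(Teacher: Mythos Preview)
Your proposal is correct and matches the approach the paper cites from \cite{coordinateFree}: the refined Taylor bound with the extra $|x-y|/\delta$ factor (so that $\eta R = C_T c_1$ is independent of $R$) together with the ball comparison from Lemma~\ref{lem:poly1} are precisely the two ingredients the paper isolates, and your smallness requirement $\rho=4C_Tc_1\le 1/4$ aligns with the paper's stated condition $c_1<1/(4C_T)$. The paper gives no further details beyond these ingredients, and your explicit treatment---direct estimate for the second transversality condition, Neumann iteration for the first---is a standard and correct implementation.
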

\begin{proof}
The proof of Lemma 3.8 in \cite{coordinateFree} gives the desired result for a constant $c_1$ determined by $m$, $n$. This proof uses two conditions on $c_1$: First, that $c_1 < \frac{1}{4 C_T}$, with $C_T$ the controlled constant in Taylor's theorem. Second, the following claim is used: If  $|x - y | \leq c_1 \delta$ and $c_1$ is sufficiently small then $\frac{9}{10}\cB_{x,\delta} \subseteq \cB_{y,\delta} \subseteq \frac{10}{9} \cB_{x,\delta}$. To verify this claim, we apply Lemma \ref{lem:poly1}. We learn that if $c_1 < \frac{1}{9 C_{\ref{lem:poly1}}}$, with $C_{\ref{lem:poly1}}$ the controlled constant $C$ in Lemma \ref{lem:poly1}, then $|P|_{x,\delta}$ and $|P|_{y,\delta}$ differ by a factor of at most $\frac{10}{9}$ for $|x-y| \leq c_1 \delta$. This implies the desired inclusions for the unit balls $\cB_{x,\delta}$ and $\cB_{y,\delta}$. We choose the controlled constant $c_1 < \min \{ \frac{1}{4 C_T}, \frac{1}{9 C_{\ref{lem:poly1}}}\}$ so as to satisfy the conditions for this proof.
\end{proof}

\begin{lemma}[cf. Lemma 2.9 of \cite{coordinateFree}]\label{lem:sigma_inc}
There exists a controlled constant $C^0 \ge 1$ so that, for any ball $B\subseteq \R^n$ and $z \in \frac{1}{2} B$, we have
\[
\sigma_{E \cap B}(z)\cap \B_{z,\diam(B)} \subseteq C^0 \cdot \sigma_E(z).
\]
\end{lemma}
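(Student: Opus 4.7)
The plan is to prove the inclusion by explicitly constructing a witness $\tilde\varphi$ for each $P \in \sigma_{E\cap B}(z) \cap \cB_{z,\diam(B)}$. Fix such a $P$. By definition there exists $\varphi \in C^{m-1,1}(\R^n)$ with $\|\varphi\| \leq 1$, $\varphi|_{E\cap B} = 0$, and $J_z\varphi = P$. Apply Lemma \ref{lem:theta_bound} (say with $r = 1/4$) to obtain a cutoff $\theta \in C^m(\R^n)$ supported in $B$, with $\theta \equiv 1$ on $\frac{3}{4}B$ and $\|\partial^\alpha \theta\|_{L^\infty} \leq C \diam(B)^{-|\alpha|}$ for $|\alpha|\le m$, where $C$ is a controlled constant. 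Since $z\in \frac12 B \subseteq \frac{3}{4}B$, $\theta\equiv 1$ near $z$, so $J_z\theta = 1$. Set $\tilde\varphi := \theta\cdot\varphi$.

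Then $\tilde\varphi|_E = 0$: off $B$ it vanishes because $\theta\equiv 0$ there, and on $E\cap B$ it vanishes because $\varphi\equiv 0$ there. Moreover, $J_z\tilde\varphi = J_z\theta \odot_z J_z\varphi = 1 \odot_z P = P$. Thus, once we prove $\|\tilde\varphi\| \leq C^0$ for a controlled constant $C^0$, we conclude $P \in C^0 \sigma_E(z)$, as required.

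The main estimate to verify is $\|\theta\varphi\|_{C^{m-1,1}(\R^n)} \le C^0$. Write $\delta = \diam(B)$. From $|P|_{z,\delta} \le 1$, i.e.\ $|\partial^\beta \varphi(z)| \le (\beta!)\delta^{m-|\beta|}$ for $|\beta|\le m-1$, apply Taylor's theorem to $\partial^\alpha\varphi$ (using $\|\varphi\|\le 1$) to obtain the scale-consistent bound $|\partial^\alpha\varphi(x)| \le C\delta^{m-|\alpha|}$ for $x \in B$ and $|\alpha|\le m-1$, $C$ controlled. For $|\gamma|<m-1$, the derivative $\partial^\gamma\varphi$ is then Lipschitz on $B$ with constant $\le C\delta^{m-|\gamma|-1}$; for $|\gamma|=m-1$, the global bound $\|\varphi\|\le 1$ gives Lipschitz constant $\le 1$. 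Combining these with the bounds on $\partial^\beta\theta$ via the product rule yields, for $|\alpha|=m-1$ and $x,y\in B$, the estimate $|\partial^\alpha(\theta\varphi)(x) - \partial^\alpha(\theta\varphi)(y)| \le C^0 |x-y|$ with every $\delta$-power canceling. Since $\theta \in C^m(\R^n)$ is supported in $B$, it and its derivatives up to order $m$ vanish on $\partial B$, so $\partial^\alpha(\theta\varphi)\equiv 0$ off $B$; the Lipschitz estimate then extends globally by a standard ``cut the segment at $\partial B$'' argument.

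The main obstacle, such as it is, is purely bookkeeping: tracking that every constant appearing is controlled. The key inputs are the controlled constant in Lemma \ref{lem:theta_bound} (which ultimately controls the $\theta$-derivatives) and the controlled constant in Taylor's theorem (Proposition \ref{lem:TT}), together with elementary combinatorial factors from the product rule. No further compactness or extraction arguments are required, so the proof strictly improves on the analogous Lemma 2.9 of \cite{coordinateFree} by yielding an explicit controlled constant $C^0$.
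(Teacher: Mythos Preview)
Your proposal is correct and follows essentially the same approach as the paper: both localize a witness $\varphi$ for $P$ by multiplying with a cutoff $\theta$ supported in $B$ (from Lemma \ref{lem:theta_bound}), then use Taylor's theorem together with the bound $|P|_{z,\diam(B)}\le 1$ to control $\|\theta\varphi\|_{C^{m-1,1}}$ by a controlled constant. The only cosmetic difference is that the paper packages the product estimate via the jet-product bound (Lemma \ref{lem:jetprodbd}, i.e.\ Lemma 2.2 of \cite{coordinateFree}), whereas you carry out the Leibniz rule and $\delta$-power bookkeeping by hand.
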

\begin{proof}
The proof of Lemma 2.9 in \cite{coordinateFree} gives the desired inclusion for a constant $C^0$ determined by $m$, $n$. This proof uses the existence of a cutoff function $\varphi \in C^{m-1,1}(\R^n)$, with $\supp(\varphi) \subseteq B$, $\varphi \equiv 1$ on a neighborhood of $z$, and $\| \varphi \| \leq C_\varphi \delta^{-m}$ (for $\delta = \diam(B)$). Following this proof, we learn that $C^0$ is bounded by the product of a finite number (independent of $m,n$) of the constants $C_\varphi$, $C$ in Lemma 2.2 of \cite{coordinateFree}, and $C_T$ in Taylor's theorem. By Proposition \ref{lem:TT} and Lemmas \ref{lem:jetprodbd}, \ref{lem:theta_bound} of this paper, these constants may be taken to be controlled constants. Thus, $C^0$ is a controlled constant.
\end{proof}

\begin{lemma}\label{lem:sigma_int}
Let $S \subseteq E$, for $E \subseteq \R^n$ finite. 

For $z \in \R^n$, let  $I_z := \{ P \in \Po : P(z) = 0\}$ be the codimension 1 subspace of $\Po$ consisting of polynomials vanishing at $z$.

If $z \in \R^n \setminus S$ then $\sigma_S(z)$ has non-empty interior in $\Po$.

If $z \in S$ then $\sigma_S(z) \subseteq I_z$ and $\sigma_S(z)$ has non-empty (relative) interior in $I_z$.
\end{lemma}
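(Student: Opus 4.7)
The plan is to prove both parts by direct construction: for each $z$, I exhibit a ball around the origin in the appropriate ambient space (all of $\Po$, or the subspace $I_z$) contained in $\sigma_S(z)$, obtained by multiplying a cutoff function localized near $z$ by a target polynomial.

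First I would establish the containment $\sigma_S(z) \subseteq I_z$ when $z \in S$: any $\varphi$ competing in the definition of $\sigma_S(z)$ vanishes on $S \ni z$, so its jet $P := J_z \varphi$ satisfies $P(z) = \varphi(z) = 0$, i.e., $P \in I_z$.

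Next I would set $r := \dist(z, S \setminus \{z\}) > 0$, using finiteness of $S$ (in the first case, where $z \notin S$, we have $r = \dist(z,S) > 0$; in the second case, $r$ is positive unless $S = \{z\}$, in which case we may take $r = \infty$ and use any fixed ball about $z$). Using Lemma \ref{lem:theta_bound}, I pick a cutoff $\theta \in C^m(\R^n)$ supported in $B(z,r/2)$ with $\theta \equiv 1$ near $z$, hence $J_z \theta = 1 \in \cR_z$, and with $\| \theta \|_{C^{m-1,1}(\R^n)} \leq C_\theta$ for some constant $C_\theta = C_\theta(r,m,n)$. For any $P \in \Po$, define $\varphi_P(x) := \theta(x) P(x)$. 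Then $\varphi_P \equiv 0$ outside $B(z,r/2)$, so $\varphi_P$ vanishes on $S \setminus \{z\}$, and $\varphi_P(z) = P(z)$. In the first case ($z \notin S$) this gives $\varphi_P = 0$ on $S$ for all $P \in \Po$; in the second case ($z \in S$) we additionally require $P \in I_z$, so $\varphi_P(z) = P(z) = 0$, again yielding $\varphi_P = 0$ on $S$. In both cases $J_z(\varphi_P) = J_z \theta \odot_z P = 1 \odot_z P = P$ since $\theta \equiv 1$ near $z$.

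The remaining step is the $C^{m-1,1}$-seminorm bound. Standard product-rule estimates (or Lemma \ref{lem:bombieri} combined with the cutoff bounds) give $\| \varphi_P \|_{C^{m-1,1}(\R^n)} \leq C' |P|_z$ for a constant $C'$ depending on $r,m,n$. Thus for $\epsilon := 1/C'$, every $P \in \Po$ (respectively every $P \in I_z$) with $|P|_z \leq \epsilon$ gives a competitor $\varphi_P$ with $\|\varphi_P\| \leq 1$ and $J_z \varphi_P = P$, so $\epsilon \cB_z \subseteq \sigma_S(z)$ in the first case and $\epsilon \cB_z \cap I_z \subseteq \sigma_S(z)$ in the second. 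This establishes non-empty interior (respectively non-empty relative interior in $I_z$). There is no real obstacle here; the only subtlety is treating the degenerate subcase $S = \{z\}$ in the second part, which is handled by taking $r$ arbitrary.
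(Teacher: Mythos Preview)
Your proposal is correct and follows essentially the same approach as the paper: both multiply a cutoff $\theta$ localized near $z$ (supported away from $S\setminus\{z\}$) by target polynomials to realize prescribed jets, then use a seminorm bound to conclude that a small ball (in $\Po$, resp.\ in $I_z$) lies in $\sigma_S(z)$. The only cosmetic difference is that the paper checks containment of $\pm\epsilon m_\alpha$ for a monomial basis and invokes convexity, whereas you bound $\|\theta P\|$ linearly in $|P|_z$ to get the ball directly.
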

\begin{proof}
By translation invariance, it suffices to assume $z = 0$.

Suppose $z=0 \notin S$. Consider the basis $\{m_\alpha(x) = x^\alpha \}_{\alpha \in \cM}$ for $\Po$. We shall demonstrate there exists $\epsilon > 0$ so that $\pm \epsilon m_\alpha \in \sigma_S(0)$ for all $\alpha \in \cM$. Given that $0 \in \R^n \setminus S$, there exists $\delta > 0$ so that $B(0,\delta)$ is disjoint from $S$. Let $\theta : \R^n \rightarrow \R$ be a $C^\infty$ cutoff function satisfying $\theta \equiv 1$ in a neighborhood of $0$, and $\supp(\theta) \subseteq B(0,\delta)$. For $\alpha \in \cM$ and $\epsilon > 0$, let $\varphi_\alpha^\pm(x) := \pm \epsilon m_\alpha(x) \theta(x)$. If $\epsilon > 0$ is picked small enough then $\| \varphi_\alpha^\pm \|_{C^{m-1,1}(\R^n)} \leq 1$. Note that $\varphi_\alpha^\pm$ vanishes on $S$, because $\theta$ vanishes on $S$. Finally, we have $J_0(\varphi_\alpha^\pm) = \pm \epsilon m_\alpha$. Thus, $\pm \epsilon m_\alpha \in \sigma_S(0)$ for all $\alpha \in \cM$. Therefore, $0 \in \Po$ is an interior point of $\sigma_S(0)$.

Suppose $z=0 \in S$. Let $I_0 = \{ P \in \Po : P(0) = 0\}$. Any function $\varphi \in C^{m-1,1}(\R^n)$ of seminorm $\leq 1$ that vanishes on $S$ must satisfy $\varphi(0)= 0$, hence, $J_z(\varphi) \in I_0$. We deduce that $\sigma_S(0) \subseteq I_0$. Consider the basis $\{m_\alpha(z) = x^\alpha \}_{\alpha \in \cM^+}$ for $I_0$, where $\cM^+ := \cM \setminus \{0 \}$ is the set of all nonzero multiindices of order at most $m-1$. Fix $\delta > 0$ so that $B(0,\delta) \cap S = \{0\}$. Let $\theta : \R^n \rightarrow \R$ be a $C^\infty$ cutoff function satisfying $\theta \equiv 1$ in a neighborhood of $0$, and $\supp(\theta) \subseteq B(0,\delta)$. Evidently, $\theta$ vanishes on $S \setminus \{0\}$. For $\alpha \in \cM^+$ and $\epsilon > 0$, let $\varphi_\alpha^\pm(x) := \pm \epsilon m_\alpha(x) \theta(x)$. If $\epsilon > 0$ is picked small enough then $\| \varphi_\alpha^\pm \|_{C^{m-1,1}(\R^n)} \leq 1$. We check that $\varphi_\alpha^\pm = 0$ on $S$. Indeed, $\varphi_\alpha^\pm(0) =0$ because $m_\alpha(0) = 0$ for $\alpha \in \cM^+$; meanwhile, $\varphi_\alpha^\pm$ vanishes on $S \setminus \{0\}$ because $\theta$ vanishes on $S \setminus \{0\}$.  Finally, we have $J_0(\varphi_\alpha^\pm) = \pm \epsilon m_\alpha$. Therefore, $\pm \epsilon m_\alpha \in \sigma_S(0)$ for all $\alpha \in \cM^+$. We deduce that $0 \in I_0$ is an interior point of $\sigma_S(0)$ in $I_0$.

\end{proof}

We finish the section by proving a version of Lemma 8.3 in \cite{coordinateFree} with controlled constants.

\begin{lemma}[cf. Lemma 8.3 of \cite{coordinateFree}] \label{fip_lem}
Let $C_0 \geq 1$ and $\ell_0 \in \N$. Let $\cW$ be a Whitney cover (see Definition \ref{defn:whit_cover}) of a ball $\widehat{B} \subseteq \R^n$, and let $N := \# \cW < \infty$. Suppose the following condition is valid for every $B \in \cW$:
\begin{equation}\label{eqn:fin_hyp}
\Gamma_{\ell_0}(x,f,M) \subseteq \Gamma_{E \cap \frac{6}{5}B}(x,f, C_0 M), \;\; \mbox{ for all } x \in (6/5)B, \; M  > 0.
\end{equation}
Then a corresponding condition is valid on $\widehat{B}$:
\begin{equation}\label{eqn:fin_con}
\Gamma_{\ell_1}(x_0,f,M) \subseteq \Gamma_{E \cap\widehat{B}}(x_0,f, C_1 M), \;\; \mbox{ for all } x_0 \in \widehat{B}, \; M  > 0.
\end{equation}
The constants $C_1,\ell_1$ in \eqref{eqn:fin_con} are given by $C_1 := C'  C_0$ and $\ell_1 :=  \ell_0 + \lceil \frac{\log(D \cdot N + 1)}{\log(D+1)} \rceil$, for a controlled constant $C'$. In particular, $C_1$ is independent of the cardinality $N$ of the cover $\cW$.
\end{lemma}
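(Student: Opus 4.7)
The plan is to combine Helly's theorem on the product of jet spaces over the cover $\cW$ with a partition-of-unity gluing, using a pinning trick at $x_0$ to force the constructed extension to carry the jet $P$ at $x_0$. Fix $x_0 \in \widehat{B}$, $M > 0$, and $P \in \Gamma_{\ell_1}(x_0, f, M)$. Let $\cW_{x_0} := \{B \in \cW : x_0 \in \tfrac{6}{5}B\}$, which is nonempty because $\cW$ covers $\widehat{B}$. I will designate basepoints $x_B \in \tfrac{6}{5}B$ by setting $x_B := x_0$ for every $B \in \cW_{x_0}$ and letting $x_B$ be an arbitrary point of $B$ otherwise; write $N' := \#(\cW \setminus \cW_{x_0}) \leq N$.

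The first step is to apply Helly's theorem to the convex sets
\[
\tilde{\Sigma}_S := \big\{(Q_B)_{B \notin \cW_{x_0}} \in \Po^{N'} : \exists F \in C^{m-1,1}(\R^n),\; F = f \text{ on } S,\; J_{x_0} F = P,\; \|F\| \leq M,\; J_{x_B} F = Q_B\; \forall B \notin \cW_{x_0}\big\},
\]
indexed by subsets $S \subseteq E$ with $\#S \leq (D+1)^{\ell_0}$. Each $\tilde{\Sigma}_S$ is nonempty since $P \in \Gamma_{\ell_1}(x_0,f,M) \subseteq \Gamma_S(x_0,f,M)$. These sets live in a vector space of dimension $DN' \leq DN$, and the choice of $\ell_1$ ensures $(D+1)^{\ell_1} \geq (DN+1)(D+1)^{\ell_0}$. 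Consequently, for any $DN+1$ such sets $\tilde{\Sigma}_{S_1},\dots,\tilde{\Sigma}_{S_{DN+1}}$, the union $S := \bigcup_i S_i$ has cardinality at most $(D+1)^{\ell_1}$, and a single witness $F$ for $P \in \Gamma_S(x_0,f,M)$ places $(J_{x_B} F)_B$ in every $\tilde{\Sigma}_{S_i}$ simultaneously. Helly's theorem therefore yields a tuple $(P_B)_{B \notin \cW_{x_0}}$ contained in every $\tilde{\Sigma}_S$; extend the tuple by $P_B := P$ for $B \in \cW_{x_0}$.

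Next, each $P_B$ lies in $\Gamma_{\ell_0}(x_B, f, M)$ by construction, so hypothesis \eqref{eqn:fin_hyp} produces $F_B \in C^{m-1,1}(\R^n)$ with $F_B = f$ on $E \cap \tfrac{6}{5}B$, $J_{x_B} F_B = P_B$, and $\|F_B\| \leq C_0 M$. When $\tfrac{6}{5}B \cap \tfrac{6}{5}B' \neq \emptyset$, a single Helly witness gives $|P_B - P_{B'}|_{x_B, |x_B - x_{B'}|} \leq C_T M$ by Taylor's Theorem (Proposition \ref{lem:TT}); the Whitney cover estimate $|x_B - x_{B'}| \leq 11 \diam(B)$ together with the norm scaling \eqref{eqn:norm_scale} upgrades this to $|P_B - P_{B'}|_{x_B, \diam(B)} \leq 11^m C_T M$, a controlled constant times $M$. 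The Gluing Lemma \ref{lem:glue} then produces $F := \sum_B \theta_B F_B \in C^{m-1,1}(\widehat{B})$ with $F = f$ on $E \cap \widehat{B}$ and $\|F\|_{C^{m-1,1}(\widehat{B})} \leq C_* C_0 M$ for a controlled $C_*$; Lemma \ref{lem:dom_ext} extends $F$ to $\R^n$ at the cost of one further controlled factor.

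The main subtlety is verifying $J_{x_0} F = P$, and this is where the pinning pays off. For $B \notin \cW_{x_0}$ the cutoff $\theta_B$ vanishes identically in a neighborhood of $x_0$, so $J_{x_0} \theta_B = 0$; for $B \in \cW_{x_0}$ one has $J_{x_0} F_B = J_{x_B} F_B = P_B = P$ because $x_B = x_0$. Since $\sum_B \theta_B \equiv 1$ near $x_0$,
\[
J_{x_0} F = \sum_{B \in \cW_{x_0}} J_{x_0}\theta_B \odot_{x_0} J_{x_0} F_B = \Bigl(\sum_{B \in \cW_{x_0}} J_{x_0}\theta_B\Bigr) \odot_{x_0} P = 1 \odot_{x_0} P = P.
\]
Hence the extended function lies in $\Gamma_{E \cap \widehat{B}}(x_0, f, C_1 M)$ with $C_1 = C' C_0$ for controlled $C'$, completing the proof. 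The hard part is arranging $J_{x_0} F = P$ exactly: without the pinning trick the glued jet would only approximate $P$ up to a Taylor residual, and a direct correction would require that residual to lie in a controlled dilate of $\sigma_{E \cap \widehat{B}}(x_0)$, which is not available from the hypotheses; the pinning trick circumvents this by forcing every $F_B$ contributing to $J_{x_0}$ of the sum to already carry the jet $P$ there.
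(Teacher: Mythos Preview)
Your proof is correct and follows essentially the same approach as the paper: apply Helly's theorem in a product of jet spaces indexed by the cover, using the pinning $x_B = x_0$ for $B \in \cW_{x_0}$ to force $J_{x_0}F = P$, then glue via Lemma~\ref{lem:glue} and extend via Lemma~\ref{lem:dom_ext}. The only cosmetic difference is that the paper runs Helly in the full $\Po^{\cW}$ rather than your $\Po^{N'}$, but since the constraint $J_{x_0}F = P$ already pins the $\cW_{x_0}$-coordinates to $P$, this changes nothing.
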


\begin{proof}

Let $f : E \rightarrow \R$ and $M>0$. Fix a point $x_0 \in \widehat{B}$. Our goal is to prove \eqref{eqn:fin_con} for  $C_1 \geq 1$ to be determined below.

For each $B \in \cW$, we fix $x_B \in (6/5)B$ satisfying
\begin{equation}\label{pts11}
x_B = x_{0} \iff x_{0} \in (6/5) B.
\end{equation}
(If $x_0 \notin (6/5) B$ then we take $x_B$ to be an arbitrary element of $(6/5)B$.)

Fix an arbitrary $P \in \Gamma_{\ell_1} (x_0, f,M)$. We will prove that $P \in \Gamma_{E \cap \widehat{B}}(x_0, f, C_1 M)$. To do so, we  define a family of auxiliary convex sets to which we apply Helly's theorem and obtain the conclusion. These convex sets will belong to the vector space $\Po^\cW$ consisting of tuples of $(m-1)$-st order Taylor polynomials indexed by elements of the cover $\cW$.  The vector space $\Po^\cW$ has dimension $J := \dim(\Po^\cW) = N \cdot D$. For each $S \subseteq E$, the convex set $\cK_{(f,P)}(S, M) \subseteq \Po^\cW$ is defined by
\[
\begin{aligned}
\cK_{(f,P)}(S, M) := \{ (J_{x_B} F)_{B \in \cW} : F \in C^{m-1,1}(\R^n), \; & \| F \| \leq M, \\
& F = f \mbox{ on } S, J_{x_0} F  = P \}.
\end{aligned}
\]
If $\#(S) \leq (D+1)^{\ell_1}$ then $P \in \Gamma_{\ell_1}(x_0,f,M) \subseteq \Gamma_S(x_0,f,M)$. Therefore, there exists $F \in C^{m-1,1}(\R^n)$ with $\| F \| \leq M$, $F = f$ on $S$, and $J_{x_0} F = P$. Hence, $(J_{x_B} F)_{B \in \cW} \in \cK_{(f,P)}(S,M)$. Thus, $\cK_{(f,P)}(S,M) \neq \emptyset$ if $\#(S) \leq (D+1)^{\ell_1}$.

If $S_1,\cdots, S_{J+1} \subseteq E$, then
\[
\bigcap_{j=1}^{J+1} \cK_{(f,P)}(S_j,M) \supseteq \cK_{(f,P)}( S, M), \mbox{ for } S = S_1 \cup \cdots \cup S_J.
\]
If also $\#(S_j) \leq (D+1)^{\ell_0}$ for all $j$, then $\#(S) \leq J (D+1)^{\ell_0} \leq (D+1)^{ \ell_1}$, by definition of $\ell_1$. Consequently, by the previous remark, $\cK_{(f,P)}(S,M) \neq \emptyset$. Thus, given subsets $S_1,\cdots,S_{J+1} \subseteq E$, with $\#(S_j) \leq (D+1)^{\ell_0}$ for all $j$, we have
\[
\bigcap_{j=1}^{J+1} \cK_{(f,P)}(S_j,M) \neq \emptyset.
\]
Therefore, since $\dim(\Po^\cW) = J$, by Helly's theorem,
\[
\cK :=  \bigcap_{\substack{S \subseteq E \\ \#(S) \leq (D+1)^{\ell_0}}} \cK_{(f,P)}(S,M) \neq \emptyset.
\]
Fix  $(P_B)_{B \in \cW}$ in $\cK$. By definition of the sets $\cK_{(f,P)}(S,M)$, the following condition holds: 
\begin{equation}\tag{$*$}\label{eq: cond *}
\begin{rcases}
    \text{For any }S \subseteq E \text{ with } \#(S) \leq (D+1)^{\ell_0}, \text{ there exists a function }\\ 
    F^S \in C^{m-1,1}(\R^n) \text{ with } \| F^S \| \leq M, F^S = f \text{ on } S, J_{x_0} F^S = P,\\
    \text{ and }J_{x_B} F^S = P_B \text{ for all } B \in \cW.
    \end{rcases}
\end{equation}

Using Condition \eqref{eq: cond *} we establish the following properties: For all $B,B' \in \cW$,
\begin{enumerate}
\item[(a)] $P_B = P$ if $x_0 \in \frac{6}{5} B$.
\item[(b)] $| P_B - P_{B'} |_{x_B, \diam(B)} \leq C^1 M$ if $\frac{6}{5} B \cap \frac{6}{5} B' \neq \emptyset$, for the controlled constant $C^1 := 11^m C_T$.
\item[(c)] There exists $F_B \in C^{m-1,1}(\R^n)$ such that $\|F_B \| \leq C_0 M$, $F_B = f$ on $E \cap \frac{6}{5}B$, and $J_{x_B} F_B = P_B$.
\end{enumerate}

For the proofs of (a) and (b), consider the function $F^\emptyset$ arising in \eqref{eq: cond *} for $S = \emptyset$. For the proof of (a), fix $B \in \cW$ with $x_0 \in \frac{6}{5} B$. Then $x_B = x_0$ by \eqref{pts11}, and $P_B = J_{x_B} F^\emptyset = J_{x_0} F^\emptyset = P$ by \eqref{eq: cond *}, which yields (a). For the proof of (b), suppose $\frac{6}{5} B \cap \frac{6}{5} B' \neq \emptyset$ for $B,B' \in \cW$. Note that $x_B \in \frac{6}{5} B$, $x_{B'} \in \frac{6}{5} B'$, and by the definition of a Whitney cover, $\diam(B)$ and $\diam(B')$ differ by a factor of at most $8$. Therefore, $|x_B - x_{B'} | \leq \frac{6}{5} \diam(B) + \frac{6}{5} \diam(B') \leq 11 \diam(B)$. Thus, by \eqref{eqn:norm_scale},  Taylor's theorem (rendered in the form \eqref{eqn:Taylor}), and \eqref{eq: cond *}, 
\[
\begin{aligned}
| P_B - P_{B'} |_{x_B, \diam(B) } &\leq 11^m | P_B - P_{B'} |_{x_B, 11 \diam(B) } \\
&= 11^m | J_{x_B} F^\emptyset - J_{x_{B'}} F^\emptyset |_{x_B, 11 \diam(B)} \\
&\leq 11^m C_T \| F^\emptyset \| \leq C^1 M.
\end{aligned}
\]

For the proof of (c), note that \eqref{eq: cond *} implies $P_B \in \Gamma_{\ell_0}(x_B, f,M)$ for all $B \in \cW$. Thus, by assumption \eqref{eqn:fin_hyp}, $P_B \in \Gamma_{E \cap \frac{6}{5}B} (x_B, f, C_0 M)$ for each $B \in \cW$. Then, by definition of the set $\Gamma_S$ in \eqref{eqn:gamma_S}, we complete the proof of (c). 

Let $\{\theta_B\}$ be a partition of unity adapted to the Whitney cover $\cW$, as in Lemma \ref{lem: p of u}, and set $F := \sum_{B \in \cW} \theta_B F_B$. We refer the reader to Lemma \ref{lem: p of u} for the conditions on $\{\theta_B\}$ used below. By properties (b), (c), and Lemma \ref{lem:glue}, we have (A) $F = f$ on $E \cap \widehat{B}$ and (B) $\| F \|_{C^{m-1,1}(\widehat{B})} \leq C C^1 C_0 M \leq C' C_0 M$ for controlled constants $C$, $C'$.  Since $\supp \theta_B \subseteq \frac{6}{5} B$, $J_{x_0} \theta_B = 0$ if $x_0 \notin \frac{6}{5} B$; on the other hand,  $J_{x_0} F_B = J_{x_B} F_B = P_B = P$ if $x_0 \in \frac{6}{5} B$ by \eqref{pts11} and properties (a), (c). Therefore, by a term-by-term comparison of sums we obtain the identity
\[
J_{x_0} F  =  \sum_{B \in \cW} J_{x_0} \theta_B \odot_{x_0} J_{x_0} F_B =  \sum_{B \in \cW} J_{x_0} \theta_B \odot_{x_0} P.
\]
Recall that $\sum_{B \in \cW} \theta_B = 1$ on $\widehat{B}$ and $x_{0} \in \widehat{B}$. Thus, $\sum_{B \in \cW} J_{x_0} \theta_B =  J_{x_0} (1) = 1$. Therefore, (C) $J_{x_0} F = P$. 

By an outcome of the classical Whitney extension theorem (see Lemma \ref{lem:dom_ext}), we extend $F \in C^{m-1,1}(\widehat{B})$ to  $F_0 \in C^{m-1,1}(\R^n)$ satisfying $F_0 = F$ on $\widehat{B}$ and
\[
\| F_0 \|_{C^{m-1,1}(\R^n)} \leq C \| F \|_{C^{m-1,1}(\widehat{B})}\leq C C' C_0 M.
\]
for a controlled constant $C \geq 1$. Then $\| F_0 \|_{C^{m-1,1}(\R^n)} \leq  C'' C_0 M $ for $C'' := C C'$  a controlled constant. Because $F_0 = F$ on $\widehat{B}$, properties (A) and (C) of $F$ imply that $F_0 = f$ on $E \cap \widehat{B}$ and $J_{x_0} F_0 = P$. Since $\| F_0 \|_{C^{m-1,1}(\R^n)} \leq  C'' C_0 M $, we deduce that $P \in \Gamma_{E \cap \widehat{B}}(x_0, f, C'' C_0 M)$. This proves \eqref{eqn:fin_con} with $C_1 = C'' C_0$.
\end{proof}

\section{Making Linear Selections}\label{sec:MLS}

Fix a finite set $E \subseteq \R^n$. This section contains additional properties of the sets $\Gamma_\ell(x,f,M)$ and $\sigma_\ell(x)$, defined in Section \ref{sec:basicconv}, that will be used in the construction of the linear extension operator $T$ in Theorem \ref{thm:sharpfinitenessprinciple}.

Below, the seminorm of $\varphi \in C^{m-1,1}(\R^n)$ is denoted by $\| \varphi \| := \| \varphi \|_{C^{m-1,1}(\R^n)}$.

\begin{lemma}[Theorem 1.3 of \cite{Braz}]\label{lem: sharp helly}
Let $\cF$ be a finite collection of symmetric convex sets in $\R^d$. Suppose $0$ is an interior point of each $\cK\in\cF$. Then there exist $\cK_1,\dots, \cK_{2d}\in\cF$, with
\[
\cK_1\cap\dots\cap\cK_{2d}\subseteq 2 \sqrt{d} \left(\bigcap_{\cK\in\cF}\cK\right).
\]
\end{lemma}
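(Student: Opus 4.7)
The plan is to follow the quantitative Helly-type argument of \cite{Braz}, combining John's theorem with a contact-point construction and a sparsification step. Write $\cK^* := \bigcap_{\cK \in \cF} \cK$; the goal is to produce $2d$ members of $\cF$ whose intersection lies in $2\sqrt{d}\,\cK^*$.

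First, I would reduce to the case that $\cK^*$ is bounded: if not, intersect each $\cK \in \cF$ with a large Euclidean ball of radius $R$, run the bounded case for each such $R$, and use finiteness of $\cF$ to extract a single $2d$-subcollection that works for all sufficiently large $R$. Next, apply the classical symmetric version of John's theorem to $\cK^*$, producing an origin-symmetric ellipsoid $\E$ with $\E \subseteq \cK^* \subseteq \sqrt{d}\,\E$. After a linear change of coordinates I may take $\E$ to be the unit Euclidean ball $B \subseteq \R^d$, so
\[
B \subseteq \cK^* \subseteq \sqrt{d}\, B.
\]

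John's characterization of the maximal-volume inscribed ellipsoid then supplies contact points $u_1, \dots, u_N \in \partial \cK^* \cap \partial B$ and positive weights $c_1, \dots, c_N$ with $\sum_i c_i u_i u_i^T = I_d$ (so $\sum_i c_i = d$ by taking traces). For each $i$, since $u_i \in \partial \cK^*$ and $\cK^*$ is the intersection of the $\cK \in \cF$, there exists $\cK^{(i)} \in \cF$ with $u_i \in \partial \cK^{(i)}$. The tangency of $B$ to $\cK^{(i)}$ at $u_i$ forces $\cK^{(i)}$ to lie in the halfspace $\{x : \langle x, u_i\rangle \leq 1\}$, and the symmetry of $\cK^{(i)}$ then gives $\cK^{(i)} \subseteq \{x : |\langle x, u_i\rangle| \leq 1\}$.

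The technical heart of the argument is to select $2d$ indices $i_1, \dots, i_{2d}$ together with positive weights $\tilde c_k$ satisfying $\sum_k \tilde c_k \leq d$ and $\sum_{k=1}^{2d} \tilde c_k u_{i_k} u_{i_k}^T \succeq \tfrac{1}{4} I_d$. Granted such a sparsification, any $x \in \bigcap_{k=1}^{2d} \cK^{(i_k)}$ satisfies $|\langle x, u_{i_k}\rangle| \leq 1$ for each $k$, whence
\[
\tfrac{1}{4}|x|^2 \leq \sum_k \tilde c_k \langle x, u_{i_k}\rangle^2 \leq \sum_k \tilde c_k \leq d,
\]
giving $|x| \leq 2\sqrt{d}$ and hence $\bigcap_{k=1}^{2d} \cK^{(i_k)} \subseteq 2\sqrt{d}\, B \subseteq 2\sqrt{d}\, \cK^*$, as required. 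The hard part is precisely this sparsification step: a direct Carathéodory argument in the cone of positive semidefinite matrices only reduces the John decomposition to $\binom{d+1}{2}$ rank-$1$ terms, which is too many. Obtaining exactly $2d$ terms with a controlled spectral lower bound requires the tailored selection argument from \cite{Braz}, which I do not reproduce here.
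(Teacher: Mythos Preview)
The paper does not prove this lemma at all; it is simply quoted as Theorem~1.3 of \cite{Braz} with no argument given. So there is no ``paper's own proof'' to compare against --- the paper treats this as a black box, and your sketch is already more than the paper provides.

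Your outline follows the standard route for such quantitative Helly results (John ellipsoid, contact-point decomposition, then sparsification), and the geometric steps you spell out are correct. In particular, your claim that the supporting hyperplane to $\cK^{(i)}$ at the contact point $u_i$ must coincide with the tangent to $B$ there is justified: since $B \subseteq \cK^* \subseteq \cK^{(i)}$ and $u_i \in \partial B \cap \partial \cK^{(i)}$, any supporting hyperplane to $\cK^{(i)}$ at $u_i$ also supports $B$ at $u_i$, and the ball has a unique supporting hyperplane there. You are also honest that the sparsification down to $2d$ terms with a spectral lower bound of $\tfrac14 I_d$ is the nontrivial part and that you are not reproducing it. That is exactly where the content of \cite{Braz} lies, so your write-up is a faithful reduction to the cited result rather than a self-contained proof --- which matches how the paper itself uses the lemma.
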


\begin{lemma}\label{lem:sigma_assist}
Fix $\ell \in \N$. For each $y \in \R^n$ there exists a set $S^y \subseteq E$ such that $\#(S^y) \leq 2D (D+1)^\ell$ and $\sigma_{S^y}(y) \subseteq 2 \sqrt{D} \sigma_\ell(y)$.
\end{lemma}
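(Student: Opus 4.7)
The plan is to apply the sharp Helly theorem (Lemma \ref{lem: sharp helly}) to the finite collection
\[
\mathcal{F} := \{\sigma_S(y) : S \subseteq E, \; \#(S) \leq (D+1)^\ell\},
\]
whose intersection is precisely $\sigma_\ell(y)$ and whose members are closed, symmetric, convex subsets of $\Po \cong \R^D$. Noticing that the lemma's target size $2D = 2 \dim \Po$ matches the output of sharp Helly applied in $\Po$, I expect to obtain $\sigma_{S_1}(y),\dots,\sigma_{S_{2D}}(y) \in \mathcal{F}$ with
\[
\bigcap_{i=1}^{2D} \sigma_{S_i}(y) \subseteq 2\sqrt{D}\,\sigma_\ell(y),
\]
and then to set $S^y := S_1 \cup \dots \cup S_{2D}$. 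The inclusion $S_i \subseteq S^y$ forces $\sigma_{S^y}(y) \subseteq \sigma_{S_i}(y)$ for every $i$ (functions vanishing on $S^y$ a fortiori vanish on $S_i$), which yields both $\sigma_{S^y}(y) \subseteq 2\sqrt{D}\,\sigma_\ell(y)$ and $\#(S^y) \leq 2D(D+1)^\ell$.

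The main obstacle lies in the interior hypothesis of Lemma \ref{lem: sharp helly}: every member of the collection must contain $0$ as an interior point. Lemma \ref{lem:sigma_int} tells us this holds for $\sigma_S(y)$ precisely when $y \notin S$; if $y \in S$, then $\sigma_S(y) \subseteq I_y := \{P \in \Po : P(y) = 0\}$ and $0$ is only in the relative interior. So the argument above applies unchanged when $y \notin E$, but it must be adapted when $y \in E$.

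To handle $y \in E$, I will pass to the codimension-one subspace $I_y$. First, since $\ell \geq 0$ gives $(D+1)^\ell \geq 1$, the singleton $\{y\}$ lies in the index set, so $\sigma_\ell(y) \subseteq \sigma_{\{y\}}(y) \subseteq I_y$. I then apply sharp Helly to the auxiliary collection $\{\sigma_S(y) \cap I_y : \#(S) \leq (D+1)^\ell\}$ in the $(D-1)$-dimensional space $I_y$. Each such intersection contains $0$ in its relative interior: if $y \in S$, this is immediate from Lemma \ref{lem:sigma_int}; if $y \notin S$, then $\sigma_S(y)$ contains a full $\Po$-neighborhood of $0$, whose intersection with $I_y$ is a relative neighborhood of $0$ in $I_y$. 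Lemma \ref{lem: sharp helly} therefore produces $2(D-1)$ sets $\sigma_{S_i}(y) \cap I_y$ whose intersection is contained in $2\sqrt{D-1}\,(\sigma_\ell(y) \cap I_y) = 2\sqrt{D-1}\,\sigma_\ell(y)$.

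Finally I set $S^y := \{y\} \cup \bigcup_i S_i$. The inclusion $y \in S^y$ gives $\sigma_{S^y}(y) \subseteq I_y$, and $S_i \subseteq S^y$ gives $\sigma_{S^y}(y) \subseteq \sigma_{S_i}(y) \cap I_y$ for every $i$, hence $\sigma_{S^y}(y) \subseteq 2\sqrt{D-1}\,\sigma_\ell(y) \subseteq 2\sqrt{D}\,\sigma_\ell(y)$. The cardinality bound is
\[
\#(S^y) \leq 1 + 2(D-1)(D+1)^\ell \leq 2D(D+1)^\ell,
\]
where the last inequality again uses $(D+1)^\ell \geq 1$. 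Both cases then deliver the conclusion.
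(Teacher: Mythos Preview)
Your proof is correct and follows essentially the same route as the paper: split into the cases $y\notin E$ and $y\in E$, apply the sharp Helly lemma in $\Po$ or in the hyperplane $I_y$ respectively (using Lemma~\ref{lem:sigma_int} for the interior hypothesis), and take the union of the resulting sets. The only cosmetic difference is that when $y\in E$ the paper adjoins an auxiliary set $S_0\ni y$ of size $\le (D+1)^\ell$, whereas you adjoin the singleton $\{y\}$, giving a slightly tighter cardinality count; otherwise the arguments are identical.
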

\begin{proof}
Recall that 
\begin{equation}
    \label{eqn:sigma_inter}
    \sigma_\ell(y) = \bigcap \left\{ \sigma_S(y): S \subseteq E, \; \#(S) \leq (D+1)^\ell \right\}. 
\end{equation}

Suppose first that $y \notin E$. Then $y \notin S$ for all $S \subseteq E$. By Lemma \ref{lem:sigma_int} the sets $\sigma_S(y)$ have nonempty interior in the $D$-dimensional vector space $\Po$. Thus we can apply Lemma \ref{lem: sharp helly} to the collection of sets $\sigma_S(y) \subseteq \Po$ for $S\subseteq E$ with $\#(S) \le (D+1)^\ell$ to get $S_1,\dots,S_{2D} \subseteq E$ such that $\#(S_i) \le (D+1)^\ell$ for each $i$ and the following inclusion holds:
\[
\bigcap_{i=1}^{2D}\sigma_{S_i}(y) \subseteq 2 \sqrt{D} \cdot \sigma_{\ell}(y).
\]
Let $S^y = S_1\cup\dots\cup S_{2D}$. Then $\sigma_{S^y}(y)\subseteq\sigma_{S_i}(y)$ for each $i$ and so 
\[
\sigma_{S^y}(y) \subseteq 2 \sqrt{D}\cdot \sigma_{\ell}(y).
\]
Furthermore, $\#(S^y) \leq 2 D (D+1)^\ell$, as claimed.

Suppose instead that $y \in E$. Then $y \in S_0$ for some $S_0 \subseteq E$ with $\#(S_0) \leq (D+1)^\ell$. By Lemma \ref{lem:sigma_int}, the set $\sigma_{S_0}(y)$ is contained in the $(D-1)$-dimensional subspace $I_y = \{ P \in \Po : P(y) = 0 \}$ of $\Po$. But $\sigma_\ell(y) \subseteq \sigma_{S_0}(y)$, so $\sigma_\ell(y)$ is contained in $I_y$. Set $\overline{\sigma}_S(y) = \sigma_S(y) \cap I_y$ for $S \subseteq E$. Intersecting both sides of \eqref{eqn:sigma_inter} with $I_y$, we have 
\[
\sigma_\ell(y) = \bigcap \left\{ \overline{\sigma}_S(y) : S \subseteq E, \; \#(S) \leq (D+1)^\ell \right\}.
\]
By Lemma \ref{lem:sigma_int}, for each $S \subseteq E$ either $\sigma_S(y)$ has nonempty interior in $\Po$ (if $y \notin S$) or $\sigma_S(y)$ has nonempty interior in $I_y$ (if $y \in S$). Therefore, $\overline{\sigma}_S(y)$ has nonempty interior in $I_y$ for all $S \subseteq E$. Thus we can apply Lemma \ref{lem: sharp helly} to the collection of sets $\overline{\sigma}_S(y) \subseteq I_y$ for $S\subseteq E$ with $\#(S) \le (D+1)^\ell$ to get $S_1,\dots,S_{2(D-1)} \subseteq E$ such that $\#(S_i) \le (D+1)^\ell$ for each $i$ and the following inclusion holds:
\begin{equation}\label{eqn:sigma_ass1}
\bigcap_{i=1}^{2(D-1)} \overline{\sigma}_{S_i}(y) \subseteq 2 \sqrt{D} \cdot \sigma_{\ell}(y).
\end{equation}
Since $\sigma_{S_0}(y) \subseteq I_y$, we have
\begin{equation}\label{eqn:sigma_ass2}
\bigcap_{i=1}^{2(D-1)} \overline{\sigma}_{S_i}(y) = I_y \cap \left( \bigcap_{i=1}^{2(D-1)} \sigma_{S_i}(y)  \right)  \supseteq \bigcap_{i=0}^{2(D-1)} \sigma_{S_i}(y).
\end{equation}
Let $S^y = S_0 \cup S_1\cup\dots\cup S_{2(D-1)}$. Then $\sigma_{S^y}(y)\subseteq\sigma_{S_i}(y)$ for each $i = 0, 1, \dots, 2(D-1)$ and so, combining \eqref{eqn:sigma_ass1} and \eqref{eqn:sigma_ass2},
\[
\sigma_{S^y}(y) \subseteq 2 \sqrt{D}\cdot \sigma_{\ell}(y).
\]
Furthermore, $\#(S^y) \leq (2 (D-1) + 1) (D+1)^\ell \leq 2D (D+1)^\ell$, as claimed.

\end{proof}

\begin{lemma}\label{lem: linear select}
Fix $y\in\R^n$ and $\ell \in \N$. There exists a linear map $P_{\ell}^y:C(E)\rightarrow\Po$ such that if $f \in C(E)$ satisfies $\cF\cH(k^\#, M)$ for some $k^\# \ge (D+1)^{\ell+3}$ and $M>0$, then $P_{\ell}^y (f) \in \Gamma_{\ell}(y,C_{\ell} M)$. Here, $C_{\ell} = C' (D+1)^{\ell}$ for a controlled constant $C'$.
\end{lemma}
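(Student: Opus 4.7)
The plan is to construct $P^y_\ell$ by selecting a small test set $S^y \subseteq E$ adapted to $(y,\ell)$, linearly extending $f|_{S^y}$ to all of $\R^n$, and taking its $(m-1)$-jet at $y$. First I would apply Lemma \ref{lem:sigma_assist} to obtain $S^y \subseteq E$ with $\#(S^y) \leq 2D(D+1)^\ell$ and
$$ \sigma_{S^y}(y) \subseteq 2\sqrt{D} \cdot \sigma_\ell(y). \qquad (*) $$
Since $(D+1)^3 \geq 2D$, the hypothesis $k^\# \geq (D+1)^{\ell+3}$ guarantees $\#(S^y) \leq k^\#$, so $\cF\cH(k^\#, M)$ applied to the subset $S^y$ yields $\|f|_{S^y}\|_{C^{m-1,1}(S^y)} \leq M$.

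Next I would build a bounded linear extension operator for the \emph{fixed} finite set $S^y$. The trace space $\cT := C^{m-1,1}(S^y) \subseteq C(S^y)$ is finite-dimensional of dimension $n_* \leq \#(S^y)$, so applying Auerbach's lemma in $\cT$ produces a basis $g_1,\dots,g_{n_*}$ together with dual functionals $g_1^*,\dots,g_{n_*}^* \in \cT^*$ of unit norm with $g_i^*(g_j) = \delta_{ij}$. For each $i$, noncanonically fix $F_i \in C^{m-1,1}(\R^n)$ with $F_i|_{S^y} = g_i$ and $\|F_i\| \leq 2$; such $F_i$ exist by the definition of the trace seminorm. Define $L(g) := \sum_i g_i^*(g) F_i$ and set $P^y_\ell(f) := J_y L(f|_{S^y})$. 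Then $L : \cT \to C^{m-1,1}(\R^n)$ is linear, satisfies $L(g)|_{S^y} = g$, and obeys $\|L(g)\| \leq 2 n_* \|g\|_{C^{m-1,1}(S^y)} \leq 2\#(S^y)\|g\|_{C^{m-1,1}(S^y)}$. Consequently $P^y_\ell$ is linear in $f$, and the finiteness hypothesis gives $P^y_\ell(f) \in \Gamma_{S^y}(y, f, 2\#(S^y) M)$.

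To upgrade $\Gamma_{S^y}$-membership to $\Gamma_\ell$-membership, I would invoke Lemma \ref{lem:gamma_trans}, which (using $k^\# \geq (D+1)^{\ell+1}$) provides some $Q \in \Gamma_\ell(y, f, M)$; crucially $Q$ need not depend linearly on $f$, only its existence is used. By Lemma \ref{lem:sigma_gamma_rel} and the inclusion $\Gamma_\ell(y,f,M) \subseteq \Gamma_{S^y}(y,f,M)$,
$$ P^y_\ell(f) - Q \in \Gamma_{S^y}(y, f, 2\#(S^y)M) - \Gamma_{S^y}(y, f, M) \subseteq (2\#(S^y)+1) M \cdot \sigma_{S^y}(y). $$
Applying $(*)$ and then Lemma \ref{lem:sigma_gamma_rel} once more,
$$ P^y_\ell(f) \in Q + 2\sqrt{D}(2\#(S^y) + 1) M \cdot \sigma_\ell(y) \subseteq \Gamma_\ell(y, f, C_\ell M), $$
with $C_\ell = 1 + 2\sqrt{D}(2\#(S^y) + 1) \leq C'(D+1)^\ell$ for a controlled constant $C'$, using $\#(S^y) \leq 2D(D+1)^\ell$.

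The main obstacle is producing the linear extension $L$ with norm growing only polynomially in $\#(S^y)$: an explicit construction based on interpolating Whitney jets pointwise would be delicate for arbitrary point configurations of $S^y$, since pointwise $\cP$-unisolvency may fail. The Auerbach-based argument sidesteps this by treating $\cT$ as a black-box finite-dimensional space and yields the clean bound $\|L\| \leq 2\#(S^y)$, which is what forces the final $(D+1)^\ell$ factor in $C_\ell$.
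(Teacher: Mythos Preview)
Your overall strategy is sound and genuinely differs from the paper's. Where you build a linear extension $L:C(S^y)\to C^{m-1,1}(\R^n)$ via Auerbach's lemma, the paper instead produces the linear selection by minimizing an explicit Whitney-type quadratic form $\cQ$ on $\Po^{S^y\cup\{y\}}$ over the affine space of jet-tuples matching $f$ on $S^y$ (and at $y$ if $y\in E$), taking $P^y_\ell(f)$ to be the $y$-component of the minimizer and invoking the classical Whitney extension theorem to bound its $\Gamma_{S^y}$-membership. Both routes arrive at $P^y_\ell(f)\in\Gamma_{S^y}(y,f,C\,\#(S^y)\,M)$ and then upgrade via $(*)$ exactly as you do. The quadratic-minimization approach is fully constructive and avoids any noncanonical choice of extensions $F_i$, while your Auerbach argument is shorter and makes the $\#(S^y)$ factor in the operator norm completely transparent.

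Two small points need repair. First, the trace seminorm on $C(S^y)$ vanishes on constants, so Auerbach's lemma does not apply directly to $\cT$; you should work on the hyperplane $\{g\in C(S^y):g(x_1)=0\}$ for a fixed $x_1\in S^y$, where the seminorm is a genuine norm, and then set $L(g)=g(x_1)+L_0(g-g(x_1))$. Second, the inclusion $\Gamma_\ell(y,f,M)\subseteq\Gamma_{S^y}(y,f,M)$ that you invoke requires $\#(S^y)\le(D+1)^\ell$, which is not guaranteed since $\#(S^y)\le 2D(D+1)^\ell$. Take instead $Q\in\Gamma_{\ell+2}(y,f,M)$, which is nonempty by Lemma~\ref{lem:gamma_trans} precisely because $k^\#\ge(D+1)^{\ell+3}$; then $\#(S^y)\le(D+1)^{\ell+2}$ gives $Q\in\Gamma_{S^y}(y,f,M)$, and since $\Gamma_{\ell+2}\subseteq\Gamma_\ell$ the rest of your argument goes through unchanged. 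The paper makes exactly this choice of index, and this is in fact why the hypothesis reads $(D+1)^{\ell+3}$ rather than $(D+1)^{\ell+1}$.
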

\begin{proof}
By Lemma \ref{lem:sigma_assist}, there exists $S^y \subseteq E$ with $\#(S^y) \leq 2D (D+1)^\ell$ such that 
\begin{equation}\label{eqn:sigma_assist}
\sigma_{S^y}(y) \subseteq 2 \sqrt{D} \cdot \sigma_{\ell}(y).
\end{equation}
Let $S^y\cup\{y\} = \{x_1,\dots,x_N\}$, with $x_N = y$. Then 
\begin{equation}
\label{eqn:Nbd}
N = \#(S^y \cup \{y\}) \leq 2D(D+1)^\ell + 1 \leq (D+1)^{\ell+2}.
\end{equation}
Introduce the vector space $\Po^N$ of all
\[
\vec{P} = (P_\mu)_{1\le \mu \le N} \quad \text{with } P_\mu \in \Po \mbox{ for all } \mu.
\]
We define a quadratic function $\cQ$ on $\Po^N$ by
\begin{equation}\label{eq: linear selection eq0}
\cQ(\vec{P}) :=  \sum_{\mu \ne \nu}\sum_{|\beta|\le m-1}\frac{|\partial^\beta(P_\mu - P_\nu)(x_\mu)|^2}{(\beta !)^2 |x_\mu - x_\nu|^{2(m-|\beta|)}} = \sum_{\mu \ne \nu} |P_\mu-P_\nu|_{x_\mu,|x_\mu-x_\nu|}^2.
\end{equation}
Given a function $f\in C(E)$, we define $W_f$ to be the subspace of $\Po^N$ consisting of $\vec{P}\in\Po^N$ satisfying $P_\mu(x_\mu) = f(x_\mu)$ for all $1\le \mu \le N-1$ and $P_N(x_N) = f(x_N)$ if $x_N = y \in E$. Note that $\cQ$ achieves a minimum on $W_f$ at some point $\vec{P}(f,y)\in W_f$ that depends linearly on $f$ for fixed $y$. Letting $P_\mu(f,y)\in\Po$ denote the $\mu$-th component of $\vec{P}(f,y)$, we define
\[
P_{\ell}^y(f):=P_N(f,y).
\]
We've constructed a linear map $P_{\ell}^y:C(E)\rightarrow \Po$; it remains to show that $P_{\ell}^y (f) \in \Gamma_{\ell}(y,f, C_\ell M)$, with $C_\ell$ as in the statement of the lemma, whenever $f$ satisfies $\cF\cH(k^\#, M)$ for some $k^\# \ge (D+1)^{\ell+3}$ and $M>0$.

To this end, suppose $f$ satisfies $\cF\cH(k^\#, M)$ for $k^\# \ge (D+1)^{\ell+3}$ and $M>0$. We will demonstrate that there exists a function $\widetilde{F} \in C^{m-1,1}(\R^n)$ satisfying
\begin{align}
    &\|\widetilde{F}\|\le C'\cdot(D+1)^\ell M,\label{eqn:lin_sel0}\\
    &\widetilde{F} = f \text{ on } S^y, \text{ and} \label{eqn:lin_sel1}\\
    &J_y(\widetilde{F}) = P_{\ell}^y(f)\label{eqn:lin_sel2}
\end{align}
 for a controlled constant $C'$.
 
 First, we claim that $\cQ(\vec{P}(f,y)) \le  C_T^2 (D+1)^{2\ell+4} M^2$. By \eqref{eqn:Nbd}, $\#(S^y \cup \{y\}) = N\leq k^\#$. By assumption, $f$ satisfies $\cF\cH(k^\#, M)$, so there exists a function $\widehat{F}$ satisfying
\begin{align}
    &\|\widehat{F}\|\le M,\label{eqn:lin_sel3}\\
    &\widehat{F} = f  \text{ on } S^y, \text{ and} \label{eqn:lin_sel3.1}\\
    &\widehat{F}(y) = f(y) \quad \text{if } y \in E. \label{eqn:lin_sel3.2}
\end{align}
Define $\vec{R} := (R_\mu)_{1\le \mu\le N}$ where $R_\mu := J_{x_\mu}(\widehat{F})$ and $ \{x_\mu\}_{1\leq \mu \leq N}= S^y \cup \{y\}$. Then $\vec{R} \in W_f$, due to \eqref{eqn:lin_sel3.1} and \eqref{eqn:lin_sel3.2}. By Taylor's theorem (see \eqref{eqn:Taylor}), $\vec{R}$ satisfies
\begin{equation}\label{eq: linear selection eq4}
    |R_\mu-R_\nu|_{x_\mu,|x_\mu-x_\nu|} \le C_T \| F \| \leq C_T M \quad \text{for all } \mu \ne \nu.
\end{equation}
We use  \eqref{eq: linear selection eq0} and \eqref{eq: linear selection eq4}, and then \eqref{eqn:Nbd}, to get
\[
\cQ(\vec{R}) \le N^2 \cdot (C_T M)^2 \leq C_T^2 (D+1)^{2\ell+4} M^2.
\]
Since $\vec{P}(f,y)$ was chosen to minimize $\cQ$ on $W_f$, we have
\begin{equation}\label{eq: linear selection eq4a}
\cQ(\vec{P}(f,y))\le C_T^2 (D+1)^{2\ell+4} M^2,
\end{equation}
as claimed. 

From \eqref{eq: linear selection eq4a} we have
\begin{equation}\label{eq: linear selection eq5}
\begin{aligned}
    |\partial^\beta(P_\mu(f,y) - P_\nu(f,y))(x_\mu)| \le  \;& C (D+1)^{\ell} M  |x_\mu - x_\nu|^{m-|\beta|}\\
    &\text{for } \mu \ne \nu, \; |\beta|\le m-1,
    \end{aligned}
\end{equation}
for a controlled constant $C$. Since \eqref{eq: linear selection eq5} holds, the classical Whitney extension theorem (see Proposition \ref{prop:cwet}) guarantees the existence of a function $\widetilde{F}\in C^{m-1,1}(\R^n)$ satisfying  $J_{x_\mu} \widetilde{F} = P_\mu(f,y)$ for $\mu=1,2,\dots,N$, and $\| \widetilde{F} \|_{C^{m-1,1}(\R^n)} \leq C_{Wh} C (D+1)^{\ell} M$. Here, $C_{Wh}$ is a controlled constant. Thus, the function $\widetilde{F}$ satisfies \eqref{eqn:lin_sel0}. Furthermore, \eqref{eqn:lin_sel1}  follows because $J_{x_\mu} \widetilde{F} = P_\mu(f,y)$ for all $\mu$, and $\vec{P}(f,y) = (P_\mu(f,y))_{1 \leq \mu \leq N} \in W_f$. Finally, \eqref{eqn:lin_sel2} follows because $J_{y} (\widetilde{F}) = J_{x_N} (\widetilde{F}) = P_N(f,y) = P^y_\ell(f)$. This completes the proof of \eqref{eqn:lin_sel0}-\eqref{eqn:lin_sel2}.

Given that $f$ satisfies $\cF\cH(k^\#,M)$ for $k^\# \geq (D+1)^{\ell+3}$, we apply Lemma \ref{lem:gamma_trans} to deduce that $\Gamma_{\ell+2}(y,f,M) \neq \emptyset$. 

Fix $P_0^y \in \Gamma_{\ell+2}(y,f,M)$. Given that $\#(S^y) \leq (D+1)^{\ell+2}$ (see \eqref{eqn:Nbd}), we have $P_0^y \in \Gamma_{S^y}(y,f,M)$.

From \eqref{eqn:lin_sel0}-\eqref{eqn:lin_sel2}, we have that $P^y_\ell(f) \in \Gamma_{S^y}(y,f,C' (D+1)^\ell M)$. By Lemma \ref{lem:sigma_gamma_rel} we deduce that $P_0^y - P^y_\ell(f) \in  C'' (D+1)^{\ell} M \sigma_{S^y}(y)$ for a controlled constant $C'' = C' + 1$.

By  \eqref{eqn:sigma_assist}, $P_0^y - P_{\ell}^y(f) \in \widehat{C}  (D+1)^{\ell} M \sigma_{\ell}(y)$ for a controlled constant $\widehat{C}$.

But $P_0^y \in \Gamma_{\ell+2}(y,f,M) \subseteq \Gamma_\ell(y,f,M)$. By  Lemma \ref{lem:sigma_gamma_rel}, we deduce that
\[
\begin{aligned}
P_\ell^y(f) = P_0^y + (P_\ell^y(f) - P_0^y) &\in  \Gamma_\ell(y,f,M) + \widehat{C}  (D+1)^{\ell} M \sigma_{\ell}(y) \\
&\subseteq \Gamma_\ell(y,f, \overline{C} (D+1)^\ell M),
\end{aligned}
\]
for a controlled constant $\overline{C} = \widehat{C} + 1$. This proves the lemma with $C_\ell = \overline{C}  (D+1)^{\ell}$.

\end{proof}

\begin{lemma}\label{lem:perturb}
Suppose $X$ is a $d$-dimensional Hilbert space with norm $| \cdot |$. Let $\B$ denote the unit ball of $X$. Let $V$ be a subspace of $X$ and let $\Omega \in \cK(X)$ be a symmetric convex set in $X$. Suppose that $\B / V \subseteq R (\Omega \cap \B)/V$. Then there exists a linear mapping $T : X \rightarrow X$ such that $\| T \|_{op} \leq dR$, $x - Tx \in V$ and $Tx \in d R |x| ( \Omega \cap \B)$ for all $x \in X$.
\end{lemma}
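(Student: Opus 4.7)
The plan is to factor $T$ through the orthogonal projection onto $V^\perp$. Since we require $x - Tx \in V$, equivalently $\Pi_{V^\perp}(Tx) = \Pi_{V^\perp}(x)$, it is natural to seek $T$ of the form $T = S \circ \Pi_{V^\perp}$, where $S : V^\perp \to X$ is a linear map satisfying $\Pi_{V^\perp} \circ S = \mathrm{id}_{V^\perp}$. Observe that for any $x \in X$ we then have $|Tx| = |S(\Pi_{V^\perp} x)|$ and $|\Pi_{V^\perp} x| \leq |x|$, so it suffices to build $S$ so that $S(y) \in dR|y|(\Omega \cap \B)$ for every $y \in V^\perp$ (taking $y = 0$ if $x \in V$ handles that case trivially, since $\Omega$ is symmetric and contains $0$).

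First I would reformulate the hypothesis using Lemma \ref{lem:mod_perp}: the inclusion $\B/V \subseteq R(\Omega \cap \B)/V$ is equivalent to
\[
\B \cap V^\perp \subseteq R \cdot \Pi_{V^\perp}(\Omega \cap \B).
\]
Let $k := \dim V^\perp \leq d$, and choose an orthonormal basis $e_1, \ldots, e_k$ of $V^\perp$. By the displayed inclusion, for each $i$ there exists $w_i \in R(\Omega \cap \B)$ with $\Pi_{V^\perp}(w_i) = e_i$. Define $S : V^\perp \to X$ by linear extension, $S\bigl(\sum_{i=1}^k c_i e_i\bigr) := \sum_{i=1}^k c_i w_i$. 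By construction, $\Pi_{V^\perp}(S(y)) = y$ for all $y \in V^\perp$, so condition $x - Tx \in V$ holds for $T := S \circ \Pi_{V^\perp}$.

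The final step is the size estimate, which I expect to be the only place a constant enters. Writing $y = \sum c_i e_i \in V^\perp$, Cauchy--Schwarz gives $\sum_{i=1}^k |c_i| \leq \sqrt{k} \, |y|$. Since $\Omega \cap \B$ is symmetric and convex (hence contains all signed combinations $\sum \lambda_i u_i$ with $u_i \in \Omega \cap \B$ and $\sum |\lambda_i| \leq 1$), and since each $R^{-1} w_i \in \Omega \cap \B$, we obtain
\[
S(y) = \sqrt{k}\, R \cdot \sum_{i=1}^k \frac{c_i}{\sqrt{k}}\cdot\frac{w_i}{R} \in \sqrt{k}\, R\, |y| \, (\Omega \cap \B) \subseteq dR |y| (\Omega \cap \B).
\]
Applied to $y = \Pi_{V^\perp} x$ this yields $Tx \in dR |x|(\Omega \cap \B)$; in particular $|Tx| \leq dR|x|$, so $\|T\|_{op} \leq dR$. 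No further step is needed, and there is no substantive obstacle beyond the two-line Cauchy--Schwarz plus symmetric-convexity computation.
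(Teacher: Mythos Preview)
Your proof is correct and follows essentially the same approach as the paper: choose a basis, pick for each basis vector a representative in $R(\Omega\cap\B)$ that is congruent to it modulo $V$, and extend linearly. The only difference is cosmetic—the paper takes an orthonormal basis of all of $X$ and uses $\max_j|c_j|\le |x|$ (giving the factor $d$ directly), whereas you take a basis of $V^\perp$, precompose with $\Pi_{V^\perp}$, and use Cauchy--Schwarz (obtaining the sharper constant $\sqrt{\dim V^\perp}\,R\le dR$).
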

\begin{proof}
Let $\{e_j : 1 \le j \le d\}$ be an orthonormal basis for $X$. Given that $\B / V \subseteq R (\Omega \cap \B)/V$, for each $e_j \in \B$ we can find $\omega_j \in R(\Omega \cap \B)$ such that $e_j - \omega_j \in V$. In particular, $|\omega_j | \leq R$ for all $j$.

Given $x \in X$, write $x = \sum_j c_j e_j$ for $c_j = \langle x, e_j \rangle$ and define $Tx := \sum_j c_j \omega_j$. Note $\max_j |c_j|\leq (\sum_j c_j^2)^{1/2} = |x|$.

We have $x - Tx = \sum_j c_j (e_j - \omega_j) \in V$.  Also, by the triangle inequality,
\[
|Tx| \leq \max_j |c_j| \cdot \sum_{j=1}^d |\omega_j| \leq  Rd |x|.
\]
Thus, $\| T \|_{op} \leq Rd$, as desired. Using that $\omega_j \in R (\Omega \cap \B)$ and $|c_j| \leq |x|$ for all $j$, and by symmetry and convexity of $\Omega \cap \B$, 
\[
Tx = \sum_{j=1}^d c_j \omega_j \in \sum_{j=1}^d |c_j| \cdot R ( \Omega \cap \B) \subseteq d R |x| (\Omega \cap \B).
\]
This completes the proof.
\end{proof}

\begin{lemma} \label{lem: linear select2}
Fix $x,y\in\R^d$, $\ell\in\N$, $R \geq 1$, $C_1 \geq 1$, $\delta \geq |x-y|$, and a DTI subspace $V\subseteq \Po$ such that $\sigma(x)$ is $(x,C_1\delta,R)$-transverse to $V$. Suppose that $f$ satisfies $\cF\cH(k^\#,M)$ for $k^\# \ge (D+1)^{\ell+2}$ and $M> 0$. Let $P_0 \in \Gamma_{\ell}(y,f,M)$. Then there exists a constant $\widehat{C}_\ell \geq 1$ and $P' \in \Gamma_{\ell-1}(x,f,\widehat{C}_\ell M)$ such that
\begin{enumerate}
    \item $P'-P_0 \in V$,
    \item $P'-P_0 \in \widehat{C}_\ell M \cB_{x,\delta}$,
    \item $P'$ depends linearly on $f$ and $P_0$,
    \item $\widehat{C}_\ell = (RD+2) \cdot C_1^m\sqrt{C_T^2 + 4DC_{\ell-1}^2}$, where $C_{\ell-1} = C' (D+1)^{\ell-1}$ is the constant arising in Lemma \ref{lem: linear select}.
\end{enumerate}
\end{lemma}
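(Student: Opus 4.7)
The plan is to construct $P'$ as the canonical linear selector $Q := P_{\ell-1}^x(f) \in \Gamma_{\ell-1}(x,f,C_{\ell-1}M)$ obtained from Lemma \ref{lem: linear select}, corrected by a term from $V$ produced via the transversality hypothesis. First, note that the hypothesis $k^\# \geq (D+1)^{\ell+2} = (D+1)^{(\ell-1)+3}$ is precisely what is needed to invoke Lemma \ref{lem: linear select} at $x$ with index $\ell-1$, yielding such a linear map $Q$. Next, view $\Po$ as the Hilbert space $(\Po,\langle\cdot,\cdot\rangle_{x,C_1\delta})$ of dimension $D$; part (1) of Definition \ref{def:trans2} gives directly that $\cB_{x,C_1\delta}/V \subseteq R\cdot(\sigma(x)\cap\cB_{x,C_1\delta})/V$, which is the hypothesis of Lemma \ref{lem:perturb} with $\Omega = \sigma(x)$. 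Applying that lemma yields a linear map $T:\Po\to\Po$ such that $P - TP \in V$ and $TP \in DR\,|P|_{x,C_1\delta}\cdot(\sigma(x)\cap\cB_{x,C_1\delta})$ for every $P \in \Po$.

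Define $P' := Q + T(P_0 - Q)$. Then $P' - P_0 = T(P_0 - Q) - (P_0 - Q) \in V$, establishing condition (1), and linearity of $P'$ in $(f,P_0)$ (condition (3)) is immediate from the linearity of $P_{\ell-1}^x$ and $T$. Both quantitative assertions reduce to the key inequality
\[
    |P_0 - Q|_{x,C_1\delta} \;\leq\; \sqrt{C_T^2 + 4DC_{\ell-1}^2}\,M. \qquad (\ast)
\]
Granting $(\ast)$, condition (2) follows from the operator bound $|TP - P|_{x,C_1\delta} \leq (DR+1)|P|_{x,C_1\delta}$ together with the scaling inclusion $\cB_{x,C_1\delta} \subseteq C_1^m\cB_{x,\delta}$ of \eqref{eqn:ball_scale}, which give $|P' - P_0|_{x,\delta} \leq (DR+1)C_1^m\sqrt{C_T^2 + 4DC_{\ell-1}^2}\,M \leq \widehat{C}_\ell M$. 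For the $\Gamma_{\ell-1}$ containment, observe $\sigma(x) \subseteq \sigma_{\ell-1}(x)$ (each $\sigma_\ell$ is an intersection that always includes $\sigma_E = \sigma$), so the $T$-image satisfies $T(P_0 - Q) \in DR\,|P_0 - Q|_{x,C_1\delta}\cdot\sigma_{\ell-1}(x)$; Lemma \ref{lem:sigma_gamma_rel} then yields $P' \in \Gamma_{\ell-1}(x,f, C_{\ell-1}M + DR|P_0 - Q|_{x,C_1\delta})$, and combining with $(\ast)$ and the inequality $C_{\ell-1} \leq 2\sqrt{D}C_{\ell-1} \leq \sqrt{C_T^2 + 4DC_{\ell-1}^2}$ (valid since $D \geq 1$), the total bound $(DR+1)\sqrt{C_T^2+4DC_{\ell-1}^2}\,M \leq \widehat{C}_\ell M$ closes the argument.

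The main obstacle will be establishing $(\ast)$. The plan is to exploit the quadratic-minimization characterization of $Q$ from the proof of Lemma \ref{lem: linear select}: $Q$ is the $x$-component of the minimizer of the quadratic form $\cQ(\vec{P}) = \sum_{\mu \neq \nu}|P_\mu - P_\nu|_{x_\mu,|x_\mu - x_\nu|}^2$ on the interpolation-constrained subspace $W_f \subseteq \Po^N$ associated to the sample points $\{x_\mu\} = S^x \cup \{x\}$, where $S^x$ is furnished by Lemma \ref{lem:sigma_assist} with $\#S^x \leq 2D(D+1)^{\ell-1}$ and $\sigma_{S^x}(x) \subseteq 2\sqrt{D}\,\sigma_{\ell-1}(x)$. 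Extending this setup by adjoining the datum $(y,P_0)$ and using Taylor's theorem (Proposition \ref{lem:TT}) together with the finiteness hypothesis $\cF\cH(k^\#,M)$ to build competing tuples, I expect to decompose $|P_0 - Q|_{x,C_1\delta}^2$ as the sum of a $(C_T M)^2$ Taylor-transfer contribution (from $y$ to $x$) and a $(2\sqrt{D}\,C_{\ell-1}M)^2$ Helly-selection contribution (from the containment $\sigma_{S^x}(x) \subseteq 2\sqrt{D}\sigma_{\ell-1}(x)$). This Pythagorean combination is the technically delicate step of the proof.
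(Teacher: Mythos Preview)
Your key inequality $(\ast)$ is false, and this is where the argument breaks. The set $\Gamma_\ell(y,f,M)$ is in general \emph{unbounded}: by Remark \ref{rmk: 2.5 cf} it is essentially a translate of $M\sigma_\ell(y)$, and $\sigma_\ell(y)$ contains every polynomial $P \in \Po$ with $P|_E = 0$ (such $P$ have $\|P\|_{C^{m-1,1}} = 0$), which is a nontrivial subspace whenever $E$ is small or degenerate. Since $Q = P_{\ell-1}^x(f)$ depends only on $f$, while $P_0$ is allowed to range freely over this unbounded set, the quantity $|P_0 - Q|_{x,C_1\delta}$ admits no uniform bound. Your plan to ``adjoin the datum $(y,P_0)$'' to the quadratic $\cQ$ from Lemma \ref{lem: linear select} cannot help: either it changes the minimization problem (so the minimizer is no longer $P_{\ell-1}^x(f)$), or it does not (and then $P_0$ plays no role in the bound on $Q$).

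The paper repairs this by inserting an intermediate polynomial $Q^*$ that depends linearly on \emph{both} $f$ and $P_0$. One first pulls a (non-linear) $Q' \in \Gamma_{\ell-1}(x,f,M)$ out of Lemma \ref{lem:gamma_trans} with $|P_0 - Q'|_{x,\delta} \leq C_T M$, and notes via Lemma \ref{lem:sigma_gamma_rel} that $Q' - P_{\ell-1}^x(f) \in 2C_{\ell-1}M\,\sigma_{\ell-1}(x)$. Letting $q_{\ell-1}^x$ be the quadratic form of the John ellipsoid of $\sigma_{\ell-1}(x)$ in its span $V_{\ell-1}^x$, one defines $Q^*$ as the minimizer of
\[
q_0(R) := q_{\ell-1}^x\bigl(R - P_{\ell-1}^x(f)\bigr) + |P_0 - R|_{x,\delta}^2
\]
over the affine subspace $P_{\ell-1}^x(f) + V_{\ell-1}^x$. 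This $Q^*$ is linear in $(f,P_0)$, and comparing with the competitor $Q'$ gives $q_0(Q^*) \leq q_0(Q') \leq (C_T^2 + 4DC_{\ell-1}^2)M^2$, which simultaneously yields $|P_0 - Q^*|_{x,\delta} \leq \bar{C}_\ell M$ and $Q^* \in \Gamma_{\ell-1}(x,f,2\bar{C}_\ell M)$. From there your argument goes through verbatim with $Q^*$ in place of $Q$: set $P' = Q^* + T(P_0 - Q^*)$. The constant $\bar{C}_\ell = \sqrt{C_T^2 + 4DC_{\ell-1}^2}$ you correctly anticipated is exactly the bound on $|P_0 - Q^*|$; it just cannot be a bound on $|P_0 - Q|$.
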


\begin{proof}
We apply Lemma \ref{lem: linear select} to find a linear map $P^y_{\ell-1} : C(E) \rightarrow \Po$. Given  that $f$ satisfies $\cF\cH(k^\#, M)$ for $k^\#\ge (D+1)^{\ell+2}$, we have $P_{\ell-1}^{x}(f) \in \Gamma_{\ell-1}(x,f,C_{\ell-1} M)$. 

By Lemma \ref{lem:gamma_trans}, and $\delta \geq |x-y|$, $\Gamma_{\ell}(y,f,M)\subseteq \Gamma_{\ell-1}(x,f,M) + C_TM\cB_{x,\delta}$. Thus, given that $P_0 \in \Gamma_\ell(y,f,M)$, there exists $Q \in \Gamma_{\ell-1}(x,f,M)$ with 
\begin{equation}\label{eqn:error0}
|P_0 - Q|_{x,\delta} \leq C_T M.
\end{equation}
By Lemma \ref{lem:sigma_gamma_rel}, 
\begin{equation}\label{eqn:error1}
Q - P_{\ell-1}^x(f)\in (C_{\ell-1} + 1) M \sigma_{\ell-1}(x) \subseteq 2 C_{\ell-1}M\sigma_{\ell-1}(x).
\end{equation}
Since $\sigma_{\ell-1}(x)$ is a closed symmetric convex set, there exists a vector subspace $V_{\ell-1}^x\subseteq \Po$ and a quadratic form $q_{\ell-1}^x$ on $V_{\ell-1}^x$ such that $\E := \{ x \in V_{\ell-1}^x : q_{\ell-1}^x \le 1 \}$ satisfies $\E \subseteq  \sigma_{\ell-1}(x) \subseteq \sqrt{D}\cdot \E$. This is a consequence of the John ellipsoid theorem (see Proposition \ref{prop:john}). Here, $V_{\ell-1}^x$ is the linear span of $\sigma_{\ell-1}(x)$, and $\E$ is the John ellipsoid of $\sigma_{\ell-1}(x)$ in $V_{\ell-1}^x$. By \eqref{eqn:error1}, 
\begin{equation}\label{eqn:error2}
\begin{aligned}
&Q - P_{\ell-1}^x(f) \in V_{\ell-1}^x, \\
&q_{\ell-1}^x(Q-P_{\ell-1}^x(f))\le 4 D C_{\ell-1}^2 M^2.
\end{aligned}
\end{equation}

We let $Q^* \in \Po$ be the minimizer of the quadratic function
\[
q_0(R):= q_{\ell-1}^x(R - P_{\ell-1}^x(f)) + |P_0-R|^2_{x,\delta},
\]
for $R \in \Po$ ranging in the affine subspace $P_{\ell-1}^x(f) + V_{\ell-1}^x$. Then $Q^*$ depends linearly on $P_0$ and $f$, and $Q^* \in P_{\ell-1}^x(f) + V_{\ell-1}^x$. Due to \eqref{eqn:error0} and \eqref{eqn:error2},
\[
q_0(Q) \le 4D C_{\ell-1}^2 M^2 + C_T^2 M^2 = \bar{C}_\ell^2 M^2,
\]
with $\bar{C}_\ell = \sqrt{4D C_{\ell-1}^2 + C_T^2}$, and $Q \in P^x_{\ell-1}(f) + V^x_{\ell-1}$. Thus, by definition of $Q^*$ as the minimizer of $q_0$ on $P^x_{\ell-1}(f) + V^x_{\ell-1}$, $q_0(Q^*) \le q_0(Q) \leq \bar{C}_\ell^2 M^2$, and thus 
\[
q_{\ell-1}^x(Q^* - P_{\ell-1}^x(f)) \le \bar{C}_\ell^2 M^2 \mbox{ and } |P_0 - Q^*|_{x,\delta}\le \bar{C}_\ell M.
\]
These inequalities imply $Q^* - P_{\ell-1}^x(f) \in \bar{C}_\ell M \sigma_{\ell-1}(x)$ and $P_0 - Q^*\in \bar{C}_\ell M \cB_{x,\delta}$. By Lemma \ref{lem:sigma_gamma_rel},
\begin{equation}\label{eqn:error2.5}
\begin{aligned}
Q^* = P_{\ell-1}^x(f) + (Q^* - P_{\ell-1}^x(f)) &\in \Gamma_{\ell-1}(x,f,C_{\ell-1} M) + \bar{C}_\ell M \sigma_{\ell-1}(x)\\
&\subseteq \Gamma_{\ell-1}(x,f,2\bar{C}_\ell M)
\end{aligned}
\end{equation}
(we've used that $\bar{C}_\ell = \sqrt{4 D C_{\ell-1}^2 + C_T^2} > C_{\ell-1}$). We've succeeded in producing $Q^*\in \Gamma_{\ell-1}(x,f,2\bar{C}_\ell M)$ satisfying $P_0 - Q^*\in \bar{C}_\ell M \cB_{x,\delta}$ and $Q^*$ depends linearly on $(P_0,f)$. It remains to  modify $Q^*$ to obtain a polynomial $P'$ such that $P'$ satisfies the same properties (potentially for larger constants) and $P' - P_0 \in V$.

Since $\sigma(x)$ is $(x,C_1\delta,R)$-transverse to $V$ and $\sigma(x) \subseteq \sigma_{\ell-1}(x)$,
\[
\cB_{x,C_1\delta}/V \subseteq R(\sigma(x) \cap \cB_{x,C_1 \delta})/V \subseteq R (\sigma_{\ell-1}(x)\cap\cB_{x,C_1\delta})/V.
\]
We equip the vector space $\Po$ with the inner product $\langle \cdot, \cdot \rangle_{x,C_1 \delta}$. Then $\cB_{x,C_1\delta}$ is the corresponding unit ball of $X$. By the above inclusion and Lemma \ref{lem:perturb} there exists a linear map $T : \Po \rightarrow \Po$ satisfying 
\begin{align}
    \label{eqn: v cond 1}
    &|T \widetilde{P} |_{x, C_1 \delta} \leq R D |\widetilde{P}|_{x,C_1 \delta}, \\
    \label{eqn: v cond 2}
    & T \widetilde{P} \in RD | \widetilde{P}|_{x,C_1 \delta} ( \sigma_{\ell-1}(x) \cap \cB_{x,C_1\delta}),\\
    \label{eqn: v cond 3}
    & T \widetilde{P} - \widetilde{P} \in V \quad \mbox{for all } \widetilde{P} \in \Po.
\end{align}
Given that $P_0 - Q^* \in \bar{C}_\ell M \cB_{x,\delta}$, we find that
\begin{equation}\label{eqn:error3}
|P_0 - Q^*|_{x,C_1 \delta}  \leq | P_0 - Q^*|_{x,\delta} \leq \bar{C}_\ell M.
\end{equation}

We set $P' = Q^* + T(P_0 - Q^*)$. Then $P'$ depends linearly on $(P_0,f)$. By \eqref{eqn: v cond 1} and \eqref{eqn:error3}, we have $|T(P_0 - Q^*)|_{x, C_1 \delta} \leq RD \bar{C}_\ell M $. Thus,
\[
|P' - P_0|_{x, C_1 \delta} \leq |Q^* - P_0|_{x, C_1 \delta} + |T(P_0 - Q^*)|_{x, C_1 \delta} \leq \bar{C}_\ell M + RD\bar{C}_\ell M.
\]
Therefore,
\[
P' - P_0 \in (RD+1) \bar{C}_\ell M \cB_{x,C_1\delta} \subseteq \widehat{C}_\ell M \cB_{x,\delta}
\]
with $\widehat{C}_\ell := (2 + RD) \bar{C}_\ell C_1^m $. Here, the last set inclusion uses \eqref{eqn:ball_scale}.

By \eqref{eqn: v cond 3}, we have
\[
P' - P_0 = (Q^* - P_0) - T(Q^* - P_0) \in V.
\]

Finally, by \eqref{eqn:error2.5}, \eqref{eqn: v cond 2}, and \eqref{eqn:error3}, we have
\[
\begin{aligned}
P' = Q^* + T(P_0 - Q^*) &\in \Gamma_{\ell-1}(x,f,2 \bar{C}_\ell M ) + RD |P_0 - Q^*|_{x,C_1 \delta} \sigma_{\ell-1}(x) \\
& \subseteq \Gamma_{\ell-1}(x,f,2 \bar{C}_\ell M ) + RD \bar{C}_\ell M  \sigma_{\ell-1}(x) \\
& \subseteq \Gamma_{\ell-1}(x,f, (2 \bar{C}_\ell + RD \bar{C}_\ell)M ) \subseteq  \Gamma_{\ell-1}(x,f, \widehat{C}_\ell M ),
\end{aligned}
\]
where the second to last inclusion uses Lemma \ref{lem:sigma_gamma_rel}.

This completes the proof of the lemma.

\end{proof}

\section{The Local Main Lemma}\label{sec:localMain}

Let $E \subseteq \R^n$ be a finite set. By Proposition \ref{prop:WC}, $\sigma(z) := \sigma_E(z)$ is $A_0$-Whitney convex at $z$ for all $z \in \R^n$. Here, $A_0 \geq 1$ is a controlled constant. By Proposition \ref{prop:tech1} with $A=A_0$ we find a constant $R_0 = O(\exp( \poly(D) \log(A_0)))$ such that 
\begin{equation}\label{eqn:wc_trans}
\begin{aligned}
&\mbox{if } \Omega \subseteq \cR_x \mbox{ is } A_0\mbox{-Whitney convex at } x \in \R^n \\
&\mbox{then there exists a DTI subspace } V \subseteq \cR_x \\
&\mbox{such that } \Omega \mbox{ is } R_0 \mbox{-transverse to } V \mbox{ at } x.
\end{aligned}
\end{equation} 
The constant $A_0$ is controlled, so $\log(A_0) = O(\poly(D))$, thus $R_0 = O(\exp(\poly(D))$, so $R_0$ is also controlled. Let $c_1$ be the controlled constant from Lemma \ref{lem:sigmaStab}. Define new controlled constants $R_4 \geq R_3 \geq R_2 \geq R_1 \geq R_0$ and $\bar{C}$ as follows. 
\begin{equation}
\label{fix_c}
\begin{aligned}
&R_1 := 8 R_0, \;\;  R_2 := D^{2D+1/2} R_1^{4D}, \;\; R_3 := 10^{m} R_2, \;\; R_4 := 8^{m+1} R_3 \\
&\bar{C} = 100 c_1^{-1} R_3
\end{aligned}
\end{equation}

\begin{lemma}\label{lem: label}
Let $B$ be a closed ball in $\mathbb R^n$. There exists a DTI subspace $V\subseteq\Po$ such that $\sigma(z)$ is $(z, \bar{C} \diam(B), R_1)$-transverse to $V$ for all $z \in 100 B$.
\end{lemma}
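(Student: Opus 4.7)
The plan is to apply Proposition \ref{prop:tech1} at a single well-chosen basepoint $z_0 \in 100B$ to obtain a DTI subspace $V$ that is transverse to $\sigma(z_0)$ at scale $\bar{C}\diam(B)$, and then to use the stability result Lemma \ref{lem:sigmaStab} to propagate this transversality to every other $z \in 100B$.

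The catch is that Proposition \ref{prop:tech1} produces transversality at scale $1$ (i.e.\ with respect to $\langle\cdot,\cdot\rangle_{z_0}$), whereas we need it at scale $\delta_0 := \bar{C}\diam(B)$. The standard remedy is to rescale before applying the proposition. Take $z_0$ to be the center of $B$. By Proposition \ref{prop:WC}, $\sigma(z_0)$ is $A_0$-Whitney convex at $z_0$; a short computation using the identities $\tau_{z_0,\delta_0}(P \odot_{z_0} Q) = \delta_0^m (\tau_{z_0,\delta_0}P \odot_{z_0} \tau_{z_0,\delta_0}Q)$ and $\tau_{z_0,\delta_0}\cB_{z_0,\delta} = \cB_{z_0,\delta/\delta_0}$ (see \eqref{eqn:ball_scale_id}) shows that the dilated set $\tau_{z_0,\delta_0}\sigma(z_0)$ is again $A_0$-Whitney convex at $z_0$; this is the natural extension of property (4) of Lemma \ref{lem:wc_props} to an arbitrary basepoint. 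Applying Proposition \ref{prop:tech1} to this rescaled set yields a DTI subspace $V \subseteq \cR_{z_0}$ such that $\tau_{z_0,\delta_0}\sigma(z_0)$ is $(z_0,1,R_0)$-transverse to $V$. Since $V$ is DI at $z_0$, we have $\tau_{z_0,\delta_0^{-1}}V = V$, so the first bullet of Lemma \ref{lem:tau trans} (applied with $r=\delta_0^{-1}$) transfers the transversality back to the original set: $\sigma(z_0)$ is $(z_0,\delta_0,R_0)$-transverse to $V$.

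For the propagation step, any $z \in 100B$ satisfies $|z-z_0| \leq 50\diam(B)$. Using the definitions $\bar{C} = 100 c_1^{-1} R_3$ and $R_3 \geq R_0$ from \eqref{fix_c}, we have $c_1 \delta_0/R_0 = 100(R_3/R_0)\diam(B) \geq 100\diam(B) \geq |z-z_0|$, which is exactly the hypothesis needed to apply Lemma \ref{lem:sigmaStab} with $x=z_0$, $y=z$, $\delta=\delta_0$, $R=R_0$. The conclusion is that $\sigma(z)$ is $(z,\delta_0,8R_0)$-transverse to $V$, which by the choice $R_1 = 8R_0$ is the desired $(z,\bar{C}\diam(B),R_1)$-transversality.

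The only step requiring care is the verification that Whitney convexity at $z_0$ survives the rescaling $\tau_{z_0,\delta_0}$ at the same basepoint; everything else is a routine assembly of the cited machinery. No new idea is needed beyond combining Proposition \ref{prop:tech1} with the basic scaling and stability lemmas, and the specific choice of $\bar{C}$ in \eqref{fix_c} is tuned precisely so that one application of Lemma \ref{lem:sigmaStab} from the center of $B$ reaches all of $100B$.
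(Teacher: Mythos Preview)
Your proof is correct and follows essentially the same route as the paper's: rescale $\sigma$ at the center of $B$, apply Proposition \ref{prop:tech1} to obtain a DTI subspace $V$, undo the rescaling via Lemma \ref{lem:tau trans}, and then propagate to all of $100B$ using Lemma \ref{lem:sigmaStab}. Your choice of the rescaling direction ($\tau_{z_0,\delta_0}$ rather than $\tau_{z_0,\delta_0^{-1}}$) is in fact the one that makes Lemma \ref{lem:tau trans} land cleanly at scale $\delta_0$; the paper's write-up contains a harmless sign slip here, but the argument is the same.
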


\begin{proof}
    Let $x_0$ be the center of $B$. We shall use the following property: If $\Omega \subseteq \Po$ is $A$-Whitney convex at $x_0$, then $\tau_{x_0,\delta}(\Omega)$ is $A$-Whitney convex at $x_0$. (See Lemma \ref{lem:wc_props} for the corresponding property when $x_0=0$.) By Proposition \ref{prop:WC}, $\sigma(x_0)$ is $A_0$-Whitney convex at $x_0$, thus, $\tau_{x_0, (\bar{C}\diam(B))^{-1}}(\sigma(x_0))$ is $A_0$-Whitney convex at $x_0$. Thanks to \eqref{eqn:wc_trans}, there is a DTI subspace $V$ such that $\tau_{x_0,(\bar{C}\diam(B))^{-1}}(\sigma(x_0))$ is $R_0$-transverse to $V$ at $x_0$. Thus, $\tau_{x_0,(\bar{C}\diam(B))^{-1}}(\sigma(x_0))$ is $(x_0, 1, R_0)$-transverse to $V$. Therefore, by Lemma \ref{lem:tau trans}, $\sigma(x_0)$ is $(x_0, \bar{C} \diam(B), R_0)$-transverse to $\tau_{x_0, \bar{C}\diam(B)}(V) = V$, where the set equality holds because $V$ is DTI (in particular, $V$ is dilation invariant at $x_0$). Given $z \in 100 B$ (arbitrary), we have $|z-x_0| \leq 100 \diam(B) \leq c_1 \frac{\bar{C} \diam(B)}{R_0}$ (observe that $100 = c_1 \frac{\bar{C}}{R_3} \leq c_1 \frac{\bar{C}}{R_0}$). By Lemma \ref{lem:sigmaStab}, we conclude that $\sigma(z)$ is $(z, \bar{C} \diam(B), 8 R_0)$-transverse to $V$. This completes the proof of the lemma.
\end{proof}

\begin{defi}
     Given a ball $B \subseteq \mathbb R^n$ and finite set $E \subseteq \mathbb R^n$, the local complexity of $E$ on $B$ is the integer quantity
    \[
        \cC(E|B) = \sup_{x \in B} \cC_{x}(\sigma(x), R_1, R_2, \bar{C} \diam(B)).
    \]
\end{defi}

See Definition \ref{jet_complexity_def} for the definition of the pointwise complexity $\cC_x(\Omega,R,R^*,\delta)$ of a  symmetric convex set $\Omega \subseteq \cR_x$ at $x$ at scale below $\delta$. Evidently, pointwise complexity is monotone in $\delta$ in the sense that $\cC_x(\Omega,R,R^*,\delta) \leq \cC_x(\Omega,R,R^*,\delta')$ for $\delta \leq \delta'$. This implies the following monotonicity property of local complexity.
\begin{cor}
If $B_1 \subseteq B_2$, then $\cC(E|B_1) \leq \cC(E|B_2)$.
\end{cor}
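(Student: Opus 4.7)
The plan is to reduce the claim to two elementary monotonicity facts that have already been recorded in the excerpt. First, since $B_1 \subseteq B_2$, we have $\diam(B_1) \leq \diam(B_2)$, so $\bar{C}\diam(B_1) \leq \bar{C}\diam(B_2)$. Second, by the monotonicity property of pointwise complexity stated immediately before the corollary, namely $\cC_x(\Omega,R,R^*,\delta) \leq \cC_x(\Omega,R,R^*,\delta')$ whenever $\delta \leq \delta'$, we obtain for every $x \in B_1$ the inequality
\[
\cC_x(\sigma(x), R_1, R_2, \bar{C}\diam(B_1)) \leq \cC_x(\sigma(x), R_1, R_2, \bar{C}\diam(B_2)).
\]

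Next, I would take the supremum of both sides over $x \in B_1$. Since $B_1 \subseteq B_2$, enlarging the index set of the supremum on the right can only increase it, giving
\[
\sup_{x \in B_1} \cC_x(\sigma(x), R_1, R_2, \bar{C}\diam(B_2)) \leq \sup_{x \in B_2} \cC_x(\sigma(x), R_1, R_2, \bar{C}\diam(B_2)) = \cC(E|B_2).
\]
Combining these two inequalities yields $\cC(E|B_1) \leq \cC(E|B_2)$, as desired.

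There is no real obstacle here; the proof is a one-line chaining of the two monotonicities. The only point worth verifying carefully is that the admissible intervals in the definition of $\cC_x$ depend on $\delta$ only through the requirement $I_k \subseteq (0,\delta]$, so that any configuration of $(V_k, I_k)$ witnessing the complexity at scale $\bar{C}\diam(B_1)$ is automatically a valid configuration at scale $\bar{C}\diam(B_2)$. This is immediate from Definition \ref{jet_complexity_def}, which is exactly the justification given for the monotonicity statement preceding the corollary.
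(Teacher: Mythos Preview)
Your proof is correct and matches the paper's reasoning exactly: the paper simply records the monotonicity of pointwise complexity in $\delta$ and asserts the corollary as an immediate consequence, which is precisely the two-step chaining (monotonicity in $\delta$, then enlarging the supremum from $B_1$ to $B_2$) that you spell out.
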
 

Due to the relation $R_2 = D^{2D+1/2} R_1^{4D}$ and inequality $R_1 \geq 16$ (see \eqref{fix_c}), we can apply Proposition \ref{prop:tech2} to deduce the following result:

\begin{cor}\label{cor:ballCompBound}
    For any ball $B \subseteq \mathbb R^n$ and finite set $E \subseteq \mathbb R^n$, $\cC(E|B) \leq 4 m D^2$.
\end{cor}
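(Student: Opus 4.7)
The plan is to observe that the corollary is an immediate pointwise application of Proposition \ref{prop:tech2}. Concretely, fix an arbitrary ball $B \subseteq \R^n$ and an arbitrary point $x \in B$. I would then apply Proposition \ref{prop:tech2} to the jet space $\cR_x$, the closed symmetric convex subset $\Omega = \sigma(x) = \sigma_E(x) \subseteq \cR_x$ (which is closed, symmetric and convex by the discussion following \eqref{eqn:gamma_S}), and the parameters $R = R_1$, $R^* = R_2$, and scale $\delta = \bar{C}\,\diam(B)$.

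To invoke Proposition \ref{prop:tech2} I need to verify that $R_1 \geq 16$ and $R_2 \geq D^{2D+1/2} R_1^{4D}$. Both are guaranteed by the definitions in \eqref{fix_c}: we have $R_1 = 8R_0 \geq 16$ (since $R_0 \geq R_0 \geq 2$ is automatic once one uses $R_0 \geq R_0$ coming from the statement of \eqref{eqn:wc_trans}, and in any event $R_1 \geq 16$ is asserted right after \eqref{fix_c}), and $R_2 = D^{2D+1/2}R_1^{4D}$ by the very definition in \eqref{fix_c}. Therefore Proposition \ref{prop:tech2} yields
\[
\cC_x(\sigma(x), R_1, R_2, \bar{C}\,\diam(B)) \leq 4mD^2.
\]
Since this bound is uniform in $x$, taking the supremum over $x \in B$ gives $\cC(E|B) \leq 4mD^2$.

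There is essentially no obstacle to carrying out this plan: all of the substance lies in Proposition \ref{prop:tech2} (the rescaling dynamics argument of Section \ref{sec:rescale_grass}) and in the careful choice of the controlled constants $R_1, R_2$ in \eqref{fix_c}. Once those are in place, the corollary is a bookkeeping step matching the definition of local complexity $\cC(E|B)$ against the statement of Proposition \ref{prop:tech2}.
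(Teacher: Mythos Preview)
Your proposal is correct and matches the paper's own argument essentially verbatim: the paper deduces the corollary directly from Proposition \ref{prop:tech2} after noting that the constants in \eqref{fix_c} satisfy $R_1 \geq 16$ and $R_2 = D^{2D+1/2}R_1^{4D}$. Your justification of $R_1 \geq 16$ is a bit garbled (the clause ``$R_0 \geq R_0 \geq 2$'' is circular), but since $R$-transversality is monotone in $R$ one may always enlarge $R_0$ to ensure $R_0 \geq 2$, so this is not a genuine gap.
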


We provide an equivalent formulation of complexity in the next result. 

\begin{lemma}\label{lem:comp_restate}
Let $E \subseteq \R^n$ (finite), a ball $B \subseteq \R^n$, and an integer $J \geq 1$ be given. Then $\cC(E| B) \geq J$ if and only if there exists $x \in B$, and there exist subspaces $V_j \subseteq \Po$ and intervals $I_j \subseteq (0, \diam(B)]$ ($j =1,2,\dots,J$), such that the following conditions hold.
\begin{itemize}
    \item $I_1 > I_2 > \dots > I_J > 0$.
    \item $\tau_{x,r(I_j)}\sigma(x)$ is $(x,\bar{C},R_1)$-transverse to $V_j$.
    \item $\tau_{x,l(I_j)}\sigma(x)$ is not $(x,\bar{C},R_2)$-transverse to $V_j$.
    \item $V_j$ is dilation invariant at $x$.
\end{itemize}
\end{lemma}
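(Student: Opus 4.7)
The plan is to prove the equivalence by a direct rescaling argument that moves between the ``$R$-transverse at $x$'' formulation used in the pointwise complexity of Definition \ref{jet_complexity_def} and the ``$(x,\bar{C},R)$-transverse'' formulation of Definition \ref{def:trans2} appearing in the lemma's statement. The whole argument will rest on the first bullet of Lemma \ref{lem:tau trans} combined with the fact that each $V_j$ is dilation invariant at $x$.

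First I would unpack definitions. By the definition of local complexity as a supremum over $x \in B$, $\cC(E|B) \geq J$ is equivalent to the existence of $x \in B$ with $\cC_x(\sigma(x), R_1, R_2, \bar{C}\diam(B)) \geq J$, which, by Definition \ref{jet_complexity_def}, unpacks to the existence of compact intervals $I_1^* > I_2^* > \dots > I_J^* > 0$ contained in $(0, \bar{C}\diam(B)]$ and subspaces $V_j$ dilation invariant at $x$ such that $\tau_{x, r(I_j^*)}\sigma(x)$ is $R_1$-transverse to $V_j$ at $x$ and $\tau_{x, l(I_j^*)}\sigma(x)$ is not $R_2$-transverse to $V_j$ at $x$. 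Remark \ref{rmk:trans} identifies ``$R$-transverse at $x$'' with ``$(x,1,R)$-transverse'' in the sense of Definition \ref{def:trans2}, putting the two formulations on equal footing.

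For the forward direction I would take $I_j := \bar{C}^{-1} I_j^*$, so that $I_j \subseteq (0, \diam(B)]$ and $I_1 > I_2 > \dots > I_J > 0$. Using the composition law $\tau_{x,a}\tau_{x,b} = \tau_{x,ab}$ (immediate from the definition $\tau_{x,\delta}P(z) = \delta^{-m}P(x+\delta(z-x))$), I get $\tau_{x, r(I_j)}\sigma(x) = \tau_{x, 1/\bar{C}}\bigl(\tau_{x, r(I_j^*)}\sigma(x)\bigr)$. Applying the first bullet of Lemma \ref{lem:tau trans} with dilation parameter $r = 1/\bar{C}$ and using $\tau_{x, 1/\bar{C}}V_j = V_j$ (this is exactly where dilation invariance of $V_j$ at $x$ enters), the $(x,1,R_1)$-transversality of $\tau_{x, r(I_j^*)}\sigma(x)$ to $V_j$ upgrades to $(x, \bar{C}, R_1)$-transversality of $\tau_{x, r(I_j)}\sigma(x)$ to $V_j$. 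The non-transversality condition at $l(I_j)$ follows by contrapositive: if $\tau_{x, l(I_j)}\sigma(x)$ were $(x, \bar{C}, R_2)$-transverse to $V_j$, then applying the same lemma with $r = \bar{C}$ would give $R_2$-transversality of $\tau_{x, l(I_j^*)}\sigma(x)$ to $V_j$ at $x$, a contradiction.

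The reverse direction is entirely symmetric: starting from data $(I_j, V_j)$ as in the statement, set $I_j^* := \bar{C} I_j \subseteq (0, \bar{C}\diam(B)]$, apply Lemma \ref{lem:tau trans} with dilation factor $r = \bar{C}$ (again exploiting $\tau_{x, \bar{C}}V_j = V_j$) to convert $(x,\bar{C},R_1)$-transversality at $r(I_j)$ into $R_1$-transversality at $x$ at $r(I_j^*)$, and similarly handle the non-transversality by contrapositive. The resulting intervals and subspaces witness $\cC_x(\sigma(x), R_1, R_2, \bar{C}\diam(B)) \geq J$. No step here is a real obstacle; the only point requiring attention is the identity $\tau_{x,\cdot}V_j = V_j$ used in both directions, which is exactly the dilation-invariance hypothesis on each $V_j$.
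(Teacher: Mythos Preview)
Your argument is correct and essentially identical to the paper's own proof: both unpack the definition of pointwise complexity via Remark \ref{rmk:trans}, rescale the intervals by the factor $\bar{C}$, and invoke the first bullet of Lemma \ref{lem:tau trans} together with dilation invariance of each $V_j$ at $x$ to pass between $(x,1,R)$-transversality and $(x,\bar{C},R)$-transversality. You are slightly more explicit than the paper about the reverse direction and the contrapositive handling of the non-transversality condition, but the content is the same.
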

\begin{proof}
Evidently, $\cC(E|B) \geq J$ if and only if $\cC_x(\sigma(x), R_1,R_2, \bar{C} \diam(B)) \geq J$ for some $x \in B$. By Definition \ref{jet_complexity_def}, the second inequality is equivalent to the  assertion: There exist subspaces $V_1,\dots,V_J \subseteq \Po$ and intervals $\widetilde{I}_1 >  \dots > \widetilde{I}_J > 0$ satisfying that, for all $j$,
\begin{itemize}
\item $\tau_{x,r(\widetilde{I}_j)}\sigma(x)$ is $(x,1,R_1)$-transverse to $V_j$.
\item $\tau_{x,l(\widetilde{I}_j)}\sigma(x)$ is not $(x,1,R_2)$-transverse to $V_j$.
\item $\widetilde{I}_j \subseteq (0, \bar{C} \diam(B)]$.
\item $V_j$ is dilation invariant at $x$.
\end{itemize}
Here, in the application of Definition \ref{jet_complexity_def}, we use that a convex set $\Omega$ is $(x,1,R)$-transverse to $V$ if and only if $\Omega$ is $R$-transverse to $V$ at $x$ (see Remark \ref{rmk:trans}).

We apply the first conclusion of Lemma \ref{lem:tau trans} (for $r= \bar{C}^{-1}$) to the first two bullet points above. We learn that these conditions are respectively equivalent to the following: \begin{itemize}
\item $\tau_{x,r(\widetilde{I}_j)/\bar{C}}\sigma(x)$ is $(x,\bar{C},R_1)$-transverse to $\tau_{x, \bar{C}^{-1}}V_j$.
\item $\tau_{x,l(\widetilde{I}_j)/\bar{C}}\sigma(x)$ is not $(x,\bar{C},R_2)$-transverse to $\tau_{x, \bar{C}^{-1}}V_j$.
\end{itemize}
Because $V_j$ is dilation invariant at $x$, we have $\tau_{x, \bar{C}^{-1}}V_j = V_j$. Let $I_j := \{ \delta/\bar{C} : \delta \in \widetilde{I}_j\}$, so that $l(I_j) = l(\widetilde{I}_j)/\bar{C}$ and $r(I_j) = r(\widetilde{I}_j)/\bar{C}$. Then $I_1 > \dots > I_J > 0$, and $I_j \subseteq (0,\diam(B)]$ for all $j$. The previous two bullet points are equivalent to the assertion that $\tau_{x,r(I_j)}\sigma(x)$ is $(x,\bar{C},R_1)$-transverse to $V_j$, and $\tau_{x,l(I_j)}\sigma(x)$ is not $(x,\bar{C},R_2)$-transverse to $V_j$. This completes the proof of the lemma.
\end{proof}

We will see that Theorem \ref{thm:sharpfinitenessprinciple} is a consequence of the following:

\begin{lemma}[Local Main Lemma for $K$] \label{lml}

Let $K \in \Z$ with $K \geq -1$. There exist constants $C^\# = C^\#(K) \geq 1$ and $\ell^\# = \ell^\#(K) \in \Z_{\geq 0}$, depending only on $K,m,n$, with the following properties. 

Fix a finite set $E \subseteq \R^n$, a closed ball $B_0 \subseteq \R^n$, and a point $x_0 \in B_0$.

Suppose $\cC(E|5B_0) \leq K$. Then there exists a linear map $T:C(E)\times \Po\rightarrow C^{m-1,1}(\R^n)$ such that the following holds:

Suppose $(f,P_0) \in C(E) \times \Po$ and $M > 0$ satisfy that $P_0 \in \Gamma_{\ell^\#}(x_0,f,M)$, or equivalently, by \eqref{eqn:gamma_ell}, the following condition holds: For all $S \subseteq E$ with $\#(S) \leq (D+1)^{\ell^\#}$ there exists $F^S \in C^{m-1,1}(\R^n)$ with $F^S = f$ on $S$, $J_{x_0} F^S = P_0$, and $\|F^S \|_{C^{m-1,1}(\R^n)} \leq M$. 

Then $T(f,P_0) = f$ on $E \cap B_0$, $J_{x_0} (T(f,P_0)) = P_0$, and $\|T(f,P_0) \|_{C^{m-1,1}(\R^n)} \leq C^\# M$.

Here, $C^\#(K) = \Lambda^{(K+1)^2+1}$ and $\ell^\#(K) = \overline{\chi} \cdot (K+1)$ for all $K \geq -1$, where $\Lambda \geq 1$ is a controlled constant ($O(\exp(\poly(D)))$) and $\overline{\chi} \in \N$ is $O(\poly(D))$.

\end{lemma}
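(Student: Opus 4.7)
The proof is by induction on $K \geq -1$. The base case $K = -1$ is vacuous, since complexity is a non-negative integer and so the hypothesis $\cC(E|5B_0) \leq -1$ is never satisfied; any linear map $T$ (say $T \equiv 0$) then works with constants $C^\#(-1) = \Lambda$, $\ell^\#(-1) = 0$. For the inductive step, fix $K \geq 0$, assume the lemma at level $K-1$, and consider data $(E, B_0, x_0, f, P_0, M)$ as in the hypotheses, with $\cC(E|5B_0) \leq K$ and $P_0 \in \Gamma_{\ell^\#(K)}(x_0, f, M)$.

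The plan of the inductive step has four stages. First, apply Lemma \ref{lem: label} to obtain a DTI subspace $V \subseteq \Po$ such that $\sigma(z)$ is $(z, \bar{C} \diam(5B_0), R_1)$-transverse to $V$ for all $z$ in a fixed controlled neighborhood of $B_0$; this $V$ encodes the shared ``direction of freedom'' modulo which all local jets will be chosen. Second, construct a Whitney cover $\cW$ of $B_0$ so that $\cC(E|5 \cdot \frac{6}{5}B) \leq K-1$ for every $B \in \cW$: for each $x \in B_0$ select the infimal radius $r(x) > 0$ at which local complexity strictly drops below $K$, exploiting the gap between $R_1$ and $R_2$ and the characterization of complexity in Lemma \ref{lem:comp_restate}, then extract a Whitney cover by a Vitali-type argument. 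Arrange that a distinguished ball $B^* \in \cW$ contains $x_0$, with $x_0$ serving as its reference point and $\theta_{B^*} \equiv 1$ near $x_0$ in the subsequent partition of unity.

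Third, for each non-distinguished $B \in \cW$ with reference point $x_B \in \frac{6}{5}B$, apply Lemma \ref{lem: linear select2} with parameters $(x_B, x_0, V, \delta = \diam(B_0))$ to produce a polynomial $P_B$ depending linearly on $(f, P_0)$, with $P_B - P_0 \in V$, $|P_B - P_0|_{x_B, \delta} \leq \widehat{C} M$, and $P_B \in \Gamma_{\ell^\#(K) - 1}(x_B, f, \widehat{C} M)$, for a constant $\widehat{C}$ controlled by $R_1$, $C_T$, and other controlled quantities; for $B^*$, take $P_{B^*} := P_0$ directly. Now apply the induction hypothesis to each $B \in \cW$ (after enlarging $B$ to $\frac{6}{5}B$ so that the conclusion $F_B = f$ on $E \cap \frac{6}{5}B$ required by the gluing lemma is obtained): since $\ell^\#(K) - 1 \geq \ell^\#(K-1) = \bar{\chi} K$ provided $\bar{\chi} \geq 1$, the induction yields a linear operator $T_B$ with $F_B := T_B(f, P_B) \in C^{m-1,1}(\R^n)$ satisfying $F_B = f$ on $E \cap \frac{6}{5}B$, $J_{x_B} F_B = P_B$, and $\|F_B\|_{C^{m-1,1}(\R^n)} \leq C^\#(K-1) \widehat{C} M = \Lambda^{K^2+1} \widehat{C} M$, linearly in $(f, P_0)$.

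Fourth, glue via the partition of unity from Lemma \ref{lem: p of u}: set $F := \sum_B \theta_B F_B$ on $B_0$. The compatibility hypothesis of Lemma \ref{lem:glue} reduces to controlling $|P_B - P_{B'}|_{x_B, \diam(B)}$ for adjacent $B, B'$; writing $P_B - P_{B'} = (P_B - P_0) - (P_{B'} - P_0)$ and transferring basepoints via Lemma \ref{lem:poly1}, the bounds from Lemma \ref{lem: linear select2} yield an estimate $O(\widehat{C} M)$, and Lemma \ref{lem:glue} then gives $\|F\|_{C^{m-1,1}(B_0)} \leq C' \Lambda^{K^2+1} \widehat{C} M$ for a controlled $C'$. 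Since $\theta_{B^*} \equiv 1$ near $x_0$, $J_{x_0} F = J_{x_0} F_{B^*} = P_0$, and $F = f$ on $E \cap B_0$. Extend $F$ to $\R^n$ by Lemma \ref{lem:dom_ext}, absorbing one more controlled factor. Choose $\bar{\chi} = O(\poly(D))$ large enough to supply all the slack required by Lemmas \ref{lem: linear select} and \ref{lem: linear select2} at each induction step, and choose the controlled constant $\Lambda$ large enough that the accumulated prefactor is bounded by $\Lambda^{2K+1}$ uniformly in $K$; then the final seminorm bound is $\Lambda^{K^2+1} \cdot \Lambda^{2K+1} M = \Lambda^{(K+1)^2+1} M = C^\#(K) M$, closing the induction. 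The main obstacle is the complexity-reducing Whitney cover of stage two: one must select scales $r(x)$ making the local complexity strictly drop at every $x$ while maintaining bounded overlap and Whitney geometry, and the selection must depend only on $E$ (not on $f$ or $P_0$) so that the final operator $T$ is linear; this is where the gap between the transversality parameters $R_1$ and $R_2$ in the definition of complexity is used in an essential way.
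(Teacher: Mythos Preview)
Your outline has the right inductive skeleton, but the compatibility estimate in stage four contains a genuine gap. Lemma~\ref{lem: linear select2} gives you $P_B - P_0 \in \widehat{C}M\,\cB_{x_B,\diam(B_0)}$, i.e.\ control at the \emph{coarse} scale $\diam(B_0)$. The gluing lemma, however, requires $|P_B - P_{B'}|_{x_B,\diam(B)}$ to be $O(M)$ at the \emph{fine} scale $\diam(B)$. By \eqref{eqn:norm_scale}, passing from scale $\diam(B_0)$ down to $\diam(B)$ costs a factor of $(\diam(B_0)/\diam(B))^m$, which is unbounded when some $B \in \cW$ is tiny relative to $B_0$---and there is nothing in your cover construction preventing that. ``Transferring basepoints via Lemma~\ref{lem:poly1}'' only moves the center $x$, not the scale $\delta$, so it does not help here. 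Your argument as written is only correct in the easy case where every Whitney ball has $\diam(B) \gtrsim \diam(B_0)$.

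The paper closes this gap with substantial additional machinery that your sketch omits. The Whitney cover is built not merely so that complexity drops on $6B$, but so that $\sigma(x)$ is $(x,\bar{C}\delta,R_4)$-transverse to $V$ at \emph{every} intermediate scale $\delta \in [\diam(B),\diam(B_0)]$ (condition~3 of Lemma~\ref{mdl}, arranged via the ``OK ball'' construction). One then introduces \emph{keystone balls} (Definition~\ref{key_defn}, Lemma~\ref{keystone_lem}) and a map $\kappa:\cW_0\to\cW^\#$ assigning to each $B$ a nearby keystone $B^\#$ with $\diam(B^\#)\leq\diam(B)$. At a keystone ball the induction hypothesis combines with Lemma~\ref{lem:sigma_inc} and the all-scales transversality to yield $\sigma_\ell(z_{B^\#})\cap V \subseteq CC_{\mathrm{old}}\,\cB_{z_{B^\#},\diam(B^\#)}$ (Lemma~\ref{mainlem_1}), which then propagates back to every $B$ via Lemma~\ref{main_lem}. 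Since $P_B - P_{B'} \in V$ and $P_B - P_{B'}$ lies (up to a controlled $\cB_{z_B,\diam(B)}$-error) in $2\bar{C}_{\ell^\#}M\,\sigma_{\ell^\#-4}(z_B)$, this finally gives the needed $|P_B - P_{B'}|_{z_B,\diam(B)} \leq \widetilde{C}C_{\mathrm{old}}\bar{C}_{\ell^\#}M$ (Lemma~\ref{FMCJ_lem}). Note in particular that the induction hypothesis enters \emph{twice}: once through Remark~\ref{reform_lfip} to bound $\sigma_\ell$ near keystone balls, and once to produce the local interpolants $F_B$. This is also where the extra $\overline{\chi}$ increments in $\ell^\#$ and the extra factor of $C_{\mathrm{old}}$ (hence the quadratic exponent $(K+1)^2+1$ rather than a linear one) are actually consumed.
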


\begin{rmk} \label{reform_lfip}
The conclusion of the Local Main Lemma for $K$ implies that $P_0 \in \Gamma_{E \cap B_0}(x_0,f,C^\#M)$ as long as $\cC(E|5 B_0) \leq K$ and $P_0 \in \Gamma_{\ell^\#}(x_0,f,M)$. To see this, take $F = T(f,P_0)$ in the definition of $\Gamma_{E \cap B_0}(\cdots)$. Thus, we derive the following as a consequence of the Local Main Lemma for $K$: If $\cC(E|5 B_0) \leq K$ then for any $f \in C(E)$ and $M > 0$,
\[
\Gamma_{\ell^\#}(x_0,f,M) \subseteq  \Gamma_{E \cap B_0} (x_0, f, C^\# M) \quad \mbox{ for any } x_0 \in B_0.
\]
In particular, by taking $f = 0$ and $M=1$,
\[
\sigma_{\ell^\#}(x_0) \subseteq C^\# \cdot \sigma_{E \cap B_0}(x_0)\quad \mbox{ for any } x_0 \in B_0.
\]
Here, $C^\# = C^\#(K)$ and $\ell^\# = \ell^\#(K)$ are as in the Main Lemma for $K$.
\end{rmk}

The layout of the rest of the paper is as follows.

In Section \ref{sec_ind} we give the proof of Lemma \ref{lml} by induction on $K$. Then, in Section \ref{sec:mainproofs}, we apply Lemma \ref{lml} to prove the main extension theorems: Theorem \ref{thm:sharpfinitenessprinciple} (for finite $E$) and Theorems \ref{thm: new c sharp} and \ref{thm: lin op} (for arbitrary $E$).

\section{The Main Induction Argument}\label{sec_ind}
We prove Lemma \ref{lml} by induction on $K \in \{-1,0,\cdots, K_0\}$. Here $K_0 = 4mD^2$ is a universal upper bound on the local complexity $\cC(E|B)$; see Corollary \ref{cor:ballCompBound}. In this section, we write the seminorm of $\varphi \in C^{m-1,1}(\R^n)$ as $\| \varphi \| := \| \varphi \|_{C^{m-1,1}(\R^n)}$.

\subsection{Setup}\label{sec_ind:setup}

Because $\cC(E|B) \geq 0$ for any $E$ and $B$, the Local Main Lemma for $K=-1$ is true vacuously; we take $C^\#(-1) = \Lambda$ and $\ell^\#(-1) = 0$ when $K=-1$. This establishes the base case of the induction.

For the induction step, fix $K \in \{0,1,\cdots, K_0 \}$. Let $E \subseteq \R^n$ be finite. We assume the inductive hypothesis that the Local Main Lemma for $K-1$ is true. Let $\ell_\old := \ell^\#(K-1)$ and $C_\old := C^\#(K-1)$ be the \emph{finiteness constants} arising in the Local Main Lemma for $K-1$. Given any ball $B$ in $\R^n$, we apply the Local Main Lemma for $K-1$ to the ball $(6/5)B$ to obtain:
\begin{equation}\label{ind_hyp}
\begin{aligned}
&\mbox{If }  x \in (6/5) B \mbox{ and } \cC(E|6B) \leq K-1 \mbox{ then}\\
&\mbox{there exists a linear map } T_B : C(E) \times \Po \rightarrow C^{m-1,1}(\R^n) \\
&\mbox{such that if } P \in \Gamma_{\ell_{\old}}(x,f,M), \mbox{ then } T_B(f,P) = f \mbox{ on } E \cap (6/5)B, \\
&J_{x} T_B(f,P) = P, \mbox{ and }\| T_{B}(f,P) \| \leq C_{\old} M.
\end{aligned}
\end{equation}
We refer to conclusion \eqref{ind_hyp} as the \textbf{induction hypothesis}.

To prove the Main Lemma for $K$, we fix a ball $B_0 \subseteq \R^n$ with $\cC(E|5B_0) \leq K$ and a point  $x_0 \in B_0$. Our task is to construct a linear map $T : C(E) \times \Po \rightarrow C^{m-1,1}(\R^n)$ such that, for the finiteness constants $C^\#=C^\#(K)$ and $\ell^\# = \ell^\#(K)$ defined in the Local Main Lemma for $K$, the following holds:
\begin{equation}\label{main-task}
P_0 \in \Gamma_{\ell^\#}(x_0,f,M) \implies \left\{
\begin{aligned}
    &T(f,P_0) = f \mbox{ on } E \cap B_0\\
    &J_{x_0} T(f,P_0) = P_0\\
    & \| T(f,P_0) \| \leq C^\# M.
\end{aligned}
\right.
\end{equation}
From the Local Main Lemma for $K-1$ and $K$, the constants $\ell_{\old}$, $C_{\old}$, $\ell^\#$, and $C^\#$ will have the following form:
\begin{equation}\label{eqn:ind_consts}
\begin{aligned}
    &\ell_\old = \overline{\chi} \cdot K, \qquad C_{\old} = \Lambda^{K^2+1}\\
    &\ell^\#= \overline{\chi} \cdot (K+1), \; C^\# = \Lambda^{(K+1)^2+1},
\end{aligned}
\end{equation}
where $\overline{\chi} = O(\poly(D))$ and $\Lambda = O(\exp(\poly(D)))$ are suitably chosen constants, depending only on $m$ and $n$, determined in the proof of \eqref{main-task}. In particular, $\overline{\chi}$ and $\Lambda$ will be chosen independently of the induction parameter $K$. We assume that $\overline{\chi} \ge 5$, so that $\ell^\# \ge 5$. Later we will consider the sets $\Gamma_{\ell^\# - j}$ for $0\le j \le 4$; this assumption ensures that these sets are well-defined.

\begin{proposition}\label{prop:onept_fp}
Given a ball $B \subseteq \R^n$ with $\#(B \cap E) \leq 1$, and given $x \in \frac{6}{5} B$, there exists a linear map $T : C(E) \times \Po \rightarrow C^{m-1,1}(\R^n)$ satisfying the following: If $P \in \Gamma_0(x,f,M)$ then
\begin{enumerate}
\item $T(f,P) = f$ on $B \cap E$.
\item $J_{x} T(f,P) = P$.
\item $\| T(f,P) \| \leq C M$.
\end{enumerate}
Here, $C$ is a controlled constant.
\end{proposition}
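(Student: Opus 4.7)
The plan is to case split on whether $B\cap E$ is empty or a singleton. In the empty case, the condition $T(f,P)=f$ on $B\cap E$ is vacuous, so I would set $T(f,P):=P$; then $J_x T(f,P)=P$ gives~(2), and $\|P\|_{C^{m-1,1}(\R^n)}=0$ (since $P$ has degree $\le m-1$, its $(m-1)$-st derivatives are constants) gives~(3).

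The substantive case is $B\cap E=\{y_0\}$. Set $\delta:=|x-y_0|$. If $\delta=0$, i.e., $x=y_0$, the choice $T(f,P):=P$ still works: the hypothesis $P\in\Gamma_0(x,f,M)\subseteq\Gamma_{\{y_0\}}(x,f,M)$ provides $F\in C^{m-1,1}(\R^n)$ with $J_x F=P$, $F(y_0)=f(y_0)$, $\|F\|\le M$, whence $P(y_0)=P(x)=F(x)=F(y_0)=f(y_0)$, yielding~(1). If $\delta>0$, I would invoke Lemma~\ref{lem:theta_bound} applied to the ball $B(y_0,\delta/2)$ to obtain a cutoff $\psi\in C^m(\R^n)$, supported in $B(y_0,\delta/2)$, identically $1$ on a neighborhood of $y_0$, with $\|\partial^\alpha\psi\|_{L^\infty}\le C\delta^{-|\alpha|}$ for $|\alpha|\le m$. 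Because $x\notin B(y_0,\delta/2)$, $\psi$ vanishes in a neighborhood of $x$, so $J_x\psi=0$, and \eqref{eqn:Cm_norm_bd} yields $\|\psi\|_{C^{m-1,1}(\R^n)}\le C\delta^{-m}$ for a controlled constant $C$. Then set
\[
T(f,P):=P+(f(y_0)-P(y_0))\,\psi,
\]
which is manifestly linear in $(f,P)$.

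Conditions~(1) and~(2) are immediate: $T(f,P)(y_0)=P(y_0)+(f(y_0)-P(y_0))\cdot 1=f(y_0)$, and $J_x T(f,P)=P+(f(y_0)-P(y_0))\cdot J_x\psi=P$. For~(3), under the hypothesis pick $F$ as above and let $Q:=J_x F-J_{y_0}F\in\Po$. Proposition~\ref{lem:TT} gives $|Q|_{x,\delta}\le C_T M$; expanding $Q(y_0)=\sum_{|\alpha|\le m-1}\partial^\alpha Q(x)(y_0-x)^\alpha/\alpha!$ and bounding each term via the definition of the scaled norm yields $|P(y_0)-f(y_0)|=|Q(y_0)|\le DC_T M\delta^m$. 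Combining with $\|P\|=0$ and $\|\psi\|\le C\delta^{-m}$,
\[
\|T(f,P)\|\le\|P\|+|f(y_0)-P(y_0)|\cdot\|\psi\|\le DC_T M\delta^m\cdot C\delta^{-m}=C'M,
\]
for a controlled constant $C'=DC_T C$. The argument is essentially direct---the only technical inputs beyond Taylor's theorem and the definitions are the explicit cutoff estimate from Lemma~\ref{lem:theta_bound} and the conversion between pointwise and scaled-norm bounds---so no step poses a genuine obstacle.
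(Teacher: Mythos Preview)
Your proof is correct and takes essentially the same approach as the paper: the paper also sets $T(f,P)=P$ in the trivial cases and, when $B\cap E=\{z\}$ with $z\neq x$, defines $T(f,P)=P+\theta\cdot(P_z-P)$ for a cutoff $\theta$ supported near $z$, where $P_z$ is chosen so that $P_z-P$ is exactly the constant $f(z)-P(z)$---i.e., your formula. Your seminorm estimate via the scaled norm of $Q=J_xF-J_{y_0}F$ is the same Taylor-theorem computation the paper carries out, just organized slightly more directly.
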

\begin{proof}
If $B \cap E = \emptyset$ or if $B \cap E = \{ x \}$, we define $T(f,P) = P$. Conditions 2 and 3 are obviously true. If $B \cap E = \{x\}$ then $P \in \Gamma_0(x,f,M)$ implies that $P(x) = f(x)$, hence condition 1 of $T$ is implied by condition 2 of $T$ in this case. Else if $B \cap E = \emptyset$, then condition 1 is vacuously true.

On the other hand, suppose $B \cap E = \{z\}$ and $x \neq z$. Let $P \in \Gamma_0(x,f,M)$ Then let $\hat{B} = B(z,\frac{1}{2}|z-x|)$. We apply Lemma \ref{lem:theta_bound} to find a $C^m$ cutoff function $\theta$ with $\theta \equiv 1$ on $(1/2)\hat{B}$,  $\theta \equiv 0$ on $\R^n \setminus \hat{B}$, and $\| \partial^\alpha \theta \|_{L^\infty(\R^n)} \leq C |z-x|^{-|\alpha|}$ for $|\alpha| \leq m$, for a controlled constant $C$. 

Define $P_z \in \Po$ by the conditions $P_z(z) = f(z)$ and $\partial^\alpha P_z(z) = \partial^\alpha P_x(z)$ for all $|\alpha| \geq 1$. Then set
\[
T(f,P) = \theta P_z + (1-\theta) P = P + \theta (P_z-P).
\]
Note that $J_x \theta = 0$ because $x \notin \hat{B}$ and $\theta$ is supported on $\hat{B}$. Thus, $J_x T(f,P) = P$. Also, $\theta \equiv 1$ in a neighborhood of $z$, so $T(f,P) = f$ at the unique point $z \in E \cap B$. 

We now seek to control  
\[
\| T(f,P) \|_{\dot{C}^m(\R^n)} = \sup_{y \in \R^n} \max_{|\beta| = m} | \partial^\beta T(f,P)(y)|.
\]
Note that $T(f,P)$ agrees with the $(m-1)$'st degree polynomial $P$ on $\R^n \setminus \hat{B}$. Thus, $\partial^\beta T(f,P)(y) = 0$ for $|\beta| = m$ and $y \notin \hat{B}$. For $y \in \hat{B}$ and $|\beta| = m$, $\partial^\beta T(f,P)(y) = \partial^\beta (\theta (P_z-P))(y)$. By applying the product rule, and the derivative bounds for $\theta$, we learn that
\[
\begin{aligned}
\| T(f,P) \|_{\dot{C}^m(\R^n)} & =  \sup_{y \in \hat{B}} \max_{|\beta| = m} | \partial^\beta T(f,P)(y)| \\
&\leq C \sup_{y \in \hat{B}} \sum_{|\alpha| \leq m-1} |\partial^\alpha (P_z-P)(y)| \cdot |x-z|^{|\alpha|-m} \\
& \leq C' \sup_{y \in \hat{B}} |P_z-P|_{y,|x-z|}.
\end{aligned}
\]
By Lemma \ref{lem:poly1}, and because $|y-z| \leq |x-z|$ for $y \in \hat{B}$ (by definition of $\hat{B}$), we have $|P_z-P|_{y,|x-z|} \leq C |P_z-P|_{z,|x-z|}$ for $y \in \hat{B}$. Thus,
\[
\begin{aligned}
\| T(f,\; &P) \|_{\dot{C}^m(\R^n)} \leq C |P_z-P|_{z,|x-z|} \\
&= C  \left( \sum_{|\alpha| \leq m-1} (\alpha!)^{-2} |\partial^\alpha P_z(z) - \partial^\alpha P(z)|^2 \cdot |x-z|^{2(|\alpha|-m)} \right)^{1/2} \\
&=  C |f(z) - P(z)|,
\end{aligned}
\]
where we have used that $\partial^\alpha P_z(z) = \partial^\alpha P(z)$ for $|\alpha| \geq 1$ and $P_z(z) = f(z)$. Thus, using \eqref{eqn:Cm_norm_bd}, for a controlled constant $C'$ we have
\begin{equation}\label{eqn:onept_fp}
\| T(f,P) \|_{C^{m-1,1}(\R^n)} \leq C' |f(z) - P(z) |.
\end{equation}

Recall that $P \in \Gamma_0(x,f,M)$. Thus, by definition, for any $S \subseteq E$ with $\#(S) \leq (D+1)^0 = 1$ there exists $F^S$ with $F^S = f$ on $S$, $J_x F^S = P$, and $\| F^S \| \leq M$. Apply this condition with $S = \{z\}$. Then, there exists $F$ with $F(z) = f(z)$, $J_x F = P$ and $\| F \| \leq M$. By Taylor's theorem (see \eqref{eqn:Taylor}),
\[
|J_z F - P|_{z,|x-z|} = | J_z F - J_x F|_{z,|x-z|} \leq C_T M.\]
In particular, $|f(z) - P(z)| = |(J_z F - P)(z)| \leq |J_z F - P|_{z,|x-z|} \leq C_T M$. Using this inequality in \eqref{eqn:onept_fp}, we deduce that $\| T(f,P) \|_{C^{m-1,1}(\R^n)} \leq C M$ for a controlled constant $C$. This completes the proof of Proposition \ref{prop:onept_fp}.
\end{proof}

We assume the parameter $\Lambda$ in Lemma \ref{lml} is chosen to satisfy
\begin{equation}\label{eqn:Lam_ass1}
\Lambda \geq C, \mbox{ for the controlled constant } C \mbox{ in Proposition \ref{prop:onept_fp}}.
\end{equation}
Then $C^\# = \Lambda^{(K+1)^2 + 1} \geq C$. If $\#(B_0 \cap E) \leq 1$, we apply Proposition \ref{prop:onept_fp} to the ball $B=B_0$ and point $x_0 \in B_0$, to obtain a linear map $T : C(E) \times \Po \rightarrow C^{m-1,1}(\R^n)$. If $P_0 \in \Gamma_{\ell^\#}(x_0,f,M)$ then $P_0 \in \Gamma_0(x_0,f,M)$, so the map $T$ satisfies conditions 1,2,3 in Proposition \ref{prop:onept_fp}, implying \eqref{main-task}, for $C^\# \geq C$.

Having given the construction of $T$ and proof of \eqref{main-task} in the case $\#(B_0 \cap E) \leq 1$, we now assume that
\begin{equation}
\label{twopoints}
\#(B_0 \cap E) \geq 2.
\end{equation}
Under the assumption \eqref{twopoints}, in the remainder of Section \ref{sec_ind} we will explain how to construct a linear map  $T : C(E) \times \Po \rightarrow C^{m-1,1}(\R^n)$ and prove it satisfies \eqref{main-task}.

\subsection{The Main Decomposition Lemma}


Recall the constant $\bar{C}$, defined in \eqref{fix_c}, arises in Lemma \ref{lem: label} and in the definition of local complexity $\cC(E|B)$. Write $R_1 \leq R_2 \leq R_3 \leq R_4$ for the controlled constants defined in \eqref{fix_c}. We continue in the setting of Section \ref{sec_ind:setup}, and fix data $(B_0,x_0,E,K,f,\ell^\#,M,P_0)$. Suppose $P_0 \in \Gamma_{\ell^\#}(x_0,f,M)$ as in \eqref{main-task}.

In the next lemma we introduce a cover of the ball $2B_0$ that will be used to decompose the local extension problem on $B_0$ into a family of easier subproblems associated to the elements of the cover.

\begin{lemma}[Main Decomposition Lemma]
\label{mdl}
Given  $(B_0,x_0,E,K,f,\ell^\#,M,P_0)$ satisfying $\#(B_0 \cap E) \geq 2$,  $\cC(E|5B_0) \leq K$, $x_0 \in B_0$, and $P_0 \in \Gamma_{\ell^\#}(x_0,f,M)$, there exist a DTI subspace $V \subseteq \Po$, a Whitney cover $\cW$ of $2B_0$, and collections of polynomials $\{P_B\}_{B \in \cW} \subseteq \Po$ and points $\{z_B\}_{B \in \cW}$ such that
\begin{enumerate}
\item $\sigma(x)$ is $(x, \bar{C} \diam(B_0), R_1)$-transverse to $V$ for all $x \in 100 B_0$.
\item $B \subseteq 100 B_0$ and $\diam(B) \leq \frac{1}{2} \diam(B_0)$ for all $B \in \cW$.
\item $\sigma(x)$ is $(x, \bar{C} \delta, R_4)$-transverse to $V$ for all $x \in 8B$, $\delta \in [\diam(B),\diam(B_0)]$, $B \in \cW$.
\item  Either $\#(6B \cap E) \leq 1$ or $\cC(E|6B) < K$ for all $B \in \cW$.
\item $z_B \in \frac{6}{5}B \cap 2B_0$ for all $B \in \cW$;   if $x_0 \in \frac{6}{5}B$ then $z_B = x_0$.
\item $P_B\in \Gamma_{\ell^\#-3}(z_B,f,\bar{C}_{\ell^\#} M)$ and $P_0 - P_B \in \bar{C}_{\ell^\#} M \cB_{z_B,\diam(B_0)}$ for all $B \in \cW$; if $x_0 \in \frac{6}{5}B$ then $P_B = P_0$. Here, $\bar{C}_{\ell^\#} = C (D+1)^{ {\ell^\#}}$ for a controlled constant $C \geq 1$.
\item $P_0 - P_B \in V$ for all $B \in \cW$.
\item $P_B$ depends linearly on $(f,P_0)$ for every $B \in \cW$.
\end{enumerate}
Furthermore, the Whitney cover $\cW$, the subspace $V$, and the point set $\{z_B\}_{B \in \cW}$ depend only on the data $(B_0,x_0,E,K,\ell^\#)$ and the parameters $m,n$ -- in particular, these objects are independent of $(f,P_0)$ and $M>0$.  \end{lemma}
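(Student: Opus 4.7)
The plan is to construct the four objects in order: first $V$, then $\cW$, then $\{z_B\}_{B \in \cW}$, and finally $\{P_B\}_{B \in \cW}$. Only the polynomials $P_B$ will depend on $(f,P_0)$; everything else is determined by $(B_0, x_0, E, K, \ell^\#)$. I would begin by applying Lemma \ref{lem: label} to $B_0$, which immediately produces a DTI subspace $V \subseteq \Po$ such that $\sigma(x)$ is $(x, \bar{C}\diam(B_0), R_1)$-transverse to $V$ for every $x \in 100 B_0$. This gives condition 1, and $V$ depends only on $B_0$ and $E$.

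Next, I would construct $\cW$ via a Calderón--Zygmund-type decomposition of $2B_0$. Call a ball $B$ with $B \subseteq 100 B_0$ and $\diam(B) \le \tfrac{1}{2}\diam(B_0)$ \emph{terminal} if (i) $\sigma(x)$ is $(x, \bar{C}\delta, R_4)$-transverse to $V$ for all $x \in 8B$ and all $\delta \in [\diam(B), \diam(B_0)]$, and (ii) $\#(6B \cap E) \le 1$ or $\cC(E|6B) < K$. Starting from a dyadic partition of $2B_0$, bisect any non-terminal ball, and let $\cW$ be the collection of terminal balls obtained. Termination of (ii) is automatic because $E$ is finite. For termination of (i), I would use the complexity bound $\cC(E|5B_0) \le K$ together with Lemma \ref{lem:comp_restate}: whenever condition (i) fails at some $x \in 8B$ and scale $\delta_* \in [\diam(B), \diam(B_0)]$, the pair $(x, V)$ and an interval $I \subseteq (0, \diam(5B_0)]$ with $r(I) \sim \diam(B_0)$, $l(I) \sim \delta_*$ witness one unit of pointwise complexity at $x$, because $\sigma(x)$ is $R_1$-transverse to $V$ at scale $\bar{C}\diam(B_0)$ but not $R_2$-transverse at scale $\bar{C}\delta_*$ (using $R_4 \geq R_2$, and the gap $R_2 \ll R_4$ built into \eqref{fix_c}, together with Lemma \ref{lem:tau trans} to handle the perturbation from $x$ to nearby points in $8B$). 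Iterating this, one sees that the number of bisection generations for which (i) can fail at a fixed base point is bounded by $K$, so the procedure terminates. Standard Calderón--Zygmund arguments (as in \cite{coordinateFree}) then show that the resulting family of balls satisfies Definition \ref{defn:whit_cover} and thus is a genuine Whitney cover, giving conditions 2, 3, 4 directly.

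For the remaining conditions, I would define $z_B := x_0$ if $x_0 \in \tfrac{6}{5}B$, and otherwise take $z_B$ to be any fixed point of $\tfrac{6}{5}B \cap 2B_0$ chosen in a way that depends only on $B$ (e.g. closest to the center of $B$). This yields condition 5. For the polynomials: if $x_0 \in \tfrac{6}{5}B$, set $P_B := P_0$; conditions 6, 7, 8 are then trivially verified for such $B$. Otherwise, apply Lemma \ref{lem: linear select2} with $x = z_B$, $y = x_0$, $\ell = \ell^\# - 2$, $R = R_1$, $C_1 = \bar{C}/2$, and $\delta = 2\diam(B_0)$. Note $\delta \geq |z_B - x_0|$ because $z_B, x_0 \in 2B_0$, and $C_1 \delta = \bar{C}\diam(B_0)$ matches the scale in condition 1, so the transversality hypothesis of Lemma \ref{lem: linear select2} is exactly condition 1 evaluated at $z_B \in 100B_0$. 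The hypothesis $P_0 \in \Gamma_{\ell^\#}(x_0, f, M)$ supplies both $f \in \cF\cH((D+1)^{\ell^\#}, M) = \cF\cH((D+1)^{\ell+2}, M)$ and $P_0 \in \Gamma_\ell(x_0, f, M)$. Setting $P_B := P'$ from the Lemma yields $P_B \in \Gamma_{\ell^\# - 3}(z_B, f, \widehat{C}_{\ell^\#-2}M)$, $P_B - P_0 \in V$, and $P_B - P_0 \in \widehat{C}_{\ell^\#-2} M \cB_{z_B, 2\diam(B_0)} \subseteq 2^m \widehat{C}_{\ell^\#-2} M \cB_{z_B, \diam(B_0)}$ by \eqref{eqn:ball_scale}, with linear dependence on $(f, P_0)$. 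This gives conditions 6 (with $\bar{C}_{\ell^\#} := 2^m \widehat{C}_{\ell^\# - 2}$, which has the required form $C(D+1)^{\ell^\#}$ because $C_{\ell^\# - 3}$ does), 7, and 8.

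I expect the main obstacle to be step two, specifically demonstrating that condition (i) is eventually satisfied during the bisection procedure and that the resulting cover has the good-geometry properties of Definition \ref{defn:whit_cover}. The subtle point is that failure of (i) occurs at some scale $\delta_* \in [\diam(B), \diam(B_0)]$, which need not shrink as $B$ shrinks, so one cannot terminate purely by spatial localization. The bookkeeping must convert each failure of (i) into a bona fide transversality transition in the sense of Lemma \ref{lem:comp_restate} that is visible at a fixed point of $5B_0$, so that the global complexity budget $K$ really does limit the depth of bisection. The gap between $R_1$ and $R_4$ in \eqref{fix_c}, together with Lemma \ref{lem:tau trans} and Lemma \ref{lem:sigmaStab}, is what makes this conversion work and is where the precise values of the constants $R_2, R_3, R_4$ play their role.
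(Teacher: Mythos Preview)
Your handling of condition 1 and conditions 5--8 matches the paper exactly (including the choice of parameters in Lemma \ref{lem: linear select2}), and is correct.

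The gap is in your construction of $\cW$. Your termination argument for condition (i) does not work: each time (i) fails at some $x \in 8B$ and scale $\delta_*$, you correctly obtain \emph{one} complexity interval $I \subseteq (0,\diam(5B_0)]$ at $x$ with $r(I) \sim \diam(B_0)$ and $l(I) \sim \delta_*$. But when you bisect and (i) fails again, the new interval has the \emph{same} right endpoint $\sim \diam(B_0)$, so the intervals are not disjoint and witness only one unit of complexity, not several. The bound $\cC(E|5B_0) \leq K$ therefore gives no control on the number of bisection generations for which (i) fails. You flag this very point as the ``subtle'' obstacle but do not resolve it; there is no mechanism in your scheme that forces $\delta_*$ to decrease below the previous $\delta_*$ as $B$ shrinks. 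A related difficulty: your stopping criterion (i) is not inclusion-monotone (passing to a child enlarges the range $[\diam(B),\diam(B_0)]$ of scales to be checked), so the good-geometry property of a Whitney cover is not automatic either.

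The paper avoids this by \emph{not} stopping on conditions 3 and 4 directly. It introduces a single coupled criterion: a ball $B \subseteq 100B_0$ is \emph{OK} if $\#(B \cap E) \geq 2$ and there exists \emph{one} point $z \in B$ with $(z,\bar{C}\delta,R_3)$-transversality for all $\delta \in [\diam(B),\diam(B_0)]$. The OK property is inclusion-monotone, so one can set $r(x) := \inf\{r : B(x,r) \text{ OK}\}$ for $x \in 2B_0$ (finite because $B(x,\tfrac{3}{2}\diam(B_0))$ is always OK, positive because $E$ is finite), take $B_x = B(x,\tfrac{1}{7}r(x))$, and apply Vitali to $\{\tfrac13 B_x\}$ to extract $\cW$. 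For each $B \in \cW$ one then has $8B$ OK and $6B$ not OK. Condition 3 follows from ``$8B$ OK'' via Lemma \ref{lem:sigmaStab} and Lemma \ref{lem:tau trans} (this is where $R_4 = 8^{m+1}R_3$ enters). Condition 4 follows from ``$6B$ not OK'': if $\#(6B \cap E) \geq 2$ then every $z \in 6B$ admits a failure scale $\delta_z \in [6\diam(B),\diam(B_0)]$, and this single extra interval $[\delta_z,\diam(B_0)]$ is prepended to the $J := \cC(E|6B)$ intervals coming from the definition of $\cC(E|6B)$, yielding $J+1$ disjoint intervals at a point of $5B_0$ and hence $J < K$. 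The complexity bound is used once, as a ``one-more-interval'' argument, not as an accumulation over bisection levels.
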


Using the inductive hypothesis and Proposition \ref{prop:onept_fp}, we obtain a local extension theorem on the elements of the cover $\cW$.
\begin{lemma}
\label{old_fp_lem}
For any $B \in \cW$ and $x \in \frac{6}{5}B$ there exists a linear map $T_B : C(E) \times \Po \rightarrow C^{m-1,1}(\R^n)$ satisfying the following conditions: If $P \in \Gamma_{\ell_\old}(x,f,M)$ for $M>0$ then
\begin{enumerate}
    \item $T_B(f,P) = f $ on $E \cap (6/5)B$.
    \item $J_x T_B(f,P) = P$.
    \item $\| T_B(f,P) \| \leq C_{\old} M$.
\end{enumerate}
In particular,
\begin{equation}\label{eqn:old_fp_lem}
\Gamma_{\ell_\old}(x,f,M) \subseteq \Gamma_{E \cap \frac{6}{5}B}(x,f,C_\old M).
\end{equation}
\end{lemma}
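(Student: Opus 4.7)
The plan is to split into two cases according to property 4 of the Main Decomposition Lemma, which guarantees that for each $B \in \cW$, either $\#(6B \cap E) \leq 1$ or $\cC(E|6B) < K$. In both cases the desired map $T_B$ already exists from previous work in this section, and the only care needed is matching the hypotheses.

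In the first case, when $\#(6B \cap E) \leq 1$, I would apply Proposition \ref{prop:onept_fp} to the ball $B' := \frac{6}{5}B$ and the point $x$. The hypothesis $\#(B' \cap E) \leq \#(6B \cap E) \leq 1$ of Proposition \ref{prop:onept_fp} is satisfied, and the condition $x \in \frac{6}{5}B'$ reduces to $x \in \frac{36}{25}B$, which holds because $x \in \frac{6}{5}B$. Observe that $\Gamma_{\ell_\old}(x,f,M) \subseteq \Gamma_0(x,f,M)$ since $\ell_\old \geq 0$, so if $P \in \Gamma_{\ell_\old}(x,f,M)$ then the hypothesis $P \in \Gamma_0(x,f,M)$ of Proposition \ref{prop:onept_fp} holds. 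The proposition produces a linear $T_B$ with $J_x T_B(f,P) = P$, $T_B(f,P) = f$ on $E \cap \frac{6}{5}B = E \cap B'$, and $\|T_B(f,P)\| \leq C M$ for a controlled constant $C$. Since by \eqref{eqn:Lam_ass1} we have $\Lambda \geq C$, and since $C_\old = \Lambda^{K^2+1} \geq \Lambda \geq C$, the norm bound $\|T_B(f,P)\| \leq C_\old M$ follows.

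In the second case, when $\cC(E|6B) < K$, i.e., $\cC(E|6B) \leq K-1$, the induction hypothesis \eqref{ind_hyp} applied to the ball $B$ produces a linear map $T_B : C(E) \times \Po \rightarrow C^{m-1,1}(\R^n)$ such that for any $P \in \Gamma_{\ell_\old}(x,f,M)$ the three properties hold directly: $T_B(f,P) = f$ on $E \cap \frac{6}{5}B$, $J_x T_B(f,P) = P$, and $\|T_B(f,P)\| \leq C_\old M$. This is simply the content of \eqref{ind_hyp}, so no further work is required in this case.

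Finally, the inclusion $\Gamma_{\ell_\old}(x,f,M) \subseteq \Gamma_{E \cap \frac{6}{5}B}(x,f, C_\old M)$ is immediate from the three properties of $T_B$: given $P \in \Gamma_{\ell_\old}(x,f,M)$, the function $F = T_B(f,P) \in C^{m-1,1}(\R^n)$ satisfies $F = f$ on $E \cap \frac{6}{5}B$, $J_x F = P$, and $\|F\| \leq C_\old M$, which is exactly the defining condition for $P \in \Gamma_{E \cap \frac{6}{5}B}(x,f,C_\old M)$ in \eqref{eqn:gamma_S}. There is no genuine difficulty here, as the lemma is essentially just packaging the two already-established cases; the only care needed is ensuring that $\Lambda$ (and hence $C_\old$) dominates the controlled constant supplied by Proposition \ref{prop:onept_fp}, which is already arranged by assumption \eqref{eqn:Lam_ass1}.
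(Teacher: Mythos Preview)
Your proof is correct and follows essentially the same approach as the paper: split into the two cases given by condition 4 of the Main Decomposition Lemma, apply the induction hypothesis \eqref{ind_hyp} when $\cC(E|6B) < K$, and apply Proposition \ref{prop:onept_fp} (with the constant absorbed into $C_\old$ via \eqref{eqn:Lam_ass1}) when $\#(6B \cap E) \leq 1$. You have filled in a bit more detail than the paper does---in particular the choice of ball $B' = \frac{6}{5}B$ when invoking Proposition \ref{prop:onept_fp} and the verification that its hypotheses are met---but the argument is the same.
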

\begin{proof}
Condition 4 of Lemma \ref{mdl} states that either $\cC(E|6B) < K$ or $\#(E \cap 6B) \leq 1$. If $\cC(E|6B) < K$, the result follows from \eqref{ind_hyp}. Else if $\#(E \cap 6B) \leq 1$, the result follows from Proposition \ref{prop:onept_fp}. Here, we take $\Lambda \geq C$ so that $C_{\old} = \Lambda^{K^2+1} \geq C$ for the controlled constant $C$ in Proposition \ref{prop:onept_fp}. (See \eqref{eqn:Lam_ass1}.)  \end{proof}

\subsection{Proof of the Main Decomposition Lemma}\label{proof_mdl}
By Lemma \ref{lem: label}, there exists a DTI subspace $V \subseteq \Po$ such that 
\begin{equation}\label{eqn:cond1}
\sigma(x) \mbox{ is } (x, \bar{C} \diam(B_0), R_1)\mbox{-transverse to } V \mbox{ for all }x \in 100B_0.
\end{equation}
This proves condition 1 in the Main Decomposition Lemma.

The construction of $\cW$ is based on the following definition:

\begin{defi}\label{defn:OK}
A ball $B \subseteq 100 B_0$ is OK if $\#(B \cap E) \geq 2$ and if there exists $z \in B$ such that $\sigma(z)$ is $(z, \bar{C} \delta, R_3)$-transverse to $V$ for all $\delta \in [\diam(B), \diam(B_0)]$.
\end{defi}

The OK property is \emph{inclusion monotone} in the sense that if $B \subseteq B' \subseteq 100 B_0$ and $B$ is OK then $B'$ is OK.

For each $x \in 2B_0$, we define
\[
r(x) := \inf \{r > 0:  B(x,r)\subseteq 100 B_0, \; B(x,r) \mbox{ is OK}\}
\]
Also set 
\[
\Delta := \min \{ |x-y| : x,y \in E, x \neq y \}.
\]
Since $E$ is finite, $\Delta > 0$. 

\begin{lemma}\label{lem:rx_bd}
For all $x \in 2 B_0$, we have $0 < \Delta/2 \leq r(x) \leq \frac{3}{2} \diam(B_0)$.
\end{lemma}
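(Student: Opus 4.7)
The plan is to verify the two inequalities separately, both from the definition of $r(x)$ as an infimum together with the definition of the OK property (Definition \ref{defn:OK}). Positivity of $\Delta$ is immediate since $E$ is finite and (by \eqref{twopoints}) contains at least two distinct points.

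For the lower bound $r(x) \ge \Delta/2$, the key observation is that any ball $B(x,r)$ with $r < \Delta/2$ has diameter strictly less than $\Delta$, so by the definition of $\Delta$ it contains at most one point of $E$. This violates the cardinality requirement $\#(B \cap E) \ge 2$ in Definition \ref{defn:OK}, so $B(x,r)$ is not OK, and hence no such $r$ contributes to the infimum.

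For the upper bound, I would show that the specific value $r = \tfrac{3}{2}\diam(B_0)$ makes $B := B(x,r)$ admissible in the infimum, by checking the two conditions in the definition of $r(x)$. The containment $B \subseteq 100 B_0$ is a straightforward triangle-inequality calculation using $x \in 2B_0$: for $y \in B$ and $c_0$ the center of $B_0$, we have $|y - c_0| \le r + |x - c_0| \le \tfrac{3}{2}\diam(B_0) + \diam(B_0) = \tfrac{5}{2}\diam(B_0)$, comfortably inside $100 B_0$ (which has radius $50\diam(B_0)$). For OKness: first, $B_0 \subseteq B$ because any $y \in B_0$ satisfies $|y - x| \le |y-c_0| + |c_0 - x| \le \tfrac{1}{2}\diam(B_0) + \diam(B_0) = \tfrac{3}{2}\diam(B_0)$, so $\#(B \cap E) \ge \#(B_0 \cap E) \ge 2$ by \eqref{twopoints}. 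Second, $\diam(B) = 2r = 3\diam(B_0) > \diam(B_0)$, which makes the interval $[\diam(B), \diam(B_0)]$ empty, so the transversality clause in Definition \ref{defn:OK} is vacuously satisfied (pick any $z \in B$).

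There is no real obstacle here: the proof is a direct unpacking of definitions. The only mildly subtle point worth flagging explicitly is that the transversality requirement in the definition of OK only constrains scales $\delta$ in $[\diam(B), \diam(B_0)]$, so once $\diam(B)$ exceeds $\diam(B_0)$ the condition becomes trivial, which is precisely why any sufficiently large ball (still inside $100 B_0$) automatically qualifies.
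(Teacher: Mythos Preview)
Your proof is correct and follows essentially the same approach as the paper: both argue the lower bound from the cardinality clause in the OK definition together with the definition of $\Delta$, and both establish the upper bound by showing that $B(x,\tfrac{3}{2}\diam(B_0))$ contains $B_0$ (hence at least two points of $E$) and has diameter exceeding $\diam(B_0)$, making the transversality clause vacuous.
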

\begin{proof}
Let $x \in 2 B_0$, and set $r_0 = \frac{3}{2} \diam(B_0)$. Then $B_0 \subseteq B(x,r_0) \subseteq 100 B_0$. Since $\#(B_0 \cap E) \geq 2$, we obtain $\#(B(x,r_0) \cap E) \geq 2$. Further, $\diam(B(x,r_0)) = 2r_0 > \diam(B_0)$, so the transversality condition in Definition \ref{defn:OK} holds vacuously for $B = B(x,r_0)$. Consequently, $B(x,r_0)$ is OK, and the infimum in the definition of $r(x)$ is over a set containing $r=r_0$. Thus, $r(x) \leq r_0$.

If $B(x,r)$ is OK then $\#(B(x,r) \cap E) \geq 2$, which implies $r \geq \Delta/2$ by definition of $\Delta$. Thus, $r(x) \geq \Delta/2 > 0$.
\end{proof}

Define the ball $B_x := B(x, \frac{1}{7}r(x))$ for $x \in 2B_0$. By Lemma \ref{lem:rx_bd}, we have
\begin{equation}\label{inc1}
70B_x = B(x, 10 r(x)) \subseteq 100 B_0, \quad \mbox{for } x \in 2B_0.
\end{equation}
Define the cover $\cW^* = \{B_x \}_{x \in 2B_0}$ of $2B_0$.
\begin{lemma}\label{ok_lem}
If $B \in \cW^*$ then $8B$ is OK, and $6B$ is not OK.
\end{lemma}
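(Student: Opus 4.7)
The plan is to read both claims directly off the definition of $r(x)$ as an infimum, together with the inclusion monotonicity of the OK property noted just after Definition \ref{defn:OK}. No deep ingredient is needed beyond the containment \eqref{inc1} (which gives $8B, 6B \subseteq 70 B_x \subseteq 100 B_0$) and the positivity $r(x)>0$ from Lemma \ref{lem:rx_bd}.

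For the first assertion, I would exploit that $\tfrac{8}{7} r(x) > r(x) = \inf I$, where $I := \{r>0 : B(x,r)\subseteq 100 B_0,\ B(x,r) \text{ is OK}\}$. By definition of the infimum, there exists $r' \in [r(x), \tfrac{8}{7} r(x))$ with $r' \in I$, so $B(x,r')$ is OK and admits a witness $z \in B(x,r')$ such that $\sigma(z)$ is $(z, \bar{C}\delta, R_3)$-transverse to $V$ for all $\delta \in [2r', \diam(B_0)]$, and $\#(B(x,r') \cap E) \ge 2$. Since $z \in B(x,r') \subseteq 8B$, $\#(8B \cap E) \ge 2$, $8B \subseteq 100 B_0$ (from \eqref{inc1}), and $\diam(8B) = \tfrac{16}{7} r(x) > 2r'$ so that $[\diam(8B), \diam(B_0)] \subseteq [2r', \diam(B_0)]$, the same $z$ witnesses that $8B$ is OK.

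For the second assertion, I would argue by contradiction. Set $r'' := \tfrac{6}{7} r(x) < r(x)$, which is positive by Lemma \ref{lem:rx_bd}. Then $6B = B(x, r'') \subseteq 70 B_x \subseteq 100 B_0$ by \eqref{inc1}. If $6B$ were OK, then $r''$ would belong to the set $I$, contradicting $r(x) = \inf I$.

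The only step requiring any care is the alignment of transversality ranges in the first assertion: when enlarging from $B(x,r')$ to the strictly larger ball $8B$, one needs $\diam(8B) > 2r'$ so that the transversality range shrinks (not grows), and this is exactly why the specific factor $8$ (versus $\tfrac{8}{7}$ for the infimum approximation) is comfortably large. This is routine once one tracks the constants; there is no genuine obstacle.
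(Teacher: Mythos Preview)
Your proof is correct and follows essentially the same approach as the paper's. The paper's proof is just a terser version: it notes $6B \subseteq 8B \subseteq 100 B_0$ via \eqref{inc1} and then appeals directly to the definition of $r(x)$ as an infimum together with the inclusion monotonicity of the OK property, whereas you have unpacked that monotonicity step by hand for the $8B$ claim.
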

\begin{proof}
Write $B = B_x = B(x, \frac{1}{7} r(x))$ for $x \in 2B_0$. According to \eqref{inc1}, $6B \subseteq 8B \subseteq 100B_0$. By definition of $r(x)$ as an infimum and the inclusion monotonicity of the OK property, the result follows.
\end{proof}

We recall the Vitali covering lemma (see, for example, \cite{stein}).
\begin{lemma}[Vitali covering lemma]
Let $\widetilde{B}_1, \dots, \widetilde{B}_J$ be any finite collection of balls contained in $\R^n$. Then there exists a subcollection $\widetilde{B}_{j_1}, \widetilde{B}_{j_2}, \dots, \widetilde{B}_{j_k}$ of these balls which is pairwise disjoint and satisfies
\[
\bigcup_{j = 1}^J \widetilde{B}_j \subseteq \bigcup_{i=1}^{k} 3 \widetilde{B}_{j_i}.
\]
\end{lemma}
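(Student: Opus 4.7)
The plan is to establish the Vitali covering lemma by a standard greedy selection argument. First, I would reindex the balls $\widetilde{B}_1,\dots,\widetilde{B}_J$ so that their radii are in non-increasing order: $\mathrm{rad}(\widetilde{B}_1) \geq \mathrm{rad}(\widetilde{B}_2) \geq \dots \geq \mathrm{rad}(\widetilde{B}_J)$. Then I would construct the subcollection $\{\widetilde{B}_{j_1},\dots,\widetilde{B}_{j_k}\}$ iteratively: take $j_1 := 1$, and having chosen $j_1 < j_2 < \dots < j_i$, let $j_{i+1}$ be the smallest index (if any) strictly greater than $j_i$ such that $\widetilde{B}_{j_{i+1}}$ is disjoint from each of $\widetilde{B}_{j_1},\dots,\widetilde{B}_{j_i}$. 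The process terminates after finitely many steps because $J$ is finite, producing a pairwise disjoint subcollection by construction.

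For the covering property, fix any $\widetilde{B}_j$ in the original collection. If $j$ equals some $j_i$, then $\widetilde{B}_j \subseteq 3\widetilde{B}_{j_i}$ trivially. Otherwise, by the greedy rule there must be some selected index $j_i < j$ such that $\widetilde{B}_j \cap \widetilde{B}_{j_i} \neq \emptyset$ (otherwise $\widetilde{B}_j$ would itself have been selected at the appropriate stage). Since $j_i < j$ and the balls are sorted by non-increasing radius, $\mathrm{rad}(\widetilde{B}_{j_i}) \geq \mathrm{rad}(\widetilde{B}_j)$. The key geometric step is then to verify that if two balls $B$ and $B'$ in $\R^n$ have non-empty intersection with $\mathrm{rad}(B) \geq \mathrm{rad}(B')$, then $B' \subseteq 3B$. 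This follows from the triangle inequality: for $y \in B'$, picking any $z \in B \cap B'$ and letting $c,c'$ be the centers of $B,B'$, we have
\[
|y - c| \leq |y - c'| + |c' - z| + |z - c| \leq 2\,\mathrm{rad}(B') + \mathrm{rad}(B) \leq 3\,\mathrm{rad}(B).
\]
Applying this to $B = \widetilde{B}_{j_i}$ and $B' = \widetilde{B}_j$ gives $\widetilde{B}_j \subseteq 3\widetilde{B}_{j_i}$, which completes the covering inclusion.

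There is no real obstacle here — this is a textbook result (see, e.g., \cite{stein}), and both ingredients (greedy selection producing disjointness, and the $3\times$-dilation engulfment when the larger ball is kept) are elementary. The only point requiring any care is making sure the ordering-plus-greedy argument correctly explains why every non-selected ball meets some selected ball of at least its own radius; this is immediate from the fact that we always consider balls in order of decreasing radius and add one whenever it is disjoint from what has been chosen so far.
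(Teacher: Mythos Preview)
Your argument is correct and is exactly the standard greedy proof of the Vitali covering lemma. The paper itself does not supply a proof --- it simply states the lemma and refers to \cite{stein} --- so there is nothing to compare beyond noting that your proof is the textbook one.
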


Because $\diam(B_x) = \frac{2}{7}r(x) \geq \Delta/7 > 0$ for all $x \in 2 B_0$ (see Lemma \ref{lem:rx_bd}), there exists a finite sequence of points $x_1,\cdots,x_J \in 2B_0$ such that $2 B_0 \subseteq \bigcup_{j=1}^J \frac{1}{3} B_{x_j}$. Applying the Vitali covering lemma to the collection $\{ \widetilde{B}_j = \frac{1}{3} B_{x_j} : j=1,\cdots,J\}$, we identify a finite subsequence $x_{j_1},\cdots, x_{j_k}$ such that $2 B_0 \subseteq \bigcup_{i=1}^k  B_{x_{j_i}}$ and $\{ \frac{1}{3} B_{x_{j_i}} : i=1,\cdots, k\}$ is pairwise disjoint. Thus we have found a finite subcover $\cW := \{ B_{x_{j_i}} :i=1,\cdots,k\} \subseteq \cW^*$ of $2 B_0$ such that the family of third-dilates $\{\frac{1}{3} B\}_{B \in \cW}$ is pairwise disjoint.

\begin{lemma}
$\cW$ is a Whitney cover of $2 B_0$.
\end{lemma}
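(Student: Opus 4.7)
The plan is to verify the three defining conditions of a Whitney cover (Definition \ref{defn:whit_cover}). Conditions (1) and (2) --- that $\cW$ covers $2B_0$ and that the third-dilates $\{\tfrac{1}{3}B\}_{B \in \cW}$ are pairwise disjoint --- are built into the construction: the covering property is preserved by the Vitali extraction (we only throw away balls while keeping the union of $3$-dilates covering), and disjointness of third-dilates is the direct output of Vitali. So the entire content of the lemma is condition (3): whenever $B, B' \in \cW$ satisfy $\tfrac{6}{5}B \cap \tfrac{6}{5}B' \neq \emptyset$, their diameters are comparable within a factor of $8$.

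Since each element of $\cW$ has the form $B_x = B(x, r(x)/7)$ with $x \in 2B_0$, condition (3) reduces to a Lipschitz-type comparison between $r(x)$ and $r(y)$ when the corresponding dilated balls overlap. First I would establish the following claim: for $x, y \in 2B_0$, one has $r(y) \leq r(x) + |x-y|$. The proof uses the inclusion monotonicity of the OK property (an immediate consequence of Definition \ref{defn:OK}, as noted in the excerpt). For any $r$ slightly larger than $r(x)$, the ball $B(x,r)$ is OK and sits inside $100B_0$; by the triangle inequality, $B(y, r+|x-y|) \supseteq B(x, r)$, so the larger ball inherits the OK property as long as it still lies inside $100B_0$. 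Lemma \ref{lem:rx_bd} gives $r(x) \leq \tfrac{3}{2}\diam(B_0)$, and $|x-y| \leq 2\diam(B_0)$ since both points lie in $2B_0$, so the enlarged ball has radius bounded by a small multiple of $\diam(B_0)$ and clearly fits inside $100B_0$. Taking $r \to r(x)^+$ and then reversing the roles of $x$ and $y$ yields $|r(x)-r(y)| \leq |x-y|$.

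Combining this Lipschitz bound with a direct geometric estimate finishes the proof. If $\tfrac{6}{5}B_x \cap \tfrac{6}{5}B_y \neq \emptyset$, picking a common point $c$ and using that $\tfrac{6}{5}B_x$ has radius $\tfrac{6 r(x)}{35}$ gives
\[
    |x-y| \leq |x-c| + |c-y| \leq \tfrac{6(r(x)+r(y))}{35}.
\]
Combined with $|r(x) - r(y)| \leq |x-y|$, this yields $|r(x)-r(y)| \leq \tfrac{6(r(x)+r(y))}{35}$, which forces $r(y)/r(x) \in [29/41,\, 41/29] \subseteq [1/8, 8]$. Hence $\diam(B_y)/\diam(B_x) = r(y)/r(x)$ lies in $[1/8, 8]$, establishing condition (3).

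The main obstacle is the geometric check that the enlarged ball $B(y, r+|x-y|)$ lies inside $100B_0$, which is what permits the application of inclusion monotonicity of OK; without this containment, the infimum $r(y)$ is only taken over balls inside $100B_0$ and the Lipschitz inequality does not follow formally. Once this containment is verified via Lemma \ref{lem:rx_bd} and the hypothesis $x,y \in 2B_0$, the remainder of the argument is routine arithmetic.
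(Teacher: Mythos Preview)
Your argument is correct. Conditions (1) and (2) do follow from the Vitali construction as you note, and your verification of condition (3) via the Lipschitz inequality $|r(x)-r(y)|\le |x-y|$ is sound: the containment $B(y,r+|x-y|)\subseteq 100B_0$ holds for the reasons you state, so inclusion monotonicity of the OK property applies and the arithmetic goes through.

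The paper takes a different route. Rather than proving a Lipschitz bound on $r(\cdot)$, it argues by contradiction using Lemma~\ref{ok_lem}: assuming $\tfrac{6}{5}B_1\cap\tfrac{6}{5}B_2\neq\emptyset$ with $\diam(B_1)<\tfrac{1}{8}\diam(B_2)$, a short triangle-inequality computation gives $8B_1\subseteq 6B_2$; since $8B_1$ is OK, inclusion monotonicity forces $6B_2$ to be OK, contradicting Lemma~\ref{ok_lem}. So both proofs ultimately rest on inclusion monotonicity of the OK property, but the paper exploits the two-sided dichotomy ($8B$ OK, $6B$ not OK) already recorded in Lemma~\ref{ok_lem}, whereas you work directly from the definition of $r(x)$. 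Your approach is a bit more self-contained and in fact yields the tighter ratio $41/29$; the paper's is shorter because it reuses a lemma already in place.
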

\begin{proof}
We only have to verify  the third condition in Definition \ref{defn:whit_cover}. Suppose for sake of contradiction that there exist balls $B_j = B(x_j,r_j) \in \cW$ for $j=1,2$, with $\frac{6}{5} B_1 \cap \frac{6}{5} B_2 \neq \emptyset$ and $r_1 < \frac{1}{8} r_2$. Since $\frac{6}{5} B_1 \cap \frac{6}{5} B_2 \neq \emptyset$, we have $|x_1-x_2| \leq \frac{6}{5}r_1+ \frac{6}{5} r_2$. If  $z \in 8 B_1$ then $|z-x_1| \leq 8 r_1$, and therefore
\[
|z-x_2| \leq |z-x_1| + |x_1-x_2| \leq 8 r_1 + \frac{6}{5} r_1 + \frac{6}{5} r_2 <  r_2 + \frac{3}{20} r_2 + \frac{6}{5} r_2 \leq 6 r_2.
\] 
Hence, $8 B_1 \subseteq 6 B_2$. By Lemma \ref{ok_lem}, $8 B_1$ is OK. By inclusion monotonicity, $6B_2$ is OK. But this contradicts Lemma \ref{ok_lem}, finishing the proof of the lemma.
\end{proof}

We now establish conditions 2--8 in the Main Decomposition Lemma. 

Fix a ball $B \in \cW$. Because $6B$ is not OK, while $6B \subseteq 100 B_0$ (a consequence of \eqref{inc1}), by negation of the OK property we have:
\begin{equation}\label{eqn:pc}
\begin{aligned}
&\mbox{If } \#(6 B \cap E) \geq 2 \mbox{ then for all } x \in 6B \\
&\mbox{ there exists } \delta_x \in [ 6\diam(B), \diam(B_0)] \\
& \mbox{ so that } \sigma(x) \mbox{ is not } (x, \bar{C} \delta_{x}, R_3) \mbox{-transverse to }V.
\end{aligned}
\end{equation}

\noindent \textbf{Proof of condition 2:} Just above \eqref{eqn:pc} we noted that $B \subseteq 100B_0$. Write $B = B(x,\frac{1}{7} r(x))$ for $x \in 2 B_0$. By Lemma \ref{lem:rx_bd}, $\diam(B) = \frac{2}{7} r(x) \leq \frac{1}{2} \diam(B_0)$.

\noindent  \textbf{Proof of condition 3:} Let $x \in 8B$. Since $8B$ is OK, there exists  $z \in 8B$ such that $\sigma(z)$ is $(z, \bar{C} \delta, R_3)$-transverse to $V$ for all $\delta \in [ 8\diam(B),\diam(B_0)]$. By definition of $\bar{C}$ in \eqref{fix_c}, we have
\[
|x-z| \leq 8 \diam(B) \leq \delta \leq \frac{c_1}{R_3} \cdot (\bar{C} \delta) \qquad (\delta \in [8\diam(B),\diam(B_0)]).
\]
So, by Lemma \ref{lem:sigmaStab},
\begin{equation}\label{eqn:trans1}
\sigma(x) \mbox{ is } (x, \bar{C} \delta , 8 R_3)\mbox{-transverse to } V  \qquad (\delta \in [8\diam(B),\diam(B_0)]).
\end{equation}

First suppose $\diam(B) \leq \frac{1}{8} \diam(B_0)$. Then the interval $[8\diam(B),\diam(B_0)]$ is nonempty. Any number in  $[\diam(B),\diam(B_0)]$ differs from a number in $[8\diam(B),\diam(B_0)]$ by a factor of at most $8$. Hence, by \eqref{eqn:trans1} and the second bullet point of Lemma \ref{lem:tau trans} (for $\kappa = 8$), $\sigma(x)$ is $(x, \bar{C} \delta ,  8^{m+1} R_3)$-transverse to $V$ for all $\delta \in [\diam(B),\diam(B_0)]$. Since $R_4 = 8^{m+1} R_3$ (see \eqref{fix_c}), we obtain  condition 3 in this case.

Suppose instead that $\diam(B) > \frac{1}{8} \diam(B_0)$. We cannot use \eqref{eqn:trans1}, because  $[8\diam(B),\diam(B_0)]$ is empty. Instead we use \eqref{eqn:cond1}. Note $x \in 8 B \subseteq 100 B_0$. By \eqref{eqn:cond1}, $\sigma(x)$ is $(x,\bar{C} \diam(B_0), R_1)$-transverse to $V$. Any number in  $[\diam(B),\diam(B_0)]$ differs from  $\diam(B_0)$ by a factor of at most $8$. So, by Lemma \ref{lem:tau trans}, $\sigma(x)$ is $(x, \bar{C} \delta,  8^m R_1)$-transverse to $V$ for all $\delta \in [\diam(B),\diam(B_0)]$. Since $R_4  = 8^{m+1} R_3 \geq 8^m R_1$, this completes the proof of condition 3.

\noindent  \textbf{Proof of condition 4:}  Suppose that $\#(6B \cap E) \geq 2$ and set $J := \cC(E|6B)$. According to the definition of complexity (see the formulation given in Lemma \ref{lem:comp_restate}), there exists a point $z \in 6B$, and there exist intervals $I_1 > I_2 > \cdots > I_J > 0$ in $(0, 6  \diam(B)]$ and subspaces $V_1, V_2, \cdots, V_J \subseteq \Po$, such that, for all $j$, \\
(A) $\tau_{z,r(I_j)} ( \sigma(z))$ is $(z, \bar{C}, R_1)$-transverse to $V_j$,\\
(B) $\tau_{z,l(I_j)} ( \sigma(z))$ is not $(z, \bar{C} , R_2)$-transverse to $V_j$, and \\
(C) $V_j$ is invariant under the mappings $\tau_{z,\delta} : \Po \rightarrow \Po$ ($\delta > 0$).

Because the center of $B$ is contained in $2B_0$ and the radius of $B$ is at most half the radius of $B_0$ (see condition 2) it follows that $6B \subseteq 5 B_0$. Hence, $z \in 5B_0$. 

Condition \eqref{eqn:pc} implies the existence of $\delta_z \in [6 \diam(B), \diam(B_0)]$ so that
\begin{equation}
\label{eqn12}
\sigma(z) \mbox{ is not } (z, \bar{C}\delta_z,R_3)\mbox{-transverse to } V.
\end{equation}
Define an interval $I_0 := [ \delta_z,  \diam(B_0)]$, with endpoints $l(I_0) = \delta_z$ and $r(I_0) = \diam(B_0)$, and define a subspace $V_0 := V$. We will next demonstrate that (A) and (B) hold for $j=0$. Since $V$ is a DTI subspace, $\tau_{z,l(I_0)} V = \tau_{z,r(I_0)} V = V$. Therefore, by rescaling \eqref{eqn12},
\begin{equation}
\label{a1}
\tau_{z,l(I_0)}(\sigma(z)) \mbox { is not } (z,\bar{C},R_3)\mbox{-transverse to } V.
\end{equation}
(Here we use the first bullet point of Lemma \ref{lem:tau trans}.) Recall \eqref{eqn:cond1} states that $\sigma(z)$ is $(z, \bar{C} \diam(B_0), R_1)$-transverse to $V$. By rescaling,
\begin{equation}
\label{a2}
\tau_{z,r(I_0)}(\sigma(z)) \mbox { is } (z,\bar{C},R_1)\mbox{-transverse to } V.
\end{equation}
Conditions \eqref{a1} and \eqref{a2} imply (A) and (B) for $j=0$ (recall $R_3 \geq R_2$). Note that $V_0 = V$ is DTI, so $V_0$ is dilation invariant at $z$. Thus, (C) holds for $j=0$.

Observe that $r(I_1) \leq  6 \diam(B) \leq  \delta_z =  l(I_0)$, thus $I_1 < I_0$. Therefore,  $I_0 > I_1 > \cdots > I_J$ are subintervals of $(0, \diam(B_0)]$.

We produced intervals $I_0 > I_1 > \cdots > I_J$ in $(0, 5 \diam(B_0)]$ and subspaces $V_0,\cdots,V_J \subseteq \Po$, so that (A), (B), and (C) hold for $j = 0,1,\cdots,J$. Since $z \in 5 B_0$, by the definition of complexity (see Lemma \ref{lem:comp_restate}), we have $\cC(E|5B_0) \geq J+1$. Since $\cC(E|5B_0) \leq K$ and $J = \cC(E| 6B)$, this completes the proof of condition 4.

Next we define a collection of points $\{z_B\}_{B \in \cW} \subseteq \R^n$ and polynomials $\{P_B\}_{B \in \cW} \subseteq \Po$ and prove conditions 5--8.

To verify condition 5, fix any family  $\{z_B\}_{B \in \cW}$ satisfying $z_B \in \frac{6}{5} B \cap 2 B_0$ and $z_B = x_0$ if $x_0 \in \frac{6}{5} B$.

\noindent  \textbf{Proofs of conditions 6--8:} If $B \in \cW$ satisfies $x_0 \in \frac{6}{5}B$ then set $P_B= P_0$. Note $z_B = x_0$. Conditions 7 and 8 are trivially true. The first containment in condition 6 is true because $P_0 \in \Gamma_{\ell^\#}(x_0,f,M)$ by hypothesis, and $\Gamma_{\ell^\#}(x_0,f,M) \subseteq \Gamma_{\ell^\#-1}(x_0,f,M) \subseteq \Gamma_{\ell^\#-1}(x_0,f, \bar{C}_{\ell^\#} M) = \Gamma_{\ell^\#-1}(z_B,f, \bar{C}_{\ell^\#} M)$ for any choice of $\bar{C}_{\ell^\#} \geq 1$. The second containment in condition 6 is trivially satisfied.

Suppose now $B \in \cW$ and $x_0 \notin \frac{6}{5}B$. Note that $z_B \in \frac{6}{5}B \cap 2B_0$, and thus $|x_0-z_B| \leq \delta_0$ for $\delta_0 :=  2\diam(B_0)$.

We prepare to verify the hypotheses of Lemma \ref{lem: linear select2} for the choice of parameters $y = x_0$, $x = z_B$, $R=R_1$, $C_1 = \bar{C}/2$, $\delta = \delta_0$, and $\ell=\ell^\#-2$.

By \eqref{eqn:cond1}, $\sigma(z_B)$ is $(z_B,\frac{\bar{C}}{2} \delta_0, R_1)$-transverse to $V$. 

Given that $P_0 \in \Gamma_{\ell^\#}(x_0,f,M)$, we have the following condition (see \eqref{eqn:gamma_ell}): For every $S \subseteq E$ with $\#(S) \leq (D+1)^{\ell^\#}$ there exists $F^S \in C^{m-1,1}(\R^n)$ satisfying $F^S = f$ on $S$, $J_{x_0} F^S = P_0$, and $\| F^S \| \leq M$. In particular, $f$ satisfies $\cF\cH(k^\#,M)$ for $k^\# = (D+1)^{\ell^\#}$ (see \eqref{eqn:FH}).

Because $\Gamma_{\ell^\#}(x_0,f,M) \subseteq \Gamma_{\ell^\#-2}(x_0,f,M)$, we have $P_0 \in \Gamma_{\ell^\#-2}(x_0,f,M)$.

By Lemma \ref{lem: linear select2}, given that $P_0 \in \Gamma_{\ell^\#-2}(x_0,f,M)$, we produce a polynomial $P_B \in \Gamma_{\ell^\#-3}(z_B,f,\widehat{C}_{\ell^\#-2} M)$ such that $P_B - P_0 \in V$, $P_B-P_0 \in \widehat{C}_{\ell^\#-2} M \B_{z_B, \delta_0}$, and $P_B$ depends linearly on $(f,P_0)$, verifying conditions 7 and 8. Here,
\[
\widehat{C}_{\ell^\#-2} = (R_1 D+2) \cdot (\bar{C}/2)^m\sqrt{C_T^2 + 4DC_{\ell^\#-3}^2},
\]
with $C_{\ell^\#-3} = C' \cdot (D+1)^{\ell^\#-3}$ the constant arising in Lemma \ref{lem: linear select}, for a controlled constant $C'$. Recall that $R_1, \bar{C}, C_T$, and $D$ are controlled constants. Hence, $\widehat{C}_{\ell^\#-2} \leq C \cdot (D+1)^{\ell^\#}$ for a controlled constant $C$.

Recalling $\delta_0 = 2 \diam(B_0)$, we apply \eqref{eqn:ball_scale} to obtain
\[
P_B-P_0 \in \widehat{C}_{\ell^\#-2} M \B_{z_B, \delta_0} \subseteq \widehat{C}_{\ell^\#-2} 2^m M \B_{z_B, \diam(B_0)}.
\]
Note $\widehat{C}_{\ell^\#-2} 2^m \leq C'' \cdot (D+1)^{\ell^\#}$ for a controlled constant $C''$. We set $\bar{C}_{\ell^\#} = C'' \cdot (D+1)^{\ell^\#}$, so that $P_B - P_0 \in \bar{C}_{\ell^\#} M \B_{z_B, \diam(B_0)}$. Given that $\widehat{C}_{\ell^\#-2} \leq \bar{C}_{\ell^\#}$, we have $P_B \in \Gamma_{\ell^\#-3}(z_B,f,\widehat{C}_{\ell^\#-2} M) \subseteq \Gamma_{\ell^\#-3}(z_B,f,\bar{C}_{\ell^\#} M)$, completing the proof of condition 6. 

This finishes the proof of the Main Decomposition Lemma (Lemma \ref{mdl}).

\subsection{Upper bounds on the sets $\sigma_\ell(x)$}\label{sec:}

We continue in the setting of Section \ref{sec_ind:setup}.

We fix data $(B_0,x_0,E,K,f,\ell^\#,M,P_0)$ satisfying $\#(B_0 \cap E) \geq 2$,  $\cC(E|5B_0) \leq K$, $x_0 \in B_0$, and $P_0 \in \Gamma_{\ell^\#}(x_0,f,M)$.

We apply the Main Decomposition Lemma (Lemma \ref{mdl}) to this data 
and obtain a Whitney cover $\cW$ of $2B_0$, a DTI subspace $V \subseteq \Po$, and collections $\{P_B\}_{B \in \cW} \subseteq \Po$ and $\{z_B\}_{B \in \cW} \subseteq \R^n$, satisfying conditions 1--8 of Lemma \ref{mdl}.

Introduce a Whitney cover $\cW_0$ of $B_0$ by setting
\begin{equation}\label{defn:W0}
\cW_0 := \{ B \in \cW : B \cap B_0 \neq \emptyset\}  \subseteq \cW.
\end{equation}

Our next result provides geometric information on the sets $\sigma_\ell(x)$ for $\ell \gg \ell_{\old}$. Recall that $z_B \in \frac{6}{5} B$ for $B \in \cW$.

\begin{lemma}\label{main_lem}
There exist constants $\epsilon_0 \in (0,1)$, $\chi \geq 1$, and $C \geq 1$, determined by $m, n$, satisfying the following. Suppose there exists a ball $\widehat{B} \in \cW_0$ satisfying $\diam(\widehat{B}) \leq \epsilon_0  \cdot \diam(B_0)$. Then for any $B \in \cW_0$, $x \in 3B$, and $\ell \geq \ell_{\old} + \chi$,
\[
(\sigma_{\ell+1}(x) + \cB_{z_B, \diam(B)}) \cap V \subseteq C C_{\old} \cB_{z_B,\diam(B)}.
\]
Here, $\epsilon_0$ and $C$ are controlled constants, and $\chi = O(\poly(D))$.
\end{lemma}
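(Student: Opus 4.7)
The proof will use the small ball $\hat{B}$ as an anchor where the induction hypothesis applies, then transfer this local control to $z_B$ via quasicontinuity and the transversality conditions of the Main Decomposition Lemma.

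I will begin by selecting an anchor point $\hat{x}$, say the center of $\hat{B}$. Since $\hat{B} \in \cW$, condition 4 of Lemma \ref{mdl} applies: either $\cC(E|6\hat{B}) < K$ or $\#(E \cap 6\hat{B}) \leq 1$. In either case, Lemma \ref{old_fp_lem} (with $f=0$, $M=1$) yields $\sigma_{\ell_\old}(\hat{x}) \subseteq C_\old \sigma_{E \cap \frac{6}{5}\hat{B}}(\hat{x})$. Combining with Lemma \ref{lem:sigma_inc} applied to $\frac{6}{5}\hat{B}$ (noting $\hat{x} \in \frac{3}{5}\hat{B} = \frac{1}{2}(\frac{6}{5}\hat{B})$), I obtain
\[
\sigma_{\ell_\old}(\hat{x}) \cap \cB_{\hat{x},\frac{6}{5}\diam(\hat{B})} \subseteq C^0 C_\old \cdot \sigma(\hat{x}).
\]
Then condition 3 of Lemma \ref{mdl} applied to $\hat{B}$ states that $\sigma(\hat{x})$ is $(\hat{x}, \bar{C}\delta, R_4)$-transverse to $V$ for every $\delta \in [\diam(\hat{B}), \diam(B_0)]$. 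Taking $\delta = \diam(\hat{B})$ gives $\sigma(\hat{x}) \cap V \subseteq R_4 \cB_{\hat{x}, \bar{C}\diam(\hat{B})}$, and together with the previous display yields a local bound of the form $\sigma_{\ell_\old}(\hat{x}) \cap V \cap \cB_{\hat{x}, \frac{6}{5}\diam(\hat{B})} \subseteq C^\ast C_\old \cB_{\hat{x}, \bar{C}\diam(\hat{B})}$ for a controlled constant $C^\ast$.

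Next I will transfer this bound to $z_B$. Fix $P \in (\sigma_{\ell+1}(x) + \cB_{z_B, \diam(B)}) \cap V$. Iteratively applying the quasicontinuity bound from Lemma \ref{lem:gamma_trans} along a chain of intermediate points reaching from $x$ to $\hat{x}$, while tracking errors through changes of base (Lemma \ref{lem:poly1}) and scale (\eqref{eqn:ball_scale}), I can write $P = P_0 + Q$ with $P_0 \in \sigma_{\ell_\old}(\hat{x})$ and $Q$ lying in a controlled multiple of $\cB_{z_B,\diam(B)}$ plus a ball at the large scale $\diam(B_0)$. The number of quasicontinuity iterations available is governed by the slack $\chi$: a choice $\chi = O(\poly(D))$ is enough, coming from traversing at most $O(\log_D N)$ intermediate scales in the Whitney tree between $B$ and $\hat{B}$ (as in the mechanism behind Lemma \ref{fip_lem}).

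The crucial step, where I expect the main difficulty, is using the DTI structure of $V$ to convert the small-scale bound at $(\hat{x}, \diam(\hat{B}))$ into the target bound at $(z_B, \diam(B))$, since the ratio $\diam(B)/\diam(\hat{B})$ may be large. Because $V$ is dilation-invariant at every point, condition 3 of Lemma \ref{mdl} also gives $(z_B, \bar{C}\diam(B), R_4)$-transversality of $\sigma(z_B)$ with $V$ (as $z_B \in 8B$). I will combine this transversality at $z_B$ with the local control at $\hat{x}$ via Lemma \ref{lem:tau trans} (to match scales) and Lemma \ref{lem:sigmaStab} (to move base points along the chain of intermediate scales). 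The hypothesis $\diam(\hat{B}) \leq \epsilon_0 \diam(B_0)$ ensures that the range of scales at $\hat{x}$ on which $V$ is transverse to $\sigma(\hat{x})$ is broad enough to bridge the gap. Threading the error terms through these rescalings yields $|P|_{z_B, \diam(B)} \leq C C_\old$, which is the claimed inclusion.
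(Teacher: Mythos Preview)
Your proposal has a genuine gap centered on the scale-transfer step. You anchor at the single small ball $\widehat{B}$ and then try to move the estimate to an arbitrary $B \in \cW_0$. But when $B$ is itself small and sits far from $\widehat{B}$ (say, $\dist(B,\widehat{B}) \sim \diam(B_0) \gg \diam(B)$), quasicontinuity from $x$ to $\hat{x}$ produces an error ball at scale $|x-\hat{x}| \sim \diam(B_0)$, not $\diam(B)$. After applying a stabv-type argument at $\hat{x}$ you would land in $CC_{\old}\,\cB_{\hat{x},\diam(B_0)}$, and converting this to $\cB_{z_B,\diam(B)}$ via \eqref{eqn:ball_scale} costs a factor $(\diam(B_0)/\diam(B))^m$, which is unbounded. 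The DTI property of $V$ does not help here: dilation-invariance lets you rescale $V$, not the norm balls, and Lemmas~\ref{lem:tau trans} and~\ref{lem:sigmaStab} only move transversality between \emph{nearby} points and \emph{comparable} scales. Your ``chain'' alternative fares no better: each quasicontinuity step costs one unit of $\ell$, and the number of Whitney balls separating $B$ from $\widehat{B}$ is governed by $\#\cW_0$, which depends on $E$ and is not $O(\poly(D))$.

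The missing idea, which the paper supplies, is to anchor \emph{locally} rather than globally. One introduces \emph{keystone balls}: balls $B^\# \in \cW$ such that every ball of $\cW$ meeting $AB^\#$ has diameter $\geq \tfrac{1}{2}\diam(B^\#)$. A greedy halving argument shows every $B \in \cW$ has a keystone ball $B^\#$ with $B^\# \subseteq 3AB$ and $\diam(B^\#) \leq \diam(B)$; one then defines $\kappa(B)=B^\#$ for small $B$ and $\kappa(B)=\widehat{B}^\#$ (the keystone associated to $\widehat{B}$) for medium-sized $B$. The hypothesis $\diam(\widehat{B}) \leq \epsilon_0 \diam(B_0)$ is used only to ensure $A\cdot\kappa(B) \subseteq 2B_0$. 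Because a keystone ball sees only boundedly many ($\leq (180A)^n$) Whitney neighbors in $AB^\#$, Lemma~\ref{fip_lem} applied to that local subcover gives $\sigma_\ell(z_{B^\#}) \subseteq CC_{\old}\,\sigma_{E\cap AB^\#}(z_{B^\#})$ for $\ell \geq \ell_{\old}+\chi$ with $\chi=O(\poly(D))$ \emph{independent of $E$}. Then Lemma~\ref{lem:sigma_inc} and transversality at $z_{B^\#}$ yield $\sigma_\ell(z_{B^\#}) \cap V \subseteq CC_{\old}\,\cB_{z_{B^\#},\diam(B^\#)}$. Finally, since $|x-z_{B^\#}| \lesssim \diam(B)$ and $\diam(B^\#) \leq \diam(B)$, a single quasicontinuity step at scale $\diam(B)$ and one application of Lemma~\ref{lem:stabv} (at scale $\diam(B)$, using transversality of $\sigma(z_{B^\#})$ there) finish the job with no scale loss.
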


Note that the constant $C_{\old} = C^\#(K-1)$ in Lemma \ref{main_lem} is not a controlled constant because it depends on $K$.


\subsubsection{Proof of Lemma \ref{main_lem}}\label{proof_main_lem}

We define constants $A \geq 10$ and $\epsilon_0 \in (0,1/300]$ as follows:
\begin{equation}\label{defn:A}
\begin{aligned}
A = 2 C^0 \cdot {\bar{C}}^m \cdot R_4, \qquad \epsilon_0 = 1/(30 A^2).
\end{aligned}
\end{equation}
Here, $C^0$ is the controlled constant in Lemma \ref{lem:sigma_inc}, and $\bar{C},R_4$ are controlled constants defined in \eqref{fix_c}. Clearly, both $A$ and $\epsilon_0$ are controlled constants.

We define 
\begin{equation}\label{defn:chi}
\chi = \lceil \log (D \cdot (180A)^n  + 1)/\log(D+1) \rceil.
\end{equation}
Since $A = O(\exp(\poly(D)))$ and $n \leq D$, we have that $\chi = O(\poly(D))$.



\begin{defi}
\label{key_defn}
A ball $B^\# \in \cW$ is keystone if $\diam(B) \geq \frac{1}{2} \diam(B^\#)$ for every $B \in \cW$ with $B \cap AB^\# \neq \emptyset$. Let $\cW^\# \subseteq \cW$ be the set of all keystone balls.
\end{defi}

Any ball $B \in \cW$ of minimal radius is a keystone ball. Because $\cW$ is finite, there exists a ball of minimal radius in $\cW$. So $\cW^\#$ is nonempty.

\begin{lemma} \label{keystone_lem}
For each ball $B \in \cW$ there exists a keystone ball $B^\# \in \cW^\#$ satisfying $B^\# \subseteq 3A B$, $\dist(B,B^\#) \leq 2A\diam(B)$, and $\diam(B^\#) \leq  \diam(B)$.
\end{lemma}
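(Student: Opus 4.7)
The plan is to construct $B^\#$ by a finite descent through $\cW$. Set $B_0 := B$. If $B_i$ is not already keystone, Definition \ref{key_defn} fails for $B_i$, so there exists some $B_{i+1} \in \cW$ with $B_{i+1} \cap AB_i \neq \emptyset$ and $\diam(B_{i+1}) < \frac{1}{2}\diam(B_i)$; pick any such ball and continue. Since $\cW$ is finite and the diameters strictly decrease along the sequence, the iteration must terminate, and termination happens precisely when we arrive at a keystone ball, which we take to be $B^\#$.

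To verify the three geometric conclusions, I would track the centers $x_i$ and radii $r_i$ of the balls $B_i = B(x_i,r_i)$. The halving property yields $r_i \leq 2^{-i} r_0$, which immediately gives $\diam(B^\#) \leq \diam(B)$. For the center drift, each incidence $B_{i+1} \cap AB_i \neq \emptyset$ produces a point in the intersection, and the triangle inequality gives $|x_{i+1}-x_i| \leq r_{i+1} + Ar_i \leq (A+1/2) r_i$. Summing the geometric series in $r_i$ bounds $|x_k - x_0|$ by $(2A+1)r_0$. This center bound, together with $r_k \leq r_0$, immediately controls both $\dist(B,B^\#)$ by $(A+1/2)\diam(B) \leq 2A \diam(B)$, and the enclosure $B^\# \subseteq 3AB$ via the estimate $r_k + (2A+1)r_0 \leq 3Ar_0$, which follows from $A \geq 10$ (as fixed in \eqref{defn:A}).

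The main thing to watch is simply that the constants line up so that the total drift of the iterated centers is small enough to fit $B^\#$ inside $3AB$ rather than some larger dilate. Since the geometric series sums to less than $2(A+1/2)r_0$, and $2A+2 \leq 3A$ for $A \geq 2$, this is easily absorbed by our choice $A \geq 10$. No analytic content from earlier sections is needed; the argument is purely a finiteness-plus-geometric-series exercise based on the definition of keystone.
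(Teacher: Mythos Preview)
Your proposal is correct and follows essentially the same approach as the paper: iterate the failure of the keystone condition to build a finite sequence of balls with halving diameters, then sum the resulting geometric series to control the total drift. The paper phrases the drift estimate in terms of $\dist(B_{j-1},B_j)$ and intermediate diameters rather than center-to-center distances, but the bookkeeping and constants work out the same way.
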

\begin{proof}
We produce a sequence of balls $B_1, B_2, \cdots, B_J \in \cW$, starting with $B_1 = B$, such that $B_{j} \cap A B_{j-1} \neq \emptyset$, $\diam(B_j) < \frac{1}{2} \diam(B_{j-1})$ for all $j \geq 2$, and $B_J$ is keystone. If $B$ is keystone, simply take a length-$1$ sequence with $B_1 = B$. Otherwise, let $B_1 = B$. Since $B_1$ is not keystone there exists $B_2 \in \cW$ with $B_2 \cap A B_1 \neq \emptyset$ and $\diam(B_2) < \frac{1}{2} \diam(B_1)$. If $B_2$ is keystone we conclude the process. Otherwise, if $B_2$ is not keystone there exists $B_3 \in \cW$ with $B_3 \cap A B_2 \neq \emptyset$ and $\diam(B_3) < \frac{1}{2} \diam(B_2)$. We continue this process until, at some step, we find a keystone ball. The process will terminate after finitely many steps because $\cW$ is finite, and $\diam(B_j)$ is decreasing in $j$.

As $B_{j} \cap A B_{j-1}\neq \emptyset$ we have $\dist(B_{j-1},B_j) \leq \frac{A}{2} \diam(B_{j-1})$. Now estimate
\[
\begin{aligned}
\dist(B_1,B_J) &\leq \sum_{j=2}^J  \dist(B_{j-1},B_j) +  \sum_{j=2}^{J-1} \diam(B_j)  \leq \left(A/2 + 1 \right) \sum_{j=1}^J \diam(B_j)  \\
& \leq  (A + 2 ) \diam(B_1) \leq 2A\diam(B_1).
\end{aligned}
\]
Since $\diam(B_J) \leq \diam(B_1)$, we deduce from the previous inequality that $B_J \subseteq (2A+6) B_1 \subseteq 3A B_1$. Set $B^\# = B_J$ to finish the proof.
\end{proof}

We prepare to define a mapping $\kappa : \cW_0 \rightarrow \cW^\#$. By hypothesis of Lemma \ref{main_lem}, there exists a ball $\widehat{B} \in \cW_0$ with $\diam(\widehat{B}) \leq \epsilon_0 \diam(B_0)$. By Lemma \ref{keystone_lem}, we can associate to $\widehat{B}$ a keystone ball $\widehat{B}^\#$ satisfying
\begin{equation}\label{eqn:keyball1}
\widehat{B}^\# \subseteq 3 A \widehat{B} \mbox{ and } \diam(\widehat{B}^\#) \leq \diam(\widehat{B}).
\end{equation}
To define $\kappa$, we proceed as follows: For each $B \in \cW_0$,
\begin{itemize}
\item If $\diam(B)  > \epsilon_0 \diam(B_0)$ ($B$ is \emph{medium-sized}), set $\kappa(B) := \widehat{B}^\#$.
\item If $\diam(B) \leq \epsilon_0 \diam(B_0)$ ($B$ is \emph{small-sized}), Lemma \ref{keystone_lem} yields a keystone ball $B^\#$ with $B^\# \subseteq 3A B$, $\dist(B,B^\#) \leq 2A\diam(B)$, and $\diam(B^\#) \leq  \diam(B)$; set $\kappa(B) := B^\#$.
\end{itemize}

We record a simple geometrical result that will be used in the analysis of $\kappa$.
\begin{lemma}\label{lem:ball_geom}
If $B \in \cW_0$ and $\diam(B) \leq \epsilon_0 \diam(B_0)$, then $3 A^2 B \subseteq 2 B_0$.
\end{lemma}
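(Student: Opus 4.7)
The plan is to verify this directly by the triangle inequality, treating the relationship $B \cap B_0 \neq \emptyset$ and the smallness condition $\diam(B) \leq \epsilon_0 \diam(B_0)$ as the two inputs that combine to give the conclusion. Write $B = B(x_B, r_B)$ and $B_0 = B(x_{B_0}, r_0)$, where $r_B = \tfrac{1}{2} \diam(B)$ and $r_0 = \tfrac{1}{2} \diam(B_0)$. By definition of $\cW_0$ (see \eqref{defn:W0}), $B$ meets $B_0$, so any common point shows $|x_B - x_{B_0}| \le r_B + r_0$.

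Next I will take an arbitrary $z \in 3A^2 B$, so $|z - x_B| \le 3 A^2 r_B$, and estimate
\[
|z - x_{B_0}| \le |z - x_B| + |x_B - x_{B_0}| \le 3A^2 r_B + r_B + r_0 = (3A^2+1)r_B + r_0.
\]
Using the smallness hypothesis $r_B \le \epsilon_0 r_0$, together with the definition $\epsilon_0 = 1/(30 A^2)$ from \eqref{defn:A}, I bound
\[
(3A^2+1)\epsilon_0 \le \frac{3A^2+1}{30 A^2} \le \frac{4A^2}{30 A^2} = \frac{2}{15} < 1,
\]
where I used $A \ge 10 \ge 1$. Thus $(3A^2+1)r_B \le r_0$, giving $|z - x_{B_0}| \le 2 r_0$, i.e., $z \in 2 B_0$. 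Since $z \in 3A^2 B$ was arbitrary, this proves $3A^2 B \subseteq 2 B_0$.

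There is no real obstacle in this argument; it is a one-line triangle inequality once the numerical value of $\epsilon_0$ is unpacked. The only thing to confirm while writing it up is that the definitions of $r_B$, $r_0$ via $\diam$, and of $\epsilon_0$ in \eqref{defn:A}, are being used consistently, and that $A \ge 1$ (in fact $A \ge 10$) is available so the estimate $(3A^2+1)/(30 A^2) < 1$ holds unconditionally.
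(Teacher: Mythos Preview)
Your proof is correct and follows essentially the same triangle-inequality approach as the paper: the paper phrases it as ``$3A^2 B$ meets $B_0$ and has diameter $\le (1/10)\diam(B_0)$, hence lies in $2B_0$,'' while you make the centers-and-radii computation explicit, but the content is identical.
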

\begin{proof}
Since $B \in \cW_0$, we have $B \cap B_0 \neq \emptyset$. Thus, $3 A^2 B \cap B_0 \neq \emptyset$. Also, 
\[
\diam(3 A^2B ) \leq 3 A^2 \epsilon_0 \diam(B_0) = (1/10) \diam(B_0).
\]
Therefore, $3 A^2 B \subseteq 2 B_0$.
\end{proof}

\begin{lemma}\label{key_geom_lem}[Properties of $\kappa$]
The mapping $\kappa : \cW_0 \rightarrow \cW^\#$ satisfies the following:  For any $B \in \cW_0$, (a) $\dist(B,\kappa(B)) \leq C_4 \diam(B)$, (b) $\diam(\kappa(B)) \leq \diam(B)$, and (c) $A \cdot \kappa(B) \subseteq 2 B_0$. Here, $C_4$ is a controlled constant.
\end{lemma}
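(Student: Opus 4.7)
The plan is to verify each of the three properties by a case analysis based on the definition of $\kappa$: whether the input ball $B \in \cW_0$ is ``medium-sized'' ($\diam(B) > \epsilon_0 \diam(B_0)$, so $\kappa(B) = \widehat{B}^\#$) or ``small-sized'' ($\diam(B) \leq \epsilon_0 \diam(B_0)$, so $\kappa(B)$ is the keystone ball produced by Lemma \ref{keystone_lem} applied to $B$ itself). In either case, $\kappa(B) = B'^{\#}$ for some small-sized $B' \in \cW_0$ (with $B' = B$ in the small-sized case and $B' = \widehat{B}$ in the medium-sized case), where $B'^\# \subseteq 3AB'$ and $\diam(B'^\#) \leq \diam(B')$. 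This reduction is the organizing principle.

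A single geometric calculation handles (b) and (c) in both cases. From $B'^\# \subseteq 3AB'$ and $\diam(B'^\#) \leq \diam(B')$, writing $B' = B(x', r')$ and $B'^\# = B(x^\#, r^\#)$, one gets $r^\# \leq r'$ and $|x^\# - x'| \leq 3A r'$; then for $y \in A B'^\#$ the triangle inequality gives $|y - x'| \leq A r^\# + 3A r' \leq 4A r'$, so $A \cdot B'^\# \subseteq (4A) B'$. Since $4A \leq 3A^2$ (as $A \geq 10$), we conclude $A \cdot \kappa(B) \subseteq 3A^2 B'$, and Lemma \ref{lem:ball_geom} applied to the small-sized ball $B'$ yields $3A^2 B' \subseteq 2 B_0$, giving (c). For (b) note $\diam(\kappa(B)) \leq \diam(B')$; in the small case $B' = B$ so (b) is immediate, and in the medium case $\diam(B') \leq \epsilon_0 \diam(B_0) < \diam(B)$.

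For (a), the two cases differ slightly. In the small-sized case, Lemma \ref{keystone_lem} directly gives $\dist(B,\kappa(B)) \leq 2A \diam(B)$, so any $C_4 \geq 2A$ suffices. In the medium-sized case, $\kappa(B) = \widehat{B}^\#$, and we use that both $B$ and $\widehat{B}^\#$ meet $2 B_0$: the ball $B$ meets $B_0$ by definition of $\cW_0$, while $\widehat{B}^\# \subseteq 3A \widehat{B} \subseteq 3A^2 \widehat{B} \subseteq 2 B_0$ again by Lemma \ref{lem:ball_geom}. Hence $\dist(B, \kappa(B)) \leq \diam(2 B_0) = 2 \diam(B_0)$, and using $\diam(B) > \epsilon_0 \diam(B_0)$ this gives $\dist(B, \kappa(B)) \leq (2/\epsilon_0) \diam(B)$.

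Combining both cases, property (a) holds with $C_4 := \max\{2A,\, 2/\epsilon_0\}$, which is controlled since $A$ and $\epsilon_0$ are controlled constants. There is no real obstacle here; the only thing to check carefully is that $\widehat{B}$ is indeed small-sized (guaranteed by the hypothesis of Lemma \ref{main_lem}), which is precisely what makes Lemma \ref{lem:ball_geom} applicable to $\widehat{B}$ in the medium-sized branch.
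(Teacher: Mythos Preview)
Your proof is correct and follows essentially the same case split and use of Lemma~\ref{lem:ball_geom} and Lemma~\ref{keystone_lem} as the paper. The only minor difference is that in the medium-sized case for (a), the paper shows the inclusion $\widehat{B}^\# \subseteq 27(\epsilon_0)^{-1} A\, B$ to bound the distance, whereas you use that both $B$ and $\widehat{B}^\#$ meet $2B_0$; both routes are elementary and yield a controlled $C_4$ (yours slightly smaller).
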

\begin{proof}
Set $C_4= 810 A^3$, which is a controlled constant. Recall that $\epsilon_0 = \frac{1}{30 A^2}$.

There exists a ball $\widehat{B} \in \cW_0$ with  $\diam(\widehat{B}) \leq \epsilon_0 \diam(B_0)$, by hypothesis of Lemma \ref{main_lem}. By Lemma \ref{lem:ball_geom},
\begin{equation}\label{eqn:ball_geom1}
3 A^2 \widehat{B} \subseteq 2 B_0.
\end{equation}

We split the proof into cases depending on whether $B \in \cW_0$ is medium-sized or small-sized.

Case 1: Suppose $B \in \cW_0$ is medium-sized, i.e., $\diam(B) > \epsilon_0 \diam(B_0)$ and $B \cap B_0 \neq \emptyset$. Then $9 (\epsilon_0)^{-1} B \supseteq 2 B_0 \supseteq \widehat{B}$; furthermore, by \eqref{eqn:keyball1},  $\widehat{B}^\# \subseteq 3A \widehat{B}$. Thus, 
\[
\widehat{B}^\# \subseteq 27 (\epsilon_0)^{-1} A B = 810 A^3 B = C_4 B.
\]
Therefore, the distance from the center of $\kappa(B) = \widehat{B}^\#$ to the center of $B$ is at most $C_4 \diam(B)$, which implies property (a). Also, from \eqref{eqn:keyball1}, 
\[
\diam(\widehat{B}^\#) \leq \diam(\widehat{B}) \leq \epsilon_0 \diam(B_0)  < \diam(B),
\]
which establishes property (b). By \eqref{eqn:ball_geom1}, \eqref{eqn:keyball1}, we have $A \widehat{B}^\# \subseteq 3 A^2 \widehat{B} \subseteq 2 B_0$, which gives (c).

Case 2: Suppose $B \in \cW_0$ is small-sized, i.e., $\diam(B) \leq \epsilon_0 \diam(B_0)$ and $B \cap B_0 \neq \emptyset$. By Lemma \ref{lem:ball_geom}, $3 A^2 B \subseteq 2 B_0$. In this case, $\kappa(B) = B^\#$, where $B^\#$ and $B$ are related via Lemma \ref{keystone_lem}. In particular,  
\[
\dist(B,B^\#) \leq  2A \diam(B) \leq C_4 \diam(B) \mbox{ and } \diam(B^\#) \leq \diam(B),
\]
yielding properties (a) and (b). Furthermore, $B^\# \subseteq 3A B$.  Thus, $A B^\# \subseteq 3 A^2 B \subseteq 2 B_0$. Thus, we have established property (c).
\end{proof}

This concludes our description of $\kappa: \cW_0 \rightarrow \cW^\#$. We will use the mapping $\kappa$ later, in the proof of Lemma \ref{main_lem}. Next we establish two lemmas describing the geometry of the sets $\sigma_\ell(x)$. The first lemma gives a stronger form of \eqref{eqn:old_fp_lem}.

\begin{lemma}\label{lemma1}
Let $B^\# \in \cW$ be a keystone ball. Suppose that $AB^\# \subseteq 2B_0$.  Let $\chi$ be defined as in \eqref{defn:chi}, and let $\ell \in \N$ with $\ell \geq \ell_{\old} + \chi$. Then
\[
\Gamma_{\ell}(x,f,M) \subseteq  \Gamma_{E \cap A B^\#}(x,f,CC_{\old} M)  \mbox{ for all } x \in A B^\#, M>0,
\]
for a controlled constant $C$. In particular, by taking $f \equiv 0|_E$ and $M=1$,
\begin{equation}\label{eqn:lemma1}
\sigma_{\ell}(x) \subseteq CC_{\old} \sigma_{E \cap A B^\#}(x) \; \; \mbox{for any } x \in A B^\#.
\end{equation}
\end{lemma}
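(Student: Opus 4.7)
My plan is to apply Lemma \ref{fip_lem} to the ball $\widehat{B} := AB^\#$, using a suitably restricted subcover of the global Whitney cover $\cW$ from Lemma \ref{mdl}. First, I would define $\cW' := \{B \in \cW : B \cap AB^\# \neq \emptyset\}$. Since $\cW$ is a Whitney cover of $2B_0 \supseteq AB^\#$, the pairwise-disjoint third-dilate property and the factor-$8$ comparability of overlapping neighbors pass from $\cW$ to $\cW'$; hence $\cW'$ is a Whitney cover of $AB^\#$. For each $B \in \cW' \subseteq \cW$, Lemma \ref{old_fp_lem} supplies the local finiteness inclusion $\Gamma_{\ell_\old}(x,f,M) \subseteq \Gamma_{E \cap \frac{6}{5}B}(x,f, C_\old M)$ for $x \in \frac{6}{5}B$, which is precisely hypothesis \eqref{eqn:fin_hyp} of Lemma \ref{fip_lem} with $\ell_0 = \ell_\old$, $C_0 = C_\old$.

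The key technical step is to bound $|\cW'| \leq (180A)^n$, so that $\lceil \log(D|\cW'|+1)/\log(D+1) \rceil \leq \chi$ from \eqref{defn:chi}. The keystone property of $B^\#$ immediately gives the lower bound $\diam(B) \geq \diam(B^\#)/2$ for every $B \in \cW'$. For the upper direction I would argue by contradiction: if $\diam(B) \geq 5A \diam(B^\#)$ for some $B \in \cW'$, then the triangle inequality---using $B \cap AB^\# \neq \emptyset$---places the center of $B^\#$ inside $\frac{6}{5}B$, so $\frac{6}{5}B \cap \frac{6}{5}B^\# \neq \emptyset$. The Whitney-cover axiom then forces $\diam(B) \leq 8 \diam(B^\#)$, contradicting $A \geq 10$ (cf.\ \eqref{defn:A}). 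Hence $\diam(B) \leq 5A \diam(B^\#)$ and $B \subseteq 11A \cdot B^\#$. A volume-packing argument using disjointness of $\{\frac{1}{3}B\}_{B \in \cW'}$ and the volume lower bound $|\tfrac{1}{3}B| \geq 6^{-n}|B^\#|$ then yields $|\cW'| \leq (66A)^n \leq (180A)^n$.

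Applying Lemma \ref{fip_lem} with $\widehat{B} = AB^\#$, cover $\cW'$, and parameters $\ell_0 = \ell_\old$, $C_0 = C_\old$, $N = |\cW'|$ produces
\[
    \Gamma_{\ell_1}(x,f,M) \subseteq \Gamma_{E \cap AB^\#}(x,f, C' C_\old M) \quad \text{for all } x \in AB^\#,
\]
where $\ell_1 \leq \ell_\old + \chi$ and $C'$ is a controlled constant. Since $\ell \geq \ell_\old + \chi \geq \ell_1$, we have $\Gamma_\ell(x,f,M) \subseteq \Gamma_{\ell_1}(x,f,M)$, concluding the proof with $C := C'$. The main obstacle I foresee is the explicit size bound $|\cW'| \leq (180A)^n$ in the second paragraph---pinning down the correct constant in the upper diameter bound so that the counting works out---since the remainder is a direct assembly of Lemmas \ref{mdl}, \ref{old_fp_lem}, and \ref{fip_lem}.
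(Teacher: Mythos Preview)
Your proposal is correct and follows essentially the same route as the paper's proof: define the subcollection $\cW' = \{B \in \cW : B \cap AB^\# \neq \emptyset\}$, verify it is a Whitney cover of $AB^\#$, invoke the local inclusion \eqref{eqn:old_fp_lem}, bound $\#\cW'$ via the keystone lower bound plus a contradiction argument for the upper diameter bound and a volume-packing count, and then apply Lemma \ref{fip_lem}. The paper uses the slightly looser constants $10A$ and $30A$ where you use $5A$ and $11A$, but both arrive at the same $(180A)^n$ bound that feeds into the definition of $\chi$.
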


\begin{proof}

Let $\cW(B^\#)$ be the set of all balls in $\cW$ that intersect $A B^\#$. Since $\cW$ is a Whitney cover of $2B_0$ and $AB^\# \subseteq 2 B_0$, we have that $\cW(B^\#)$ is a Whitney cover of $A B^\#$. From \eqref{eqn:old_fp_lem} we have the inclusion
\[
\Gamma_{\ell_{\old}}(x,f,M) \subseteq \Gamma_{E \cap \frac{6}{5} B}(x,f, C_{\old} M) \mbox{ for all } B \in \cW(B^\#), \; x \in (6/5)B.
\]
We apply Lemma \ref{fip_lem} to the Whitney cover $\cW(B^\#)$ of $A B^\#$, with $\ell_0 = \ell_{\old}$ and $C_0 = C_{\old}$. We deduce that
\[
\Gamma_{\ell_1}(x,f,M) \subseteq \Gamma_{E \cap A B^\#}(x,f, C_1 M)
\]
for the constants $C_1 = C \cdot C_{\old}$ and $\ell_1 = \ell_\old + \lceil \frac{\log (D \cdot N  + 1) }{\log(D+1)} \rceil$, where $N = \# \cW(B^\#)$; here, $C$ is a controlled constant.

We prepare to estimate $N = \# \cW(B^\#)$ using a volume comparison bound.

For any $B \in \cW(B^\#)$, we have $\diam(B) \geq \frac{1}{2} \diam(B^\#)$ by definition of keystone balls -- furthermore, we claim that $\diam(B) \leq 10A \diam(B^\#)$. We proceed by contradiction: Suppose  $\diam(B) > 10 A \diam(B^\#)$ for some $B \in \cW(B^\#)$. We have $B \cap AB^\# \neq \emptyset$ by definition of $\cW(B^\#)$. The previous conditions yield that $\frac{6}{5} B \cap B^\# \neq \emptyset$. Then $\diam(B) \leq 8 \diam(B^\#)$ by the properties of the Whitney cover $\cW$ (see Definition \ref{defn:whit_cover}). This completes the proof by contradiction. 

For any $B \in \cW(B^\#)$ we have $B \cap A B^\# \neq \emptyset$ and $\diam(B) \leq 10 A \diam(B^\#)$, and therefore $B \subseteq 30A B^\#$.

We estimate the volume of $\Omega := \bigcup_{B \in \cW(B^\#)}  \frac{1}{3} B$ in two ways. First, note that $\mbox{Vol}(\Omega) \leq \mbox{Vol}(30A B^\#) = (30A)^n \mbox{Vol}(B^\#)$. Since $\{\frac{1}{3} B\}_{B \in \cW}$ is pairwise disjoint (by properties of the Whitney cover $\cW$), $N= \# \cW(B^\#)$, and $\diam(B) \geq \frac{1}{2} \diam(B^\#)$ for $B \in \cW(B^\#)$, we have
\[
\mbox{Vol}(\Omega) = \sum_{B \in \cW(B^\#)} 3^{-n} \mbox{Vol}(B) \geq  N 6^{-n} \mbox{Vol}(B^\#).
\]
Thus, $N \leq (180A)^n$. By definition of $\chi$ in \eqref{defn:chi}, $\ell_1 = \ell_\old +\lceil \frac{\log (D \cdot N + 1) }{ \log (D+1)} \rceil   \leq \ell_\old + \chi \leq \ell$. Hence,
\[
\Gamma_{\ell}(x,f,M) \subseteq \Gamma_{\ell_1}(x,f,M) \subseteq \Gamma_{E \cap A B^\#}(x,f,C_1 M),
\]
as desired.
\end{proof}

\begin{lemma}\label{mainlem_1}
If $\ell \geq \ell_{\old} + \chi$, and if $B^\# \in \cW$ is a keystone ball satisfying $AB^\# \subseteq 2B_0$, then
\begin{equation}\label{eqn:mainlem_1}
\sigma_\ell(z_{B^\#})  \cap V \subseteq C  C_{\old} \cB_{z_{B^\#}, \diam(B^\#)}.
\end{equation}
Here, the constant $\chi \geq 1$ is defined in \eqref{defn:chi}, and $C \geq 1$ is a controlled constant. \end{lemma}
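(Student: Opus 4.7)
The plan is to chain three ingredients already available in the paper: Lemma~\ref{lemma1}, Lemma~\ref{lem:sigma_inc}, and the transversality of $\sigma(z_{B^\#})$ to $V$ furnished by condition~3 of the Main Decomposition Lemma (Lemma~\ref{mdl}). Write $z = z_{B^\#}$ and $\delta = \diam(B^\#)$ throughout. Since $\ell \geq \ell_{\old}+\chi$ and $AB^\# \subseteq 2B_0$, Lemma~\ref{lemma1} yields $\sigma_\ell(z) \subseteq C^{(1)} C_{\old}\, \sigma_{E \cap AB^\#}(z)$ for a controlled constant $C^{(1)}$. Since $z \in \tfrac{6}{5}B^\# \subseteq \tfrac{1}{2}(AB^\#)$ (as $A\ge 10$), Lemma~\ref{lem:sigma_inc} applied to the ball $AB^\#$ gives $\sigma_{E \cap AB^\#}(z) \cap \cB_{z,A\delta} \subseteq C^0\, \sigma(z)$. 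Finally, condition~3 of Lemma~\ref{mdl} (applied with $B=B^\#$, $x=z\in 8B^\#$, and the choice $\delta'=\delta \in [\diam(B^\#),\diam(B_0)]$) tells us that $\sigma(z)$ is $(z,\bar{C}\delta,R_4)$-transverse to $V$, so $\sigma(z)\cap V \subseteq R_4\cB_{z,\bar{C}\delta} \subseteq R_4\bar{C}^m\cB_{z,\delta}$ via \eqref{eqn:ball_scale}.

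Chaining these three inclusions gives a restricted form of the desired bound: for every $P \in \sigma_\ell(z) \cap V \cap (C^{(1)}C_{\old})\cB_{z,A\delta}$, the rescaled polynomial $P/(C^{(1)}C_{\old}C^0)$ lies in $\sigma(z) \cap V$ (using that $V$ is a subspace), hence $|P|_{z,\delta} \leq C^{(1)}C_{\old}\,C^0 R_4 \bar{C}^m$. By the choice of $A = 2C^0\bar{C}^m R_4$ in \eqref{defn:A}, this right-hand side equals $C^{(1)}C_{\old}\cdot A/2$, a relationship that will drive the final step.

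The principal obstacle is removing the restriction $|P|_{z,A\delta}\le C^{(1)}C_{\old}$. I would handle this by contradiction: suppose some $P\in\sigma_\ell(z)\cap V$ had $|P|_{z,A\delta}>C^{(1)}C_{\old}$. Scaling $\widetilde{P} := (C^{(1)}C_{\old}/|P|_{z,A\delta})P$ keeps $\widetilde{P}\in\sigma_\ell(z)\cap V$ (symmetric convexity), so the restricted bound applies and gives $|\widetilde{P}|_{z,\delta}\le C^{(1)}C_{\old}\cdot A/2$. Scaling back produces the inequality $|P|_{z,\delta}\le (A/2)\,|P|_{z,A\delta}$. On the other hand, the universal polynomial scaling bound from \eqref{eqn:norm_scale} (applied with $\rho=\delta$ and the bigger scale $A\delta$) forces $|P|_{z,A\delta}\le A^{-1}|P|_{z,\delta}$, i.e.\ $|P|_{z,\delta}\ge A\,|P|_{z,A\delta}$. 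Combining the two inequalities on $|P|_{z,\delta}$ yields $A\le A/2$ unless $|P|_{z,A\delta}=0$, and $|P|_{z,A\delta}=0$ forces $P=0$ by \eqref{eqn:norm_scale}; either way we contradict $|P|_{z,A\delta}>C^{(1)}C_{\old}>0$. Hence $|P|_{z,A\delta}\le C^{(1)}C_{\old}$ holds for every $P\in\sigma_\ell(z)\cap V$, and the restricted bound from the previous paragraph becomes the desired conclusion with $C := C^{(1)}C^0 R_4\bar{C}^m$, a product of controlled constants.
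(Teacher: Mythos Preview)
Your proof is correct and follows essentially the same route as the paper: chain Lemma~\ref{lemma1}, Lemma~\ref{lem:sigma_inc}, and the transversality from condition~3 of Lemma~\ref{mdl} to get the restricted bound, then exploit the choice $A = 2C^0\bar{C}^m R_4$ together with convexity to remove the restriction. The only cosmetic difference is in this last step: the paper first passes from $\cB_{z,A\delta}$ to $A\cB_{z,\delta}$ and then invokes the clean principle ``$\Omega \cap 2r\cB \subseteq r\cB \implies \Omega \subseteq r\cB$'' for symmetric convex $\Omega$, whereas you run the equivalent contradiction argument directly across the two norm scales.
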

\begin{proof}

Let $C_0$ be the constant $C$ in Lemma \ref{lemma1}, and $C^0$ the constant in Lemma \ref{lem:sigma_inc}. Note that $z_{B^\#} \in \frac{6}{5} B^\# \subseteq \frac{1}{2} A B^\#$ (since $A \geq 10$). By condition \eqref{eqn:lemma1} in Lemma \ref{lemma1}, and Lemma \ref{lem:sigma_inc} (applied for $B = AB^\#$ and $x= z_{B^\#}$),
\begin{equation}\label{eq:1}
\begin{aligned}
\sigma_{\ell}(z_{B^\#}) &  \cap C_0 C_{\old}  \cB_{z_{B^\#}, A \diam(B^\#)}  \\
& \subseteq C_0 C_{\old} ( \sigma_{E \cap AB^\#}(z_{B^\#})\cap \cB_{z_{B^\#}, A \diam(B^\#)} ) \\
& \subseteq C^0 C_0 C_{\old} \cdot \sigma(z_{B^\#}) \qquad\qquad\qquad \mbox{for } \ell \geq \ell_{\old} + \chi.
\end{aligned}
\end{equation}

Apply condition 3 of Lemma \ref{mdl} to $B = B^\#$,  $x= z_{B^\#}$, and $\delta = \diam(B^\#)$, giving that $\sigma(z_{B^\#})$ is $(x,\bar{C} \diam(B^\#), R_4)$-transverse to $V$. By Lemma \ref{lem:tau trans}, $\sigma(z_{B^\#})$ is $(x,\diam(B^\#), \widehat{R})$-transverse to $V$ for $\widehat{R}=\bar{C}^m R_4$. Therefore, $\sigma(z_{B^\#}) \cap V \subseteq \widehat{R} \cB_{z_{B^\#},\diam(B^\#)}$. Applying this inclusion and taking the intersection with $V$ on each side of  \eqref{eq:1}, we obtain
\[
\sigma_\ell(z_{B^\#}) \cap V \cap (C_0 C_{\old} \cB_{z_{B^\#}, A \diam(B^\#)}) \subseteq C^0 C_0 C_{\old} \widehat{R}  \cB_{z_{B^\#}, \diam(B^\#)}.
\]
From \eqref{eqn:ball_scale}, $A  \cB_{z_{B^\#}, \diam(B^\#)} \subseteq \cB_{z_{B^\#},A \diam(B^\#)}$ (recall $A \geq 1$). Thus, 
\begin{equation}\label{eq:3}
\sigma_\ell(z_{B^\#}) \cap V \cap (C_0 C_{\old} A \cB_{z_{B^\#},\diam(B^\#)}) \subseteq C^0 C_0 C_{\old} \widehat{R}  \cB_{z_{B^\#}, \diam(B^\#)}.
\end{equation}
By definition of $A$ in \eqref{defn:A}, $A = 2 C^0 \bar{C}^m R_4 = 2 C^0 \widehat{R}$. Therefore, \eqref{eq:3} reads as
\[
(\sigma_\ell(z_{B^\#}) \cap V) \cap (2 C^0 C_0 C_{\old} \widehat{R}  \cB_{z_{B^\#},\diam(B^\#)}) \subseteq C^0 C_0 C_{\old} \widehat{R}  \cB_{z_{B^\#}, \diam(B^\#)}.
\]
Note that $\Omega \cap  2r \B \subseteq  r  \B \implies \Omega \subseteq r \B$, valid when $\Omega$ is a symmetric convex subset of a Hilbert space $X$ with unit ball $\B$, and $r > 0$. By this fact and the above inclusion, we have
\[
\sigma_\ell(z_{B^\#}) \cap V \subseteq C^0 C_0 C_{\old} \widehat{R} \cB_{z_{B^\#}, \diam(B^\#)}.
\]
This completes the proof of \eqref{eqn:mainlem_1} for the controlled constant $C = C^0 C_0 \widehat{R}$.

\end{proof}

We require one last lemma before the proof of our main result.

\begin{lemma}\label{lem:stabv} Let $R,Z \geq 1$ and $\lambda \geq 1$ be given. If $\Omega$ is a symmetric closed convex set in a Hilbert space $X$, $\cB$ is the closed unit ball of $X$, and $V \subseteq X$ is a subspace, satisfying (i) $ \cB/V \subseteq R \cdot (\Omega \cap \cB)/V$ and  (ii) $\Omega \cap V \subseteq Z \cB$, then
\begin{equation}\label{eqn:201}
(\Omega + \lambda \cB) \cap V \subseteq Z \cdot (3R \lambda + 1) \cB.
\end{equation}
\end{lemma}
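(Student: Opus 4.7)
The plan is to analyze an arbitrary $x \in (\Omega + \lambda\cB) \cap V$ by splitting $x = \omega + b$ with $\omega \in \Omega$, $b \in \lambda\cB$, and then using hypotheses (i) and (ii) sequentially to control $|x|$. First I would apply the orthogonal projection $\Pi_{V^\perp}$ to the identity $x = \omega + b$: since $x \in V$ the left side vanishes in $V^\perp$, so $\Pi_{V^\perp}\omega = -\Pi_{V^\perp}b$, and in particular $|\Pi_{V^\perp}\omega| \leq |b| \leq \lambda$. This means $\omega$ lies within distance $\lambda$ of $V$, and the vector $y := -\Pi_{V^\perp}b/\lambda \in V^\perp$ lies in $\cB$.

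The key step is to produce a correction sending $\omega$ back into $V$ while only moving it through $\Omega$-dilates. Hypothesis (i) applied to $y \in \cB$ supplies $\tilde\omega \in R(\Omega \cap \cB)$ with $y - \tilde\omega \in V$; multiplying by $\lambda$ and combining with $\omega + \Pi_{V^\perp}b \in V$ (observed above) yields
\[
v := \omega - \lambda\tilde\omega \in V.
\]
At the same time I claim $v \in (\lambda R + 1)\Omega$. Indeed, $\omega \in \Omega$ and $-\lambda\tilde\omega \in \lambda R \cdot \Omega$ by symmetry of $\Omega$, so writing $v = (\lambda R + 1)\bigl(\tfrac{1}{\lambda R + 1}\omega + \tfrac{\lambda R}{\lambda R + 1}\omega'\bigr)$ for an appropriate $\omega' \in \Omega$ and invoking convexity of $\Omega$ (which contains $0$) gives the claim.

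Hypothesis (ii) then forces $|v| \leq Z(\lambda R + 1)$. Writing $x = v + \lambda\tilde\omega + b$ and using $|\tilde\omega| \leq R$, $|b| \leq \lambda$, I get
\[
|x| \leq Z(\lambda R + 1) + \lambda R + \lambda,
\]
which, absorbing the last two terms using $Z \geq 1$ and $R \geq 1$, is at most $Z(3R\lambda + 1)$, as desired. The main (and only nontrivial) obstacle is the convex-combination step verifying $v \in (\lambda R + 1)\Omega$; it is routine but uses all of symmetry, convexity, and $0 \in \Omega$ (the last being a consequence of the former two together with the nonemptiness of $\Omega$ implied by (i)).
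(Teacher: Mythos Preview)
Your proof is correct and follows essentially the same route as the paper's: split $x=\omega+b$, use (i) to find an element of $R\lambda(\Omega\cap\cB)$ in the same $V$-coset as $b$, observe that the resulting correction of $\omega$ lies in $(\lambda R+1)\Omega\cap V$, apply (ii), and collect terms. The only cosmetic difference is that you first pass to $y=-\Pi_{V^\perp}b/\lambda$ before invoking (i), whereas the paper applies (i) directly to $P_1/\lambda$; since $b/\lambda$ and $\Pi_{V^\perp}b/\lambda$ represent the same coset in $\cB/V$, this detour is unnecessary but harmless.
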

\begin{proof}
Fix $P \in (\Omega + \lambda \cB) \cap V$. Write $P = P_0 + P_1$ with $P_0 \in  \Omega$ and $P_1 \in \lambda \cB$. Since $P_1 \in \lambda \cB$, there exists $P_2 \in R \lambda (\Omega \cap \cB)$ with $P_1 - P_2 \in V$ by condition (i). Define $\tilde{P} := P - (P_1 - P_2) \in  V$. As $\tilde{P} = P_0 + P_2$, with $P_0 \in \Omega$ and $P_2 \in R \lambda \cdot  \Omega$, we have $\tilde{P} \in (R \lambda + 1) \Omega$. Thus, by condition (ii),
\[
\tilde{P} \in (R\lambda + 1) \cdot ( \Omega \cap V) \subseteq (R \lambda + 1) \cdot  Z \cB.
\]
Therefore,
\[
P =  \tilde{P} + P_1 - P_2 \in (R \lambda + 1)Z  \cB + \lambda \cB + R \lambda \cB \subseteq (3R\lambda + 1)Z \cB.
\]
\end{proof}

We finish this section with the proof of Lemma \ref{main_lem}.

\begin{proof}[Proof of Lemma \ref{main_lem}]
\label{fin_up_sec}

Fix the constants $A$, $\epsilon_0$, and $\chi$ as in \eqref{defn:A}, \eqref{defn:chi}.

Let $B \in \cW_0$, $x \in 3B$, and $\ell \geq \ell_{\old} + \chi$. Set $B^\# = \kappa(B) \in \cW$, as defined in Lemma \ref{key_geom_lem}. Thus, $\diam(B^\#) \leq \diam(B)$,  $A B^\# \subseteq 2 B_0$, and $\dist(B^\#,B) \leq C_4 \diam(B)$ for a controlled constant $C_4$. By Lemma \ref{mainlem_1} and \eqref{eqn:ball_inc},
\begin{equation}
\label{eqn:501}
\sigma_\ell(z_{B^\#}) \cap V \subseteq C  C_{\old} \cB_{z_{B^\#}, \diam(B^\#)} \subseteq C  C_{\old} \cB_{z_{B^\#}, \diam(B)}.
\end{equation}

Note that $\diam(B) \leq \frac{1}{2} \diam(B_0)$ (see condition 2 of Lemma \ref{mdl}). We apply condition 3 of Lemma \ref{mdl}, with $B^\# \in \cW$, $x=z_{B^\#} \in \frac{6}{5} B^\#$, and $\delta = \diam(B) \in[ \diam(B^\#), \diam(B_0)]$. Thus, $\sigma(x)$ is $(z_{B^\#},\bar{C} \diam(B), R_4)$-transverse to $V$. By Lemma \ref{lem:tau trans}, $\sigma(x)$ is $(z_{B^\#}, \diam(B), \widehat{R})$-transverse to $V$, for $\widehat{R} = \bar{C}^m R_4$. Hence,
\[
\cB_{z_{B^\#},\diam(B)}/V  \subseteq \widehat{R} \cdot (\sigma(z_{B^\#}) \cap \cB_{z_{B^\#},\diam(B)})/V.
\]
By the inclusion $\sigma(z_{B^\#}) \subseteq \sigma_\ell(z_{B^\#})$, we obtain
\begin{equation}\label{eqn:502}
\cB_{z_{B^\#},\diam(B)}/V  \subseteq \widehat{R} \cdot (\sigma_\ell(z_{B^\#}) \cap \cB_{z_{B^\#},\diam(B)})/V.
\end{equation}

Since $z_{B^\#} \in \frac{6}{5} B^\#$ and $x \in 3B$, we have 
\begin{equation}\label{eqn:stuff1}
\begin{aligned}
|z_{B^\#} - x| &\leq \dist(B^\#, B) + 3 \diam(B) + (6/5) \diam(B^\#) \\
& \leq C_4 \diam(B) + 3 \diam(B) + (6/5) \diam(B) \\
&\leq C_5 \diam(B),
\end{aligned}
\end{equation}
for a controlled constant $C_5$. 

By Lemma \ref{lem:gamma_trans} and \eqref{eqn:stuff1}, $\sigma_{\ell+1}(x) \subseteq \sigma_\ell(z_{B^\#}) + C_T \cB_{z_{B^\#}, C_5 \diam(B)}$. Then by \eqref{eqn:ball_scale}, $\sigma_{\ell+1}(x) \subseteq \sigma_\ell(z_{B^\#}) + C_T C_5^m \cB_{z_{B^\#}, \diam(B)}$. Therefore,
\begin{equation}\label{eqn:503}
\sigma_{\ell+1}(x) + \cB_{z_{B^\#}, \diam(B)} \subseteq \sigma_{\ell}(z_{B^\#}) + \widetilde{C} \cB_{z_{B^\#}, \diam(B)},
\end{equation}
where $\widetilde{C} = C_T C_5^m + 1$ is a controlled constant.

We apply Lemma \ref{lem:stabv} to the convex set $\Omega = \sigma_\ell(z_{B^\#})$ in the Hilbert space $X= (\Po,\langle \cdot, \cdot \rangle_{z_{B^\#},\diam(B)})$. We take $\lambda = \widetilde{C}$ in Lemma \ref{lem:stabv}. Inclusions \eqref{eqn:501}, \eqref{eqn:502} imply hypotheses (i), (ii) of Lemma \ref{lem:stabv} with $R = \widehat{R}$, $Z = C C_{\old}$. So,
\begin{equation}\label{eqn:504}
\left(\sigma_{\ell}(z_{B^\#}) + \widetilde{C} \cB_{z_{B^\#},\diam(B)} \right) \cap V \subseteq C C_{\old} \cdot(3 \widehat{R} \widetilde{C} + 1) \cB_{z_{B^\#},\diam(B)}.
\end{equation}
From \eqref{eqn:503} and \eqref{eqn:504},
\begin{equation}\label{eqn:505}
(\sigma_{\ell+1}(x) +  \cB_{z_{B^\#},\diam(B)}) \cap V \subseteq C' C_{\old} \cdot \cB_{z_{B^\#},\diam(B)},
\end{equation}
for a controlled constant $C'$.

Finally, note that  $\widehat{C}^{-1} \cdot \cB_{z_{B},\diam(B)} \subseteq \cB_{z_{B^\#},\diam(B)} \subseteq \widehat{C} \cdot \cB_{z_{B},\diam(B)}$ for a controlled constant $\widehat{C}$; these inclusions follow from Lemma \ref{lem:poly2} and the estimate $|z_{B} - z_{B^\#}| \leq C \diam(B)$ (let $x=z_B$ in \eqref{eqn:stuff1}). Therefore, \eqref{eqn:505} implies that
\[
(\sigma_{\ell+1}(x) + \cB_{z_{B},\diam(B)}) \cap V \subseteq C C_{\old} \cdot \cB_{z_{B},\diam(B)},
\]
for a controlled constant $C$, as desired. This finishes the proof of Lemma  \ref{main_lem}.
\end{proof}

\subsection{Compatibility of the jets $(P_B)_{B \in \cW_0}$}

Our next result states that the polynomials $( P_B)_{B \in \cW_0}$ are pairwise compatible.

\begin{lemma}\label{FMCJ_lem}
There exist constants $\overline{\chi} \geq 5$ and $\widetilde{C} \geq 1$, determined by $m$ and $n$, such that the following holds. Let $(P_B)_{B \in \cW}$, $\ell^\#$, and $\bar{C}_{\ell^\#}$ be as in the statement of Lemma \ref{mdl}, and suppose $\ell^\# \geq \ell_{\old} + \overline{\chi}$. Then $P_B - P_{B'} \in \widetilde{C} C_{\old} \bar{C}_{\ell^\#} M \cB_{z_B,\diam(B)}$ for any $B,B' \in \cW_0$ with $(\frac{6}{5})B \cap (\frac{6}{5})B' \neq \emptyset$. Furthermore, $\overline{\chi} = O(\poly(D))$ and $\widetilde{C} = O(\exp(\poly(D)))$.
\end{lemma}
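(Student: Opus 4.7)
The plan is to exploit two key outputs of the Main Decomposition Lemma: the fact (condition~7) that $P_B - P_{B'} \in V$ for all $B, B' \in \cW$, together with the fact (condition~6) that each $P_B$ sits in a suitable $\Gamma_{\ell^\#-3}$-set. These two pieces combine with the geometric control from Lemma~\ref{main_lem} to yield the desired compatibility estimate. The main obstacle will be arranging the hypothesis of Lemma~\ref{main_lem}, which requires a ball $\widehat{B} \in \cW_0$ with $\diam(\widehat{B}) \le \epsilon_0 \diam(B_0)$; accordingly I will split the argument into two cases.

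First I would fix $B, B' \in \cW_0$ with $\tfrac{6}{5}B \cap \tfrac{6}{5}B' \ne \emptyset$ and record the basic Whitney-cover geometry: $\diam(B)$ and $\diam(B')$ differ by at most a factor of $8$, and $|z_B - z_{B'}| \le 11\diam(B)$. By condition~7 of Lemma~\ref{mdl} we have $P_B - P_{B'} \in V$. Next, from condition~6, $P_B \in \Gamma_{\ell^\#-3}(z_B,f,\bar{C}_{\ell^\#}M)$ and $P_{B'} \in \Gamma_{\ell^\#-3}(z_{B'},f,\bar{C}_{\ell^\#}M)$, so the quasicontinuity estimate of Lemma~\ref{lem:gamma_trans}, applied with $\delta = 11\diam(B) \ge |z_B - z_{B'}|$, gives
\[
P_{B'} \in \Gamma_{\ell^\#-4}(z_B,f,\bar{C}_{\ell^\#}M) + C_T \bar{C}_{\ell^\#} M \cdot \cB_{z_B,11\diam(B)}.
\]
Combining this with $P_B \in \Gamma_{\ell^\#-4}(z_B,f,\bar{C}_{\ell^\#}M)$, invoking Lemma~\ref{lem:sigma_gamma_rel} to take differences, and rescaling the ball by \eqref{eqn:ball_scale}, I obtain
\[
P_B - P_{B'} \in C \bar{C}_{\ell^\#} M \bigl(\sigma_{\ell^\#-4}(z_B) + \cB_{z_B,\diam(B)}\bigr)
\]
for a controlled constant $C$.

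In the main case where some $\widehat{B} \in \cW_0$ has $\diam(\widehat{B}) \le \epsilon_0 \diam(B_0)$, I would then apply Lemma~\ref{main_lem} at the point $x = z_B \in \tfrac{6}{5}B \subseteq 3B$ with $\ell+1 = \ell^\#-4$. Since $V$ is a subspace, the containment $P_B - P_{B'} \in V$ together with the displayed inclusion gives
\[
\tfrac{1}{C\bar{C}_{\ell^\#}M}(P_B - P_{B'}) \in V \cap \bigl(\sigma_{\ell^\#-4}(z_B) + \cB_{z_B,\diam(B)}\bigr) \subseteq C' C_{\old} \cB_{z_B,\diam(B)},
\]
provided $\ell^\# - 5 \ge \ell_\old + \chi$; this dictates the choice $\overline{\chi} := \chi + 5 = O(\poly(D))$. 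Multiplying back yields the claimed bound with $\widetilde{C} = CC'$, a controlled constant (hence $O(\exp(\poly(D)))$).

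Finally, in the remaining case where every $B \in \cW_0$ satisfies $\diam(B) > \epsilon_0 \diam(B_0)$, Lemma~\ref{main_lem} is unavailable but also unnecessary: condition~6 of Lemma~\ref{mdl} directly provides $P_B - P_0, P_{B'} - P_0 \in \bar{C}_{\ell^\#} M \cB_{\cdot,\diam(B_0)}$ based at $z_B$ and $z_{B'}$ respectively. Using Lemma~\ref{lem:poly2} with $\lambda = 11$ to transfer $\cB_{z_{B'},\diam(B_0)}$ into a controlled multiple of $\cB_{z_B,\diam(B_0)}$, then \eqref{eqn:ball_scale} together with $\diam(B_0) < \epsilon_0^{-1}\diam(B)$ to pass to $\cB_{z_B,\diam(B)}$, gives
\[
P_B - P_{B'} \in C'' \epsilon_0^{-m} \bar{C}_{\ell^\#} M \cB_{z_B,\diam(B)},
\]
which is of the required form (absorbing the factor $C_\old \ge 1$ harmlessly). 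Taking $\widetilde{C}$ to be the maximum of the controlled constants from the two cases completes the proof.
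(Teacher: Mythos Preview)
Your proposal is correct and follows essentially the same approach as the paper: the same two-case split on whether every $B \in \cW_0$ satisfies $\diam(B) > \epsilon_0 \diam(B_0)$, the same use of condition~6 alone in the ``all balls large'' case, and in the other case the same combination of condition~7, Lemma~\ref{lem:gamma_trans}, Lemma~\ref{lem:sigma_gamma_rel}, and Lemma~\ref{main_lem} with $\ell = \ell^\#-5$, leading to the identical choice $\overline{\chi} = \chi + 5$. The only differences are cosmetic (you use $|z_B - z_{B'}| \le 11\diam(B)$ where the paper uses $16\diam(B)$, and you order the cases differently).
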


\begin{proof}[Proof of Lemma \ref{FMCJ_lem}]
We fix the constants $\epsilon_0$ and $\chi$ via Lemma \ref{main_lem}, and let $\overline{\chi} = \chi + 5$. Suppose $\ell^\# \in \N$ is picked so that $\ell^\# \geq \ell_{\old} + \overline{\chi}$, and $B,B' \in \cW_0$ satisfy $\frac{6}{5} B \cap \frac{6}{5}B' \neq \emptyset$.

Consider the following two cases for the Whitney cover $\cW_0 \subseteq \cW$. 

\emph{Case 1:} $\diam(B) > \epsilon_0 \diam(B_0)$ for all $B \in \cW_0$.

\emph{Case 2:} There exists $\widehat{B} \in \cW_0$ with $\diam(\widehat{B}) \leq \epsilon_0 \diam(B_0)$.

Suppose $\cW_0$ is as in Case 1. By the second containment in condition 6 of Lemma \ref{mdl}, we obtain
\begin{equation}\label{eqn:801}
\begin{aligned}
P_B - P_{B'} &= (P_B - P_0) + (P_0 - P_{B'}) \\
&\in \bar{C}_{\ell^\#} M \cB_{z_B,\diam(B_0)} + \bar{C}_{\ell^\#} M \cB_{z_{B'},\diam(B_0)}.
\end{aligned}
\end{equation}
Because $z_B,z_{B'} \in 2B_0$, we have $|z_B-z_{B'}| \leq 2 \diam(B_0)$. So by Lemma \ref{lem:poly2}, for a controlled constant $C$,
\begin{equation}\label{eqn:802}
\cB_{z_{B'},\diam(B_0)} \subseteq C 2^{m-1} \cB_{z_B,\diam(B_0)}.
\end{equation}
By \eqref{eqn:ball_scale}, because $\diam(B) > \epsilon_0 \diam(B_0)$, we conclude that 
\begin{equation}\label{eqn:803}
\cB_{z_B,\diam(B_0)} \subseteq (\epsilon_0)^{-m} \cB_{z_B,\diam(B)}.
\end{equation}
When put together, \eqref{eqn:801}, \eqref{eqn:802}, \eqref{eqn:803} give that 
\[
P_B - P_{B'} \in  \bar{C}_{\ell^\#} M (\epsilon_0)^{-m} C2^{m} \cB_{z_B,\diam(B)}.
\]
Note that $C' =  (\epsilon_0)^{-m} C 2^{m}$ is a controlled constant. We obtain the conclusion of Lemma \ref{FMCJ_lem} in Case 1, for any choice of $\widetilde{C} \geq C'$.

Now suppose $\cW_0$ is as in Case 2. By condition 7 in Lemma \ref{mdl},
\[
P_B - P_{B'} = (P_B - P_0) + (P_0 - P_{B'}) \in V.
\]
By the first part of condition 6 of Lemma \ref{mdl}, $P_{B'} \in \Gamma_{\ell^\#-3}(z_{B'},f,\bar{C}_{\ell^\#} M)$. Because $z_B \in \frac{6}{5} B$, $z_{B'} \in \frac{6}{5} B'$, $\frac{6}{5} B \cap \frac{6}{5} B' \neq \emptyset$, and $\diam(B') \leq 8 \diam(B)$ (see condition $(3)$ in Definition \ref{defn:whit_cover} of a Whitney cover) we have
\[
|z_B - z_{B'} | \leq 16 \diam(B).
\]
There exists $\widetilde{P}_B \in \Gamma_{\ell^\#-4}(z_{B}, f , \bar{C}_{\ell^\#} M)$ with $\widetilde{P}_B - P_{B'} \in C_T \bar{C}_{\ell^\#} M \cB_{z_{B}, 16 \diam(B)}$, thanks to Lemma \ref{lem:gamma_trans}. By \eqref{eqn:ball_scale}, $\widetilde{P}_B - P_{B'} \in 16^m C_T \bar{C}_{\ell^\#} M  \cB_{z_{B}, \diam(B)}$.

By condition 6 in Lemma \ref{mdl}, 
\[
P_{B} \in \Gamma_{\ell^\#-3}(z_{B},f,\bar{C}_{\ell^\#} M) \subseteq \Gamma_{\ell^\#-4}(z_{B},f,\bar{C}_{\ell^\#} M),
\]
so, because $\widetilde{P}_B \in \Gamma_{\ell^\#-4}(z_{B}, f , \bar{C}_{\ell^\#} M)$, by Lemma \ref{lem:sigma_gamma_rel},
\[
\widetilde{P}_B - P_B \in 2\bar{C}_{\ell^\#} M \cdot \sigma_{\ell^\#-4}(z_{B}).\]
Thus,
\[
\begin{aligned}
P_B - P_{B'} &= (P_B - \widetilde{P}_B) + (\widetilde{P}_B - P_{B'}) \\
&\in 2\bar{C}_{\ell^\#} M \cdot \sigma_{\ell^\#-4}(z_{B}) +  16^m C_T \bar{C}_{\ell^\#} M \cdot \cB_{z_{B}, \diam(B)} \\
&\subseteq C \bar{C}_{\ell^\#} M \cdot (  \sigma_{\ell^\#-4}(z_{B}) + \cB_{z_{B}, \diam(B)} ),
\end{aligned}
\]
and hence
\[
P_B - P_{B'} \in C \bar{C}_{\ell^\#} M \cdot (  \sigma_{\ell^\#-4}(z_{B}) + \cB_{z_{B}, \diam(B)} ) \cap V,
\]
for a controlled constant $C$.

Note that $\ell^\# - 5 \geq \ell_{\old} + \overline{\chi} - 5 = \ell_{\old} + \chi$, by definition of $\overline{\chi}$. We apply Lemma \ref{main_lem} (with $\ell = \ell^\# - 5$) to deduce that 
\[
(  \sigma_{\ell^\#-4}(z_{B}) + \cB_{z_{B}, \diam(B)} ) \cap V \subseteq C C_{\old} \cB_{z_B,\diam(B)}.
\]
Therefore, $P_B - P_{B'} \in C'' C_{\old} \bar{C}_{\ell^\#} M \cdot \cB_{z_B,\diam(B)}$ for a controlled constant $C''$. We obtain the conclusion of Lemma \ref{FMCJ_lem} in Case 2, for any choice of $\widetilde{C} \geq C''$. This concludes the proof of Lemma \ref{FMCJ_lem}.

\end{proof}

\subsection{Completing the Main Induction Argument} \label{sec_glue}

We complete the induction argument started in Section \ref{sec_ind:setup} by proving the Main Lemma for $K$. Thus, we fix data $(B_0, x_0, E, K, f, \ell^\#, M, P_0)$. In Section \ref{sec_ind:setup} we gave a proof of the Main Lemma for $K$ under the assumption $\#(E \cap B_0) \leq 1$. Thus, we may assume $\#(E \cap B_0) \geq 2$. See \eqref{twopoints}. Recall our task is to  construct a linear map $T : C(E) \times \Po \rightarrow C^{m-1,1}(\R^n)$ and prove it satisfies \eqref{main-task}.

The constant $\overline{\chi}$ in the Main Lemma for $K$ is taken to be $\overline{\chi}$ in Lemma \ref{FMCJ_lem}. Note that $\overline{\chi} \geq 5$ is a constant determined by $m$ and $n$, and $\overline{\chi} = O(\poly(D))$. Let $\ell^\#$, $C^\#$  satisfy \eqref{eqn:ind_consts} for $\overline{\chi}$ defined above and $\Lambda$ to be defined momentarily.

Given $P_0 \in \Gamma_{\ell^\#}(x_0,f,M)$, we apply the Main Decomposition Lemma (Lemma \ref{mdl}) to the data $(B_0, x_0, E, K, f, \ell^\#, M, P_0)$ to obtain a Whitney cover $\cW$ of $2B_0$, a DTI subspace $V \subseteq \Po$, and families $\{P_B\}_{B \in \cW}$ and $\{z_B\}_{B \in \cW}$. We defined in \eqref{defn:W0} the subfamily $\cW_0 = \{ B \in \cW : B \cap B_0 \neq \emptyset \}$ of $\cW$, so that $\cW_0$ is a Whitney cover of $B_0$. 

We apply Lemma \ref{old_fp_lem} with $x=z_B \in (6/5)B$ for $B \in \cW$. Thus, there exists a linear map $T_B : C(E) \times \Po \rightarrow C^{m-1,1}(\R^n)$ satisfying conditions 1,2,3 of Lemma \ref{old_fp_lem}, for $x=z_B$. 

Lemma \ref{mdl} (condition 6) asserts that $P_B \in \Gamma_{\ell^\# - 3}(z_B,f,\bar{C}_{\ell^\#} M)$ for $B \in \cW$. Because $\ell^\# - 3 \geq \ell^\# - \overline{\chi} = \ell_\old$, we have $P_B \in \Gamma_{\ell_\old}(z_B,f,\bar{C}_{\ell^\#} M)$. Thus, by Lemma \ref{old_fp_lem}, the function $F_B := T_B(f,P_B) \in C^{m-1,1}(\R^n)$ satisfies
\begin{equation} \label{aaa1}
\left\{
\begin{aligned}
&F_B = f \mbox{ on } E \cap (6/5)B,\\
&J_{z_B} F_B = P_B, \mbox{ and } \| F_B \| \leq C_{\old}  \bar{C}_{\ell^\#} M 
\end{aligned}
\right. \qquad (B \in \cW).
\end{equation} 
Since $\ell^\# \geq \ell_{\old} + \overline{\chi}$, we can apply Lemma \ref{FMCJ_lem} to conclude that
\begin{equation}
\label{aaa2}
|J_{z_B} F_B - J_{z_{B'}} F_{B'}|_{z_B,\diam(B)} = |P_B - P_{B'} |_{z_B, \diam(B)} \leq \widetilde{C} C_{\old} \bar{C}_{\ell^\#} M,
\end{equation}
for $B,B' \in \cW_0$ with $(6/5)  B \cap (6/5) B' \neq \emptyset$, and a controlled constant $\widetilde{C}$.

Let $\{\theta_B\}_{B \in \cW_0}$ be a partition of unity on $B_0$ adapted to the Whitney cover $\cW_0$ of $B_0$, satisfying the properties in Lemma \ref{lem: p of u}. Define $F : B_0 \rightarrow \R$ by
\[
F = \sum_{B \in \cW_0} F_B \theta_B \mbox{ on }B_0.
\]

We describe the basic properties of the function $F$. By Lemma \ref{lem:glue} and the conditions \eqref{aaa1}, \eqref{aaa2}, $F \in C^{m-1,1}(B_0)$  satisfies  $\| F \|_{C^{m-1,1}(B_0)} \leq C C_{\old} \bar{C}_{\ell^\#} M$ and $F=f$ on $E \cap B_0$, where $C$ is a controlled constant.

Because each $F_B$ depends linearly on $(f,P_B)$, and each $P_B$ depends linearly on $(f,P_0)$ (see condition 8 in Lemma \ref{mdl}), $F$ depends linearly on $(f,P_0)$.

By conditions 5 and 6 in Lemma \ref{mdl}, $z_B = x_0$ and $P_B = P_0$ if $x_0 \in (6/5) B$. By the support properties of $\theta_B$ (see Lemma \ref{lem: p of u}), $J_{x_0} \theta_B \neq 0 \implies x_0 \in (6/5) B$. Thus, $J_{x_0} F_B = P_0$ if $J_{x_0} \theta_B \neq 0$. Therefore, using that $\sum_{B \in \cW_0} \theta_B = 1$ on $B_0$,
\[
\begin{aligned}
J_{x_0} F &= \sum_{B \in \cW_0 : x_0 \in \frac{6}{5}B} J_{x_0} (F_B \theta_B) = \sum_{B \in \cW_0 : x_0 \in \frac{6}{5}B} J_{x_0} F_B \odot_{x_0} J_{x_0} \theta_B \\
& =  \sum_{B \in \cW_0 : x_0 \in \frac{6}{5}B} P_0 \odot_{x_0} J_{x_0} \theta_B = P_0 \odot_{x_0} 1 = P_0.
\end{aligned}
\]

We extend $F :B_0 \rightarrow \R$ to all of $\R^n$ using Lemma \ref{lem:dom_ext} (an outcome of the classical Whitney extension theorem). This guarantees the existence of a function $\widehat{F} \in C^{m-1,1}(\R^n)$, depending linearly on $F$, with $\widehat{F}|_{B_0} = F$, and
\[
\| \widehat{F} \|_{C^{m-1,1}(\R^n)} \leq  C \| F \|_{C^{m-1,1}(B_0)} \leq C' C_{\old} \bar{C}_{\ell^\#} M.
\]
Here, $C, C'$ are controlled constants. By the properties of $F$, stated above, and since $\widehat{F}|_{B_0} = F$, we deduce that $\widehat{F} = F =  f$ on $E \cap B_0$ and $J_{x_0} \widehat{F} = J_{x_0} F = P_0$ (recall $x_0 \in B_0$). Therefore, we have shown:
\begin{equation}\label{eqn:main_cond_barF}
\left\{
    \begin{aligned}
    &\widehat{F} = f \mbox{ on } E \cap B_0 \\
    &J_{x_0} \widehat{F} = P_0 \\
    &\| \widehat{F} \|_{C^{m-1,1}(\R^n)} \leq C' C_{\old} \bar{C}_{\ell^\#} M.
    \end{aligned}
\right.
\end{equation}

We choose $\Lambda$ in \eqref{eqn:ind_consts}, now, to ensure the inequality $C^\# \geq C'  C_{\old} \bar{C}_{\ell^\#}$. From Lemma \ref{mdl} recall that $\bar{C}_{\ell^\#} = C \cdot (D+1)^{\ell^\#}$ for a controlled constant $C \geq 1$. From \eqref{eqn:ind_consts}, $C_{\old} = C^\#(K-1)$, $\ell^\# = \ell^\#(K)$, and $C^\#=C^\#(K)$ have the form  $\ell^\# =  \overline{\chi} \cdot (K+1)$, $C^\# = \Lambda^{(K+1)^2 + 1}$ and $C_{\old} = \Lambda^{K^2 + 1}$. Thus, the desired inequality is equivalent to 
\[
\frac{C^\#}{C_{\old}} = \Lambda^{2K + 1} \geq C' \cdot C \cdot (D+1)^{\overline{\chi} \cdot (K+1)}.
\]
Fix a controlled constant $\Lambda$ satisfying the earlier condition \eqref{eqn:Lam_ass1}, in addition to $\Lambda \geq C'  C  (D+1)^{\overline{\chi}}$ so that the preceding inequality is valid, and $C^\# \geq C'  C_{\old} \bar{C}_{\ell^\#}$. Therefore, \eqref{eqn:main_cond_barF} implies
\[
\| \widehat{F} \|_{C^{m-1,1}(\R^n)} \leq C^\# M.
\] 
Because $\widehat{F}$ depends linearly on $F$ and $F$ depends linearly on $(f,P_0)$, we have that $\widehat{F} = T(f,P_0)$ for some linear map $T : C(E) \times \Po \rightarrow C^{m-1,1}(\R^n)$.

Thus we have defined a linear map $T : C(E) \times \Po \rightarrow C^{m-1,1}(\R^n)$ and verified the conditions in \eqref{main-task} (see \eqref{eqn:main_cond_barF}). This completes the proof of the Main Lemma for $K$ (Lemma \ref{lml}).

\section{Proofs of the Main Results}\label{sec:mainproofs}

\subsection{Proof of Theorem \ref{thm:sharpfinitenessprinciple}}\label{subsec:mainproofs1}

We give the proof of Theorem \ref{thm:sharpfinitenessprinciple}. Recall that Lemma \ref{lml} specifies a family of constants $\ell^\#(K)$ and $C^\#(K)$ ($K \in \{-1,0,\dots\}$).

Let $E \subseteq \R^n$ be finite. Fix a closed ball $B_0 \subseteq \R^n$ containing $E$, and a point $x_0 \in B_0$. Set $K_0 := 4 m D^2$, $\ell^\# := \ell^\#(K_0)$, and $C^\# := C^\#(K_0)$.

By Corollary \ref{cor:ballCompBound}, we have $\cC(E|5B_0) \leq K_0$. Lemma \ref{lml} guarantees the existence of a linear mapping $T : C(E) \times \Po \rightarrow C^{m-1,1}(\R^n)$ satisfying, for any $(f,P) \in C(E) \times \Po$, if $P \in \Gamma_{\ell^\#}(x_0,f,M)$ then
\begin{enumerate}
    \item $T(f,P) = f$ on $E$.
    \item $J_{x_0} T(f,P) = P$.
    \item $\| T(f,P) \|_{C^{m-1,1}(\R^n)} \leq C^\# M$.
\end{enumerate}

For the proof of part (A) of Theorem \ref{thm:sharpfinitenessprinciple}, set $k^\# := (D+1)^{\ell^\# + 3}$. We are given that $f$ satisfies the finiteness hypothesis $\cF\cH(k^\#, M)$ for some $M > 0$. According to Lemma \ref{lem:gamma_trans}, $\Gamma_{\ell^\#}(x_0,f,M) \neq \emptyset$. Let $P \in \Gamma_{\ell^\#}(x_0,f,M)$. Set $F = T(f,P)$. According to the above conditions, $F = f$ on $E$ and $\| F \|_{C^{m-1,1}(\R^n)} \leq C^\# M$. Thus, $\| f \|_{C^{m-1,1}(E)} \leq C^\# M$. This establishes part (A) of Theorem \ref{thm:sharpfinitenessprinciple}.

We next prove part (B) of Theorem \ref{thm:sharpfinitenessprinciple}. By Lemma \ref{lem: linear select} there exists a linear map $P^{x_0}_{\ell^\#} : C(E) \rightarrow \Po$ such that if $f$ satisfies $\cF\cH(k^\#, M)$ then $P^{x_0}_{\ell^\#}(f) \in \Gamma_{\ell^\#}(x_0,C_{\ell^\#}M)$, with $C_{\ell^\#} = C' (D+1)^{\ell^\#}$ for a controlled constant $C'$.

Define a linear map $\widehat{T}: C(E) \rightarrow C^{m-1,1}(\R^n)$ by $\widehat{T}(f) := T(f,P^{x_0}_{\ell^\#}(f))$.

Suppose $f \in C(E)$ and let $ M > \| f \|_{C^{m-1,1}(E)}$. Evidently, $f$ satisfies $\cF\cH(k^\#,M)$. Hence, $P^{x_0}_{\ell^\#}(f) \in \Gamma_{\ell^\#}(x_0,C_{\ell^\#} M)$. By property 3 of $T$,
\[
\| \widehat{T}(f) \|_{C^{m-1,1}(\R^n)} = \| T(f, P^{x_0}_{\ell^\#}(f)) \|_{C^{m-1,1}(\R^n)} \leq C^\# C_{\ell^\#} M = C_0^\# M,
\]
with $C_0^\# := C^\# C_{\ell^\#}$. Since $M > \| f \|_{C^{m-1,1}(E)}$ is arbitrary,  $\| \widehat{T}(f) \|_{C^{m-1,1}(\R^n)} \leq C_0^\# \| f \|_{C^{m-1,1}(E)}$, as desired. By property $1$ of $T$, we have $\widehat{T}(f) = f$ on $E$.  This completes the proof of part (B) of Theorem \ref{thm:sharpfinitenessprinciple}.

We remark at last on the form of the constants. Recall that $C^\# = C^\#(K_0) = \Lambda^{(K_0+1)^2 + 1}$, $\Lambda$ is a controlled constant, and $K_0 = 4m D^2$. Thus, $C^\#$ is a controlled constant. Similarly, since $\ell^\# = \ell^\#(K_0) = \overline{\chi} \cdot (K_0+1)$ with $\overline{\chi} = O(\poly(D))$, we have $\ell^\# = O(\poly(D))$, and thus, $C_{\ell^\#} = C' (D+1)^{ \ell^\#}$ is a controlled constant. Therefore, $k^\# = (D+1)^{\ell^\#+1}$ and $C^\#_0 = C^\# C_{\ell^\#}$ are controlled constants. This completes the proof of Theorem \ref{thm:sharpfinitenessprinciple}.

\subsection{Proofs of Theorem \ref{thm: new c sharp} and \ref{thm: lin op}}\label{subsec:mainproofs2}

Let $E \subseteq \R^n$ be an arbitrary set, and let $f : E \rightarrow \R$. We claim that
\begin{equation}\label{eqn:fin_E}
\| f \|_{C^{m-1,1}(E)} = \sup_{\widehat{E} \subseteq E \text{ finite}} \| f|_{\widehat{E}}\|_{C^{m-1,1}(\widehat{E})}.
\end{equation}
To prove \eqref{eqn:fin_E}, we use a compactness argument adapted from the proof of Lemma 18.2 of \cite{F1}.

First note that if $\widehat{E} \subseteq E$ then $ \| f \|_{C^{m-1,1}(E)} \geq \| f|_{\widehat{E}} \|_{C^{m-1,1}(\widehat{E})}$, by definition of the trace seminorm. Therefore, the left-hand side of \eqref{eqn:fin_E} is greater than or equal to the right-hand side of \eqref{eqn:fin_E}.

For the reverse inequality, it suffices to demonstrate that 
\begin{equation}\label{eqn:fin_red1}
\begin{aligned}
& \| f|_{\widehat{E}}\|_{C^{m-1,1}(\widehat{E})} \leq 1 \mbox{ for all finite } \widehat{E} \subseteq E \\
&\implies f \in C^{m-1,1}(E) \mbox{ and } \| f \|_{C^{m-1,1}(E)} \leq 1.
\end{aligned}
\end{equation}

Let $\eta > 0$ be arbitrary. The hypothesis in \eqref{eqn:fin_red1} implies the following:
\begin{equation}\label{eqn:fin_red2}
\begin{aligned}
&\mbox{For all finite } \widehat{E} \subseteq E \mbox{ there exists } F_{\widehat{E}} \in C^{m-1,1}(\R^n) \\
&\mbox{satisfying } F_{\widehat{E}} = f \mbox{ on } \widehat{E} \mbox{ and } \| F_{\widehat{E}} \|_{C^{m-1,1}(\R^n)} \leq 1 + \eta.
\end{aligned}
\end{equation}
We define
\[
\mathcal{D} = \{ F \in C^{m-1,1}(\R^n) : \| F \|_{C^{m-1,1}(\R^n)} \leq 1 + \eta \},
\]
equipped with the local $C^{m-1}$ topology defined by the family of seminorms
\[
\rho_R(F) := \sup_{|x| \leq R} \max_{|\alpha| \leq m-1} |\partial^\alpha F(x)| \qquad (R > 0).
\]
We define
\[
\mathcal{D}(x) = \{ F \in \mathcal{D} : F(x) = f(x) \} \mbox{     for each } x \in E.
\]
Then \eqref{eqn:fin_red2} implies that $ \bigcap_{x \in \widehat{E}} \mathcal{D}(x) \neq\emptyset$ for any finite subset $\widehat{E} \subseteq E$.

On the other hand, each $\mathcal{D}(x)$ is a closed subset of $\mathcal{D}$, and $\mathcal{D}$ is compact by the Arzela-Ascoli theorem. Therefore, the intersection of $\mathcal{D}(x)$ over all $x \in E$ is nonempty. Thus, there exists $F \in C^{m-1,1}(\R^n)$ satisfying $F = f$ on $E$ and $\| F \|_{C^{m-1,1}(\R^n)} \leq 1 + \eta$. Since $\eta > 0$ is arbitrary, by definition of the trace seminorm we have $\| f \|_{C^{m-1,1}(E)} \leq 1$.

This completes the proof of \eqref{eqn:fin_red1}. With this, \eqref{eqn:fin_E} is established.

We take $C^\# \geq 1$ and $k^\# \in \N$ as in Theorem \ref{thm:sharpfinitenessprinciple}. Note that the constants $C^\#$, $k^\#$ in Theorem \ref{thm:sharpfinitenessprinciple} satisfy $C^\# = O(\exp(\poly(D)))$ and $k^\# = O(\exp(\poly(D)))$. Thus, $C^\# , k^\# \leq \exp(\gamma D^k)$ for absolute constants $\gamma,k > 0$ (independent of $m,n, E$).

We first prove Theorem \ref{thm: new c sharp}. Let $E \subseteq \R^n$ be arbitrary, and let $\widehat{E} \subseteq E$ be a finite subset. By hypothesis of Theorem \ref{thm: new c sharp}, we are given $f : E \rightarrow \R$ satisfying: For all $S \subseteq \widehat{E}$ with $\#(S) \leq k^\#$ there exists $F^S \in C^{m-1,1}(\R^n)$ satisfying $F^S = f$ on $S$ and $\| F^S \|_{C^{m-1,1}(\R^n)} \leq 1$. Then $f|_{\widehat{E}}  : \widehat{E} \rightarrow \R$ satisfies the finiteness hypothesis $\cF\cH(k^\#,1)$ (see \eqref{eqn:FH}). Part (A) of Theorem \ref{thm:sharpfinitenessprinciple} ensures that $\| f|_{\widehat{E}} \|_{C^{m-1,1}(\widehat{E})} \leq C^\#$. We deduce that $f \in C^{m-1,1}(E)$ and $\| f \|_{C^{m-1,1}(E)} \leq C^\#$ by \eqref{eqn:fin_E}. This completes the proof of Theorem \ref{thm: new c sharp}.

We will prove Theorem \ref{thm: lin op} for finite $E$. The general case of Theorem \ref{thm: lin op} then follows by a standard argument using Banach limits. See Section 17 of \cite{F2}.

For $E \subseteq \R^n$ finite, we write $C(E)$ to denote the set of all real-valued functions on $E$. Note that $C(E) = C^{m-1,1}(E)$ because $E$ is finite. By part (B) of Theorem \ref{thm:sharpfinitenessprinciple}, there exists a linear map $T : C(E) \rightarrow C^{m-1,1}(\R^n)$ satisfying $Tf = f$ on $E$ and $\| Tf \|_{C^{m-1,1}(\R^n)} \leq C^\# \| f \|_{C^{m-1,1}(E)}$ for all $f \in C(E)$. This completes the proof of Theorem \ref{thm: lin op} for finite $E$.

\bibliography{references}
\bibliographystyle{plain}

\end{document}